\documentclass{amsart}[11pt]
\usepackage{amssymb,amscd}
\usepackage{xcolor}
\usepackage{tabmac}
\usepackage{verbatim}
\usepackage{hyperref}
\usepackage{array}

\newcommand{\af}{{\mathrm{af}}}
\newcommand{\al}{\alpha}
\newcommand{\A}{{\mathbb{A}}}
\newcommand{\B}{{\mathbb{B}}}
\newcommand{\BB}{{\mathcal{B}}}
\newcommand{\C}{{\mathbb{C}}}

\newcommand{\CC}{\mathcal{C}}

\newcommand{\del}{\partial}
\newcommand{\Dec}[2]{I\!\!\downarrow_{#2}^{#1}}
\newcommand{\dom}{\trianglerighteq}

\newcommand{\F}{\mathbb F}

\newcommand{\first}{\mathrm{first}}
\newcommand{\fgap}{\mathrm{firstgap}}

\newcommand{\fp}{\mathrm{fp}}

\newcommand{\G}{\mathcal G}
\newcommand{\geh}{\mathfrak{g}}
\newcommand{\Gr}{\mathrm{Gr}}
\newcommand{\Hc}{\Gamma}
\newcommand{\Hcinf}{\Gamma^*}
\newcommand{\Hm}{\Gamma}
\newcommand{\Hminf}{\Gamma_*}
\newcommand{\ip}[1]{\langle #1 \rangle}
\newcommand{\ipp}[1]{[ #1 ]}
\newcommand{\Inc}[2]{I\!\!\uparrow_{#1}^{#2}}
\newcommand{\la}{\lambda}
\newcommand{\last}{\mathrm{last}}
\newcommand{\lgap}{\mathrm{lastgap}}
\newcommand{\La}{\Lambda}

\newcommand{\NilCox}{\A_0}
\newcommand{\NilH}{\A}
\newcommand{\nor}{\mathrm{nor}}
\renewcommand{\O}{\mathbb O}
\newcommand{\om}{\omega}
\newcommand{\OP}{\mathcal{OP}}
\newcommand{\PC}{\mathcal{P}_C}
\newcommand{\Pf}{P^{(n)}}
\newcommand{\PA}{\mathbb{P}}
\newcommand{\pzt}{\phi_0^{(2)}}
\newcommand{\Q}{\mathbb{Q}}
\newcommand{\Qf}{Q^{(n)}}

\newcommand{\rdom}{\trianglelefteq}

\newcommand{\re}{\mathrm{re}}

\newcommand{\Red}{\mathcal{R}}
\newcommand{\RN}{{\overleftarrow{\mathrm{N}}}}
\newcommand{\spe}{\mathrm{sp}}
\newcommand{\gspn}{\mathfrak{sp}_{2n}(\C)}
\newcommand{\SP}{\mathcal{SP}}
\newcommand{\Sc}{\mathcal{S}}
\newcommand{\st}{\mathrm{st}}
\newcommand{\Supp}{\mathrm{Supp}}
\newcommand{\sw}{\subset}
\newcommand{\tC}{{\tilde{C}}}
\newcommand{\tu}{{\tilde{u}}}
\newcommand{\tv}{{\tilde{v}}}
\newcommand{\TT}{T}
\newcommand{\uh}{\widehat{u}}

\newcommand{\vb}{\mathbf{v}}
\newcommand{\vn}{\varnothing}

\newcommand{\WS}{\mathcal{Z}}
\newcommand{\Z}{{\mathbb{Z}}}

\newtheorem{lem}{Lemma}
\newtheorem{thm}[lem]{Theorem}
\newtheorem{prop}[lem]{Proposition}

\theoremstyle{definition}
\newtheorem{remark}[lem]{Remark}

\newtheorem{example}[lem]{Example}

\numberwithin{equation}{section}

\numberwithin{lem}{section}

\begin{document}
\author{Thomas Lam}
\address{Department of Mathematics, Harvard University, Cambridge MA
02138 USA} \email{tfylam@math.harvard.edu}

\author{Anne Schilling}
\address{Department of Mathematics, University of California, One
Shields Ave. Davis, CA 95616-8633 USA} \email{anne@math.ucdavis.edu}

\author{Mark Shimozono}
\address{Department of Mathematics, Virginia Polytechnic Institute
and State University, Blacksburg, VA 24061-0123 USA}
\email{mshimo@vt.edu}
\thanks{\textit{Date}: September 2007}

\title[Schubert Polynomials for affine Grassmannian of symplectic group]
{Schubert polynomials for the affine Grassmannian of the symplectic group}

\begin{abstract}
We study the Schubert calculus of the affine Grassmannian Gr of the
symplectic group. The integral homology and cohomology rings of Gr
are identified with dual Hopf algebras of symmetric functions,
defined in terms of Schur's $P$ and $Q$ functions.  An explicit
combinatorial description is obtained for the Schubert basis of the
cohomology of Gr, and this is extended to a definition of the affine
type $C$ Stanley symmetric functions.  A homology Pieri rule is also
given for the product of a special Schubert class with an arbitrary
one.
\end{abstract}

\maketitle

\section{Introduction}
Let $G$ be a simply-connected simple complex algebraic group and let
$\Gr_G$ denote the affine Grassmannian of $G$.  Following Peterson
\cite{P} and Lam \cite{Lam2} we study the homology and cohomology
Schubert calculus of $\Gr_{Sp_{2n}(\C)}$.

The structure of $H_*(\Gr_G)$\footnote{Our (co)homologies are with
$\Z$-coefficients unless otherwise specified.} and $H^*(\Gr_G)$ is
particularly rich because of the interaction of two phenomena. On
the one hand, $\Gr_G$ inherits free $\Z$-module {\it Schubert bases}
$\{\xi_x \in H_*(\Gr_G)\}$ and $\{\xi^x \in H^*(\Gr_G)\}$ from its
presentation $\Gr_G = \G/{\mathcal P}$ where $\G$ is the affine
Kac-Moody group associated to $G$ and ${\mathcal P} \subset \G$ is a
maximal parabolic subgroup.  On the other hand, it is a classical
result of Quillen \cite{Q} (see also \cite{GR} and \cite{Pr}) that
$\Gr_G$ is homotopy equivalent to the based loops $\Omega K$ into
the maximal compact subgroup $K \subset G$. The group structure on
$\Omega K$ endows $H_*(\Gr_G)$ and $H^*(\Gr_G)$ with the structure
of dual Hopf algebras.

The dual Hopf algebras $H_*(\Gr_G)$ and $H^*(\Gr_G)$ were first
studied intensively by Bott \cite{B}.  Bott gave an algorithm to
compute these Hopf algebras in terms of the Cartan data of $G$,
essentially by transgressing elements of $H^*(K)$ to obtain the
primitive elements in $H^*(\Gr_G)$.  With $\Q$-coefficients, $H^*(K,
\Q)$ is an exterior algebra with odd-dimensional generators so
$H^*(\Gr_G, \Q)$ is a polynomial algebra on even-dimensional
generators.  The situation is even more favorable when $G =
Sp_{2n}(\C)$ since $Sp_{2n}(\C)$ is torsion-free and
$H_*(\Gr_{Sp_{2n}(\C)})$ is a polynomial algebra over $\Z$.  Bott
comments that his description does not give polynomial generators
for $H_*(\Gr_{Sp_{2n}(\C)})$. We resolve this by producing $n$
special Schubert classes which are polynomial generators over $\Z$.

Our main result identifies $H_*(\Gr_{Sp_{2n}(\C)})$ and
$H^*(\Gr_{Sp_{2n}(\C)})$ with certain dual Hopf algebras $\Hc_{(n)}$
and $\Hc^{(n)}$ of symmetric functions, defined in terms of Schur's
$P$- and $Q$-functions \cite{Mac}.  We explicitly describe symmetric
functions $\Qf_w \in \Hc^{(n)}$ which represents the cohomology
Schubert basis. These symmetric functions are constructed using the
combinatorics of a remarkable subset $\WS \subset \tC_n$ of the
affine Weyl group $\tC_n$ of $Sp_{2n}(\C)$. In fact the cohomology
representatives $\Qf_w$ extend to a larger family of symmetric
functions: the type $C$ affine Stanley symmetric functions.

\subsection{Peterson's work on affine Schubert calculus}
Our results rely heavily on the work of Peterson \cite{P} who
defined a Hopf embedding $j: H_T(\Gr_G)\to \A$ of the
$T$-equivariant cohomology of $\Gr_G$ as a commutative subalgebra of
the nilHecke ring $\A = \A_G$ of Kostant and Kumar \cite{KK}.  Here
$T \subset K$ is a maximal torus.  Peterson characterizes the image
$j(\xi_x)$ of the Schubert basis of $H_T(\Gr_G)$ in terms of certain
identities inside $\A$.  In the non-equivariant case, Lam
\cite{Lam2} showed that Peterson's embedding specializes to a Hopf
isomorphism $j_0: H_*(\Gr_G) \to \B$ with an algebra which he called
the affine Fomin-Stanley subalgebra.  We give an explicit
combinatorial formula for generators of $\B$ in the case $G =
Sp_{2n}(\C)$.

\subsection{Earlier work for $G = SL_n(\C)$}
For $G=SL_n(\C)$, Lam \cite{Lam2} identified the Schubert basis of
$H_*(\Gr_{SL_n(\C)})$ with symmetric functions, called $k$-Schur
functions, of Lapointe, Lascoux and Morse \cite{LLM}; these arose in
the study of Macdonald polynomials.  The Schubert basis of
$H^*(\Gr_{SL_n(\C)})$ are the dual $k$-Schur functions \cite{LM}
which are generalized by the affine Stanley symmetric functions
\cite{Lam1}. In \cite{LLMS} Pieri rules were given for the
multiplication of Bott's generators on the Schubert bases of Bott's
realization of $H_*(\Gr_{SL_n(\C)})$ and $H^*(\Gr_{SL_n(\C)})$.
Furthermore, a combinatorial interpretation of the pairing between
$H_*(\Gr_{SL_n(\C)})$ and $H^*(\Gr_{SL_n(\C)})$ is given.

\subsection{Two Hopf algebras of symmetric functions} \label{SS:ring}
Let $\Lambda$ denote the ring of symmetric functions over $\Z$.  Let
$P_i$ and $Q_i$ denote the Schur $P$- and $Q$-functions with a
single part \cite[III.8]{Mac}. Define the Hopf subalgebras of
$\Lambda$ given by $\Hminf = \Z[P_1,P_2,\ldots]$ and $\Hcinf =
\Z[Q_1,Q_2,\ldots]$. There is a natural pairing (see \eqref{E:pair})
$\ipp{\cdot,\cdot}:\Hminf \times \Hcinf \to \Z$ making $\Hminf$ and
$\Hcinf$ into dual Hopf algebras.  For $n \geq 1$ the subspace
$\Hc_{(n)} = \Z[P_1,P_2,\ldots,P_{2n}] \subset \Hminf$ is a Hopf
subalgebra and we let $\Hcinf \twoheadrightarrow \Hc^{(n)} $ be the
dual quotient Hopf algebra.

\subsection{Special classes}\label{SS:special}
The affine Weyl group of $Sp_{2n}(\C)$, denoted $\tC_n$, has simple
generators $s_0,s_1,\ldots,s_n$ with the relations
\eqref{E:AffWeylRelations}. Let $\tC_n^0$ denote the minimal length
coset representatives of $\tC_n/C_n$, also called the {\it
Grassmannian} elements of $\tC_n$. Define $\rho_i\in \tC_n^0$ by
\begin{equation} \label{E:specialclass}
  \rho_i =
  \begin{cases}
    s_{i-1} s_{i-2}\dotsm s_1 s_0 &  \text{for $1\le i\le n$} \\
    s_{2n+1-i} s_{2n+2-i} \dotsm s_{n-1} s_n s_{n-1} \dotsm s_1 s_0 &\text{for $n+1\le i\le 2n$.}
  \end{cases}
\end{equation}
The homology Schubert classes $\xi_{\rho_i} \in
H_*(\Gr_{Sp_{2n}(\C)})$ for $1\le i\le 2n$, are called
\textit{special classes}.

\subsection{Zee-s}\label{SS:Z}
Let $\WS$ be the Bruhat order ideal in $\tC_n$ generated by the
conjugates of the element $\rho_{2n}$, %(see \eqref{E:specialclass})
that is, the set of $w\in \tC_n$ which have a reduced word that is a
subword of a rotation of the unique reduced word of $\rho_{2n}$.  An
element of $\WS$ is called a $Z$.  Let $\WS_r = \{w \in \WS \mid
\ell(w) = r\}$ denote the set of $Z$-s of length $\ell(w)$ equal to
$r$.

\begin{example} Let $n=2$. Then $\WS$ consists of the elements of
$\tC_2$ which have a reduced word that is a subword of one of the
words $1210$, $2101$, $1012$, $0121$.
\end{example}

Given a word $u$ with letters in $I_\af$, its support
$\Supp(u)\subset I_\af$ is by definition the set of letters
appearing in $u$. For $w\in\tC_n$ define $\Supp(w)=\Supp(u)$ for any
reduced word $u$; this is independent of the choice of $u$. A
component of a subset of $I_\af$ is by definition a maximal nonempty
subinterval. Let $c(w)$ denote the number of components of
$\Supp(w)$.

\subsection{Affine type $C$ Stanley symmetric
functions}\label{SS:affStan} For $w \in \tC_n$ we define the
generating function
\begin{equation}\label{E:affStan}
  \Qf_w[Y] = \sum_{(v^1,v^2,\dotsm)} \prod_i 2^{c(v^i)}
  y_i^{\ell(v^i)},
\end{equation}
where the sum runs over the factorizations $v^1v^2\dotsm=w$ of $w$
such that $v^i\in\WS$ and $\ell(v^1) + \ell(v^2) + \cdots =
\ell(w)$.

\begin{thm}\label{T:affStan}
The series $\Qf_w$ is symmetric and defines an element of
$\Hc^{(n)}$. The subset $\{\Qf_v \mid v \in \tC_n^0\}$ forms a basis
of $\Hc^{(n)}$ such that all product and coproduct structure
constants are positive and every $\Qf_w$ for $w \in \tC_n$ is
positive in this basis.
\end{thm}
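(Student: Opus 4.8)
The plan is to prove symmetry first, then identify $\Qf_w$ with an honest cohomology Schubert representative in $\Hc^{(n)}$, and finally extract positivity from the geometry/combinatorics of the affine flag variety. For symmetry, I would argue exactly as in the type $A$ case (Lam \cite{Lam1}): define the ``cyclic'' operators on $\WS$-factorizations that exchange the lengths $\ell(v^i)$ and $\ell(v^{i+1})$ in adjacent factors, i.e.\ construct a length- and $2^{c(\cdot)}$-weight-preserving bijection between factorizations of $w$ of content $(\dots,a,b,\dots)$ and those of content $(\dots,b,a,\dots)$. The weight factor $2^{c(v^i)}$ is the new feature here; the key local fact I would establish is that a maximal factor $v \in \WS$ with $\ell(v) = \ell(v') + \ell(v'')$, $v = v'v''$ reduced and $v',v'' \in \WS$, satisfies $c(v') + c(v'') = c(v) + (\text{number of shared components})$, and that this bookkeeping matches the RSK-type insertion used in the type $A$ proof of \cite{Lam1}. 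Symmetry then reduces to invariance under adjacent transpositions, which is the content of that local move. The fact that $\Qf_w$ lies in $\Lambda$ (finitely many variables give a polynomial, and the definition is stable) is immediate once symmetry is known; that it lies in $\Hc^{(n)}$ rather than merely in $\Hcinf$ requires showing it is killed by the ideal dual to the kernel of $\Hminf \twoheadrightarrow \Hc_{(n)}$, which I would get from the bound $\ell(v^i) \le \ell(\rho_{2n}) = 2n$ forced by $v^i \in \WS$ — i.e.\ no factor can contribute a ``part'' exceeding $2n$, matching $\Hc^{(n)} = \Hcinf / (Q_{2n+1}, Q_{2n+2}, \dots)^{\perp}$-style truncation.

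For the basis claim, the strategy is to go through Peterson's and Lam's machinery quoted in the introduction. By the companion results of this paper (the Hopf isomorphism $j_0: H_*(\Gr) \to \B$ and its dual), one has an explicit pairing between $\Hc_{(n)} \cong H_*(\Gr_{Sp_{2n}})$ and $\Hc^{(n)} \cong H^*(\Gr_{Sp_{2n}})$, and the Schubert basis $\{\xi^w : w \in \tC_n^0\}$ of cohomology. I would show that $\Qf_v$ for $v \in \tC_n^0$ equals $\xi^v$ under this identification — concretely, by checking that $\langle \xi_u, \Qf_v \rangle = \delta_{uv}$ against the homology Schubert basis, using the combinatorial model for that pairing (the $\WS$-factorization statistic computes exactly the coefficient of the relevant monomial, i.e.\ it is the ``affine Stanley = sum over reduced-word factorizations'' identity specialized to type $C$). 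Triangularity with respect to the monomial/dominance basis then upgrades this to ``$\{\Qf_v\}$ is a $\Z$-basis of $\Hc^{(n)}$.'' The extension to all $w \in \tC_n$ (not just Grassmannian $w$) factors through the affine flag variety: $\Qf_w$ should be the image of an affine Schubert class under the projection $\Gr \leftarrow \mathrm{Fl}$, so $\Qf_w = \sum_{v \in \tC_n^0} a_{wv} \Qf_v$ with $a_{wv}$ an affine Schubert structure constant, hence $\ge 0$.

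Positivity of the product, coproduct, and expansion constants I would then deduce formally: product structure constants of $\{\Qf_v\}$ are cup-product structure constants of Schubert classes in $H^*(\Gr_{Sp_{2n}})$, which are nonnegative by Kumar–Nori/Graham-type positivity for Kac–Moody Schubert calculus (or, here, directly by the geometry of $\Gr$ and Kleiman transversality after noting $\Gr$ is an ind-variety of ind-finite type with the relevant CW structure); coproduct constants are dual to products in homology, nonnegative for the same reason together with the Hopf duality of $\Hc_{(n)}, \Hc^{(n)}$; and the positivity of $\Qf_w$ in the $\{\Qf_v\}$ basis is the pushforward statement just described. I would also give a purely combinatorial confirmation of product-positivity if possible, via an insertion/crystal argument on $\WS$-factorizations, but the geometric proof is the safe route.

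The main obstacle I anticipate is the symmetry proof's local move — specifically, tracking the factor $2^{c(v^i)}$ through the length-exchanging bijection. In type $A$ the weight is simply $y_i^{\ell(v^i)}$ and the bijection is a clean cyclic-word / Coxeter-Knuth argument; here one must show the component-count statistic is transported correctly, which amounts to understanding how $\Supp$ behaves under the canonical factorizations of elements of $\WS$ (these are subwords of rotations of the single word for $\rho_{2n}$, so their supports are cyclic intervals in $I_\af$, and ``number of components'' is the number of cyclic gaps). Making the exchange bijection respect this cyclic-interval combinatorics — and in particular handling the degenerate cases where $s_0$ or $s_n$ (the special ends of the type $C$ affine Dynkin diagram) lie in the support — is where the real work lies; everything downstream is a formal consequence of the paper's earlier identifications with $H_*(\Gr)$, $H^*(\Gr)$ and standard Schubert positivity.
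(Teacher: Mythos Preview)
Your approach to the basis claim and the positivity statements is essentially the paper's: identify $\Qf_v$ for $v \in \tC_n^0$ with cohomology Schubert classes via Theorem~\ref{T:main}, invoke Graham/Kumar for product positivity, use Hopf duality plus Peterson's identification of the homology structure constants $j_v^w$ with Gromov--Witten invariants (\cite{P,LS}) for coproduct positivity, and read off the expansion coefficients of $\Qf_w$ as $j_v^w \ge 0$ from \eqref{E:kerPF}.

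Where you diverge is the symmetry proof, and here you are working much harder than necessary. You propose a direct bijective argument on $\WS$-factorizations, exchanging adjacent factor lengths while tracking the $2^{c(\cdot)}$ weights, and you correctly flag this weight-tracking as the main obstacle --- one you do not actually resolve. The paper bypasses it entirely: by definition \eqref{E:keraffStan}, $\Qf_w$ is the coefficient of $A_w$ in
\[
\Omega_{-1}^\B = \sum_{\lambda_1 \le 2n} \PA_{\lambda_1}\PA_{\lambda_2}\cdots \otimes M_\lambda[Y] \in \B \,\hat\otimes\, \Hc^{(n)},
\]
and since the $M_\lambda[Y]$ are already symmetric functions lying in $\Hc^{(n)}$ (via \eqref{E:Ccoiso}), both symmetry and membership in $\Hc^{(n)}$ are immediate. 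The commutativity of $\B$ is what lets one pass from the sum over compositions in \eqref{E:Bkern} to a sum over partitions; that single algebraic fact replaces the entire bijective machinery you sketch. Your route might succeed, but the key step is unverified and the paper's argument shows it is unnecessary.

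One small correction to your mental model: the affine type $C$ Dynkin diagram is a path $0\text{---}1\text{---}\cdots\text{---}n$, not a cycle, so supports of elements of $\WS$ are unions of ordinary subintervals of $I_\af$, not cyclic intervals. The ``rotation'' in the definition of $\WS$ is cyclic rotation of the \emph{word} for $\rho_{2n}$, not a Dynkin automorphism.
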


The symmetric functions $\Qf_w$ are type $C$ analogues of the affine
Stanley symmetric functions studied in \cite{Lam1}.
Some examples for the type $C$ affine Stanley symmetric functions
are given in Appendix~\ref{S:Qfdata}.
They have the following geometric interpretation.

Let $LSp(n)$ and $\Omega Sp(n)$ denote respectively the space of all
loops and based loops, into the maximal compact subgroup $Sp(n)
\subset Sp_{2n}(\C)$ and let $T \subset Sp(n)$ be the maximal torus.
Let $p: \Omega Sp(n) \to LSp(n)/T$ denote the composition $\Omega
Sp(n) \hookrightarrow LSp(n) \to LSp(n)/T$ of the inclusion and
natural projection.  The type $C$ affine Stanley symmetric functions
$\Qf_w$ can be identified via Theorem \ref{T:main} (see below) with the
pullbacks $p^*(\xi^w)$ of the Schubert classes $\xi^w \in
H^*(LSp(n)/T)$.  This follows from \eqref{E:Stacoeff} and \cite[Remark
8.6]{Lam2}.  See also \cite[Remark 4.6]{Lam2}. For $w\in \tC_n^0$,
$p^*(\xi^w)$ is itself
a Schubert class in $H^*(\Omega Sp(n))\cong H^*(\Gr_{Sp_{2n}(\C)})$
as detailed below.

\subsection{(Co)homology Schubert polynomials}\label{SS:Schub}
The Hopf algebras $H^*(\Gr_{Sp_{2n}(\C)})$ and
$H_*(\Gr_{Sp_{2n}(\C)})$ are dual via the cap product.  The Schubert
bases $\{\xi_x \in H_*(\Gr_G)\}$ and $\{\xi^x \in H^*(\Gr_G)\}$ are
dual under the cap product and are both indexed by the Grassmannian
elements $x \in \tC_n^0$.

\begin{thm} \label{T:main} There are dual Hopf algebra isomorphisms
\begin{align*}
\Phi: \Hc_{(n)} \to H_*(\Gr_{Sp_{2n}(\C)}) \\
\Psi: H^*(\Gr_{Sp_{2n}(\C)}) \to \Hc^{(n)}
\end{align*}
such that
\begin{align*}
\Phi(P_i) &= \xi_{\rho_i} &\qquad& \text{for $1\le i\le 2n$, and} \\
\Psi(\xi^w) &= \Qf_w & \qquad &\text{for $w \in \tC_n^0$.}
\end{align*}
\end{thm}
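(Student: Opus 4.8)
The strategy is to realize both sides as subquotients of the nilHecke ring and transport Peterson's/Lam's identifications. First I would recall from \cite{Lam2} that, non-equivariantly, there is a Hopf isomorphism $j_0: H_*(\Gr_{Sp_{2n}(\C)}) \to \B$, where $\B = \B_{\tC_n}$ is the affine Fomin--Stanley subalgebra of the nilHecke ring $\A = \A_{Sp_{2n}(\C)}$, sending $\xi_x$ to the unique element $B_x \in \B$ whose expansion in the $A_w$-basis has coefficient $\delta_{x,w}$ on $A_w$ for $w \in \tC_n^0$ (the ``$j$-basis''). Dually, $H^*(\Gr_{Sp_{2n}(\C)})$ is identified with $\B^*$, and the Schubert class $\xi^w$ is the functional dual to $B_w$. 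So it suffices to produce an isomorphism $\Hc_{(n)} \to \B$ carrying $P_i \mapsto B_{\rho_i}$ and to check that the dual map sends $\xi^w \mapsto \Qf_w$.

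The first main step is to compute the elements $B_{\rho_i}$ explicitly. The special classes $\rho_i$ in \eqref{E:specialclass} have, up to rotation, reduced words that are subwords of the word for $\rho_{2n}$; I would show by a direct nilHecke computation (cyclic/Fomin--Stanley-type argument, as in the $SL_n$ case of \cite{Lam2, LLMS}) that $B_{\rho_i} = \sum_{w \in \WS,\ \ell(w)=i} 2^{c(w)-1} A_w$ — i.e.\ the $j$-basis element indexed by $\rho_i$ is a nonnegative sum over the length-$i$ $Z$-s weighted by powers of two recording the number of components of the support. Equivalently, these $B_{\rho_i}$ are the ``noncommutative Schur $P$-functions'' in the $A_w$. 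The key commutation check is that these elements lie in $\B$ (they commute with the appropriate elements of $\A$), which reduces to a combinatorial statement about rotations of the word $\rho_{2n}$; this is where the bulk of the work lies.

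Next I would show the $B_{\rho_i}$ are algebraically independent and generate $\B$. For independence and generation, I would use the dual picture: the factorization formula \eqref{E:affStan} shows that the functional $\Qf_w$ expands the coproduct in $\B^*$, and by Theorem \ref{T:affStan} the $\Qf_v$ for $v \in \tC_n^0$ form a basis of $\Hc^{(n)}$. Matching the $P_i$ with $B_{\rho_i}$ is then forced: the pairing $\ipp{\cdot,\cdot}: \Hminf \times \Hcinf \to \Z$ restricts to a perfect pairing $\Hc_{(n)} \times \Hc^{(n)} \to \Z$, and under the established bijections (symmetric-function side $\leftrightarrow$ nilHecke side on generators, dual bases to dual bases) the structure constants agree. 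Concretely, one checks $\ipp{P_\lambda, \Qf_w} = [\text{coeff of } A_w \text{-type data}]$ matches $\ip{B_{\rho_\lambda}, \xi^w}$, using that Schur $P$-functions are characterized by the Pieri rule for $P_1,\dots$ together with the $2^{c}$ weighting — this is exactly the shape of the homology Pieri rule stated in the abstract, which I would prove in parallel and feed in here. Once the generators and the pairing match, $\Phi$ and $\Psi$ are mutually dual Hopf algebra isomorphisms by construction, and $\Phi(P_i) = \xi_{\rho_i}$, $\Psi(\xi^w) = \Qf_w$ as claimed.

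The hard part will be the explicit determination of $B_{\rho_i}$ inside $\B$ and the verification that the proposed power-of-two-weighted sums over $Z$-s actually commute with the relevant part of the nilHecke ring; this is the type $C$ analogue of the Fomin--Stanley computation and it is where the special combinatorics of $\WS$ (rotations of the single reduced word of $\rho_{2n}$, behavior of $\Supp$ and $c(\cdot)$ under the Bruhat ideal) is essential. Everything else — algebraic independence, the duality of $\Phi$ and $\Psi$, and the Hopf-algebra compatibility — then follows formally from Theorem \ref{T:affStan} and the general nilHecke/Peterson framework of \cite{P, Lam2, KK}.
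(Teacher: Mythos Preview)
Your overall architecture is right: identify $H_*(\Gr)$ with the affine Fomin--Stanley subalgebra $\B$ via $j_0$, compute the elements $\PA_{\rho_i}=j_0(\xi_{\rho_i})$ explicitly as $\sum_{w\in\WS_i}2^{c(w)-1}A_w$, and transport the symmetric-function Hopf structure across. You also correctly locate the main combinatorial difficulty in proving that these power-of-two weighted sums over $\WS_i$ lie in $\B$. But the remainder of your argument has two substantive gaps.

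First, you invoke Theorem~\ref{T:affStan} (that $\{\Qf_v\mid v\in\tC_n^0\}$ is a basis of $\Hc^{(n)}$) to deduce that the $\PA_{\rho_i}$ generate $\B$ and to pin down the duality. In the paper the logical order is the reverse: the basis statement in Theorem~\ref{T:affStan} is \emph{derived from} Theorem~\ref{T:main}. The paper instead proves directly that $\Psi$ is a bijection, via (a) surjectivity, by exhibiting explicit Grassmannian elements $c_r$ with $\Qf_{c_r}=Q_r$, and (b) injectivity, by a triangularity argument using the bijection $\tC_n^0\leftrightarrow\PC^n$ of Lemma~\ref{L:EE}. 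Your proposal needs an independent argument of this kind; as written it is circular.

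Second, and more seriously, you treat ``Hopf-algebra compatibility'' as formal once the generators are matched. It is not. Two separate calculations are required and neither is a consequence of the Pieri rule or of Theorem~\ref{T:affStan}:
\begin{itemize}
\item[\textbullet] Well-definedness of $\Phi$ as a ring map: since $\Hc_{(n)}=\Z[P_1,P_3,\dotsc,P_{2n-1}]$ with the even $P_{2m}$ expressed via \eqref{E:Prel}, one must prove the $\PA_{\rho_i}$ satisfy the same quadratic relations. This is Proposition~\ref{P:evenP}, and its proof uses the explicit formula for $\PA_i$ together with the factorization bijection of Lemma~\ref{L:EE}.
\item[\textbullet] Coalgebra compatibility: one must check that the coproduct of $\PA_r$ in $\B$ matches \eqref{E:DeltaP}. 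This is Theorem~\ref{T:phiP}, and it is a second, independent technical computation (carried out in the paper via a general coproduct formula in the nilHecke algebra, Proposition~\ref{prop:del A}). It does not follow from the Pieri rule for $H_*$, which concerns the \emph{product} on $\B$, not the coproduct.
\end{itemize}
Once these two ingredients are in place, the duality of $\Phi$ and $\Psi$ is established via the type~$C$ Cauchy kernel $\Omega_{-1}^\B$ (Section~\ref{S:ker}), and bijectivity is proved as above. Your plan should be amended to include both of these verifications explicitly, and to replace the appeal to Theorem~\ref{T:affStan} by the direct triangularity/surjectivity arguments.
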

Since $\Hc_{(n)} = \Z[P_1,P_3,\ldots,P_{2n-1}]$, we obtain in
particular that $H_*(\Gr_{Sp_{2n}(\C)})$ is a polynomial algebra on
$\xi_{\rho_1}, \xi_{\rho_3}, \ldots, \xi_{\rho_{2n-1}}$. It also
follows that the basis $\{ \Pf_w \mid w\in \tC_n^0\}$ of $\Hc_{(n)}$
dual to $\{\Qf_w \mid w \in \tC_n^0\} \subset \Hc^{(n)}$ maps to the
homology Schubert classes $\xi_w \in H^*(\Gr_{Sp_{2n}(\C)})$.  The
symmetric functions $\Pf_w$ are Schubert polynomials for
$H_*(\Gr_{Sp_{2n}(\C)})$ and are the type $C$ analogue of $k$-Schur
functions \cite{Lam2,LLM}. Examples are given in
Appendix~\ref{S:Pfdata}.

\subsection{Pieri rule for $H_*(\Gr_{Sp_{2n}(\C)})$}
We also give a positive formula for the multiplication of an
arbitrary homology class by a special class.

\begin{thm}\label{T:Pieri}
Let $w \in \tC_n^0$.  Then in $H_*(\Gr_{Sp_{2n}(\C)})$ we have
$$
\xi_{\rho_i} \; \xi_w = \sum_{v \in \WS_i} 2^{c(v) - 1} \, \xi_{vw}
$$
where the sum is taken over all $v \in \WS_i$ such that $\ell(vw) =
\ell(v) + \ell(w)$ and $vw \in \tC_n^0$.
\end{thm}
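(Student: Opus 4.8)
The plan is to transport the statement through the Hopf isomorphism $\Phi$ of Theorem~\ref{T:main} and to prove the resulting identity of symmetric functions by a product--coproduct duality argument. Since $\Phi$ is an algebra isomorphism with $\Phi(P_i)=\xi_{\rho_i}$ and $\Phi(\Pf_w)=\xi_w$ for $w\in\tC_n^0$, Theorem~\ref{T:Pieri} is equivalent to the identity
\[
P_i\cdot\Pf_w=\sum_{v\in\WS_i}2^{c(v)-1}\,\Pf_{vw}\qquad\text{in }\Hc_{(n)},
\]
the sum running over $v$ with $\ell(vw)=\ell(v)+\ell(w)$ and $vw\in\tC_n^0$. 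As $\{\Pf_u\mid u\in\tC_n^0\}$ and $\{\Qf_u\mid u\in\tC_n^0\}$ are dual bases of the dual Hopf algebras $\Hc_{(n)}$ and $\Hc^{(n)}$, and since for fixed $z\in\tC_n^0$ at most one $v$ (namely $v=zw^{-1}$) can satisfy $vw=z$, the identity is in turn equivalent to the statement
\[
\ipp{P_i\,\Pf_w,\Qf_z}=
\begin{cases}
2^{c(zw^{-1})-1} & \text{if } zw^{-1}\in\WS_i \text{ and } \ell(z)=\ell(zw^{-1})+\ell(w),\\
0 & \text{otherwise,}
\end{cases}
\]
for every $z\in\tC_n^0$; the conclusion then follows by applying $\Phi$.

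To compute the left-hand side I would first record the coproduct of the affine Stanley class: directly from the generating-function definition \eqref{E:affStan}, on splitting the variable set into two (exactly as for the type $A$ affine Stanley symmetric functions of \cite{Lam1}), one obtains $\Delta\Qf_z=\sum_{z=ab}\Qf_a\otimes\Qf_b$, the sum over factorizations with $\ell(a)+\ell(b)=\ell(z)$. Hence
\[
\ipp{P_i\,\Pf_w,\Qf_z}=\ipp{P_i\otimes\Pf_w,\Delta\Qf_z}=\sum_{z=ab}\ipp{P_i,\Qf_a}\,\ipp{\Pf_w,\Qf_b},
\]
and for degree reasons only terms with $\ell(a)=i$ and $\ell(b)=\ell(w)$ contribute. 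Now comes a short Coxeter-theoretic observation: if $z\in\tC_n^0$ and $z=ab$ with $\ell(z)=\ell(a)+\ell(b)$, then $b\in\tC_n^0$, since $bs_j<b$ for some $j\in\{1,\ldots,n\}$ would force $zs_j<z$. Thus $\Qf_b$ is a basis element, $\ipp{\Pf_w,\Qf_b}=\delta_{w,b}$, and the sum collapses to $\ipp{P_i,\Qf_a}$ with $a=zw^{-1}$ when $\ell(z)=\ell(zw^{-1})+\ell(w)$, and to $0$ otherwise.

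It remains to prove the crux: for $a\in\tC_n$ with $\ell(a)=i$, the pairing $\ipp{P_i,\Qf_a}$ equals $2^{c(a)-1}$ if $a\in\WS$ and $0$ otherwise. For this I would specialize $\Qf_a$ to a single variable $y_1$ by setting $y_2=y_3=\cdots=0$. In \eqref{E:affStan} only the one-term factorization $a=a$ then survives, so $\Qf_a(y_1,0,0,\ldots)=2^{c(a)}\,y_1^{i}$ if $a\in\WS$ and $0$ otherwise. On the other hand, for a strict partition $\mu$ one has $Q_\mu(y_1,0,0,\ldots)=0$ whenever $\ell(\mu)\ge 2$, while $Q_{(i)}(y_1,0,0,\ldots)=2\,y_1^{i}$. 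Expanding $\Qf_a$ in the Schur $Q$-basis (it lies in $\Hcinf$ by Theorem~\ref{T:affStan}) and then specializing therefore identifies the coefficient of $Q_{(i)}$ in $\Qf_a$ as $2^{c(a)-1}$ when $a\in\WS$ and as $0$ otherwise; by the duality $\ipp{P_\lambda,Q_\mu}=\delta_{\lambda\mu}$ of Section~\ref{SS:ring} this coefficient is exactly $\ipp{P_i,\Qf_a}$. Assembling this with the previous paragraph and reindexing by $v=zw^{-1}$ yields the desired formula for $\ipp{P_i\,\Pf_w,\Qf_z}$, and hence Theorem~\ref{T:Pieri}.

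The step I expect to demand the most care is justifying the coproduct identity $\Delta\Qf_z=\sum_{z=ab}\Qf_a\otimes\Qf_b$ inside the \emph{quotient} Hopf algebra $\Hc^{(n)}$, together with the attendant bookkeeping of which ring ($\Lambda$, $\Hcinf$, or $\Hc^{(n)}$) each $\Qf_a$ is being expanded or specialized in; the evaluation of $\ipp{P_i,\Qf_a}$ itself, via the one-variable specialization, is then brief. Alternatively, the whole argument can be run directly on the topological side, replacing $\ipp{\cdot,\cdot}$ by the cap-product pairing between $H_*(\Gr_{Sp_{2n}(\C)})$ and $H^*(\Gr_{Sp_{2n}(\C)})$ and using that the coproduct on $H^*(\Gr_{Sp_{2n}(\C)})$ is dual to the Pontryagin product on $H_*(\Gr_{Sp_{2n}(\C)})$; the combinatorial core is the same.
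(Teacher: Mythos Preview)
Your proposal is correct, but it takes a genuinely different route from the paper's one-line argument. The paper simply applies the nonequivariant case of Proposition~\ref{P:jco}, which says $\xi_x\xi_z=\sum_y j_x^y\,\xi_{yz}$ (sum over $y$ with $yz\in W_\af^0$ and $\ell(yz)=\ell(y)+\ell(z)$), to $x=\rho_i$ and then reads off $j_{\rho_i}^v=2^{c(v)-1}\chi(v\in\WS_i)$ from Theorem~\ref{T:P}. That's the whole proof.

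Your argument instead passes through Theorem~\ref{T:main} to the symmetric-function side, uses the coproduct identity $\Delta\Qf_z=\sum_{z=ab}\Qf_a\otimes\Qf_b$ (which does hold, e.g.\ by multiplying two copies of the kernel \eqref{E:Bkern} in separate alphabets and comparing with \eqref{E:keraffStan}), exploits the Coxeter fact that the right factor $b$ is automatically Grassmannian, and then evaluates $\ipp{P_i,\Qf_a}$ via \eqref{E:Mcoeff} or one-variable specialization. Each step is sound; the only wrinkle is the parenthetical ``it lies in $\Hcinf$ by Theorem~\ref{T:affStan}'': that theorem places $\Qf_a$ in the quotient $\Hc^{(n)}$, not in $\Hcinf$. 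This is harmless, since \eqref{E:Mcoeff} lets you read off $\ipp{P_i,\Qf_a}$ as the coefficient of $M_{(i)}$ directly from the representative \eqref{E:affStan} in $\La$ without ever lifting to $\Hcinf$.

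What each approach buys: the paper's route is strictly lighter in its logical dependencies, using only Theorem~\ref{T:P} and the Peterson formula (Proposition~\ref{P:jco}). Your route invokes Theorem~\ref{T:main}, which in this paper's development already requires both Theorem~\ref{T:P} \emph{and} the substantially harder Theorem~\ref{T:phiP}. Your one-variable trick is a pleasant direct computation of $\ipp{P_i,\Qf_a}$ that in effect rederives the content of Theorem~\ref{T:P} needed here, but since you have already appealed to Theorem~\ref{T:main} it does not actually shorten the overall logical chain.
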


\subsection{Future work and other directions}

\subsubsection{Pieri rule for $H^*(\Gr_{Sp_{2n}(\C)})$ and explicit
description of homology Schubert basis}

We hope to describe the symmetric functions $\{ \Pf_w \mid w\in
\tC_n^0\} \subset \Hc_{(n)}$ explicitly in the future, perhaps in a
manner similar to the {\it strong tableaux} in \cite{LLMS}.  As is
explained in \cite{LLMS}, the description of $\Pf_w$ is essentially
equivalent to the description of a Pieri rule for
$H^*(\Gr_{Sp_{2n}(\C)})$.

\subsubsection{The special orthogonal groups}
A generalization of our work to the special orthogonal groups $G =
SO_n(\C)$, together with the $G = SL_n(\C)$ case in \cite{Lam2},
would complete the analysis of the classical groups. The symmetric
function description of $H_*(\Gr_{SO_n(\C)})$ is likely to be more
involved as it is not a polynomial algebra over $\Z$.

\subsubsection{Comparison with finite case}
We hope to explore the relationship between our symmetric functions
and the ``type $B$'' Stanley symmetric functions and classical type
Schubert polynomials of Fomin and Kirillov \cite{FK}, and of Billey
and Haiman \cite{BH}.  In particular, specializing $A_n = 0$ in
Theorem \ref{T:P} we obtain an expression nearly the same as the
formula \cite[(4.1)]{FK}.

\subsubsection{Embedding of groups and branching of Schubert classes}
We intend to study the behavior of the affine Schubert classes
studied here and in \cite{Lam2} induced by the inclusions of compact
groups:
\begin{align*}
SU(n) \subset SU(n+1) \ \ \ Sp(n) \subset Sp(n+1) \ \ \ \
 Sp(n) \subset SU(2n) \ \ \ SU(n) \subset
Sp(n).
\end{align*}

In particular, the symmetric functions $\Pf_w$ and $\Qf_w$ have
positivity properties with respect to expansions involving Schur
$P$-functions, Schur $Q$-functions, and ordinary Schur functions.

\subsubsection{Work of Ginzburg and Bezrukavnikov, Finkelberg and Mirkovic}
The rings $H_*(\Gr_G)$ and $H^*(\Gr_G)$ were also studied by
Ginzburg \cite{Gin} and by Bezrukavnikov, Finkelberg and Mirkovic
\cite{BFM} from the point of view of geometric representation
theory.  The connection with our point of view is unclear since the
Schubert basis is as yet unavailable in their descriptions, although
part of the Schubert basis is described by Ginzburg.

\subsection{Organization}
In section \ref{S:Sym} we give notation for symmetric functions and
describe the dual Hopf algebras $\Hc_{(n)}$ and $\Hc^{(n)}$.  In
section \ref{S:root} we fix notation concerning affine root systems
and Weyl groups.  In section \ref{S:nilHecke} we explain the
connection between the Peterson and the Fomin-Stanley subalgebras,
and the homology of the affine Grassmannian.  The material in
sections \ref{S:root}--\ref{S:nilHecke} are valid for the affine
Grassmannian of any simply-connected simple complex algebraic group
$G$.

Apart from Proposition \ref{prop:del A}, the remainder of the paper
specializes to the case $G = Sp_{2n}(\C)$.  In section \ref{S:main}
we prove our main results (Theorems \ref{T:affStan}, \ref{T:main}
and \ref{T:Pieri}) modulo two nilHecke algebra calculations --
Theorems \ref{T:P} and \ref{T:phiP}.  Section \ref{S:Pieri} is
devoted to the study of the Bruhat order of $\tC_n$ restricted to
$\WS$, and to the proof of Theorem \ref{T:P}.  Section \ref{S:Hopf}
presents a general formula (Proposition \ref{prop:del A}) for the
coproduct in a nilHecke algebra and uses it to prove Theorem
\ref{T:phiP}. Some data, in particular for the type $C$ affine
Stanley symmetric functions $\Qf_w$ and $k$-Schur functions $\Pf_w$,
is given in Appendices~\ref{S:Qfdata} and \ref{S:Pfdata}.

\subsection{Acknowledgements.}
Many thanks to Jennifer Morse for pointing us in the right direction
for finding the leading monomial in a Grassmannian $\Qf_w$. We thank
Mike Zabrocki for discussions at an early stage of this work, and
Nicolas Thi\'ery and Florent Hivert for their support with MuPAD-Combinat~\cite{HT:2003}.
This work was partially supported by the NSF grants DMS--0600677,
DMS--0501101, DMS--0652641, DMS--0652648, and DMS--0652652.

\section{Symmetric functions}\label{S:Sym}
In this section we study a subring $\Hm_{(n)}$ and subquotient
$\Hc^{(n)}$ of the ring of symmetric functions.  Let $\La$ be the
Hopf algebra of symmetric functions over $\Z$. It has a number of
bases indexed by partitions $\la$:
\begin{alignat*}{2}
  s_\la & \qquad&\text{Schur \cite[I.3]{Mac}} \\
  h_\la & &\text{homogeneous \cite[I.1]{Mac}} \\
  p_\la && \text{power sums \cite[I.1]{Mac}} \\
  m_\la && \text{monomial \cite[I.1]{Mac}} \\
  P_\la[X;t]&& \text{Hall-Littlewood $P$ \cite[III.2]{Mac}}  \\
  Q_\la[X;t]&& \text{Hall-Littlewood $Q$ \cite[III.2]{Mac}}
\end{alignat*}
The power sums are a basis over $\Q$ \cite[I.2.12]{Mac}
and the Hall-Littlewood $P$- and $Q$-functions are
a basis over $\Q(t)$ \cite[III.2.7,2.11]{Mac}.

Let $\ip{\cdot,\cdot}:\La\otimes\La \to \Z$ be the pairing defined by
\begin{align} \label{E:scalar}
  \ip{h_\la,m_\mu} = \delta_{\la\mu}.
\end{align}
It has reproducing kernel \cite[I.4.1,4.2]{Mac}
\begin{equation} \label{E:kernel}
\begin{split}
  \Omega &:= \prod_{i,j\ge1} \dfrac{1}{1-x_iy_j} \\
  &= \sum_\la h_\la[X] m_\la[Y] \\
  &= \sum_\la z_\la^{-1} p_\la[X] p_\la[Y],
\end{split}
\end{equation}
where $z_\la = \prod_{i\ge1} i^{m_i(\la)} m_i(\la)!$ and $m_i(\la)$ is the
number of times the part $i$ occurs in $\la$.
Here and elsewhere, unless otherwise specified the sum runs over the
set of all partitions.

The Schur $P$ and $Q$ functions are defined by \cite[III.8]{Mac}
\begin{equation} \label{E:PQdef}
\begin{split}
  P_\la[X] &= P_\la[X;-1] \\
  Q_\la[X] &= Q_\la[X;-1] = 2^{\ell(\la)} P_\la[X]
\end{split}
\end{equation}
where $\ell(\la)$ is the number of nonzero parts of $\la$. We have \cite[III.8.7]{Mac}
\begin{equation} \label{E:strictvanish}
P_\la[X]=Q_\la[X]=0\qquad\text{if $\la\not\in \SP$}
\end{equation}
where $\SP$ is the set of strict partitions $\la$, those with $\la_1>\la_2>\dotsm$;
see~\eqref{E:Pbasis} and \eqref{E:Qbasis}.

\subsection{Homology ring}

Define the Hopf subalgebra $\Hminf\subset\La$ by
\begin{align}
  \Hminf &= \Z[P_1,P_3,P_5,\dotsc].
\end{align}
The $P_i$ for $i$ odd, are algebraically independent, so that
\begin{align}
\Hminf = \bigoplus_{\la\in\OP} \Z P_{\la_1} P_{\la_2}\dotsm
\end{align}
where $\OP$ is the set of partitions with odd parts. The Hopf
structure on $\Hminf$ is given by
\begin{align} \label{E:DeltaP}
  \Delta(P_r) = 1 \otimes P_r + P_r \otimes 1 + 2 \sum_{0<s<r} P_s
  \otimes P_{r-s},
\end{align}
where the $P_i$ for $i$ even, satisfy only the relations
\cite[III.8.2']{Mac}
\begin{align} \label{E:Prel}
  P_{2i} = 2 \left(P_1 P_{2i-1} -  P_2 P_{2i-2} + \dotsm + (-1)^{i-2}
  P_{i-1}P_{i+1}\right)+(-1)^{i-1}
  P_i^2.
\end{align}

Iterating~\cite[III.8.15]{Mac} yields the relation
\begin{align} \label{E:Ptri}
  P_{\la_1} P_{\la_2} \dotsm = \sum_{\substack{\mu\in\SP \\
  \mu \dom \la}} L_{\mu\la} P_\mu,
\end{align}
where $L_{\mu\la}\in\Z_{\ge0}$ and $L_{\mu\mu}=1$.  Here $\dom$
denotes the dominance partial order on partitions \cite[I.1]{Mac}.  It follows that
\begin{align} \label{E:Pbasis}
  \Hminf = \bigoplus_{\la\in\SP} \Z P_\la.
\end{align}

Define the Hopf subalgebra $\Hm_{(n)}\subset\Hminf$ by
\begin{equation}
\begin{split}
  \Hm_{(n)} &= \Z[P_1,P_2,\dotsc,P_{2n}] \\
  &= \Z[P_1,P_3,\dotsc,P_{2n-1}] \\
  &= \bigoplus_{\substack{\la\in\OP \\ \la_1 \le 2n-1}} \Z
  P_{\la_1}P_{\la_2}\dotsm.
\end{split}
\end{equation}

\subsection{Cohomology ring}
Define
\begin{align}
  \Hcinf &= \Z[Q_1,Q_2,\dotsc] \subset \La.
\end{align}
By \eqref{E:Ptri} we have
\begin{align} \label{E:Qbasis}
  \Hcinf = \bigoplus_{\la\in\SP} \Z Q_\la.
\end{align}
Define the pairing $\ipp{\cdot,\cdot}:\Hminf\times \Hcinf \to \Z$ by
\cite[III.8.12]{Mac}
\begin{align}\label{E:pair}
\ipp{P_\la,Q_\mu}=\delta_{\la\mu}\qquad\text{for $\la,\mu\in\SP$.}
\end{align}
The pairing $\ipp{\cdot,\cdot}$ has reproducing kernel
\begin{equation} \label{E:-1kernel}
\begin{split}
  \Omega_{-1} &:= \prod_{i,j\ge1} \dfrac{1+x_iy_j}{1-x_iy_j} \\
  &= \sum_{\la\in\SP} P_\la[X] Q_\la[Y] \\
  &= \sum_\la P_{\la_1}[X]P_{\la_2}[X]\dotsm M_\la[Y] \\
  &= \sum_{\la\in\OP} z_\la^{-1} 2^{\ell(\la)} p_\la[X] p_\la[Y],
\end{split}
\end{equation}
where $M_\la = 2^{\ell(\la)} m_\la$.
These equalities hold by definition,
\cite[III.8.13]{Mac}, setting $t=-1$ in \cite[III.4.2]{Mac},
and \cite[III.8.12]{Mac}.

Let $J_n\subset \Hcinf$ be the ideal given by the annihilator of
$\Hm_{(n)}\subset\Hminf$ with respect to $\ipp{\cdot,\cdot}$. Define
\begin{align}
  \Hc^{(n)} = \Hcinf/J_n
\end{align}
which is a Hopf quotient algebra of $\Hcinf$.  The pairing
$\ipp{\cdot,\cdot}$ descends to a perfect pairing $\Hc_{(n)} \otimes
\Hc^{(n)}\to\Z$ which by \eqref{E:-1kernel} has reproducing kernel
\begin{align}
\Omega_{-1}^{(n)} = \sum_{\la_1\le 2n}
P_{\la_1}[X]P_{\la_2}[X]\dotsm  M_\la[Y].
%\in \Hm_{(n)} \hat
%\otimes \Hc^{(n)}
\end{align}

\subsection{Comparing $\La$ with $\Hminf$ and $\Hcinf$}
Since $\La=\bigoplus_\la \Z h_\la$ \cite[I.2.8]{Mac}
one may define a surjective ring homomorphism $\theta:\La\to\Hcinf$
defined by $\theta(h_i)=Q_i$ for $i\in\Z_{>0}$. Over $\Q$ it may be
defined by $\theta(p_{2i})=0$ and $\theta(p_{2i-1})=2p_{2i-1}$ for
$i\in\Z_{>0}$ \cite[Ex. III.8.10]{Mac}.
%In plethystic notation, $\theta
%f[X]=f[X(1-t)]|_{t=-1}$.

Let $\iota:\Hminf\to\La$ be the inclusion
map.
\begin{lem}\label{L:IPs}
\begin{align}
 \ip{\iota(f),g} = \ipp{f,\theta(g)}\qquad\text{for $f\in\Hminf,\,g\in\La$.}
\end{align}
\end{lem}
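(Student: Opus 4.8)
The plan is to verify the identity on a generating set and then extend by bilinearity, using the explicit reproducing kernels already recorded in the excerpt. Since $\Hminf = \Z[P_1,P_3,P_5,\dotsc]$ is a polynomial ring and $\La = \bigoplus_\la \Z h_\la$, and since both sides of the claimed equality are bilinear in $(f,g)$ and additive, it suffices to check $\ip{\iota(f),g} = \ipp{f,\theta(g)}$ when $f$ and $g$ range over any bases — or, more efficiently, when they range over algebra generators, provided one first checks that both pairings interact with products and coproducts compatibly. The cleanest route avoids that bookkeeping: work over $\Q$, where $\Hminf$ has the power-sum basis $\{p_\la : \la \in \OP\}$ and $\La$ has the basis $\{p_\mu\}$, and compute both sides directly on these basis elements.

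First I would recall the two relevant kernel identities. The standard Hall inner product on $\La$ satisfies $\ip{p_\la,p_\mu} = z_\la \delta_{\la\mu}$, which is exactly \eqref{E:kernel} read off in the power-sum expansion. The pairing $\ipp{\cdot,\cdot}$ on $\Hminf \times \Hcinf$, by the last line of \eqref{E:-1kernel}, has reproducing kernel $\sum_{\la\in\OP} z_\la^{-1} 2^{\ell(\la)} p_\la[X] p_\la[Y]$, so that $\ipp{p_\la, p_\mu} = 2^{-\ell(\la)} z_\la \delta_{\la\mu}$ for $\la,\mu \in \OP$ (using that $p_\la \in \Hminf$ corresponds under the kernel to $2^{\ell(\la)}p_\la$ on the $Y$-side, matching the description $\theta(p_{2i-1}) = 2p_{2i-1}$). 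Next I would use the explicit formula for $\theta$ over $\Q$ from \cite[Ex. III.8.10]{Mac}, namely $\theta(p_{2i}) = 0$ and $\theta(p_{2i-1}) = 2 p_{2i-1}$, so that for a partition $\mu$, $\theta(p_\mu) = 0$ unless all parts of $\mu$ are odd, in which case $\theta(p_\mu) = 2^{\ell(\mu)} p_\mu$.

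Now the verification is a short computation. Take $f = p_\la$ with $\la \in \OP$ and $g = p_\mu$ with $\mu$ an arbitrary partition. If $\mu$ has an even part, then $\theta(g) = 0$, so the right side vanishes; and the left side $\ip{p_\la, p_\mu} = z_\la\delta_{\la\mu}$ also vanishes because $\la \in \OP$ forces $\la \neq \mu$. If $\mu \in \OP$, then $\theta(g) = 2^{\ell(\mu)} p_\mu$ and the right side is $2^{\ell(\mu)} \ipp{p_\la, p_\mu} = 2^{\ell(\mu)} \cdot 2^{-\ell(\la)} z_\la \delta_{\la\mu} = z_\la \delta_{\la\mu}$, which equals the left side $\ip{\iota(p_\la), p_\mu} = z_\la\delta_{\la\mu}$. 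This proves the identity after extending $\Q$-bilinearly over the two bases, and since both $\ip{\iota(f),g}$ and $\ipp{f,\theta(g)}$ take integer values for $f\in\Hminf$, $g\in\La$ (as $\Hminf, \Hcinf$ are defined over $\Z$ with integral pairing \eqref{E:pair}), the $\Q$-identity restricts to the claimed $\Z$-identity.

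The only mild subtlety — the main place to be careful — is pinning down the normalization constant $\ipp{p_\la,p_\mu}$ for $\la \in \OP$: one must be sure whether the factor $2^{\ell(\la)}$ in \eqref{E:-1kernel} belongs to the $X$-variables (the $\Hminf$ side) or the $Y$-variables (the $\Hcinf$ side), and the consistent reading is forced by demanding compatibility with $\ipp{P_\la,Q_\mu} = \delta_{\la\mu}$ together with $Q_\mu = 2^{\ell(\mu)}P_\mu$ from \eqref{E:PQdef}. Once that convention is fixed, everything lines up. An alternative, slicing the same cake, is to check directly that $\ip{\iota(P_i), h_\mu} = \ipp{P_i, \theta(h_\mu)} = \ipp{P_i, Q_\mu}$ for $i$ odd and all $\mu$, using that $P_i = q_i$ (the $i$-th "$Q'$-to-$P$" one-row function) pairs with $h_\mu$ via \eqref{E:scalar} and the known expansion of $Q$-functions in the $h$-basis; but the power-sum computation above is the shortest.
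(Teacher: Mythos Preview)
Your proof is correct and follows essentially the same approach as the paper's: reduce by $\Q$-bilinearity to power sums $f=p_\la$ with $\la\in\OP$ and $g=p_\mu$, use $\theta(p_\mu)=2^{\ell(\mu)}p_\mu$ for $\mu\in\OP$ (and $0$ otherwise), and compare with the power-sum expansions of the two kernels \eqref{E:kernel} and \eqref{E:-1kernel}. Your version is in fact slightly more careful than the paper's, which tersely restricts to $\la,\mu\in\OP$ without explicitly noting that the case $\mu\notin\OP$ is trivial.
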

\begin{proof}
%\begin{align*}
%  \theta^Y \Omega[XY] &= \Omega[XY(1-t)]|_{t=-1} \\
%     &= \Omega_{-1}
%\end{align*}
By linearity one may reduce to the case $f=p_\la$ and $g=p_\mu$ for $\la,\mu\in\OP$.
By \eqref{E:kernel} and \eqref{E:-1kernel} we have
\begin{align*}
  \ipp{p_\la,\theta(p_\mu)} &= 2^{\ell(\mu)} \ipp{p_\la,p_\mu} \\
  &= 2^{\ell(\mu)-\ell(\la)} z_\la \delta_{\la\mu} \\
  &= z_\la\delta_{\la\mu} \\
  &= \ip{\iota(p_\la),p_\mu}.
\end{align*}
\end{proof}

Lemma \ref{L:IPs} can be restated as
\begin{align} \label{E:thetaOmega}
\theta^Y \Omega = \Omega_{-1}
\end{align}
where $\theta^Y$ means the operator $\theta$ applied to the $Y$ variables.

\begin{lem} \label{L:QM} For $\nu\in\SP$
\begin{align*}
  Q_\nu = \sum_\la L_{\nu\la} M_\la.
\end{align*}
In particular $\Hcinf\subset \bigoplus_\la \Z M_\la$.
\end{lem}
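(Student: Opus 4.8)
The plan is to derive the identity $Q_\nu = \sum_\la L_{\nu\la} M_\la$ directly from the two expansions of the reproducing kernel $\Omega_{-1}$ in \eqref{E:-1kernel}, using the relation \eqref{E:Ptri} that governs how products of one-row $P$-functions decompose into the $P_\la$ basis. First I would start from the two equalities
\begin{align*}
  \Omega_{-1} = \sum_{\mu\in\SP} P_\mu[X]\, Q_\mu[Y] = \sum_\la P_{\la_1}[X]P_{\la_2}[X]\dotsm M_\la[Y],
\end{align*}
both coming from \eqref{E:-1kernel}. On the right-hand side I would rewrite each product $P_{\la_1}[X]P_{\la_2}[X]\dotsm$ using \eqref{E:Ptri}, obtaining $\sum_{\mu\in\SP,\ \mu\dom\la} L_{\mu\la} P_\mu[X]$, so that
\begin{align*}
  \Omega_{-1} = \sum_\la \Bigl(\sum_{\mu\in\SP} L_{\mu\la} P_\mu[X]\Bigr) M_\la[Y]
  = \sum_{\mu\in\SP} P_\mu[X] \Bigl(\sum_\la L_{\mu\la} M_\la[Y]\Bigr),
\end{align*}
where I interpret $L_{\mu\la}=0$ unless $\mu\dom\la$ and $\mu\in\SP$, and I have swapped the order of summation (legitimate since for fixed $\mu$ only finitely many $\la$ of each degree contribute). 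Comparing the two expressions for $\Omega_{-1}$ and using that $\{P_\mu[X] \mid \mu\in\SP\}$ is linearly independent in $\Hminf$ (by \eqref{E:Pbasis}) — more precisely, that the kernel $\Omega_{-1}$, being a reproducing kernel for the perfect pairing $\ipp{\cdot,\cdot}$, determines the coefficient of each $P_\mu[X]$ uniquely — I conclude $Q_\mu[Y] = \sum_\la L_{\mu\la} M_\la[Y]$ for every $\mu\in\SP$, which is the claimed formula.

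An alternative, perhaps cleaner, route I would keep in mind is to bypass $\Omega_{-1}$ and instead combine \eqref{E:Ptri} with Lemma \ref{L:IPs} and the pairing \eqref{E:pair}: for $\nu\in\SP$ and any partition $\la$, the coefficient of $M_\la$ in $Q_\nu$ equals $\ip{h_\la, \iota^{-1}(\cdots)}$ type expression — concretely, one computes $\ipp{P_{\la_1}P_{\la_2}\dotsm, Q_\nu}$ in two ways. Expanding the product via \eqref{E:Ptri} and using \eqref{E:pair} gives $L_{\nu\la}$; on the other hand, by Lemma \ref{L:IPs} this pairing equals $\ip{\iota(P_{\la_1}P_{\la_2}\dotsm), \theta^{-1}(Q_\nu)}$-style bookkeeping, which via the kernel $\Omega$ and \eqref{E:scalar} picks out the coefficient of $M_\la$ in $Q_\nu$ when $Q_\nu$ is expanded in the $M$-basis — here one uses that $\{M_\la\}$ is dual to $\{P_{\la_1}P_{\la_2}\dotsm\}$ with respect to $\ipp{\cdot,\cdot}$, which is exactly the content of the third line of \eqref{E:-1kernel}. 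Either way the heart of the matter is the same duality statement.

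The final assertion, that $\Hcinf \subset \bigoplus_\la \Z M_\la$, then follows immediately: $\Hcinf = \bigoplus_{\nu\in\SP}\Z Q_\nu$ by \eqref{E:Qbasis}, and each $Q_\nu$ is an integer combination of the $M_\la$ by the formula just proved (the $L_{\nu\la}$ lie in $\Z_{\ge0}$ by \eqref{E:Ptri}). I do not expect a serious obstacle here; the only point requiring a little care is the justification for comparing coefficients of $P_\mu[X]$ across the two forms of $\Omega_{-1}$ — i.e., that the $X$-expansion of a formal kernel into the $P$-basis is unique — but this is exactly the standard reproducing-kernel argument (pair both sides against $Q_\mu[Y]$ in the $Y$-variables using \eqref{E:pair}), so I would phrase the proof in that language to make the uniqueness transparent rather than appealing to linear independence of $P_\mu$ in a completed tensor product.
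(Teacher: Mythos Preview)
Your proof is correct and follows essentially the same route as the paper's: both equate the expansions $\sum_{\mu\in\SP} P_\mu[X] Q_\mu[Y]$ and $\sum_\la P_{\la_1}[X]P_{\la_2}[X]\dotsm M_\la[Y]$ of $\Omega_{-1}$, expand the products via \eqref{E:Ptri}, and extract the coefficient of $P_\nu[X]$ using \eqref{E:Pbasis}. The paper re-derives the third line of \eqref{E:-1kernel} via the symmetry $\theta^Y\Omega=\Omega_{-1}=\theta^X\Omega$ before proceeding, whereas you cite \eqref{E:-1kernel} directly, but the substance is the same.
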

\begin{proof} Since both $\Omega$ and $\Omega_{-1}$ are
invariant under exchanging the $X$ and $Y$ variables, by~\eqref{E:thetaOmega}
and~\eqref{E:Ptri} we have
\begin{align*}
  \theta^Y \Omega &= \Omega_{-1} = \theta^X \Omega \\
  &= \theta^X \sum_\la h_\la[X] m_\la[Y] \\
  &= \sum_\la Q_{\la_1}[X] Q_{\la_2}[X] \cdots  m_\la[Y] \\
  &= \sum_\la P_{\la_1}[X] P_{\la_2}[X] \dotsm M_\la[Y] \\
  &= \sum_\la \sum_{\nu\in\SP} L_{\nu\la} P_\nu[X] M_\la[Y] \\
  &= \sum_{\nu\in\SP} P_\nu[X] \sum_\la L_{\nu\la} M_\la[Y].
\end{align*}
By \eqref{E:-1kernel} and \eqref{E:Pbasis}, taking the coefficient of $P_\nu[X]$, the Lemma follows.
\end{proof}

\subsection{A monomial-like basis for $\Hc^{(n)}$}
\label{sec:basishc} For a partition $\la$, define $\TT_\la =
\theta(m_\la)$.  We shall give a ``monomial'' basis of $\Hc^{(n)}$
using the $\TT_\la$.
Let $\chi(\textrm{true})=1$ and $\chi(\textrm{false})=0$.

\begin{lem} \label{L:TtoM}
For every partition $\la$, $$T_\la \in \chi(\la\in \OP) M_\la +
\sum_{\mu\triangleright \la} \Z M_\mu.$$
\end{lem}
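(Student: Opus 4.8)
The plan is to compute $T_\la = \theta(m_\la)$ by applying the ring homomorphism $\theta$ to a known expansion of $m_\la$, then using the $M$-triangularity already established for $\theta(h_\mu) = Q_\mu$ in Lemma~\ref{L:QM}. Recall that $\theta$ is defined by $\theta(h_i) = Q_i$ on the generators, and that over $\Q$ it kills $p_{2i}$ and doubles $p_{2i-1}$; the latter description makes clear why odd parts are the relevant phenomenon.

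First I would express $m_\la$ in terms of the $h_\mu$. The transition between the $m$-basis and the $h$-basis is dominance-triangular: one has $h_\la = \sum_{\mu \dom \la} N_{\la\mu} m_\mu$ for nonnegative integers $N$ with $N_{\la\la} = 1$ (this is \cite[I.6]{Mac}), so inverting, $m_\la = \sum_{\mu \dom \la} \widetilde{N}_{\la\mu} h_\mu$ with $\widetilde{N}_{\la\la} = 1$. Applying $\theta$ and using $\theta(h_\mu) = Q_{\mu_1} Q_{\mu_2} \dotsm = 2^{\ell(\mu)} P_{\mu_1} P_{\mu_2} \dotsm$ together with Lemma~\ref{L:QM} (equivalently the line $\sum_\la Q_{\la_1}[X]Q_{\la_2}[X]\cdots m_\la[Y] = \sum_\la P_{\la_1}P_{\la_2}\dotsm M_\la[Y]$ in its proof), each $\theta(h_\mu)$ lies in $\bigoplus_\nu \Z M_\nu$. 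So $T_\la$ is a $\Z$-combination of $M_\nu$'s, and I must pin down which $\nu$'s occur and the coefficient of $M_\la$ itself.

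For the triangularity: I would combine the dominance-triangularity of $m_\la = \sum_{\mu \dom \la} \widetilde N_{\la\mu} h_\mu$ with the expansion $\theta(h_\mu) = \sum_{\nu} (\text{coeff}) M_\nu$, where by Lemma~\ref{L:QM}-type reasoning the $M_\nu$ appearing satisfy $\nu \dom \mu$ (tracing through $Q_{\mu_1}Q_{\mu_2}\dotsm = \sum_{\nu \in \SP, \nu \dom \mu} L_{\nu\mu} Q_\nu$ and then $Q_\nu = \sum_{\kappa} L_{\nu\kappa} M_\kappa$ with $\kappa \dom \nu$). Transitivity of $\dom$ then gives $T_\la \in \sum_{\nu \dom \la} \Z M_\nu$. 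To isolate the $M_\la$ coefficient, extract the $\mu = \la$ term: it contributes $\theta(h_\la) = Q_{\la_1}Q_{\la_2}\dotsm$, whose $M_\la$-coefficient is $1$ exactly when $\la \in \OP$ (so that $Q_{\la_1}Q_{\la_2}\dotsm$ has $P_\la$ as a genuine summand in the $P$-expansion with coefficient $1$, hence $M_\la$ with coefficient $1$) and is $0$ otherwise, since $P_\la = 0$ for $\la \notin \SP$ and more generally the even-part relations \eqref{E:Prel} prevent an $M_\la$ term from surviving. All strictly-dominating terms $\mu \rhd \la$ contribute only to $M_\mu$ with $\mu \rhd \la$, hence not to $M_\la$. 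This yields $T_\la \in \chi(\la \in \OP) M_\la + \sum_{\mu \rhd \la} \Z M_\mu$.

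The main obstacle is the bookkeeping at $\mu = \la$: one must verify carefully that the coefficient of $M_\la$ in $\theta(h_\la) = Q_{\la_1} Q_{\la_2} \dotsm$ is precisely $\chi(\la \in \OP)$. For $\la \in \OP$ this follows because $Q_{\la_1}\dotsm Q_{\la_{\ell}} = 2^{\ell(\la)} P_{\la_1}\dotsm P_{\la_\ell}$ and \eqref{E:Ptri} gives $P_{\la_1}\dotsm = P_\la + \sum_{\mu \rhd \la, \mu \in \SP} L_{\mu\la} P_\mu$ (note $\la$ is automatically strict here once sorted, but more to the point $L_{\la\la}=1$), and then $Q_\la = \sum_\kappa L_{\la\kappa} M_\kappa$ with $L_{\la\la} = 1$ by Lemma~\ref{L:QM}. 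For $\la \notin \OP$, $\la$ has an even part, and I would argue via the $p$-expansion: over $\Q$, $\theta$ sends $p_\la$ to $0$ whenever $\la$ has an even part, and $m_\la$ expanded in power sums has leading term (in dominance) involving $p_\la$; the surviving terms after $\theta$ all strictly dominate $\la$. Making this last step rigorous over $\Z$ rather than $\Q$ requires a little care, but the dominance-triangular $h$-to-$M$ route above sidesteps it, so I would present that as the primary argument and keep the $p$-sum remark as intuition.
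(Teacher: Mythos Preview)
Your approach has a genuine gap: the transition between the $h$-basis and the $m$-basis is \emph{not} dominance-triangular in either direction. Already in degree~$2$ one has $h_2 = m_2 + m_{11}$ (so $h_\la$ involves $m_\mu$ with $\mu \not\trianglerighteq \la$) and $h_{11} = m_2 + 2m_{11}$ (so also $m_\mu$ with $\mu \not\trianglelefteq \la$). Hence your claimed expansion $m_\la = \sum_{\mu \trianglerighteq \la} \widetilde N_{\la\mu} h_\mu$ is false, and the argument built on it collapses. There is a second, independent error in the chain for $\theta(h_\mu)$: from \eqref{E:Ptri} and Lemma~\ref{L:QM} one has $Q_\nu = \sum_\kappa L_{\nu\kappa} M_\kappa$ with $\nu \trianglerighteq \kappa$, not $\kappa \trianglerighteq \nu$ as you wrote. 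Concretely, $\theta(h_2) = Q_2 = M_2 + M_{11}$ (since $P_1^2 = P_2$ by \eqref{E:Prel}), and $(1,1) \not\trianglerighteq (2)$. So even granting your first step, the composite would give $\kappa \trianglelefteq \nu \trianglerighteq \mu \trianglerighteq \la$, which imposes no relation between $\kappa$ and $\la$.

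The paper's route avoids both problems by passing through power sums, where the transition $p_\la \in y_\la m_\la + \sum_{\mu \triangleright \la} \Z\, m_\mu$ \emph{is} dominance-triangular, and where $\theta$ acts diagonally ($\theta(p_\la) = 2^{\ell(\la)} p_\la$ for $\la \in \OP$, and $0$ otherwise). This immediately gives the correct leading coefficient and the correct direction of triangularity over~$\Q$; integrality then comes for free from Lemma~\ref{L:QM}. If you want to salvage your strategy, you would need to replace the $h$-basis by one whose transition to $m$ genuinely respects dominance---and the $p$-basis is exactly that.
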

\begin{proof} Define $y_\la = \prod_{i\ge1} m_i(\la)!$ where
$m_i(\la)$ is the multiplicity of the part $i$ in $\la$. By
expanding $p_\la$ it is easy to see that $p_\la\in y_\la m_\la +
\sum_{\mu\triangleright\la} \Z y_\mu m_\mu$. It follows that
$m_\la\in y_\la^{-1} p_\la  + \sum_{\mu\triangleright\la} \Q p_\mu$
and
\begin{equation} \label{E:Texp}
\begin{split}
\TT_\la &\in y_\la^{-1} \theta(p_\la) +\sum_{\mu\triangleright\la} \Q \theta(p_\mu) \\
  &= \chi(\la\in \OP) y_\la^{-1} 2^{\ell(\la)} p_\la +
  \sum_{\substack{\mu\triangleright\la \\ \mu\in \OP}} \Q p_\mu \\
  &= \chi(\la\in\OP) M_\la + \sum_{\mu\triangleright\la} \Q
  m_\mu.
\end{split}
\end{equation}
By Lemma \ref{L:QM}, $\TT_\la=\theta(m_\la)\in \Hcinf$ is a $\Z$-linear combination
of the $M_\mu$. Since by definition the $M_\mu$ are integer multiples of
the $m_\mu$, \eqref{E:Texp} expresses $\TT_\la$ as a $\Q$-linear combination
of the $M_\mu$.
Since the $M_\mu$ are independent, the coefficients must then be integers.
\end{proof}

\begin{lem} \label{L:PT} For $\la,\mu\in\OP$,
\begin{align} \label{E:PTtri}
\ipp{P_{\la_1}P_{\la_2}\dotsm,T_\mu} &= 0\qquad\text{unless
$\la\dom\mu$,} \\
\label{E:PTdiag}
   \ipp{P_{\la_1}P_{\la_2}\dotsm,T_\la} &= 1.
\end{align}
\end{lem}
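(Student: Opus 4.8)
The plan is to deduce both \eqref{E:PTtri} and \eqref{E:PTdiag} from the duality between the $P_\nu$ and $Q_\nu$ bases, using Lemma~\ref{L:QM} to pass between the $Q_\nu$ and the monomial-like elements $M_\la$. First I would recall that $\ipp{P_\nu,Q_\mu}=\delta_{\nu\mu}$ for strict partitions by \eqref{E:pair}, and that \eqref{E:Ptri} writes any product $P_{\la_1}P_{\la_2}\dotsm$ as $\sum_{\nu\in\SP,\,\nu\dom\la} L_{\nu\la} P_\nu$ with $L_{\la\la}=1$. Dually, Lemma~\ref{L:QM} gives $Q_\nu=\sum_\mu L_{\nu\mu} M_\mu$; inverting this triangular system (the matrix $(L_{\nu\mu})$ is unitriangular with respect to dominance on $\SP$, after restricting to strict $\mu$, but more carefully one must track that $M_\mu$ for $\mu$ non-strict also appear) expresses each $M_\mu$ in terms of the $Q_\nu$ with $\nu\dom\mu$, $\nu\in\SP$.

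The cleanest route, I expect, is to compute the pairing $\ipp{P_{\la_1}P_{\la_2}\dotsm,\, T_\mu}$ by expanding $T_\mu=\theta(m_\mu)$ directly against the reproducing kernel. Concretely, from \eqref{E:-1kernel} in the form $\Omega_{-1}=\sum_\la P_{\la_1}[X]P_{\la_2}[X]\dotsm M_\la[Y]$ one reads off that the pairing $\ipp{\cdot,\cdot}$ is characterized by $\ipp{P_{\la_1}P_{\la_2}\dotsm,\, \text{(dual of }M_\la)}=\delta$; but what we actually need is the pairing of the product $P_{\la_1}P_{\la_2}\dotsm$ against $T_\mu$, so I would instead use that $\ipp{P_{\la_1}P_{\la_2}\dotsm, M_\mu}$ is the coefficient of $M_\mu[Y]$ when we expand... no — rather, I would use Lemma~\ref{L:TtoM}: $T_\mu\in\chi(\mu\in\OP)M_\mu+\sum_{\nu\triangleright\mu}\Z M_\nu$. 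Then $\ipp{P_{\la_1}P_{\la_2}\dotsm, T_\mu}=\chi(\mu\in\OP)\ipp{P_{\la_1}P_{\la_2}\dotsm,M_\mu}+\sum_{\nu\triangleright\mu}c_\nu\ipp{P_{\la_1}P_{\la_2}\dotsm,M_\nu}$, and since $\mu\in\OP$ is given, the first term has coefficient $1$.

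So the real content is: $\ipp{P_{\la_1}P_{\la_2}\dotsm, M_\nu}=\delta_{\la\nu}$ whenever $\nu\dom\la$ is false — more precisely that this pairing vanishes unless $\la\dom\nu$ and equals $1$ when $\la=\nu$. This is exactly the triangularity encoded in \eqref{E:-1kernel}: writing $\Omega_{-1}=\sum_\la (P_{\la_1}P_{\la_2}\dotsm)[X]\,M_\la[Y]$, and knowing that $\{P_{\la_1}P_{\la_2}\dotsm\mid\la\in\OP\}$ and $\{M_\la\mid\la\in\OP\}$... here one must be a little careful because $M_\la$ for non-strict $\la$ is not in $\Hcinf$'s natural indexing, yet the kernel sum runs over all $\la$. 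I would argue that, by the general theory of dual bases with respect to a reproducing kernel (cf. \cite[I.4]{Mac}), $\ipp{P_{\la_1}P_{\la_2}\dotsm, M_\mu}=\delta_{\la\mu}$ for \emph{all} partitions $\la,\mu$ for which these make sense, since $\{P_{\la_1}P_{\la_2}\dotsm\}$ and $\{M_\la\}$ are the two factor-families in the kernel expansion. Combined with the displayed expansion of $T_\mu$ from Lemma~\ref{L:TtoM}, this gives $\ipp{P_{\la_1}P_{\la_2}\dotsm, T_\mu}=\chi(\mu\in\OP)\delta_{\la\mu}+\sum_{\nu\triangleright\mu}c_\nu\delta_{\la\nu}$; the right side is $0$ unless $\la=\mu$ or $\la\triangleright\mu$, i.e.\ unless $\la\dom\mu$, proving \eqref{E:PTtri}, and equals $1$ when $\la=\mu$ (with $\mu\in\OP$), proving \eqref{E:PTdiag}.

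The main obstacle I anticipate is justifying the orthogonality $\ipp{P_{\la_1}P_{\la_2}\dotsm,M_\mu}=\delta_{\la\mu}$ rigorously given that the families are indexed by all partitions while the spaces $\Hminf,\Hcinf$ are spanned by the strict ones; the resolution is that \eqref{E:-1kernel}'s third line is literally the statement that these are dual ``bases'' in the completed sense, and one extracts the finitely many relevant coefficients by degree reasons. Once that is in hand, the rest is a short triangular bookkeeping using Lemma~\ref{L:TtoM}.
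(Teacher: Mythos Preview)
Your plan has a genuine gap at the step where you assert $\ipp{P_{\la_1}P_{\la_2}\dotsm,\,M_\nu}=\delta_{\la\nu}$ ``by the general theory of dual bases with respect to a reproducing kernel''. That theory (Macdonald I.4) requires the two families in the kernel expansion to be \emph{bases} of the paired spaces. In the expansion $\Omega_{-1}=\sum_\la (P_{\la_1}P_{\la_2}\dotsm)[X]\,M_\la[Y]$ the sum runs over \emph{all} partitions, and neither $\{P_{\la_1}P_{\la_2}\dotsm\}_{\la}$ nor $\{M_\la\}_{\la}$ is a basis of $\Hminf$ resp.\ $\Hcinf$: the former has relations such as $P_2=P_1^2$, and the latter are not even elements of $\Hcinf$ in general. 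For instance $M_{(1,1)}=4m_{(1,1)}=2p_1^2-2p_2$ involves $p_2$, which does not lie in $\Hcinf\otimes\Q=\Q[p_1,p_3,\dotsc]$; so $\ipp{P_{\la_1}P_{\la_2}\dotsm,\,M_{(1,1)}}$ is simply undefined. Consequently your term-by-term expansion of $\ipp{P_{\la_1}P_{\la_2}\dotsm,T_\mu}$ via Lemma~\ref{L:TtoM} cannot be carried out: the individual summands do not live in the domain of $\ipp{\cdot,\cdot}$.

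The paper sidesteps this by moving the problem into $\La$, where genuine dual bases are available. Using Lemma~\ref{L:IPs} it rewrites $\ipp{P_{\la_1}P_{\la_2}\dotsm,T_\mu}=\ipp{P_{\la_1}P_{\la_2}\dotsm,\theta(m_\mu)}=\ip{P_{\la_1}P_{\la_2}\dotsm,\,m_\mu}$, the \emph{Hall} pairing on $\La$. It then expands $P_{\la_1}P_{\la_2}\dotsm=2^{-\ell(\la)}\theta(h_\la)$ through the unitriangular change of basis $h_\la\leftrightarrow p_\nu$ and back, obtaining a combination of $h_\rho$ with $\rho\rdom\la$; pairing against $m_\mu$ via $\ip{h_\rho,m_\mu}=\delta_{\rho\mu}$ then gives the vanishing unless $\la\dom\mu$ and the value $1$ on the diagonal. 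If you want to salvage your route, the first move should be exactly this passage to $\ip{P_{\la_1}P_{\la_2}\dotsm,\,m_\mu}$ via Lemma~\ref{L:IPs}; after that you are essentially forced into the paper's computation rather than a dual-basis shortcut.
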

\begin{proof} Let $A=(a_{\la\nu})$ and $B = (b_{\la\nu})$ be the change of basis matrices
\begin{align*}
h_\lambda = \sum_{\nu \rdom \la} a_{\la\nu} p_\nu
\qquad\text{and}\qquad p_\nu = \sum_{\rho \rdom \nu} b_{\nu\rho}
h_\rho.
\end{align*}
They are unitriangular and mutually inverse. We have
\begin{align*}
P_{\la_1}P_{\la_2}\dotsm &= 2^{-\ell(\la)} \theta(h_\la) \\
&= \sum_{\substack{\nu \rdom \la  \\
\nu \in \OP}} \sum_{\rho \rdom \nu} 2^{\ell(\nu)-\ell(\la)}
a_{\la\nu}b_{\nu\rho} h_\rho.
\end{align*}
For any partition $\mu$,
by Lemmata \ref{L:IPs} and \ref{L:TtoM} we have
\begin{align*}
  \ipp{P_{\la_1} P_{\la_2} \dotsm, T_\mu} &= \ip{P_{\la_1}P_{\la_2}\dotsm, m_\mu} \\
  &= \sum_{\substack{\nu \rdom \la  \\
\nu \in \OP}} \sum_{\rho \rdom \nu} 2^{\ell(\nu)-\ell(\la)}
a_{\la\nu}b_{\nu\rho} \delta_{\mu\rho}
\end{align*}
by \eqref{E:scalar}. But this sum is zero unless $\la\dom\mu$,
proving \eqref{E:PTtri}. When $\mu = \la$ we have
$\ipp{P_{\la_1}P_{\la_2}\dotsm,T_\la} = a_{\la\la} b_{\la\la} = 1$,
since $AB = I$.
\end{proof}

\begin{prop} \label{P:thetambasis} We have
\begin{align*}
  \Hcinf = \bigoplus_{\la\in\OP} \Z \TT_\la, \ \ \ \ \
  \Hc^{(n)} = \bigoplus_{\substack{\la\in\OP \\ \la_1 \le 2n}} \Z
  \TT_\la,
  \  \ \
  \ \
  J_n = \bigoplus_{\substack{\la\in\OP \\ \la_1 \ge 2n+1}} \Z
  T_\la.
\end{align*}
\end{prop}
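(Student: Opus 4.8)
The plan is to deduce the three direct sum decompositions from the triangularity results already established, namely Lemma~\ref{L:QM} (which shows $\Hcinf \subseteq \bigoplus_\la \Z M_\la$ via a unitriangular change of basis $Q_\nu = \sum_\la L_{\nu\la} M_\la$ with $L_{\nu\nu}=1$), Lemma~\ref{L:TtoM} (which shows $\TT_\la \in \chi(\la\in\OP) M_\la + \sum_{\mu\sdom\la}\Z M_\mu$), and Lemma~\ref{L:PT} (which computes the pairing between the product monomials $P_{\la_1}P_{\la_2}\dotsm$ and the $\TT_\mu$). The first claim, $\Hcinf = \bigoplus_{\la\in\OP}\Z\TT_\la$, is the heart of the matter; the other two then follow quickly by duality under $\ipp{\cdot,\cdot}$.

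First I would prove $\Hcinf = \bigoplus_{\la\in\OP}\Z\TT_\la$. The containment $\supseteq$ is immediate since each $\TT_\la = \theta(m_\la) \in \Hcinf$. For the reverse containment and independence, I would argue as follows. By Lemma~\ref{L:TtoM}, restricting to $\la\in\OP$, the $\TT_\la$ are unitriangular (with respect to dominance order, reading from $\la$ towards larger $\mu$) in terms of the $M_\la$ with $\la\in\OP$ --- but one must be careful, since Lemma~\ref{L:TtoM} allows $M_\mu$ contributions for $\mu\sdom\la$ that need not lie in $\OP$. So I would instead use the dual pairing: by~\eqref{E:Pbasis}, $\{P_\nu \mid \nu\in\SP\}$ is a $\Z$-basis of $\Hminf$, and the product monomials $\{P_{\la_1}P_{\la_2}\dotsm \mid \la\in\OP\}$ are also a $\Z$-basis of $\Hminf$ (by the last display of Section~2.1, $\Hm_{(n)}$'s ambient version, and~\eqref{E:Ptri} which is unitriangular between the two). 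Lemma~\ref{L:PT} says the matrix pairing $\{P_{\la_1}P_{\la_2}\dotsm\}$ against $\{\TT_\mu\}$ (both indexed by $\OP$) is unitriangular with respect to dominance: $\ipp{P_{\la_1}P_{\la_2}\dotsm,\TT_\mu}=0$ unless $\la\dom\mu$, and the diagonal entries are $1$. Since $\ipp{\cdot,\cdot}$ is a perfect pairing $\Hminf\times\Hcinf\to\Z$ (by~\eqref{E:pair} and~\eqref{E:Pbasis}, \eqref{E:Qbasis}), and $\{P_{\la_1}P_{\la_2}\dotsm\mid\la\in\OP\}$ is a $\Z$-basis of $\Hminf$, a family in $\Hcinf$ dual-unitriangular to it must itself be a $\Z$-basis of $\Hcinf$; this gives $\Hcinf = \bigoplus_{\la\in\OP}\Z\TT_\la$.

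Next, for the decomposition of $\Hc^{(n)} = \Hcinf/J_n$: recall $\Hm_{(n)} = \bigoplus_{\la\in\OP,\,\la_1\le 2n-1}\Z P_{\la_1}P_{\la_2}\dotsm$ (equivalently $\la_1\le 2n$ since parts are odd), and $J_n$ is defined as its annihilator in $\Hcinf$. Using the unitriangular pairing of Lemma~\ref{L:PT}, an element $\sum_{\mu\in\OP}c_\mu\TT_\mu$ lies in $J_n$ iff it pairs to zero with every $P_{\la_1}P_{\la_2}\dotsm$ with $\la\in\OP$, $\la_1\le 2n-1$; by unitriangularity (pairing against $P_{\la_1}P_{\la_2}\dotsm$ picks out $c_\la$ plus a $\Z$-combination of $c_\mu$ for $\mu\sdom\la$, all of which have $\mu_1\le\la_1\le 2n-1$, so $\mu\in\OP$ with $\mu_1\le 2n$), one shows by downward induction on dominance that $c_\la=0$ for all $\la\in\OP$ with $\la_1\le 2n$. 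Hence $J_n = \bigoplus_{\la\in\OP,\,\la_1\ge 2n+1}\Z\TT_\la$, and the quotient $\Hc^{(n)}$ is then $\bigoplus_{\la\in\OP,\,\la_1\le 2n}\Z\TT_\la$ (the images of the remaining $\TT_\la$ form a $\Z$-basis of the quotient since $\Hcinf$ splits as the internal direct sum $J_n \oplus \bigoplus_{\la\in\OP,\,\la_1\le 2n}\Z\TT_\la$).

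The main obstacle I anticipate is the bookkeeping in the induction for the $J_n$ computation: one must verify that when expanding $\ipp{P_{\la_1}P_{\la_2}\dotsm,\,\sum_\mu c_\mu\TT_\mu}$ via Lemma~\ref{L:PT}, all partitions $\mu$ that actually contribute satisfy $\mu_1\le 2n$ and $\mu\in\OP$ (so that they are among the finitely many indices under consideration and the induction stays within the relevant index set), and that dominance $\la\dom\mu$ indeed forces $\mu_1\le\la_1$. Both facts are standard --- the first part of dominance order --- but need to be stated cleanly. Everything else is a formal consequence of the perfectness of $\ipp{\cdot,\cdot}$ together with the unitriangularity already packaged in Lemmata~\ref{L:QM}, \ref{L:TtoM}, and~\ref{L:PT}.
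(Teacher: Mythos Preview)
Your approach is essentially the same as the paper's: both deduce the result from Lemma~\ref{L:PT}, using that $\{\TT_\mu\mid\mu\in\OP\}$ is unitriangularly related via $\ipp{\cdot,\cdot}$ to the dual of the $\Z$-basis $\{P_{\la_1}P_{\la_2}\dotsm\mid\la\in\OP\}$ of $\Hminf$. One small slip: in your $J_n$ argument you write ``$c_\mu$ for $\mu\sdom\la$'' where you mean $\la\sdom\mu$ (since Lemma~\ref{L:PT} gives nonzero pairing only when $\la\dom\mu$, and indeed $\la\dom\mu$ forces $\mu_1\le\la_1$, as you correctly use); with that corrected the induction runs from dominance-minimal $\la$ upward, and the argument is complete.
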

\begin{proof}
This follows from Lemma \ref{L:PT}, which says that
$\{T_\mu\mid\mu\in\OP\}$ is unitriangularly related (over $\Z$) to the
$\Z$-basis of $\Hcinf$ that is $\ipp{\cdot,\cdot}$-dual to the
$\Z$-basis of $\Hminf$ given by $\{P_{\la_1}P_{\la_2}\dotsm\mid
\la\in\OP\}$.
\end{proof}
%
%It follows that
%\begin{align*}
%  J_n = \bigoplus_{\substack{\la\in\OP \\ \la_1 \ge 2n+1}} \Z T_\la
%\end{align*}
%and that $\{\TT_\la\mid \text{$\la\in\OP$ and $\la_1 \le 2n$} \}$ is
%a $\Z$-basis for $\Hc^{(n)}=\Hcinf/J_n$.

%Moreover $\ipp{\cdot,\cdot}$ descends to a perfect pairing
%$\ipp{\cdot,\cdot}:\Hm_{(n)} \times \Hc^{(n)} \to \Z$.

\subsection{Another realization of $\Hc^{(n)}$}
Let $I_k\subset \La$ be the ideal generated by $m_\la$ for $\la_1\ge
k$.  There is a natural ring isomorphism
\begin{align*}
  \Hcinf/(\Hcinf\cap I_{2n+1}) \cong (\Hcinf+I_{2n+1})/I_{2n+1}.
\end{align*}
By Proposition \ref{P:thetambasis}, we have $\Hcinf\cap I_{2n+1} =
J_n$. Therefore
\begin{align} \label{E:Ccoiso}
  \Hc^{(n)} \cong (\Hcinf+I_{2n+1})/I_{2n+1}.
\end{align}
It follows from \eqref{E:Ccoiso} and Lemma \ref{L:IPs} that
\begin{equation}\label{E:Mcoeff}
[P_{\la_1}P_{\la_2}\cdots, f] \ \text{is the coefficient of
$M_\lambda$ in $f$}
\end{equation}
for $f \in \Gamma^{(n)}$ and $\lambda$ satisfying $\la_1 \le 2n$.

%\begin{remark} We could have used $2n$ here instead of $2n+1$. This
%seems analogous to the choice of generators
%$\Z[P_1,P_3\dotsc,P_{2n-1}]$ versus $\Z[P_1,P_2,\dotsc,P_{2n}]$ for
%$\Hm_{(n)}$.
%\end{remark}

\section{Affine root systems}
\label{S:root}

\subsection{Weyl group}
\label{SS:Weyl}

A Cartan datum is a pair $(I,A)$ where $I$ is a finite set (the set
of Dynkin nodes) and $A=(a_{ij}\mid i,j\in I)$ is a generalized
Cartan matrix, which by definition satisfies $a_{ii}=2$ for $i\in
I$, $a_{ij}\le0$ for $i\ne j$, and $a_{ij}<0$ if and only if
$a_{ji}<0$. The Cartan datum $(I,A)$ is of finite type if $A$ is
nonsingular and of affine type if $A$ has corank one.

Given a Cartan datum $(I,A)$, for $i,j\in I$ with $i\ne j$, define
the integers $m_{ij}=2,3,4,6,\infty$ according as $a_{ij}a_{ji}$ is
$0,1,2,3$, or $\ge4$. The Weyl group $W=W(I,A)$ is the Coxeter group
with generators $s_i$ for $i\in I$ such that $s_i^2=1$ for all $i\in
I$ and braid relations
\begin{align} \label{E:braid}
\underbrace{s_i s_j s_i \dotsm }_{\text{$m_{ij}$ times}} =
\underbrace{s_j s_i s_j \dotsm }_{\text{$m_{ij}$
times}}\qquad\text{for $i\ne j$.}
\end{align}
The length function $\ell: W \to \Z$ is given by $\ell(w) = l$ if a
shortest expression $w=s_{i_1}s_{i_2}\dotsm s_{i_l}$ of $w$ as a
product of the $s_i$, is of length $l$.  We call such an expression
$w=s_{i_1}s_{i_2}\dotsm s_{i_l}$ a {\it reduced expression}.  The
word $i_1i_2\dotsm i_l$ consisting of the indices of a reduced
expression is called a {\it reduced word} for $w$. We denote by
$\Red(w)$ the set of reduced words for $w$. We write $u\equiv u'$ if
$u,u'\in\Red(w)$ for some $w\in W$.

An element $s\in W$ is a reflection if $s=ws_iw^{-1}$ for some $i\in
I$ and $w\in W$.

The Bruhat order on $W_\af$ is defined by $v\le w$ if some
(equivalently every) reduced word of $w$ has a subword that is a
reduced word for $v$. Alternatively $v\lessdot w$ if $v^{-1}w$ is a
reflection and $\ell(w)=\ell(v)+1$.

We now fix notation for an affine Cartan datum. Let $(I,A)$ be the
finite Cartan datum associated with the Lie algebra $\geh$ of a
simple simply-connected complex algebraic group $G$. Let
$I=\{1,2,\dotsc,n\}$ where $n$ is the rank of $\geh$. Let
$(I_\af,A_\af)$ be the affine Cartan datum for the untwisted affine
algebra $\geh_\af = (\C[t,t^{-1}] \otimes_\C \geh) \oplus \C K
\oplus \C d$ \cite[\S 7.2]{Kac}. We write $I_\af=\{0\} \sqcup I$
where $0\in I_\af$ is the distinguished Kac $0$ node \cite[\S
4.8]{Kac}. Let $W=W(I,A)$ be the finite Weyl group and
$W_\af=W(I_\af,A_\af)$ the affine Weyl group. We denote by
$W^0_\af\subset W_\af$ the set of \textit{Grassmannian} elements,
which by definition are the minimal length coset representatives of
$W_\af/W$.

\begin{example} \label{X:WeylC}
If $\geh=\gspn$, the Cartan matrix for $\geh_\af$ is given by
$a_{ii}=2$ for $i\in I_\af$, $a_{i,i+1}=a_{i+1,i}=-1$ for $1 \le
i\le n-2$, $a_{01}=-1$, $a_{10}=-2$, $a_{n-1,n}=-2$, $a_{n,n-1}=-1$,
and $a_{ij}=0$ if $|i-j|\ge 2$. The affine Weyl group $W_\af=\tC_n$
has generators $\{s_0,s_1,\dotsc,s_n\}$ and relations
\begin{align}
\notag s_i^2 &= 1 &  \\
\notag s_i s_j &= s_j s_i & \text{if $|i - j| > 1$} \\
\label{E:AffWeylRelations}
s_i s_{i+1} s_i &= s_{i+1}s_i s_{i+1} & \text{if $1\le i\le n-2$} \\
\notag s_0s_1s_0 s_1 & = s_1 s_0 s_1 s_0 \\
\notag s_{n-1} s_n s_{n-1} s_n &= s_{n}  s_{n-1} s_n s_{n-1}.
\end{align}
For $W_\af=\tC_n$, we use the notation $W_\af^0=\tC_n^0$ and
$W=C_n$.
\end{example}

\subsection{Affine root, coroot, and weight lattices}
Let $P_\af= \Z\delta \oplus \bigoplus_{i\in I_\af} \Z \La_i$ be the
affine weight lattice, where $\delta$ is the null root and the
$\La_i$ are the fundamental weights. Let
$P_\af^*=\mathrm{Hom}_\Z(P_\af,\Z)$ be the dual weight lattice and
$\ip{\cdot,\cdot}:P_\af^* \times P_\af \to \Z$ the natural perfect
pairing. Let $\{d\} \cup \{\al_i^\vee\mid i\in I_\af\}$ be the basis
of $P_\af^*$ dual to the above basis of $P_\af$; in particular,
\begin{align*}
  \ip{\al_i^\vee,\La_j} &= \delta_{ij} &\qquad&\text{for $i,j\in
  I_\af$} \\
  \ip{\al_i^\vee,\delta} &= 0 &&\text{for $i\in I_\af$}
\end{align*}
where $\delta_{ij}$ is the Kronecker delta. The $\al_i^\vee$ are
called simple coroots. For $j\in I_\af$ define the simple root
$\al_j\in P_\af$ by
\begin{align}
  \al_j = \sum_{i\in I_\af} a_{ij} \La_i + \delta_{j0} \delta.
\end{align}
Note that
\begin{align} \label{E:Cartan}
  \ip{\al_i^\vee, \al_j} = a_{ij}\qquad\text{for all $i,j\in
I_\af$.}
\end{align}
Due to a linear dependence among the columns of the Cartan matrix,
we have
\begin{align} \label{E:nullroot}
  \delta &= \al_0 + \theta
\end{align}
%% 2(\al_1+\al_2+\dotsm+\al_{n-1})+\al_n \\
where $\theta$ is the highest root of $\geh$. Let
$Q_\af=\bigoplus_{i\in I_\af} \Z \al_i\subset P_\af$ and
$Q_\af^\vee=\bigoplus_{i\in I_\af} \Z \al_i^\vee$ be the affine root
and coroot lattices. The nullroot satisfies
\begin{align} \label{E:nullz}
  \ip{\mu,\delta}=0\qquad\text{for all $\mu\in Q_\af^\vee$.}
\end{align}
Similarly a dependence among the rows of the Cartan matrix, yields
the \textit{canonical central element} $K\in Q_\af^\vee$ defined by
\begin{align} \label{E:central}
  K &= \al_0^\vee + \theta^\vee
\end{align}
%\al_0^\vee + \al_1^\vee + \dotsm +\al_n^\vee \\
where $\theta^\vee$ is the coroot associated to $\theta$ (defined in
the next subsection). $K$ satisfies
\begin{align} \label{E:Kz}
  \ip{K,\la}=0 \qquad\text{for all $\la\in Q_\af$.}
\end{align}
\begin{example} \label{X:deltaK} For $\geh=\gspn$, $\geh_\af$ has
nullroot $\delta=\al_0+2(\al_1+\dotsm+\al_{n-1})+\al_n$ and
canonical central element $K=\al_0^\vee+\dotsm+\al_n^\vee$.
\end{example}
The affine Weyl group $W_\af$ acts on $P_\af$ and $P_\af^\vee$ by
\begin{align} \label{E:WonP}
  s_i \la &= \la - \al_i \ip{\al_i^\vee,\la}&\qquad&\text{for
  $\la\in P_\af$} \\
  s_i \mu &= \mu - \al_i^\vee\ip{\mu,\al_i}&&\text{for $\mu\in
  P_\af^\vee$.}
\end{align}
One may show that
\begin{align}
  \ip{w\mu,w\la}=\ip{\mu,\la}\qquad\text{for $w\in W_\af$,
$\la\in P_\af$, $\mu\in P_\af^\vee$.}
\end{align}
By \eqref{E:nullz} and \eqref{E:Kz} we have
\begin{align}
  w \delta = \delta, \qquad w K = K \qquad\text{for all $w\in
W_\af$.}
\end{align}

\subsection{Finite root, coroot, and weight lattices}
The finite coroot lattice is defined by $Q^\vee=\bigoplus_{i\in I}
\Z \al_i^\vee \subset Q_\af^\vee$. The finite root and weight
lattices $Q$ and $P$ are quotients of their affine counterparts
$Q_\af$ and $P_\af$, but by abuse we will define them as
sublattices. The finite root lattice is defined by
$Q=\bigoplus_{i\in I} \Z\al_i \subset Q_\af\subset P_\af$. The
finite weight lattice is defined by $P=\bigoplus_{i\in I}
\Z\om_i\subset P_\af$ where
\begin{align}
  \om_i = \La_i - \ip{K,\La_i} \La_0
\end{align}
for $i\in I$; these are the fundamental weights of $\geh$. We have
\begin{align}
  \ip{\al_i^\vee,\om_j} = \delta_{ij} \qquad\text{for $i,j\in I$.}
\end{align}

%Explicitly we may realize $P=\Z^n$ with
%$\om_i=\eps_1+\eps_2+\dotsm+\eps_i$ for $i\in I$, where $\eps_i$ is
%the standard basis vector in $\Z^n$. We have
%$\al_i=\eps_i-\eps_{i+1}$ for $1\le i\le n-1$ and $\al_n=2\eps_n$.
%We may identify $Q^\vee=\Z^n$ with $\al_i^\vee=\eps_i-\eps_{i+1}$
%and $\al_n^\vee=\eps_n$, where here $\Z^n$ is being identified with
%its dual via the standard scalar product on $\Z^n$.

%The finite Weyl group $C_n$ acts on $P=\Z^n$ and $Q^\vee=\Z^n$ where
%$s_i$ exchanges the $i$-th and $(i+1)$-th coordinate for $1\le i\le
%n-1$ and $s_n$ negates the $n$-th coordinate.

\subsection{Roots}
\label{SS:affroots}

The root system of $\geh$ may be defined by
\begin{align}
  R = W \cdot \{\al_i \mid i\in I\}.
\end{align}
Given $\al\in R$ with $\al=u\al_i$ for some $u\in W$ and $i\in I$,
its associated coroot is defined by $\al^\vee=u \al_i^\vee\in
Q^\vee$. Its associated reflection is $s_\al=u s_i u^{-1}$. Both are
independent of the choice of $u$ and $i$. There is a decomposition
$R = R^+ \cup -R^+$ where $R^+=R\cap \bigoplus_{i\in I} \Z_{\ge0}
\al_i$ is the set of positive roots.
%Explicitly $R^+ = \{\eps_i-\eps_j\mid 1\le i<j\le
%n\} \cup \{2\eps_i\mid 1\le i\le n\}$.

The set of affine roots $R_\af\subset Q_\af$ is given by the set of
nonzero elements in the set $R+\Z\delta$. We have $R_\af = R_\af^+
\cup -R_\af^+$ where $R_\af^+$ is the set of positive affine roots,
which have the form $\al+m\delta$ where either $m>0$ or both $m=0$
and $\al\in R^+$. Equivalently, $R_\af^+ = R_\af \cap
\bigoplus_{i\in I_\af} \Z_{\ge0} \al_i$.

%The set of affine roots $R_\af$ is partitioned into subsets
%$R_\af^\re$ and $R_\af^{\textrm{im}}$ called the real and imaginary
%roots.
The set of real affine roots is defined by
$$R_\af^\re = W_\af \cdot \{\al_i\mid i\in I_\af\}.
$$
For $\al=u\al_i\in R_\af^\re$ for $u\in W_\af$ and $i\in I_\af$
define the associated coroot by $\al^\vee = u\al_i^\vee$ and
associated reflection $s_\al\in W_\af$ by $s_\al = u s_i u^{-1}$; as
before one may show these definitions are independent of $u$ and
$i$.

Let $v\lessdot w$ in $W_\af$. Then $s=v^{-1}w$ is a reflection $s=u
s_i u^{-1}$ for some $i\in I_\af$ and $u\in W_\af$. Let $u$ be
shortest so that $\al=u\al_i$ is a positive real root. For later use
we denote this root $\al$ by $\al_{vw}$ and its associated coroot by
$\al_{vw}^\vee$.

\begin{example} \label{X:covercoroot}
Let $W_\af=\tilde{C}_3$, $w=s_1s_2s_3s_2s_1s_0$, and
$v=s_1s_3s_2s_1s_0$; this defines a cover $v\lessdot w=vs_\alpha$.
Then $s_\al=(s_0s_1s_2s_3)s_2(s_3s_2s_1s_0)$ and
\begin{align*}
  \al_{vw}^\vee &= s_0s_1s_2s_3(\al_2^\vee) \\
%  &= s_0s_1s_2(\al_2^\vee+2\al_3^\vee) \\
%  &= s_0s_1(\al_2^\vee+2\al_3^\vee) \\
%  &= s_0(\al_1^\vee+\al_2^\vee+2\al_3^\vee) \\
  &= 2\al_0^\vee+\al_1^\vee+\al_2^\vee+2\al_3^\vee.
\end{align*}
\end{example}

\subsection{Level 0 action}
There is a surjective group homomorphism $W_\af\to W$ given by
$s_i\mapsto s_i$ for $i\in I$ and $s_0\mapsto s_\theta$ where
$\theta\in R^+$ is the highest root.
%Note that $s_\theta$ acts on $\Z^n$ by negating the first
%coordinate.
Since $W$ acts on $P$, $W_\af$ acts on $P$ via the above
homomorphism; this is called the level zero action. It is not
faithful since $s_0$ and $s_\theta$ are different elements of
$W_\af$.

\section{NilHecke algebra and affine Grassmannian}
\label{S:nilHecke}
\subsection{(Co)homology of affine Grassmannian}
For this section we fix $G$ a simple and simply-connected complex
algebraic group with Weyl group $W$, and Cartan datum $(I,A)$ as in
section \ref{SS:Weyl}. Let $K$ denote a maximal compact subgroup of
$G$ and $T$ denote a maximal torus in $K$.

Let $\F = \C((t))$ and $\O = \C[[t]]$.  The affine Grassmannian
$\Gr_G$ is the ind-scheme $G(\F)/G(\O)$ (see~\cite{Kum}).  It is a
homogeneous space for the affine Kac-Moody group $\G$ associated to
$W_\af$.  It is a classical result due to Quillen that the space
$\Gr_G$ is homotopy-equivalent to the space $\Omega K$ of based
loops in $K$; see for example~\cite{GR,Pr}.

The group $\G$ possesses a {\it Bruhat decomposition} $\G =
\bigcup_{w \in W_\af}\BB w \BB$ where $\BB$ denotes the Iwahori
subgroup. The Bruhat decomposition induces a decomposition of
$\Gr_G$ into {\it Schubert cells} $\Omega_w = \BB w G(\O) \subset
G(\F)/G(\O)$.  Thus the equivariant homology $H_T(\Gr_G)$ and
cohomology $H^T(\Gr_G)$ of $\Gr_G$ are free $S = H^T({\rm
pt})$-modules with Schubert bases $\xi_x^T \in H_T(\Gr_G)$ and
$\xi^x_T \in H^T(\Gr_G)$.  Similarly, the homology $H_T(\Gr_G)$ and
cohomology $H^T(\Gr_G)$ of $\Gr_G$ are free $\Z$-modules with
Schubert bases $\xi_x \in H_T(\Gr_G)$ and $\xi^x \in H^T(\Gr_G)$.
The index $x$ varies over the Grassmannian elements $W_\af^0$. We
refer the reader to \cite{KK, Kum} for the general construction and
properties of Schubert bases in the Kac-Moody setting.

The pointwise multiplication of loops on $K$ induces the structure
of dual Hopf algebras over $\Z$ to $H_*(\Gr_G)$ and $H^*(\Gr_G)$,
and the structure of dual Hopf algebras over $S$ to $H_T(\Gr_G)$ and
$H^T(\Gr_G)$.   This is a special feature of the affine Grassmannian
unavailable in the more general Kac-Moody setting.

\subsection{NilCoxeter algebra}
The nilCoxeter algebra $\NilCox$ is the associative $\Z$-algebra
with generators $A_i$ for $i \in I$ and relations $A_i^2=0$ for
$i\in I$ and braid relations
\begin{align}
\underbrace{A_i A_j A_i \dotsm }_{\text{$m_{ij}$ times}} =
\underbrace{A_j A_i A_j \dotsm }_{\text{$m_{ij}$
times}}\qquad\text{for $i\ne j$.}
\end{align}
Since these are the same braid relations \eqref{E:braid} satisfied
by $s_i\in W$, for $w\in W$ one may define $A_w=A_{i_1}A_{i_2}\dotsm
A_{i_l}$ for any $i_1i_2\dotsm i_l\in \Red(w)$.

The algebra $\NilCox$ is a free $\Z$-module with basis $\{A_w \mid w
\in W\}$. In this basis, the multiplication is given by
$$
A_v A_u = \begin{cases} A_{vu} & \text{if $\ell(v) + \ell(u) = \ell(vu)$}\\
0 & \text{otherwise.}
\end{cases}
$$
%The algebra $\NilCox$ is graded by $\degr(A_w) = \ell(w)$.

\begin{example} \label{X:NilCoxC}
For the affine Cartan datum of Example \ref{X:WeylC}, $\NilCox$ has
generators $A_i$ for $i\in I_\af$ and relations
\begin{align*}
A_i^2 &= 0 &  \\
A_i A_j &= A_j A_i & \text{if $|i - j| > 1$} \\
A_i A_{i+1} A_i &= A_{i+1}A_i A_{i+1} & \text{if $1\le i\le n-2$} \\
A_0A_1A_0 A_1 & = A_1 A_0 A_1 A_0 \\
A_{n-1} A_n A_{n-1} A_n &= A_{n}  A_{n-1} A_n A_{n-1}
\end{align*}
\end{example}

\subsection{Kostant and Kumar's NilHecke algebra}
Let $P$ be the weight lattice of $\geh$ and $S={\rm Sym}(P)$ the
symmetric algebra. The Peterson affine nilHecke algebra $\A$ is by
definition\footnote{The nilHecke algebra of Kostant and Kumar
\cite{KK} for the affine Cartan datum, uses a larger weight lattice
than Peterson's nilHecke algebra. See \cite{Lam2} for a comparison
of the two.} the associative $\Z$-algebra generated by $S$ and the
nilCoxeter algebra $\NilCox$ for the affine Cartan datum
$(I_\af,A_\af)$ with
\begin{align} \label{E:easycommute}
  A_i \la = (s_i\cdot\la) A_i + \ip{\al_i^\vee,\la}1 \qquad\text{for $i\in
I_\af$ and $\la\in P$.}
\end{align}
Consequently $\NilH$ is a free left $S$-module with basis $\{A_w
\mid w\in W_\af\}$.

\begin{comment}
To explain the relation \eqref{E:easycommute}, for $i\in I_\af$ the
Bernstein-Gelfand-Gelfand \cite{BGG} divided difference operator
$\del_i:P\to P$ is defined by
\begin{align}
  \del_i f = \dfrac{f - s_i f}{\overline{\al_i}}.
\end{align}
where $\la\mapsto \overline{\la}$ is the natural map $Q_\af\to Q$
given by projection modulo $\Z\delta$.
%Explicitly
%$\overline{\al_i}=\al_i$ for $i\in I$ and
%$\overline{\al_0}=-\theta$.
Here $W_\af$ acts on $P$ by the level zero action. If we take
$f=\la\in P\subset S$ then under the level zero action we have $\la
- s_i \la = \ip{\al_i^\vee,\la} \overline{\al_i}$ and $\del_i \la =
\ip{\al_i^\vee,\la}$.

One may check that the operator $\del_i$ satisfies the twisted
derivation property
\begin{align} \label{E:deriv}
  \del_i(f g) &= \del_i(f) g + s_i(f) \del_i(g)
\end{align}
for all $i\in I_\af$, $f,g\in S$. So roughly\footnote{The reader is
warned that the ring homomorphism from $\NilCox$ to
$\mathrm{End}_{\Z}(S)$ given by $A_i\mapsto \del_i$ is not faithful
because the level zero action is also not faithful. In particular if
one tries to define $A_i$ as the operator on $S$ defined by
$\del_i$, then $\del_w$ for $w\in W$ (rather than $w\in W_\af$)
would be a basis for $\NilH$ as a free left $S$-module.} if we think
of $A_i$ acting on $S$ by $\del_i$ and $S$ acting on itself by left
multiplication, then \eqref{E:easycommute} expresses the commutation
between $A_i$ and $S$ as operators on $S$.
\end{comment}

Iterating \eqref{E:easycommute} produces the following relation.

\begin{lem} \label{lem:commute}
For $x \in W_\af$ and $\la \in P$,
\begin{align} \label{E:AonP}
  A_x \la = (x \cdot \la) A_x + \sum_{y\lessdot x}
\ip{\al_{yx}^\vee, \la} A_y
\end{align}
where $\al_{yx}^\vee$ is defined in section \ref{SS:affroots}.
\end{lem}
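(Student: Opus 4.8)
The plan is to induct on $\ell(x)$. The base case $\ell(x)\le 1$ is immediate: for $\ell(x)=0$ both sides equal $\la$, and for $\ell(x)=1$ the identity is exactly \eqref{E:easycommute}. For the inductive step, choose $i\in I_\af$ with $\ell(s_ix)<\ell(x)$ and put $x'=s_ix$, so that $\ell(x')=\ell(x)-1$ and $A_x=A_iA_{x'}$. Applying the inductive hypothesis to $A_{x'}\la$ and multiplying on the left by $A_i$, everything reduces to simplifying
\[
A_x\la \;=\; A_i\,(x'\cdot\la)\,A_{x'} \;+\; \sum_{y'\lessdot x'}\ip{\al_{y'x'}^\vee,\la}\,A_iA_{y'}.
\]

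First I would simplify the two pieces. Applying \eqref{E:easycommute} to $A_i(x'\cdot\la)$ and re-multiplying by $A_{x'}$ gives $A_i(x'\cdot\la)A_{x'}=(x\cdot\la)A_x+\ip{\al_i^\vee,x'\cdot\la}A_{x'}$. In the sum, $A_iA_{y'}$ equals $A_{s_iy'}$ if $\ell(s_iy')=\ell(y')+1$ and vanishes otherwise, so only the covers $y'\lessdot x'$ with $s_iy'>y'$ contribute. Thus
\[
A_x\la \;=\; (x\cdot\la)A_x + \ip{\al_i^\vee,x'\cdot\la}A_{x'} + \sum_{\substack{y'\lessdot x'\\ s_iy'>y'}}\ip{\al_{y'x'}^\vee,\la}\,A_{s_iy'},
\]
and it remains only to identify the last two terms with $\sum_{y\lessdot x}\ip{\al_{yx}^\vee,\la}\,A_y$.

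This identification is the heart of the matter. Using the lifting property of the Bruhat order one checks that the set of covers of $x=s_ix'$ is the disjoint union of $\{x'\}$ with $\{\,s_iy'\mid y'\lessdot x',\ s_iy'>y'\,\}$, and that $y'\mapsto s_iy'$ is a bijection onto the second part. For the coroot bookkeeping I would argue directly from the definitions in Section~\ref{SS:affroots}: since $x'\lessdot x$, we have $(x')^{-1}x=(x')^{-1}s_ix'=s_{(x')^{-1}\al_i}$, and $\ell(s_ix')>\ell(x')$ forces $(x')^{-1}\al_i$ to be a positive root, so $\al_{x'x}^\vee=(x')^{-1}\al_i^\vee$ and hence $\ip{\al_{x'x}^\vee,\la}=\ip{\al_i^\vee,x'\cdot\la}$ by $W_\af$-invariance of the pairing (under the level zero action); likewise, for $y'\lessdot x'$ with $s_iy'>y'$ one has $(s_iy')^{-1}(s_ix')=(y')^{-1}x'$, which yields $\al_{s_iy',\,x}^\vee=\al_{y'x'}^\vee$. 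Substituting these back into the displayed formula produces exactly $\sum_{y\lessdot x}\ip{\al_{yx}^\vee,\la}\,A_y$, completing the induction.

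I expect the Bruhat-theoretic decomposition — in particular surjectivity, that every cover $y\lessdot x$ with $y\ne x'$ satisfies $s_iy<y$ — to be the only genuinely delicate point; it follows from the lifting property, since if $y\lessdot x$, $s_ix<x$, and $s_iy>y$, then $s_iy\le x$ together with a length count forces $s_iy=x$, i.e.\ $y=x'$. Together with the short coroot computations above, this is the real content; the remainder is routine manipulation with \eqref{E:easycommute} and the NilCoxeter multiplication rule.
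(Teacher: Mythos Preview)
Your proof is correct and is precisely the induction the paper alludes to with the single sentence ``Iterating \eqref{E:easycommute} produces the following relation.'' You have carefully spelled out the Bruhat-theoretic bookkeeping (the lifting property yielding the bijection between covers of $x$ and $\{x'\}\cup\{s_iy':y'\lessdot x',\,s_iy'>y'\}$) and the coroot identifications, all of which are exactly what the iteration requires.
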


\begin{prop} \cite{P} \label{P:Atensoraction}
Let $M$ and $N$ be left $\A$-modules. Define $M \otimes_S N =
(M\otimes_\Z N)/\langle sm \otimes n - m \otimes sn \mid s\in S;
m\in M; n\in N\rangle$. Then $M \otimes_S N$ is a left $\A$-module
via
\begin{equation} \label{E:Atensoraction}
\begin{split}
  A_i \cdot (m \otimes n) &= (A_i \cdot m ) \otimes n + m \otimes
  (A_i\cdot n) - \alpha_i (A_i \cdot m) \otimes (A_i \cdot n) \\
  s \cdot (m \otimes n) &= sm \otimes n.
\end{split}
\end{equation}
\end{prop}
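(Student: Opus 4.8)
The plan is to verify directly that the formulas \eqref{E:Atensoraction} give a well-defined action of $\A$ on $M \otimes_S N$, that is, that the generators $A_i$ and $s \in S$ act by operators satisfying the defining relations of $\A$. First I would check that the operators are well-defined on the quotient $M \otimes_S N$, not merely on $M \otimes_\Z N$: one must confirm that $A_i \cdot (sm \otimes n - m \otimes sn) = 0$ and $s' \cdot (sm \otimes n - m \otimes sn) = 0$ in $M \otimes_S N$ for all $s, s' \in S$. The $S$-action is immediate. For $A_i$, expand $A_i \cdot (sm \otimes n)$ using \eqref{E:Atensoraction} and push the $A_i$ past $s$ inside the first tensor factor using \eqref{E:easycommute}, namely $A_i s = (s_i \cdot s) A_i + \ip{\al_i^\vee, s}$ (extended from $P$ to all of $S = \mathrm{Sym}(P)$ by the twisted derivation property); similarly expand $A_i \cdot (m \otimes sn)$. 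Comparing the two expansions modulo the relation $s m' \otimes n' = m' \otimes s n'$ should yield equality — this bookkeeping is the crux of well-definedness.

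Next I would verify the relations of $\A$. The relation $A_i^2 = 0$: apply $A_i$ twice to $m \otimes n$ using \eqref{E:Atensoraction}, using $A_i^2 m = 0$, $A_i^2 n = 0$, and the commutation $A_i \al_i = (s_i \cdot \al_i) A_i + \ip{\al_i^\vee, \al_i} = -\al_i A_i + 2$ to simplify the cross-terms; the terms should cancel in pairs. The commutation relation $A_i s = (s_i \cdot s) A_i + \ip{\al_i^\vee, s}$ as operators on $M \otimes_S N$: check it on a simple tensor, again using \eqref{E:easycommute} in each factor. The braid relations \eqref{E:braid} among the $A_i$ are the most laborious; here I would invoke the standard device for Kac–Moody nilHecke-type algebras — rather than expanding a word of length $m_{ij}$ in two ways, one observes that the tensor construction \eqref{E:Atensoraction} is precisely the coproduct-type formula coming from the Hopf structure on $\A$ (cf. Proposition \ref{prop:del A} referenced later in the paper), so that the braid relations on $M \otimes_S N$ follow formally from those on $M$ and on $N$ together with coassociativity/coproduct-compatibility; alternatively one can cite Peterson \cite{P} directly, as the proposition is attributed to him.

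The main obstacle I anticipate is the braid relations: a brute-force expansion of $A_i A_j A_i \cdots$ ($m_{ij}$ factors) via \eqref{E:Atensoraction} produces exponentially many terms and is unmanageable for $m_{ij} = 4$ or $6$ (both of which occur in type $\tilde C_n$). The right move is therefore conceptual: identify $\A \to \A \otimes_S \A$, $A_i \mapsto A_i \otimes 1 + 1 \otimes A_i - \al_i(A_i \otimes A_i)$, $s \mapsto s \otimes 1$, as a coproduct-like algebra homomorphism, so that the module structure on $M \otimes_S N$ is pulled back from the external tensor product $M \boxtimes N$ along this homomorphism; then \emph{all} relations — including the braid relations — reduce to checking that this single map respects the relations of $\A$, and this in turn reduces to the commutation relation and $A_i^2 = 0$ checks already done, plus the observation that the formula is symmetric and local in $i$. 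This is essentially the content of \cite{KK} and \cite{P}, so in the writeup I would state the homomorphism explicitly, verify $A_i^2 = 0$ and the $S$-commutation for it, and then remark that the braid relations follow since the construction is local to each pair $\{i,j\}$ and the rank-two cases reduce to finite computations (or cite \cite{P} for the general statement).
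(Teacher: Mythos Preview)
Your handling of well-definedness on the quotient, of $A_i^2 = 0$, and of the commutation $A_i \la = (s_i \la) A_i + \ip{\al_i^\vee,\la}$ is fine and matches the paper (which in fact omits the well-definedness check, so you are being more careful there). The gap is in the braid relations. Recasting the action as a would-be homomorphism $\A \to \A \otimes_S \A$ buys nothing: $\A \otimes_S \A$ carries no a priori ring structure (the paper makes exactly this point just after the proposition), so verifying that such a map respects the relations of $\A$ is the same problem as verifying the braid relations for the action on $M \otimes_S N$ with $M = N = \A$. Your claim that this ``reduces to the commutation relation and $A_i^2 = 0$ checks already done, plus the observation that the formula is symmetric and local in $i$'' is not a valid reduction; locality and symmetry do not force an alternating word of length $m_{ij}$ to equal its reverse. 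Falling back on rank-two brute force or on citing \cite{P} is precisely what the paper avoids, since it is supplying a proof because one is ``not readily available in the literature''.

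The missing idea is short: set $r_i = 1 - \al_i A_i \in \A$. Given \eqref{E:easycommute}, the braid relations for the $A_i$ are equivalent to the braid relations for the $r_i$. A one-line computation from \eqref{E:Atensoraction} shows that $r_i$ acts \emph{diagonally}: $r_i \cdot (m \otimes n) = (r_i m) \otimes (r_i n)$. Diagonal actions inherit componentwise any relation that the $r_i$ already satisfy in $\A$, so the braid relations on $M \otimes_S N$ follow immediately, with no expansion of length-$m_{ij}$ words and no case analysis on $m_{ij}$.
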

Since the proof of this proposition is not readily available in the
literature, we include a proof.
\begin{proof}
We verify \eqref{E:easycommute}.
\begin{align*}
&A_i \cdot (\la \cdot (m \otimes n)) \\&= (A_i \cdot \la m) \otimes
n + \la m \otimes (A_i \cdot n) - \alpha_i
(A_i \cdot\la m) \otimes (A_i \cdot n) \\
&= (s_i \la)(A_i m \otimes n )+ \ip{\al_i^\vee,\la}m \otimes n + \la
m \otimes A_i n \\ & \ \ \ \ \  - \alpha_i(s_i \la)A_im \otimes A_in
- \alpha_i\ip{\al_i^\vee,\la}m
\otimes A_in \\
&= (s_i \la)(A_i m \otimes n ) + (\la- \alpha_i\ip{\al_i^\vee,\la})
m \otimes A_in \\ & \ \ \ \ \ - (s_i \la)\alpha_iA_im \otimes A_in +
\ip{\al_i^\vee,\la}m \otimes n \\
&= (s_i \la) \cdot A_i \cdot (m \otimes n) + \ip{\al_i^\vee,\la}m
\otimes n.
\end{align*}
We verify $A_i^2 = 0$.
\begin{align*}
&A_i \cdot (A_i \cdot (m \otimes n)) = 2A_i m \otimes A_i n - (A_i \alpha_i A_i m) \otimes n \\
&= 2A_i m \otimes A_i n + \alpha_i A_i^2 m \otimes n -
\ip{\al_i^\vee,\al_i}A_i m \otimes A_i n = 0.
\end{align*}
To verify the braid relations, it is convenient to introduce the
elements $r_i = 1 - \alpha_i A_i \in \A$.  It is not difficult to
see that given \eqref{E:easycommute} the relation $(A_iA_j)^{m_{ij}}
= (A_j A_i)^{m_{ij}}$ is equivalent to $(r_i r_j)^{m_{ij}} = (r_j
r_i)^{m_{ij}}$.  An easy calculation shows that the $r_i$ act on $M
\otimes_S N$ by $r_i \cdot (m \otimes n) = r_i m \otimes r_i n$.  It
is clear that this action satisfies the braid relations for the
$r_i$.
\end{proof}

Thus there is a left $S$-module homomorphism $\Delta: \A \to \A
\otimes_S \A$ defined by
\begin{equation}
\begin{split}
  \Delta(a) &= a \cdot (1\otimes 1)\qquad\text{for $a\in \A$.}
\end{split}
\end{equation}
By \eqref{E:Atensoraction} we have
\begin{align}
\label{E:DeltaA}
  \Delta(A_i) &= A_i \otimes 1 + 1 \otimes A_i - A_i \otimes
  \alpha_i A_i \\
\label{E:DeltaS}
  \Delta(s) &= s\otimes 1.
\end{align}
The map $\Delta$ is injective so there is a linear map $\Delta(\A)
\otimes (\A\otimes_S \A) \to (\A\otimes_S \A)$ defined by
\begin{align}
  \Delta(a) \otimes (x\otimes y) \mapsto a \cdot (x\otimes y)
\end{align}
using the left $\A$-module structure on $\A\otimes_S \A$ afforded by
Proposition \ref{P:Atensoraction}. We deduce that this map yields a
ring structure on $\Delta(\A)$ and an action of $\Delta(\A)$ on
$\A\otimes_S \A$.

It follows by induction using Proposition \ref{P:Atensoraction} that
this action is computed explicitly as follows. Let $a\in\A$ and
$\Delta(a)=\sum_{w,v} A_w \otimes a_{wv} A_v$. Then
\begin{align}
  \Delta(a) \cdot (x\otimes y) = \sum_{w,v} A_w x \otimes a_{wv} A_v
  y.
\end{align}
In particular, if $b\in\A$ and $\Delta(b)=\sum_{w,v} A_w \otimes
b_{wv} A_v$ for $b_{wv}\in S$, then
\begin{align} \label{E:Deltamult}
  \Delta(ab)=\Delta(a)\Delta(b)=
  \sum_{w,v,w',v'} A_w A_{w'} \otimes a_{wv} A_v b_{w'v'} A_{v'}.
\end{align}
The ring structure on $\Delta(\A)$ does not extend to all of $\A
\otimes_S \A$ by the formula $(a \otimes b)(c \otimes d)=ac \otimes
bd$, because if it did, then since $s\otimes 1=1\otimes s$ we would
have $fs \otimes g = (f\otimes g)(s\otimes 1)=(f\otimes g)(1\otimes
s)=f \otimes gs$, which is false in general (say, for $g=1$ and
$fs\ne sf$). Equation \eqref{E:Deltamult} says that when this
``obvious" generally ill-defined multiplication formula is applied
to expressions coming from the action of $\Delta(\A)$ on $\A
\otimes_S \A$, the result is well-defined.

\subsection{The Peterson subalgebra and equivariant cohomology of affine Grassmannian}
The \textit{Peterson subalgebra} of $\A$ is the centralizer
$Z_\A(S)$ of $S$. It is a Hopf algebra over $S$ since the factorwise
product on $Z_\A(S) \otimes_S Z_\A(S)$ gives it an $S$-algebra
structure under which the restriction of $\Delta$ to $Z_\A(S)$, is
an $S$-algebra homomorphism.

\begin{thm} \cite{P} \cite[Theorem 4.4]{Lam2}
There is an $S$-Hopf algebra isomorphism $$j: H_T(\Gr_G) \to
Z_\A(S)$$ which is characterized by the property that for all $x\in
W_\af^0$, $j(\xi_x^T)$ is the unique element of $Z_\A(S)\cap (A_x +
\sum_{y\in W_\af\setminus W_\af^0} S\, A_y)$.
\end{thm}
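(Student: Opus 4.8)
The statement packages together three assertions: that $Z_\A(S)$, which is already an $S$-Hopf algebra (via the factorwise product on $Z_\A(S)\otimes_S Z_\A(S)$ together with the restriction of $\Delta$), is Hopf-isomorphic to $H_T(\Gr_G)$; that $j$ furnishes such an isomorphism; and that each Schubert class $\xi_x^T$ is singled out inside $Z_\A(S)$ by the normalization of its ``Grassmannian part''. The plan is to describe $Z_\A(S)$ explicitly by localization, to match the result with the fixed-point localization of $H_T(\Gr_G)$, and then to use compatibility of Schubert stratifications to identify $j$ and deduce the normalization clause; the only step carrying real geometric content is the identification of $H_T(\Gr_G)$ with the centralizer.

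First I would localize. Write $\mathrm{Frac}(S)$ for the fraction field of $S$. The algebra $\A\otimes_S\mathrm{Frac}(S)$ is isomorphic to the twisted group algebra $\mathrm{Frac}(S)\rtimes W_\af$, in which $W_\af$ acts on $\mathrm{Frac}(S)$ through the level-zero action and $w\,f=(w\cdot f)\,w$; the isomorphism sends $A_i$ to $\beta_i^{-1}(1-s_i)$, where $\beta_i=\al_i$ for $i\in I$ and $\beta_0=-\theta$. Checking that this is a well-defined isomorphism (the braid relations, $A_i^2=0$, and \eqref{E:easycommute}) is routine given that $s_i\cdot\beta_i=-\beta_i$. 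Since $\A$ is torsion-free over $S$ it embeds in this algebra, so $Z_\A(S)$ consists of the $a\in\A$ whose image centralizes $\mathrm{Frac}(S)$; and $\sum_w f_w\,w$ centralizes $\mathrm{Frac}(S)$ exactly when it is supported on the subgroup of $W_\af$ acting trivially on $P$, which is the translation lattice $Q^\vee\subset W_\af$, the kernel of $W_\af\to W$. Because $W_\af=W\ltimes Q^\vee$, this lattice is a transversal for $W_\af/W$, hence canonically indexed by $W_\af^0$; the resulting commutative algebra $\bigoplus_{\mu\in Q^\vee}\mathrm{Frac}(S)\,t_\mu$ of $\mathrm{Frac}(S)$-valued functions on $W_\af^0$ is precisely where $H^T(\Gr_G)$ localizes, since the isolated $T$-fixed points of $\Gr_G$ are indexed by $W_\af^0$. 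Unwinding through \eqref{E:AonP}, the condition $a=\sum_w a_w A_w\in Z_\A(S)$ becomes the GKM-type recursion $a_w\,(w\cdot\la-\la)=-\sum_{w\lessdot w'}a_{w'}\ip{\al_{ww'}^\vee,\la}$ for all $w\in W_\af$ and $\la\in P$.

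Next comes the geometric identification. The key input, due to Peterson and worked out in the small-torus setting in \cite{P,Lam2} (building on \cite{KK}), is that the nilHecke ring $\A$ is a model for the $T$-equivariant homology $H_T(\mathcal{G}/\mathcal{B})$ of the affine flag variety, with $\{A_w\}$ corresponding to the Schubert classes and with the above localization being the standard fixed-point localization; and that the $\Gr_G$-action on $\mathcal{G}/\mathcal{B}$ coming from loop multiplication (equivalently, the fibration $\mathcal{G}/\mathcal{B}\to\mathcal{G}/\mathcal{P}=\Gr_G$) induces a map $H_T(\Gr_G)\to\A$ which is injective with image exactly $Z_\A(S)$. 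Matching the GKM recursions on the two sides is what forces ``image $=Z_\A(S)$''; moreover this map intertwines the Pontryagin product on $H_T(\Gr_G)$ with the product of $\A$, and the coproduct on $H_T(\Gr_G)$ (dual to the cup product on $H^T(\Gr_G)$) with the map $\Delta$ of \eqref{E:DeltaA}--\eqref{E:DeltaS}, so it is a map of $S$-Hopf algebras. Compatibility of the Schubert stratifications of $\Gr_G$ and of $\mathcal{G}/\mathcal{B}$ forces the image $\zeta_x$ of $\xi_x^T$ to lie in $A_x+\sum_{y\in W_\af\setminus W_\af^0}S\,A_y$, and then $\{\zeta_x\mid x\in W_\af^0\}$ is an $S$-basis of $Z_\A(S)$; one sets $j(\xi_x^T)=\zeta_x$.

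The normalization clause of the theorem is now formal. If $a\in Z_\A(S)\cap\bigl(A_x+\sum_{y\notin W_\af^0}S\,A_y\bigr)$, expand $a=\sum_{x'\in W_\af^0}c_{x'}\zeta_{x'}$ in the basis just produced, and apply the $S$-linear projection $\A\to\bigoplus_{x'\in W_\af^0}S\,A_{x'}$ that kills the non-Grassmannian basis vectors: the left side maps to $A_x$ while each $\zeta_{x'}$ maps to $A_{x'}$, forcing $c_x=1$ and $c_{x'}=0$ otherwise, so $a=\zeta_x=j(\xi_x^T)$ is the unique such element. The genuinely substantial step is the one flagged above: correctly setting up the small-torus Kostant--Kumar formalism and proving that $H_T(\Gr_G)$ embeds into $\A$ with image exactly the centralizer $Z_\A(S)$, compatibly with both Hopf structures and with the Schubert stratifications; everything else is the bookkeeping of transporting that fact through the localization of the second paragraph, which is also what turns it into the explicit computable description of $Z_\A(S)$ used in the rest of the paper.
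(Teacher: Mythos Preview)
The paper does not prove this theorem; it is stated with citations to Peterson \cite{P} and to \cite[Theorem 4.4]{Lam2} and then used as a black box. So there is no ``paper's own proof'' to compare against.

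That said, your outline is a fair sketch of how the argument goes in those references. The localization step is correct: under the level-zero action the image of $\al_0$ in $P$ is $-\theta$, so your $\beta_0=-\theta$ is right, and the centralizer of $\mathrm{Frac}(S)$ inside $\mathrm{Frac}(S)\rtimes W_\af$ is indeed supported on the kernel $Q^\vee$ of $W_\af\to W$. You are also right to flag the geometric input (that $H_T(\Gr_G)$ embeds in $\A$ with image exactly $Z_\A(S)$, compatibly with the Hopf structures and Schubert classes) as the substantive step imported from \cite{P,Lam2,KK}; your write-up does not supply that argument but correctly attributes it. The uniqueness clause is, as you say, formal once one knows $\{j(\xi_x^T)\mid x\in W_\af^0\}$ is an $S$-basis of $Z_\A(S)$ whose Grassmannian parts are the $A_x$: projection onto $\bigoplus_{x'\in W_\af^0}S\,A_{x'}$ forces the coefficients.
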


For $x\in W_\af^0$ and $y\in W_\af$ let $j_x^y\in S$ be defined by
\begin{align} \label{E:jdef}
  j(\xi_x^T) = \sum_{y\in W_\af} j_x^y A_y.
\end{align}

\begin{prop} \cite{P} \cite[Theorem 6.3]{LS} \label{P:jco} \
\begin{enumerate}
\item
For $x\in W_\af^0$ and $y\in W_\af$, the polynomial $j_x^y$ is
either zero or homogeneous of degree $\ell(y)-\ell(x)$; in
particular it is zero if $\ell(y)<\ell(x)$.
\item
For $x,z\in W_\af^0$ we have
\begin{align}\label{E:HomTstructureconst}
  \xi_x^T \xi_z^T = \sum_y j_x^y \xi_{yz}^T
\end{align}
where $y$ runs over the $y\in W_\af$ such that $yz\in W_\af^0$ and
$\ell(yz)=\ell(y)+\ell(z)$.
\end{enumerate}
\end{prop}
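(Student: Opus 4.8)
The plan is to derive both statements formally from Peterson's theorem (the $S$-algebra isomorphism $j\colon H_T(\Gr_G)\to Z_\A(S)$ together with its characterizing triangularity), by bookkeeping inside the nilHecke algebra $\A$ and using one standard fact about minimal coset representatives. For part (1), equip $\A$ with the grading in which $\deg\la=1$ for $\la\in P\subset S$ and $\deg A_i=-1$ for $i\in I_\af$; the relation \eqref{E:easycommute}, together with $A_i^2=0$ and the braid relations, is homogeneous for it, so $\A=\bigoplus_{y\in\Waf}SA_y$ is graded with $A_y$ in degree $-\ell(y)$. The Schubert classes $\xi_x^T$ are homogeneous and $j$ is a graded isomorphism (this is part of the cited theorem, and in any case follows from the construction of $j$ from Schubert classes), so $j(\xi_x^T)=\sum_y j_x^y A_y$ is homogeneous; since its $A_x$-coefficient is $1$, a nonzero degree-$0$ scalar, $j(\xi_x^T)$ has degree $-\ell(x)$. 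Hence every nonzero $j_x^y$ is homogeneous of degree $\ell(y)-\ell(x)$, which is $\ge 0$, so $j_x^y=0$ whenever $\ell(y)<\ell(x)$.

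For part (2) I would apply the ring isomorphism $j$ to the identity $\xi_x^T\xi_z^T=\sum_{w\in\Waf^0}c_{xz}^w\xi_w^T$ (the coefficients $c_{xz}^w\in S$ exist because $\{\xi_w^T\mid w\in\Waf^0\}$ is an $S$-basis of $H_T(\Gr_G)$) and then discard the non-Grassmannian part. Put $\mathfrak I=\bigoplus_{v\in\Waf\setminus\Waf^0}SA_v\subset\A$ and let $\pi\colon\A\to\A/\mathfrak I$ be the quotient. The one step with genuine content is the combinatorial claim that $\mathfrak I$ is a \emph{left ideal}: using the characterization of $\Waf^0$ as those $w$ with $\ell(ws_i)>\ell(w)$ for all $i\in I$, it comes down to the statement that if $\ell(uv)=\ell(u)+\ell(v)$ and $v\notin\Waf^0$ then $uv\notin\Waf^0$ (pick $i\in I$ with $vs_i<v$; then $\ell(uvs_i)=\ell(u)+\ell(vs_i)<\ell(uv)$), plus the routine check that left multiplication by $S$ and by the $A_i$ sends $\mathfrak I$ into itself. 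Granting this, $\A/\mathfrak I$ is free as a left $S$-module on $\{\overline{A_x}\mid x\in\Waf^0\}$; by Peterson's characterization $j(\xi_x^T)-A_x\in\mathfrak I$, so $\pi\circ j\colon H_T(\Gr_G)\to\A/\mathfrak I$ carries the $S$-basis $\{\xi_w^T\}$ to the $S$-basis $\{\overline{A_w}\}$ and is thus an isomorphism of $S$-modules.

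Now apply $\pi\circ j$ to the product identity. Since $\mathfrak I$ is a left ideal and $j(\xi_z^T)-A_z\in\mathfrak I$, we have $j(\xi_x^T)j(\xi_z^T)\equiv j(\xi_x^T)A_z\pmod{\mathfrak I}$, and therefore, using $A_yA_z=A_{yz}$ when $\ell(yz)=\ell(y)+\ell(z)$ and $A_yA_z=0$ otherwise,
\begin{align*}
\sum_{w\in\Waf^0}c_{xz}^w\,\overline{A_w}
&=\overline{j(\xi_x^T)\,A_z}
=\sum_y j_x^y\,\overline{A_y A_z}\\
&=\sum_{\substack{y\colon\ell(yz)=\ell(y)+\ell(z)\\ yz\in\Waf^0}}j_x^y\,\overline{A_{yz}}.
\end{align*}
Because $y\mapsto yz$ is injective on this index set and $\{\overline{A_w}\mid w\in\Waf^0\}$ is an $S$-basis of $\A/\mathfrak I$, comparing the coefficient of $\overline{A_w}$ gives $c_{xz}^w=\sum_{y\colon yz=w,\ \ell(yz)=\ell(y)+\ell(z)}j_x^y$, and substituting this back into $\xi_x^T\xi_z^T=\sum_w c_{xz}^w\xi_w^T$ yields exactly \eqref{E:HomTstructureconst}.

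The only obstacle beyond routine manipulation is the left-ideal lemma above — a standard property of minimal coset representatives — and, for part (1), confirming that the isomorphism $j$ of the cited theorem respects the gradings; everything else is formal once Peterson's theorem is in hand.
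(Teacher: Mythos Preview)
The paper does not give its own proof of this proposition; it simply cites Peterson's lecture notes \cite{P} and \cite[Theorem 6.3]{LS}. So there is no in-paper argument to compare against.

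Your proof is correct. For part (1), the grading you introduce on $\A$ (with $\deg\la=1$ and $\deg A_i=-1$) is indeed well-defined, as the defining relation \eqref{E:easycommute} is homogeneous; the homogeneity of $j(\xi_x^T)$ is the one point you defer to the cited theorem, which is fair. For part (2), your left-ideal lemma is the heart of the matter and your verification is sound: if $v\notin\Waf^0$, pick $i\in I$ with $vs_i<v$; then for $\ell(uv)=\ell(u)+\ell(v)$ one has $\ell(uvs_i)\le\ell(u)+\ell(vs_i)=\ell(uv)-1$, so $uv\notin\Waf^0$. The remaining check that $A_j\cdot(sA_v)\in\mathfrak I$ for $s\in S$ follows from \eqref{E:easycommute} and this same fact. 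Once $\mathfrak I$ is a left ideal, your computation of $\pi(j(\xi_x^T)j(\xi_z^T))$ by first collapsing $j(\xi_z^T)$ to $A_z$ modulo $\mathfrak I$, and then using the nilCoxeter multiplication rule, is exactly right; the injectivity of $y\mapsto yz$ lets you read off the structure constants.

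This is essentially the standard argument (it is how the result is proved in \cite{LS}): the ``$j$-coefficients are homology structure constants'' phenomenon is precisely the statement that $\A/\mathfrak I$ is a left $\A$-module on which $Z_\A(S)$ acts, with Peterson's characterization identifying the action of $j(\xi_x^T)$.
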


%Note that $j_x^y$ can be nonzero
%even if $y$ is not Grassmannian.
We wish to compute $j_x^y$ in the ``nonequivariant case''
$\ell(x)=\ell(y)$, when $j_x^y\in\Z_{\ge0}$. For this purpose we
consider the maps that forget the $T$-equivariance.

\subsection{Affine Fomin-Stanley subalgebra}
\label{SS:FominStanley} Let $\phi_0:S \to \Z$ be the map that sends
a polynomial to its evaluation at $0$. By abuse of notation define
$\phi_0:\NilH\to \NilCox$ by $\phi_0(\sum_w s_wA_w) = \sum_w
\phi_0(s_w) A_w$ for $s_w\in S$. Peterson's $j$-map induces an
injective ring homomorphism $j_0:H_*(\Gr_G)\to \NilCox$ such that
the diagram commutes:
\begin{align} \label{E:zerospec}
\begin{CD}
  H_T(\Gr_G) @>j>> \NilH \\
  @V{\epsilon}VV @VV{\phi_0}V \\
  H_*(\Gr_G) @>>{j_0}> \NilCox
\end{CD}
\end{align}
where $\epsilon: H_T(\Gr_G) \to H_*(\Gr_G)$ is obtained by $\xi_x^T
\mapsto \xi_x$ and the evaluation $\phi_0$.

By \eqref{E:jdef} and \eqref{E:zerospec} we have
\begin{align} \label{E:jcoef}
  j_0(\xi_w) = \sum_{\substack{u\in W_\af \\ \ell(u)=\ell(w)}} j_w^u
  A_u\qquad\qquad\text{for $w\in W_\af^0$.}
\end{align}

The affine Fomin-Stanley subalgebra is defined in \cite{Lam2} by
\begin{align} \label{E:inB}
\B = \{a\in \NilH \mid \phi_0(s)a = \phi_0(as)\text{ for every $s\in
S$} \}.
\end{align}
Define $\pzt:\A\otimes_S \A \to \A_0\otimes_\Z \A_0$ by
$$\pzt\left(\sum_{w,v\in W_\af} a_{w,v} A_w \otimes A_v\right) = \sum_{w,v\in
W_\af} \phi_0(a_{w,v}) A_w \otimes A_v$$ for $a_{w,v}\in S$. Then
$\B$ is a Hopf algebra with coproduct given by the restriction of
$\pzt \circ \Delta$ to $\B$.

\begin{thm}
[{\cite[Prop. 5.4, Thm. 5.5]{Lam2}}] \label{T:iso} The map $j_0$ is
a Hopf algebra isomorphism $H_*(\Gr_G)\cong \B$. Moreover, for every
$w\in W_\af^0$, $j_0(\xi_w)$ is the unique element of $\B\cap
(A_w+\sum_{u\in W_\af\setminus W_\af^0} \Z A_u)$.
\end{thm}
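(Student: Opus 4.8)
The plan is to deduce everything from Peterson's equivariant isomorphism $j\colon H_T(\Gr_G)\xrightarrow{\sim}Z_\A(S)$ by specializing along $\phi_0$, using the commutative square \eqref{E:zerospec} together with the facts already recorded in \S\ref{SS:FominStanley} (that $j_0$ is a well-defined injective ring homomorphism with $j_0(\xi_w)$ given by \eqref{E:jcoef}). Since $\epsilon\colon H_T(\Gr_G)\to H_*(\Gr_G)$ is surjective and $j$ is an isomorphism onto $Z_\A(S)$, that square forces $\mathrm{im}(j_0)=\phi_0(Z_\A(S))$. So the theorem reduces to three points: (i) $\phi_0(Z_\A(S))\subseteq\B$; (ii) $\phi_0(Z_\A(S))\supseteq\B$; and (iii) $j_0$ is a coalgebra map for the coproduct $\pzt\circ\Delta$ on $\B$. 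I expect (ii) to be the main obstacle.

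For (i): let $a=\sum_w a_wA_w\in Z_\A(S)$ with $a_w\in S$, and let $s\in S$. Then $\phi_0(s)\phi_0(a)=\sum_w\phi_0(s)\phi_0(a_w)A_w=\phi_0(sa)=\phi_0(as)$, the last equality because $a$ commutes with $s$. Writing $a=\phi_0(a)+a'$ with $a'\in\ker\phi_0$, we have $a's\in\ker\phi_0$, since $\ker\phi_0$ is the set of elements of $\NilH$ all of whose $A_w$-coefficients lie in the augmentation ideal $S^+$, and this is a right ideal (immediate, as $S^+$ is an ideal of $S$). Hence $\phi_0(as)=\phi_0(\phi_0(a)s)$, so $\phi_0(a)$ satisfies the defining relation \eqref{E:inB} of $\B$. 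The same bookkeeping shows $\phi_0\vert_{Z_\A(S)}$ is multiplicative and has kernel exactly $S^+Z_\A(S)$ (one uses that the $A_x$-coefficient of $\sum_{x\in W_\af^0}f_x\,j(\xi_x^T)$ is $f_x$, the only Grassmannian term of $j(\xi_z^T)$ being $A_z$), whence $\phi_0(Z_\A(S))\cong Z_\A(S)/S^+Z_\A(S)=\bigoplus_{x\in W_\af^0}\Z\,j_0(\xi_x)$. Moreover, by \eqref{E:jcoef}, Proposition \ref{P:jco} and $j^w_w=1$, each $j_0(\xi_x)$ lies in $A_x+\sum_{u\in W_\af\setminus W_\af^0}\Z A_u$. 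Consequently $\phi_0(Z_\A(S))=\B$ is equivalent to the thinness statement $\B\cap\bigoplus_{u\in W_\af\setminus W_\af^0}\Z A_u=\{0\}$, and this statement also yields the uniqueness assertion in the theorem (if $b\in\B\cap(A_w+\sum_{u\notin W_\af^0}\Z A_u)$ then $b-j_0(\xi_w)\in\B$ is supported off $W_\af^0$, hence zero).

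For (ii), i.e.\ the thinness lemma: $\B$ is graded by length (if $b=\sum_w c_wA_w\in\B$ then each homogeneous component is in $\B$, since for $\la\in P$ the element $A_w\la$ is homogeneous of degree $1-\ell(w)$), so we may assume $0\ne b=\sum_{\ell(u)=r}c_uA_u\in\B$ with $c_u\in\Z$ and every $u$ in the support outside $W_\af^0$. Feeding a linear $\la\in P$ into \eqref{E:inB} and using $\phi_0(\la)=0$ gives $\phi_0(b\la)=0$; expanding $b\la$ by Lemma \ref{lem:commute} and noting that the leading terms $(u\cdot\la)A_u$ are killed by $\phi_0$, we obtain
\[
\textstyle\sum_{y}\Bigl(\sum_{u\,:\,y\lessdot u}c_u\,\langle\al^\vee_{yu},\la\rangle\Bigr)A_y=0,\qquad\text{hence}\qquad\textstyle\sum_{u\,:\,y\lessdot u}c_u\,\al^\vee_{yu}=0\ \text{ in }Q^\vee_\af\ \text{ for every }y\in W_\af.
\]
I would then run a downward induction on length, starting from maximal-length support elements: each real affine coroot $\al^\vee_{yu}$ has nonzero image in $\mathrm{Hom}(P,\Z)$ (its finite part is a nonzero coroot of $\geh$), and the coroots $\{\al^\vee_{yu}\mid y\lessdot u\}$ attached to the upper covers of a fixed $y$ are sufficiently independent that, combined over all $y$, the displayed relations force every $c_u=0$, contradicting $b\ne0$. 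The combinatorial core — that these coroot relations in the affine Bruhat order of $W_\af$ admit no nonzero solution supported off $W_\af^0$ — is the part requiring genuine work; granting it, $\B=\phi_0(Z_\A(S))=\mathrm{im}(j_0)$, so $j_0\colon H_*(\Gr_G)\to\B$ is a ring isomorphism and the ``moreover'' follows from the previous paragraph.

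For (iii): $j$ is an $S$-Hopf algebra isomorphism onto $Z_\A(S)$, whose coproduct is the restriction of $\Delta\colon\NilH\to\NilH\otimes_S\NilH$, and the coproduct on $\B$ is the restriction of $\pzt\circ\Delta$; so it suffices to show $j_0$ is a coalgebra map, i.e.\ that $\pzt\circ\Delta$ descends through $\phi_0\colon Z_\A(S)\to\B$, equivalently that it kills $\ker(\phi_0\vert_{Z_\A(S)})=S^+Z_\A(S)$. For $s\in S^+$ and $a\in Z_\A(S)$, formulas \eqref{E:DeltaS} and \eqref{E:Deltamult} give $\Delta(sa)=(s\otimes1)\Delta(a)$; pushing $s$ across the tensor (legal over $S$) and applying $\pzt$ introduces the factor $\phi_0(s)=0$, so $\pzt(\Delta(sa))=0$. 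Therefore $j_0$ intertwines the coproducts; being a bialgebra homomorphism between Hopf algebras it automatically commutes with antipodes, so $j_0$ is a Hopf algebra isomorphism $H_*(\Gr_G)\cong\B$, completing the proof.
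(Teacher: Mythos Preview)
The paper does not prove this theorem: it is quoted from \cite[Prop.~5.4, Thm.~5.5]{Lam2} and used as a black box. So there is no in-paper proof to compare against, and your reduction to the ``thinness lemma'' $\B\cap\bigoplus_{u\notin W_\af^0}\Z A_u=0$ together with the coalgebra compatibility is exactly the right shape; parts (i) and (iii) of your argument are correct as written.

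The genuine gap is (ii), and you flag it yourself (``granting it''). Two remarks. First, a minor slip: from $\phi_0(b\la)=0$ and Lemma~\ref{lem:commute} you only get $\sum_{u\gtrdot y}c_u\,\al_{yu}^\vee\in\Z K$, not $=0$ in $Q_\af^\vee$ (compare Lemma~\ref{L:equivB}); the pairing is against the finite lattice $P$, which annihilates $K$. Second, and more seriously, the phrase ``sufficiently independent'' is not a proof, and there is no downward induction on length available: $b$ is homogeneous, so every $u$ in its support has the same length, and the relations indexed by $y$ with $\ell(y)=r-1$ are a fixed linear system in the $c_u$. What you must show is that this linear system, restricted to non-Grassmannian support, has only the zero solution; equivalently, that the rank of $\B$ in degree $r$ equals $|\{x\in W_\af^0:\ell(x)=r\}|$. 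The coroots $\al_{yu}^\vee$ for $u\gtrdot y$ are typically \emph{not} linearly independent (already in affine type $A$ one finds distinct covers with proportional coroot images), so the local picture at a single $y$ does not suffice. The actual argument in \cite{Lam2} does not proceed via these coroot relations directly; it uses the equivariant structure (the localization/GKM description of $H^T(\Gr_G)$ and its pairing with $H_T(\Gr_G)$) to identify $\B$ with the image of $Z_\A(S)$ under $\phi_0$. If you want a self-contained nilCoxeter argument, you would need a substantially more delicate combinatorial input than what is sketched here.
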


$\B$ has a basis $\{\PA_w\mid w\in W_\af^0\}$ defined by
\begin{align} \label{E:ncdef}
  \PA_w = j_0(\xi_w)\qquad\text{for $w\in W_\af^0$.}
\end{align}
For $G=SL_n(\C)$ these are the noncommutative $k$-Schur functions of
\cite{Lam2}. The following Lemma is an aid for computing the
elements $\PA_w$.

\begin{lem} \label{L:equivB} Let $a=\sum_{w\in W_\af} c_w A_w\in
\NilCox$ with $c_w\in\Z$. Then $a\in \B$ if and only if
$\sum_{w\gtrdot v} c_w \al_{vw}^\vee\in \Z K$ for all $v\in W_\af$.
\end{lem}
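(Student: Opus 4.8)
The plan is to unwind the definition of $\B$ in \eqref{E:inB} and reduce the membership condition to a commutator computation, then extract the coefficient of each basis element $A_v$ and interpret the resulting linear-in-$P$ condition via the pairing with $\delta$ and $K$. First I would observe that for $a = \sum_w c_w A_w$ with $c_w \in \Z$ (so $\phi_0(a) = a$), membership $a \in \B$ is equivalent to $\phi_0(as - sa) = 0$ for every $s \in S$; since $S = \mathrm{Sym}(P)$ is generated by $P$ and $\phi_0$ is linear, and since $[a, \cdot]$ satisfies a twisted-Leibniz-type rule, it suffices to test $s = \la$ ranging over $P$. So the condition becomes: $\phi_0(a\la - \la a) = 0$ for all $\la \in P$.

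Next I would compute $a\la - \la a$ using Lemma \ref{lem:commute}: $A_x \la = (x\cdot\la) A_x + \sum_{y \lessdot x} \ip{\al_{yx}^\vee, \la} A_y$, so
\begin{align*}
  a\la - \la a = \sum_x c_x\bigl( (x\cdot\la - \la) A_x + \sum_{y\lessdot x} \ip{\al_{yx}^\vee,\la} A_y\bigr).
\end{align*}
Applying $\phi_0$ kills the polynomial coefficients $c_x(x\cdot\la - \la)$ (these lie in the augmentation ideal of $S$ since they have no constant term — here I use that the level-zero action fixes constants and $x\cdot\la - \la \in Q$), and leaves
\begin{align*}
  \phi_0(a\la - \la a) = \sum_x \sum_{y \lessdot x} c_x \ip{\al_{yx}^\vee, \la}\, A_y = \sum_v \Bigl( \sum_{x \gtrdot v} c_x \al_{vx}^\vee \Bigr)^{\!\!*}\!\!(\la)\, A_v,
\end{align*}
where I am regrouping by the lower element $v$ and writing the coefficient as the pairing of $\la$ with $\mu_v := \sum_{x \gtrdot v} c_x \al_{vx}^\vee \in Q_\af^\vee$. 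Since the $A_v$ are a $\Z$-basis of $\NilCox$, this vanishes for all $\la \in P$ if and only if $\ip{\mu_v, \la} = 0$ for all $\la \in P$ and all $v \in W_\af$.

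Finally I would identify the condition $\ip{\mu_v, \la} = 0$ for all $\la \in P$ with $\mu_v \in \Z K$. The coroots $\al_{vx}^\vee$ live in $Q_\af^\vee$, which has $\Z$-basis $\{\al_i^\vee \mid i \in I_\af\}$; pairing against the finite weight lattice $P = \bigoplus_{i\in I}\Z\om_i$ detects precisely the coefficients of $\al_1^\vee,\dots,\al_n^\vee$ (using $\ip{\al_i^\vee,\om_j} = \delta_{ij}$ for $i,j\in I$ and that $\al_0^\vee$ pairs to zero with each $\om_j$, which holds because $\om_j = \La_j - \ip{K,\La_j}\La_0$ and $\ip{\al_0^\vee, \La_j} = \delta_{0j} = 0$, $\ip{\al_0^\vee,\La_0}=1$, so one must check $\ip{K,\La_j}=0$ for $j\in I$; alternatively note $\al_0^\vee = K - \theta^\vee$ and $\theta^\vee \in Q^\vee$ pairs nontrivially — so I would instead argue directly that the kernel of $Q_\af^\vee \to \mathrm{Hom}(P,\Z)$ is exactly the radical, namely $\Z K$, by the well-known fact \eqref{E:Kz} that $\ip{K,\la}=0$ for $\la\in Q_\af$ together with corank-one-ness of the affine Cartan matrix). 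Thus $\mu_v \in \Z K$ for all $v$, which is exactly the claimed condition $\sum_{w\gtrdot v} c_w \al_{vw}^\vee \in \Z K$. The main obstacle is the bookkeeping in the last step — pinning down that the annihilator of $P$ inside $Q_\af^\vee$ is precisely $\Z K$ and not something larger — which rests on the structure of the affine Cartan matrix and the relation \eqref{E:central}; everything else is a direct computation from Lemma \ref{lem:commute}.
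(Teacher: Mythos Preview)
Your proof is correct and follows essentially the same route as the paper: reduce from $s\in S$ to $\la\in P$, apply Lemma~\ref{lem:commute} to expand $a\la$, collect by the lower element $v$, and identify the annihilator of $P$ in $Q_\af^\vee$ with $\Z K$. One minor simplification: you don't need the commutator $a\la-\la a$; since $\la a=\sum_w (c_w\la)A_w$ has $\phi_0(\la a)=0$ directly, the condition is just $\phi_0(a\la)=0$, which is what the paper uses.
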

\begin{proof} The following are equivalent:
\begin{enumerate}
\item
Equation \eqref{E:inB} holds for $a$.
\item
$\phi_0(a\la)=0$ for all $\la\in P$.
\item
$\sum_w c_w \sum_{v\lessdot w} \ip{\al_{vw}^\vee,\la} A_v=0$
for all $\la\in P$.
\item
$\sum_{w\gtrdot v} c_w \ip{\al_{vw}^\vee,\la}=0$ for all $v\in
W_\af$ and all $\la \in P$.
\item $\sum_{w\gtrdot v} c_w \al_{vw}^\vee \in \Z K$
for all $v\in W_\af$.
\end{enumerate}
(1) and (2) are easily seen to be equivalent. The equivalence of (2)
and (3) follows from equation \eqref{lem:commute}. (3) and (4) are
equivalent because the $A_v$ form a basis of $\NilCox$. (4) and (5)
are equivalent because $\Z K = \{\mu\in Q_\af^\vee\mid
\ip{\mu,P}=0\}$.
\end{proof}

\section{Schubert polynomials for $H_*(\Gr_{Sp_{2n}(\C)})$ and $H^*(\Gr_{Sp_{2n}(\C)})$}
\label{S:main} In this section we outline the proofs of Theorems
\ref{T:affStan}, \ref{T:main} and \ref{T:Pieri}, relegating two
technical calculations to sections \ref{S:Pieri} and \ref{S:Hopf}.

\subsection{Special generators of Fomin-Stanley subalgebra}
Recall the special elements $\rho_i$ defined in
\eqref{E:specialclass}.  For $1\le i\le 2n$ define
\begin{align} \label{E:PA}
  \PA_i = \PA_{\rho_i}
\end{align}
where $\PA_w$ is defined in \eqref{E:ncdef}.

We now state the explicit expansion of the elements $\PA_i\in\B$
that correspond to homology generators.  Recall the set $\WS$
defined in section \ref{SS:Z}.  Note that $\rho_r$ is the unique
Grassmannian element in $\WS_r$ for $1 \leq r \leq 2n$.

\begin{thm} \label{T:P}
For $1\le r\le 2n$,
\begin{align} \label{E:P}
  \PA_r = \sum_{w\in \WS_r} 2^{c(w)-1} A_w.
\end{align}
\end{thm}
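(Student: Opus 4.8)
The plan is to verify directly that the right-hand side $a_r := \sum_{w\in\WS_r} 2^{c(w)-1} A_w$ lies in the affine Fomin-Stanley subalgebra $\B$, and that it has the correct leading term $A_{\rho_r}$ with respect to the Grassmannian coset, whence $a_r = \PA_{\rho_r} = \PA_r$ by the uniqueness in Theorem~\ref{T:iso}. The key tool for the membership check is Lemma~\ref{L:equivB}: $a_r\in\B$ if and only if for every $v\in\tC_n$ the sum $\sum_{w\gtrdot v} c_w \al_{vw}^\vee$ lies in $\Z K$, where $c_w = 2^{c(w)-1}$ if $w\in\WS_r$ and $c_w=0$ otherwise. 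So I would fix $v$, enumerate the covers $v\lessdot w$ with $w\in\WS_r$, and show the weighted sum of the coroots $\al_{vw}^\vee$ is an integer multiple of $K = \al_0^\vee + \dots + \al_n^\vee$ (equivalently, pairs to zero against every $\om_j$, $j\in I$).

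First I would set up the combinatorics of $\WS$ needed to run this: since every $w\in\WS$ has a reduced word which is a subword of a rotation of the unique reduced word of $\rho_{2n}$, the elements of $\WS_r$ are indexed by "cyclic intervals" in the cyclic word underlying $\rho_{2n}$, and $\Supp(w)$ is a union of arcs on the affine Dynkin diagram (a cycle of type $\tilde C_n$, folded at both ends). I would describe $\WS_r$ explicitly, identify when $w\in\WS_r$ covers a given $v$, compute the root $\al_{vw}$ (using the "shortest $u$" recipe of section~\ref{SS:affroots}), and thereby get a closed form for $\al_{vw}^\vee$ as a nonnegative integer combination of simple coroots. Here the structure of $\WS$ as subwords of a single cyclic word is what makes the covers and their roots uniform and tractable; this is exactly the kind of bookkeeping that section~\ref{S:Pieri} ("the Bruhat order of $\tC_n$ restricted to $\WS$") is set up to handle, and I would lean on those combinatorial lemmas.

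With that in hand, the verification of $\sum_{w\gtrdot v,\,w\in\WS_r} 2^{c(w)-1}\al_{vw}^\vee\in\Z K$ becomes a finite case analysis organized by how $v$ sits relative to the cyclic word: the covers $w\gtrdot v$ in $\WS_r$ come from inserting a letter at one of a small number of positions (the two ends of the arc, or bridging a gap in $\Supp(v)$), and the point is that the contributions from the "two ends" come in pairs whose coroots add up to something proportional to $K$, while the $2^{c(w)-1}$ factors are exactly calibrated (a component merge changes $c$ by one and doubles the relevant coefficient) to make the cancellation work. The factor $2^{c(w)-1}$ and the doubled relations $s_0s_1s_0s_1$, $s_{n-1}s_ns_{n-1}s_n$ in $\tC_n$ — reflecting the edge labels $a_{10}=a_{n-1,n}=-2$ — are what produce the $\Z K$ (rather than $0$) on the right; checking that these weights balance at the two ``doubled'' ends of the diagram is, I expect, the main obstacle and the crux of the whole computation.

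Finally, once $a_r\in\B$ is established, I would observe that among $w\in\WS_r$ the only Grassmannian element is $\rho_r$ (as noted just before the theorem), so $a_r\in A_{\rho_r} + \sum_{u\notin\tC_n^0}\Z A_u$ with coefficient $2^{c(\rho_r)-1} = 2^0 = 1$ on $A_{\rho_r}$ (since $\Supp(\rho_r)$ is a single interval containing $0$, so $c(\rho_r)=1$). By the uniqueness clause of Theorem~\ref{T:iso}, $a_r = j_0(\xi_{\rho_r}) = \PA_{\rho_r} = \PA_r$, which is \eqref{E:P}.
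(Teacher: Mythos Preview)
Your approach is exactly the paper's: reduce via Theorem~\ref{T:iso} and Lemma~\ref{L:equivB} to showing $\sum_{w\in\WS_r,\,w\gtrdot v} 2^{c(w)-1}\al_{vw}^\vee\in\Z K$ for each $v$, which the paper formulates as Proposition~\ref{p:2} (with the precise value $2^{c(v)}K$, and the observation that only $v\in\WS$ with $\ell(v)=r-1$ contribute since $\WS$ is a Bruhat order ideal). Be warned that the cover analysis is considerably more intricate than your sketch suggests --- the Dynkin diagram of $\tC_n$ is a path, not a cycle, and the paper distinguishes internal, isolated, adjoining, and merging covers (Lemmata~\ref{L:ncov} and~\ref{L:intcov}), with the internal covers (especially the ``special cover'' when $\Supp(v)=I_\af$) requiring the bulk of the work.
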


This result is proved in section \ref{S:Pieri}. Some examples for
$\PA_r$ are given in Appendix~\ref{S:PAdata}.

\begin{remark}
It follows from Theorem \ref{T:main} that the elements $\PA_r \in
\B$ generate the affine Fomin-Stanley subalgebra $\B$.
\end{remark}

\subsection{Relations among special generators}

Let $\PC^n$ be the set of partitions $\la$ with $\la_1\le 2n$, which
have at most one part of size $i$ for all $i\le n$. We first note
the following result which is essentially \cite[Lemma 24]{EE}. The
bijection of Lemma \ref{L:EE} was first brought to our attention by
Morse \cite{Mor} who discovered it independently.

\begin{lem}\label{L:EE}
Let $w \in \tC_0^n$.  Then $w$ has a unique length-additive
factorization
$$
w = \rho_{\la_l} \cdots \rho_{\la_2} \rho_{\la_1}
$$
into Grassmannian $Z$-s such that every left factor $\rho_{\la_l}
\cdots \rho_{\la_i}$ is Grassmannian.  Furthermore the map $w
\mapsto \lambda(w)$ is a bijection $\tC_0^n \to \PC^n$ such that
$\ell(w)=|\la(w)|$.
\end{lem}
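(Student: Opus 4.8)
The plan is to analyze the structure of a Grassmannian element $w \in \tC_n^0$ by peeling off reduced letters from the right, one ``block'' at a time, where each block will turn out to be one of the $\rho_i$. The starting observation is the standard description of $\tC_n^0$ in terms of reduced words: since $\rho_i$ as defined in \eqref{E:specialclass} is the minimal-length coset representative obtained by acting with $s_{i-1}\dotsm s_1 s_0$ (and its ``reflected'' analogue for $i>n$), a reduced word for $w$ read from the right must begin with some string $s_{k}\dotsm s_1 s_0$ — the letter $0$ occurs exactly once at the far right of any reduced word of a Grassmannian element, and the letters immediately preceding it are forced to decrease consecutively down to $0$ because $w$ being a minimal coset representative means $w s_i > w$ for all $i \in I$ (equivalently $\ell(w s_i) > \ell(w)$), which constrains which letters can appear where. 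First I would make precise the claim that the maximal such decreasing run starting at the rightmost $0$ is exactly a reduced word for some $\rho_{\la_1}$ with $\la_1 \le 2n$; this is where the ``reflected'' shape of $\rho_i$ for $n+1 \le i \le 2n$ enters, since a run passing through the node $n$ bounces back. Write $w = w' \rho_{\la_1}$ length-additively. Then $w'$ need not be Grassmannian, but I would show that one can choose the factorization so that $w'$ \emph{is} Grassmannian: the key point is that $\Dec{}{}$-type exchange arguments let us commute any ``defect'' past $\rho_{\la_1}$, and in fact the cited result \cite[Lemma 24]{EE} already supplies exactly this inductive step, so the cleanest route is to invoke it directly and do the bookkeeping to match conventions. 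Iterating gives the length-additive factorization $w = \rho_{\la_l}\dotsm \rho_{\la_1}$ into Grassmannian $Z$-s with all left factors Grassmannian.

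Next I would establish uniqueness. Given any such factorization, the rightmost factor $\rho_{\la_1}$ is determined because its reduced word is forced to be the maximal right-decreasing run ending in $0$ described above — any other choice would either fail to be length-additive or would leave a non-Grassmannian complementary right factor. Then uniqueness follows by induction on $\ell(w)$ applied to $\rho_{\la_l}\dotsm \rho_{\la_2} = w \rho_{\la_1}^{-1}$, which is again Grassmannian by hypothesis. The part of the claim that each $\la_i \le 2n$ is immediate from \eqref{E:specialclass}, and $\ell(w) = |\la(w)|$ is just additivity of length together with $\ell(\rho_i) = i$, which one reads off from the reduced words in \eqref{E:specialclass}.

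Finally, for the bijection with $\PC^n$ I would check two things: first, that the multiset $\la(w) = (\la_l, \dotsc, \la_1)$ — reordered to a partition — indeed lies in $\PC^n$, i.e.\ has at most one part of each size $i \le n$; and second, that the map is onto. The ``at most one small part'' condition is the genuinely type-$C$-specific phenomenon: if two consecutive factors $\rho_{\la_{j+1}}\rho_{\la_j}$ had $\la_j = \la_{j+1} = i \le n$, their concatenated reduced word $(s_{i-1}\dotsm s_0)(s_{i-1}\dotsm s_0)$ would not be reduced (one can cancel using the finite-type braid relations among $s_0, \dotsc, s_{i-1}$, which generate a type $C$ or $A$ parabolic not involving node $n$), contradicting length-additivity — whereas for $i > n$ the presence of the bounce at node $n$ breaks this and repetition is allowed. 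For surjectivity, given $\la \in \PC^n$ I would order its parts in the unique way compatible with the ``all left factors Grassmannian'' requirement (roughly: larger ``reflected'' parts outermost, or more precisely the order dictated by the proof of existence) and check the product is length-additive and Grassmannian, again by induction. The main obstacle I anticipate is pinning down exactly which orderings of the parts of $\la$ yield a valid factorization and proving that ordering is unique — this is the combinatorial heart of the statement and is precisely what \cite[Lemma 24]{EE} (and Morse's independent bijection) resolves, so I would lean on that reference for the delicate exchange arguments rather than reproving them from scratch.
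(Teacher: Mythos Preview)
Your proposal and the paper's proof take the same route: both defer the substantive work to \cite[Lemma~24]{EE}, with the paper simply translating conventions (the embedding $\tC_n \hookrightarrow \tilde{S}_{2n+2}$ via windows, and the involution $s_i \leftrightarrow s_{n-i}$) and identifying the sorting steps there with right multiplication by the $\rho_{\la_i}$.

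However, the extra combinatorial commentary you add contains a concrete error. You argue that parts of size $i \le n$ cannot repeat because $(s_{i-1}\dotsm s_0)(s_{i-1}\dotsm s_0)$ ``would not be reduced.'' This is false: for instance $\rho_2^2 = s_1 s_0 s_1 s_0$ is a reduced word in $\tC_n$ (it is a braid word for the $m_{01}=4$ relation). The correct obstruction is that $\rho_i\rho_i \notin \tC_n^0$ for $i \le n$ (as the paper notes elsewhere, in the proof of Proposition~\ref{P:evenP}), and more to the point that the sorting procedure in \cite{EE} never produces a repeated small part. Your argument as written only addresses \emph{consecutive} equal parts anyway, so even with the corrected statement it does not by itself show $\la(w) \in \PC^n$; that conclusion really does come from the structure of the Eriksson--Eriksson bijection rather than from a local braid calculation. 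Since you already plan to cite \cite{EE} for the hard step, the cleanest fix is simply to let that citation carry the $\PC^n$ membership as well, as the paper does.
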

\begin{proof}
The result follows nearly immediately from \cite[Lemma 24]{EE}.  In
\cite{EE} one associates to $w \in \tC_n$ the window
$$[-w_n,\ldots,-w_1,0,w_1,\ldots,w_n]$$ of an affine
permutation.  This corresponds to the embedding of $\tC_n$ into the
affine symmetric group $\tilde{S}_{2n+2}$.  In \cite{EE} the
parabolic subgroup is generated by $\{s_0, \ldots, s_{n-1}\}$ rather
than by $\{s_1, \ldots, s_n\}$ so we must apply the notational
involution $s_i \leftrightarrow s_{n-i}$ to be compatible with
\cite{EE}.

In any case, for $w \in \tC_n^0$ it is shown in \cite[Lemma 24]{EE}
that the window of $w$ can be successively {\it sorted} to become
the identity. Each sorting operation corresponds to right
multiplication by a factor $\rho_{\la_i}$.  The requirement that
every left factor $\rho_{\la_l} \cdots \rho_{\la_i}$ is Grassmannian
corresponds to asking for the window of $w$ to be completely sorted
at each step. The rest of the statement now follows from \cite{EE}.
\end{proof}

\begin{prop} \label{P:evenP}
The elements $\PA_i \in \B$ satisfy
$$
\PA_{2m} = 2 \left(\PA_1 \PA_{2m-1} -  \PA_2 \PA_{2m-2} + \dotsm +
(-1)^{m-2}
  \PA_{m-1}\PA_{m+1}\right)+(-1)^{m-1}\PA_{m}^2
$$
for $1 \leq m \leq n$.
\end{prop}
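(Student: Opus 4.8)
The plan is to deduce this identity among the $\PA_i\in\B$ from the corresponding identity \eqref{E:Prel} among the Schur $P$-functions $P_i\in\Hminf$, using the fact (to be established in the main results) that $\B$ is a commutative ring isomorphic to $\Hc_{(n)}$ with $\PA_i\leftrightarrow P_i$. More precisely, Theorem \ref{T:main} gives a Hopf algebra isomorphism $\Phi:\Hc_{(n)}\to H_*(\Gr_{Sp_{2n}(\C)})$ with $\Phi(P_i)=\xi_{\rho_i}$, and Theorem \ref{T:iso} identifies $H_*(\Gr_{Sp_{2n}(\C)})\cong\B$ via $j_0$ with $j_0(\xi_{\rho_i})=\PA_{\rho_i}=\PA_i$. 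Composing, $j_0\circ\Phi:\Hc_{(n)}\to\B$ is a ring isomorphism carrying $P_i$ to $\PA_i$ for $1\le i\le 2n$. Applying this isomorphism to the relation \eqref{E:Prel}, valid in $\Hminf$ (hence in $\Hc_{(n)}$ since all indices appearing are $\le 2n$), yields exactly the asserted identity among the $\PA_i$.

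The one point requiring care is a potential circularity: the proof of Theorem \ref{T:main} may itself rely on Proposition \ref{P:evenP} (for instance, to check that the map $\Phi$ sending $P_i\mapsto\xi_{\rho_i}$ is well-defined as a ring homomorphism, one must verify the relations among the $P_i$ are respected). If so, one should instead prove the identity directly inside $\B\subset\NilCox$. For this, the strategy is to use Lemma \ref{L:EE}: every element $w\in\tC_n^0$ has a unique length-additive factorization $w=\rho_{\la_l}\cdots\rho_{\la_1}$ into Grassmannian $Z$-s with all left factors Grassmannian, and $w\mapsto\la(w)$ is a bijection $\tC_n^0\to\PC^n$ with $\ell(w)=|\la(w)|$. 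This shows that $\B$, which has $\Z$-basis $\{\PA_w\mid w\in\tC_n^0\}$, is a polynomial-type algebra in which the monomials in $\PA_1,\dots,\PA_{2n}$ are controlled by $\PC^n$; combined with the (already-established) commutativity of $\B$, one obtains that $\B=\Z[\PA_1,\PA_3,\dots,\PA_{2n-1}]$ and that the $\PA_i$ for $i$ even are expressible in the odd ones.

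The cleanest route is then the following: since $\B$ is commutative, the assignment $P_i\mapsto\PA_i$ ($1\le i\le 2n$) extends to a well-defined ring homomorphism $\Hc_{(n)}\to\B$ \emph{provided} the $\PA_i$ satisfy the defining relation \eqref{E:Prel} of $\Hc_{(n)}$ — which is precisely what we want to prove, so instead argue as follows. Using Lemma \ref{L:EE} and Theorem \ref{T:P}, expand both sides of the claimed identity in the basis $\{A_w\}$ of $\NilCox$. On the left, $\PA_{2m}=\sum_{w\in\WS_{2m}}2^{c(w)-1}A_w$. On the right, each product $\PA_a\PA_b$ is a sum $\sum 2^{c(v)+c(v')-2}A_vA_{v'}=\sum 2^{c(v)+c(v')-2}A_{vv'}$ over length-additive products; regrouping by $w=vv'$ and using Lemma \ref{L:EE} to control which $w\in\WS_{2m}$ arise with which multiplicities, one matches coefficients with \eqref{E:Prel}. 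The \textbf{main obstacle} is this bookkeeping: showing that the signed sum of binomial-type coefficients coming from factoring each $w\in\WS_{2m}$ as $\rho_a\rho_b$ telescopes to give exactly $2^{c(w)-1}$, i.e.\ reproducing the combinatorics underlying \eqref{E:Prel} at the level of $Z$-s. If, however, the logical dependencies in the paper permit using Theorem \ref{T:main} here, the first paragraph's one-line argument is the intended proof.
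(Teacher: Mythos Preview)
Your circularity concern is exactly right, and it is fatal to the first approach: the paper uses Proposition \ref{P:evenP} to prove that $\Phi:\Hc_{(n)}\to H_*(\Gr_{Sp_{2n}(\C)})$ is well-defined (see \S\ref{S:ker}, where $\Phi$ is introduced with the justification ``By Proposition \ref{P:evenP} and Theorem \ref{T:iso} this map is well-defined''). So the one-line transport argument via Theorem \ref{T:main} is not available here; a direct proof inside $\B$ is required.

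Your direct sketch points in the right direction but does not supply the key step, and it also chooses a harder basis than necessary. Rather than expanding in the $A_w$ basis of $\NilCox$, the paper works in the Schubert basis $\{\PA_w\mid w\in\tC_n^0\}$ of $\B$ (Theorem \ref{T:iso}). The crucial ingredient you are missing is the Pieri-type formula
\[
\PA_i\,\PA_j=\sum_{\substack{v\in\WS_i\\ \ell(v\rho_j)=i+j\\ v\rho_j\in\tC_n^0}} 2^{c(v)-1}\,\PA_{v\rho_j},
\]
obtained by combining Theorem \ref{T:P} with the nonequivariant case of Proposition \ref{P:jco}. Since any $v\in\WS_i$ has at most one $s_0$ in a reduced word, Lemma \ref{L:EE} forces every $w$ appearing on the right to have the form $w=\rho_a\rho_b$ with $a+b=i+j$. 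The paper then computes, for fixed $w=\rho_i\rho_{2m-i}\in\tC_n^0$, the coefficient $[\PA_w]\,\PA_j\PA_{2m-j}$ for each $1\le j\le m$: it is $0$ if $i>j$, $1$ if $i=j$, $2$ if $0<i<j$ (because the residual factor $v=\rho_i\rho_{2m-i}\rho_{2m-j}^{-1}$ is a $Z$ with two components), and $1$ if $i=0$. Plugging these into the alternating sum verifies the identity coefficient by coefficient. This explicit $0/1/2$ computation is the ``bookkeeping'' you flagged as the main obstacle; it is short, but it is the heart of the argument and your proposal does not contain it.
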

\begin{proof}
We use the explicit computation of the $\PA_i$ given in Theorem
\ref{T:P}.  By evaluating the statement of Proposition \ref{P:jco}
at 0, we observe that
\begin{equation}\label{E:PAmult}
\PA_i \; \PA_j = \sum_{w = v\rho_j} 2^{c(v) - 1}\PA_w
\end{equation}
where the summation is over all $w = v\rho_j$ such that (a) $v \in
\WS_i$, (b) $\ell(w) = i + j$, and (c) $w \in \tC_n^0$.  Now any
reduced expression for $v \in \WS_i$ can have at most one occurrence
of $s_0$, so by Lemma \ref{L:EE}, we deduce that the set of $w$ such
that $\PA_w$ can occur in a product of the form $\PA_i \; \PA_j$ has
the form $\rho_a\; \rho_b$ where $a + b = i + j$.

Now fix $1 \leq m \leq n$ and let us compute
$$
S = 2 \left(\PA_1 \PA_{2m-1} -  \PA_2 \PA_{2m-2} + \dotsm +
(-1)^{m-2}
  \PA_{m-1}\PA_{m+1}\right)+(-1)^{m-1}\PA_{m}^2.
$$

First via a direct calculation we note that $\rho_m\rho_m \notin
\tC_n^0$ for $1 \leq m \leq n$. We claim that for $1 \leq j \leq m$
and $w = \rho_i \rho_{2m-i}$ satisfying $w \in \tC_n^0$ and $\ell(w)
= 2m$ we have
$$
[\PA_{w}] \PA_{j} \PA_{2m-j} = \begin{cases} 0 &
\mbox{if $i > j$} \\
1 & \mbox{if $ i = j$} \\
2 & \mbox{if $0 < i < j$} \\
1 & \mbox{if $i = 0$.}
\end{cases}
$$
where $[\PA_{w}]b$ denotes the coefficient of $\PA_w$ in $b \in \B$.
The case $i > j$ follows from Lemma \ref{L:EE}.  The case $i = j$ is
immediate since $c(\rho_i) = c(\rho_{2m-i}) = 1$.  For the case $i <
j$ we must consider $v = \rho_i \rho_{2m-i} \rho_{2m-j}^{-1}$.  We
observe that $$\Supp(v) = [0,i-1] \cup \Supp(\rho_{2m-i}\,
\rho_{2m-j}^{-1})$$ and $i$ is smaller than all elements of
$\Supp(\rho_{2m-i} \rho_{2m-j}^{-1})$ (we use the inequality $2m - j
> i$).  Thus $v$, being a product two
``non-touching'' $Z$-s, is itself a $Z$ and we have $c(v) = 2$. The
final case $i = 0$ is trivial.

Now it follows that the $S = \PA_{2m}$, as required.
\end{proof}

\subsection{Coproduct formula for special generators}
The following result is proved in section \ref{S:Hopf}.
\begin{thm} \label{T:phiP}
For $1 \le r \le 2n$
$$
\pzt(\Delta(\PA_r)) = 1 \otimes \PA_r + \PA_r \otimes 1 + 2\sum_{0 <
s < r}\PA_s \otimes \PA_{r-s}.
$$
\end{thm}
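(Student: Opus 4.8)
The plan is to compute $\pzt(\Delta(\PA_r))$ directly from the explicit expansion of $\PA_r = \sum_{w \in \WS_r} 2^{c(w)-1} A_w$ given in Theorem~\ref{T:P}, together with the coproduct formula \eqref{E:DeltaA} for the nilHecke generators $A_i$. First I would note that, since $\pzt \circ \Delta$ is an $S$-algebra homomorphism on $\B$ (equivalently, $\Delta$ is multiplicative in the twisted sense of \eqref{E:Deltamult} and $\pzt$ kills the scalars), it suffices to understand $\pzt(\Delta(A_w))$ for a reduced word $w = s_{i_1} \cdots s_{i_r} \in \WS_r$. Iterating \eqref{E:DeltaA} and applying $\pzt$ (which sets $\alpha_i \mapsto 0$) collapses all the ``correction terms'' $-A_i \otimes \alpha_i A_i$, leaving
$$
\pzt(\Delta(A_w)) = \sum_{w = uv} A_u \otimes A_v,
$$
where the sum is over all length-additive factorizations $w = uv$ arising from splitting the chosen reduced word $i_1 \cdots i_r$ into a prefix and a suffix.

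The combinatorial heart of the argument is then to reorganize the double sum $\sum_{w \in \WS_r} 2^{c(w)-1} \sum_{w=uv} A_u \otimes A_v$ by the pair $(u,v)$. For this I would use the fact (from Section~\ref{SS:Z}) that every $w \in \WS$ has, up to rotation, essentially one reduced word, obtained as a subword of a rotation of the reduced word of $\rho_{2n}$; concretely, a prefix $u$ and suffix $v$ of such a word are again elements of $\WS$, and conversely a length-additive product $uv$ with $u,v\in\WS$ lies in $\WS_r$ provided the supports ``fit together'' inside a rotation of the $\rho_{2n}$-word. The key bookkeeping is the behavior of $c(\cdot)$: if $u,v \in \WS$ with $\ell(uv) = \ell(u) + \ell(v)$ and $uv \in \WS$, then comparing the number of components of $\Supp(u) \cup \Supp(v)$ inside $I_\af$ to those of $\Supp(u)$ and $\Supp(v)$ gives either $c(uv) = c(u) + c(v)$ (when the supports are ``non-touching'') or $c(uv) = c(u) + c(v) - 1$ (when they share a boundary node); this dichotomy, together with the weight $2^{c(w)-1}$, is exactly what produces the factor $2$ in the cross terms $\PA_s \otimes \PA_{r-s}$ and the absence of a factor $2$ in the two boundary terms $1 \otimes \PA_r$ and $\PA_r \otimes 1$ (which correspond to $u = \mathrm{id}$ or $v = \mathrm{id}$). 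Summing $2^{c(u)-1} \cdot 2^{c(v)-1} = 2^{(c(u)+c(v))-2}$ against $2^{c(uv)-1}$ and matching exponents is then routine.

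Concretely, I would split the contributions to $\pzt(\Delta(\PA_r))$ into three families: (i) $u = \mathrm{id}$, contributing $1 \otimes \PA_r$; (ii) $v = \mathrm{id}$, contributing $\PA_r \otimes 1$; and (iii) both $u,v$ of positive length $s$ and $r-s$ respectively. For family (iii) I would show that the $(A_u \otimes A_v)$-coefficient is $\sum 2^{c(w)-1}$ over $w \in \WS_r$ admitting the split $w = uv$, and that for each fixed $(u,v)$ with $u \in \WS_s$, $v \in \WS_{r-s}$, $\ell(uv)=r$, and $uv\in \WS$, the element $w = uv$ is unique with $c(w) \in \{c(u)+c(v)-1,\ c(u)+c(v)\}$ — but in fact inside $\WS$ a length-additive product of two $Z$-s is again a $Z$ exactly when the supports don't overlap in a node, forcing $c(uv) = c(u)+c(v)$, so $2^{c(w)-1} = 2 \cdot 2^{c(u)-1} \cdot 2^{c(v)-1}$ and the coefficient is $2$ times the $(A_u\otimes A_v)$-coefficient of $\PA_s \otimes \PA_{r-s}$. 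Conversely every $(u,v)$ occurring in $\PA_s \otimes \PA_{r-s}$ with $\ell(uv) = r$ gives a valid $w \in \WS_r$ — this uses the rotation-subword description of $\WS$ and the fact, exploited in the proof of Proposition~\ref{P:evenP}, that a reduced word in $\WS$ contains at most one $s_0$. I expect the main obstacle to be precisely this last point: verifying that the length-additive products $w = uv$ with $u \in \WS_s$, $v \in \WS_{r-s}$ that actually lie in $\tC_n$ are exactly those in $\WS_r$, and pinning down the $c$-value — i.e.\ controlling which concatenations of rotation-subwords of the $\rho_{2n}$-word remain rotation-subwords. This is the same circle of ideas as Section~\ref{S:Pieri}, so I would either cite the Bruhat-order analysis of $\WS$ developed there or give a short self-contained argument using the explicit shape of the unique reduced word of $\rho_{2n}$.
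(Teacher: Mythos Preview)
Your proof breaks down at the very first step. The claimed formula
\[
\pzt(\Delta(A_w)) = \sum_{w=uv} A_u \otimes A_v
\]
(sum over prefix/suffix splits of a fixed reduced word) is false, for two independent reasons.

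First, even if the correction term $-A_i \otimes \alpha_i A_i$ in \eqref{E:DeltaA} were absent, iterating $A_i \mapsto A_i \otimes 1 + 1 \otimes A_i$ produces the shuffle coproduct: the sum would run over all subword/complement pairs $(u,u^\perp)$ of the reduced word, not merely prefix/suffix splits.

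Second, and more seriously, the correction terms do \emph{not} vanish under $\pzt$. The scalar $\alpha_j \in S$ does not commute with $A_i$: by \eqref{E:easycommute} one has $A_i \alpha_j = (s_i\alpha_j)A_i + \langle\alpha_i^\vee,\alpha_j\rangle$, and the integer $\langle\alpha_i^\vee,\alpha_j\rangle = a_{ij}$ survives evaluation at $0$. Already for $w = s_i s_j$ one finds
\[
\pzt(\Delta(A_i A_j)) = A_iA_j\otimes 1 + 1 \otimes A_iA_j + A_i \otimes A_j + A_j \otimes A_i + b_{ij}\, A_j \otimes A_j,
\]
where $b_{ij} = -a_{ij}$; the last term, coming from $-A_j \otimes A_i\alpha_j A_j$, is not a subword/complement pair at all. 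Proposition~\ref{prop:del A} records the correct (and much more elaborate) formula for $\pzt(\Delta(A_w))$, as a sum over tuples $\vb$ of subwords weighted by products of off-diagonal Cartan entries.

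Since the first displayed formula fails, none of the subsequent combinatorics applies. (That combinatorics also has internal problems: you acknowledge the dichotomy $c(uv)\in\{c(u)+c(v),\,c(u)+c(v)-1\}$ but then assert that in $\WS$ one is ``forced'' to have $c(uv)=c(u)+c(v)$; this is already wrong for $u=s_1$, $v=s_0$, $uv=\rho_2$.)

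The paper proceeds quite differently. By Theorem~\ref{T:iso}, $\PA_r$ is the unique element of $\B$ whose only Grassmannian term is $A_{\rho_r}$, and a non-Grassmannian $A_w$ contributes only terms $A_x\otimes A_y$ with $x$ or $y$ non-Grassmannian to $\pzt(\Delta(A_w))$. Hence it suffices to show that the coefficient of $A_{\rho_s}\otimes A_{\rho_{r-s}}$ in $\pzt(\Delta(A_{\rho_r}))$ equals $2$ for each $0<s<r$. This coefficient is then extracted from Proposition~\ref{prop:del A} applied to the unique reduced word of $\rho_r$, via a substantial case analysis (Lemma~\ref{lem:uy} and Proposition~\ref{prop:hard}).
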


\begin{remark}
An alternative formulation of Theorem \ref{T:phiP} is that the
coefficient of $\xi^{\rho_r}$ in $\xi^{\rho_s}\,\xi^{\rho_{r-s}} \in
H^*(\Gr_{Sp_{2n}(\C)})$ is equal to 2, for $1 \leq s \leq r-1$.
\end{remark}

\subsection{Affine type $C$ Cauchy kernel}\label{S:ker}
Define $\Phi: \Gamma_{(n)} \to H_*(\Gr_{Sp_{2n}(\C)})$ by $P_i
\mapsto \xi_{\rho_i}$ for $1 \leq i \leq 2n$ as in Theorem
\ref{T:main}. By Proposition \ref{P:evenP} and Theorem \ref{T:iso}
this map is well-defined. Define $\Omega_{-1}^\B\in \B \hat \otimes
\Hc^{(n)}$ by taking the image of $\Omega_{-1}^{(n)}$ under the
composition $\Phi_B = j_0 \circ \Phi : \Hc_{(n)} \to \B$:
\begin{equation}
\label{E:Bkern}
\begin{split}
    \Omega_{-1}^\B &= \sum_{\la_1\le 2n} \PA_{\la_1} \PA_{\la_2}\dotsm \otimes
  M_\la[Y] \\
  &= \sum_{\substack{\al \\ \al_i\le 2n}} \PA_{\al_1} \PA_{\al_2}\dotsm
  \otimes 2^{\ell(\al)} y^{\al}
\end{split}
\end{equation}
where $\al$ runs over compositions whose parts have size at most
$2n$. The second equality holds since $\B$ is a commutative ring.
For $w\in \tC_n$, the type $C$ affine Stanley function $\Qf_w$ is
defined by
\begin{equation} \label{E:keraffStan}
  \Omega_{-1}^\B = \sum_{w\in \tC_n} A_w \otimes \Qf_w[Y].
\end{equation}
A straightforward computation shows that this definition agrees with
\eqref{E:affStan}.  Note that \eqref{E:affStan} defines an element
of the ring $\Hc^{(n)}$ via \eqref{E:Ccoiso}. By Theorem \ref{T:iso}
we have
\begin{align} \label{E:kerPF}
  \Omega_{-1}^\B = \sum_{w\in \tC_n^0} \PA_w \otimes \Qf_w[Y]
\end{align}
where $\PA_w$ is defined by \eqref{E:ncdef}.

\subsection{Proof of Theorem \ref{T:Pieri}} Theorem \ref{T:Pieri}
follows immediately from applying the non-equivariant part of
Proposition \ref{P:jco} to Theorem \ref{T:P}.

\subsection{Proof of Theorem \ref{T:main}}
It follows from Proposition \ref{P:evenP}, Theorem \ref{T:phiP} and
Theorem \ref{T:iso} that $\Phi: \Gamma_{(n)} \to
H_*(\Gr_{Sp_{2n}(\C)})$ is a bialgebra morphism.  Since both
$\Gamma_{(n)}$ and $H_*(\Gr_{Sp_{2n}(\C)})$ are graded commutative
and cocommutative Hopf algebras, $\Phi$ must in addition be a Hopf
morphism.  Recall that we define $\Psi: H^*(\Gr_{Sp_{2n}(\C)})\to
\Hc^{(n)}$ by the linear map $\xi^w \mapsto \Qf_w$ for $w \in
\tC_n^0$.

We first show that $\Psi:H^*(\Gr_{Sp_{2n}(\C)})\to \Hc^{(n)}$ and
$\Phi: \Hc_{(n)} \to H_*(\Gr_{Sp_{2n}(\C)})$ are dual with respect
to the pairing $\ip{.,.}: H_*(\Gr_{Sp_{2n}(\C)}) \times
H^*(\Gr_{Sp_{2n}(\C)}) \to \Z$ induced by the cap product and the
pairing $\ipp{.,.}:\Hc_{(n)} \times \Hc^{(n)} \to \Z$ of section
\ref{sec:basishc}.  It suffices to show that for each $w \in
\tC_n^0$ we have $\ip{\Phi(f),\xi^w} = \ipp{f,\Psi(\xi^w)}$ as $f$
varies over the spanning set $\{P_{\lambda_1}\cdots P_{\lambda_l}
\mid \lambda_1 \leq 2n\}$ of $\Hc_{(n)}$. Identifying $\B$ with
$H_*(\Gr_{Sp_{2n}(\C)})$ via the map $j_0$ of Theorem~\ref{T:iso},
we calculate
\begin{align*}
\ipp{P_{\lambda_1}\cdots P_{\lambda_l},\Psi(\xi^w)} & = \ipp{P_{\lambda_1}\cdots P_{\lambda_l},\Qf_w} \\
& = \ipp{P_{\lambda_1}\cdots
P_{\lambda_l},\ip{\Omega_{-1}^\B,\xi^w}}
\\
& = \ip{\ipp{P_{\lambda_1}\cdots
P_{\lambda_l},\Omega_{-1}^\B},\xi^w}
\\
& = \ip{\PA_{\lambda_1}\cdots \PA_{\lambda_l},\xi^w}
\\
& = \ip{\Phi_\B(P_{\lambda_1}\cdots P_{\lambda_l}),\xi^w}.
\end{align*}
The second equality holds by \eqref{E:kerPF}. The fourth holds by
\eqref{E:Bkern} and \eqref{E:Mcoeff}. The other equalities hold by
definition.

Since $\Phi$ is a Hopf-morphism, we deduce that $\Psi$ is also a
Hopf-morphism.  It only remains to prove that $\Psi$ is a bijection.
 For surjectivity, since the $Q_r$ generate $\Hc^{(n)}$ as an
algebra, it suffices to show that $\Qf_{c_r}=Q_r$ in $\Hc_{(n)}$,
where $c_r\in \tC_n^0$ is the length $r$ element of the form
$$
c_r = \cdots s_1 \; s_0 \; s_1 \; \cdots \;s_{n-1} \; s_n \;s_{n-1}
\;\cdots \; s_2\;s_1\; s_0.
$$
It is easy to see that $c_r$ has a unique reduced word.  So a
length-additive factorization of $c_r$ into a product $c_r = \prod_i
v^i$ with each $v^i\in\WS$, is equivalent to a composition
$(\alpha_1,\alpha_2,\ldots,\alpha_s)$ of $r$ into parts of size less
than $2n$, where each $v^i$ is either the identity or has one
component. After multiplying by $2^{t}$ where $t = \#\{i \mid
\alpha_i > 0\}$, we see that $\Qf_{c_r}$ is the generating function
of shifted tableaux $T$ whose shape is a single row of length $r$
where no letter can be used more than $2n$ times. The tableau $T$ is
obtained from the composition $\alpha$ by setting $\alpha_i$ letters
equal to $i$. The factor $2^{t}$ comes from the two possible choices
of marking for the leftmost occurrence of each letter. This matches
$\Qf_{c_r}$ to the combinatorial definition of $Q_r$ using tableaux
\cite[III.8.16]{Mac}.

For injectivity, it suffices to show that $\{\Qf_w \mid w \in
\tC^0_n\}$ is linearly independent.  We shall establish the
triangularity property
$$
\Qf_w = \sum_{\mu \le \la(w)} a_{\mu,w} M_\mu
$$
where $w \mapsto \la(w)$ is the bijection between $\tC_n^0$ and
$\PC^n$ of Lemma \ref{L:EE}, and $\le$ is the lexicographic order on
partitions. Furthermore $a_{\mu,w}$ is unitriangular.

We first observe that if $w \in \tC^0_n$ and $w = v^{s} \; \cdots \;
v^{1}$ is a factorization into $Z$'s then $v^{1}$ must be
Grassmannian, so it is one of the $\rho_r$'s for $r \in [1,2n]$. But
if $w = \rho_{\la_l} \cdots \rho_{\la_2} \rho_{\la_1}$ where
$\rho_{\la_l} \cdots \rho_{\la_2}$ is Grassmannian then
$w\,(\rho_r)^{-1}$ cannot be length subtractive for $2n \geq r >
\la_1$.  This is because every reduced expression for $\rho_{\la_l}
\cdots \rho_{\la_2}$ ends in $s_0$.  Repeating this, we see that the
matrix of coefficients $a_{\mu,w}$ is triangular with respect to the
lexicographic order.  We are using the fact that if $a_{\mu,w} \neq
0$ then the factorization $w = v^{s} \; \cdots \; v^{1}$ can be
chosen so that $\ell(v^{i}) = \mu_i$.

Finally, the factorization $w = \rho_{\la_l} \cdots \rho_{\la_2}
\rho_{\la_1}$ of Lemma \ref{L:EE} shows that $a_{\la(w),w} = 1$
since $c(\rho_i) = 1$.

\subsection{Proof of Theorem \ref{T:affStan}}
The fact that $\Qf_w$ is symmetric and defines an element of
$\Hc^{(n)}$ follows from the definition \eqref{E:keraffStan} via the
affine type $C$ Cauchy kernel.  The statement that $\{\Qf_w \mid w
\in \tC_n^0\}$ forms a basis follows from Theorem \ref{T:main} and
the fact that $\{\xi^w \mid w \in \tC_n^0\}$ is a basis for
$H^*(\Gr_{Sp_{2n}(\C)})$.  The positivity of the product structure
constants is a general theorem due to Graham \cite{Gra} and Kumar
\cite{Kum}.

The coproduct structure constants of $\{\Qf_w \mid w \in \tC_n^0\}$
are the same as those of $\{\xi^w \mid w \in \tC_n^0\}$. By the
duality of $H_*(\Gr_{Sp_{2n}(\C)})$ and $H^*(\Gr_{Sp_{2n}(\C)})$ and
their Schubert bases, the above constants are the same as the
product structure constants for the homology classes $\{\xi_w \mid w
\in \tC_n^0\}$. Using the nonequivariant case $\ell(y)=\ell(x)$ of
\eqref{E:HomTstructureconst}, these constants are given by the
coefficients $j_x^y$ of \eqref{E:jdef}. But these are known to be
nonnegative from the work of Peterson \cite{P} and Lam and Shimozono
\cite{LS}; they are equal to certain three-point genus zero
Gromov-Witten invariants of the (finite) flag variety.

For the final positivity statement we claim that
\begin{equation}\label{E:Stacoeff}
\text{the coefficient of $\Qf_v$ where $v \in \tC_n^0$ in $\Qf_w$ is
equal to $j_v^w$} \end{equation} that is, the coefficient of $A_w$
in $\PA_v$. But this follows from expanding \eqref{E:kerPF} using
the definition of $\PA_w$.

\section{The combinatorics of Zee-s}
\label{S:Pieri}

It is obvious that $\WS_r$ contains a unique Grassmannian element,
namely, $\rho_r$, and that $c(\rho_r)=1$.  To prove Theorem
\ref{T:P}, by Theorem \ref{T:iso} it remains to show that the right
hand side of \eqref{E:P} is an element of $\B$. By Lemma
\ref{L:equivB} and Example \ref{X:deltaK} it suffices to prove the
following result, whose proof occupies the rest of this section.

\begin{prop} \label{p:2}
For any $v\in \WS$ with $\ell(v)<2n$, let $\CC_v=\{w\in\WS \mid
w\gtrdot v\}$. Then
\begin{align}\label{E:corootsum}
  \sum_{w\in \CC_v} 2^{c(w)-1} \al_{vw}^\vee = 2^{c(v)} K.
\end{align}
\end{prop}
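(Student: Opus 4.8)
The plan is to reduce Proposition~\ref{p:2} to an explicit combinatorial study of the covers $v \lessdot w$ inside $\WS$, using the fact that every $w \in \WS$ has a reduced word that is a subword of a rotation of the unique reduced word $\rho_{2n} = s_{2n-1}s_{2n-2}\dotsm$ of the top element. First I would set up coordinates: realize $\tC_n$ via the standard embedding into affine permutations (as in Lemma~\ref{L:EE}), so that a coroot $\al_{vw}^\vee$ appearing in a cover can be written concretely as a $\Z$-combination of the simple coroots $\al_0^\vee,\dotsc,\al_n^\vee$. The element $K = \al_0^\vee + \dotsm + \al_n^\vee$ of Example~\ref{X:deltaK} is characterized inside $Q_\af^\vee$ by $\ip{K,\al_j} = 0$ for all $j \in I_\af$; so \eqref{E:corootsum} is equivalent to checking, for each simple root $\al_j$, that $\ip{\sum_{w \in \CC_v} 2^{c(w)-1}\al_{vw}^\vee, \al_j}$ is independent of $j$ and equals $2^{c(v)}\cdot 0 = 0$ --- wait, more precisely one pairs against the finite weights to pin down the $K$-multiple and against one affine root to fix the scalar. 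Concretely I would show both sides have the same pairing with every $\om_j$ ($j \in I$) --- forcing the left side to lie in $\Z K$ --- and the same pairing with, say, $\La_0$, or equivalently compare the coefficient of a single $\al_i^\vee$.

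The combinatorial heart is enumerating $\CC_v = \{w \in \WS \mid w \gtrdot v\}$ together with the data $(c(w), \al_{vw}^\vee)$. Here I would exploit the rotation structure: a $Z$ of length $r$ corresponds to choosing a cyclic window of positions in the periodic word $\dotsm 1210\,1210 \dotsm$ (for general $n$, the periodic word built from $\rho_{2n}$'s reduced word) and then a subword. The covers $w \gtrdot v$ come in a few types --- either extending $v$ by one letter at the left end or the right end of its ``window'', or inserting a letter in the interior --- and each type contributes a controlled coroot. The parameter $c(w)$ (number of components of $\Supp(w)$) changes by $0$ or $\pm 1$ relative to $c(v)$ depending on whether the new letter merges two components, extends one, or creates a new isolated component; the weight $2^{c(w)-1}$ is designed precisely so that merging/splitting contributions telescope. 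I expect the casework to split according to $\Supp(v)$: whether $0 \in \Supp(v)$, whether $n \in \Supp(v)$, and the gap structure of $\Supp(v) \subset I_\af$, since the nonstandard braid relations at the two ends ($s_0 s_1 s_0 s_1 = s_1 s_0 s_1 s_0$ and $s_{n-1}s_n s_{n-1} s_n = \dotsm$) make the coroots $\al_{vw}^\vee$ behave differently near $0$ and near $n$ (this is where the factor of $2$ in $a_{10} = -2$ enters, matching the $2$'s in $Q = 2^{\ell}P$).

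The key steps in order: (1) fix the affine-permutation model and write down $\al_{vw}^\vee$ for each cover type; (2) classify the elements of $\CC_v$ by how the covering letter sits relative to the window and support of $v$; (3) for each class, record the pair $(c(w)-c(v),\ \al_{vw}^\vee)$; (4) sum $\sum_{w \in \CC_v} 2^{c(w)-1}\al_{vw}^\vee$ class by class, observing the telescoping in the powers of $2$ across the component structure; (5) check the total pairs to zero against each $\om_j$, $j \in I$, and evaluates correctly against one more affine weight, concluding it equals $2^{c(v)}K$. The main obstacle will be step~(2)--(3): giving a clean, case-free (or few-case) description of $\CC_v$ and the associated coroots, because $\WS$ is a Bruhat order ideal generated by a \emph{conjugacy class} (the rotations of $\rho_{2n}$), not a single element, so a cover $w \gtrdot v$ in $\tC_n$ need not stay in $\WS$ --- one must characterize exactly which covers do. Handling the interaction between the ``wrap-around'' of the rotation and the two special braid relations at the ends of the Dynkin diagram, while keeping track of $c(w)$, is where the real work lies; everything after that is a finite linear-algebra verification in $Q_\af^\vee$.
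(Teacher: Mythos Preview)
Your outline is essentially the same strategy the paper uses: classify the covers $w\gtrdot v$ in $\WS$ according to how the new letter interacts with the components of $\Supp(v)$ (the paper's four kinds are \emph{isolated}, \emph{adjoining}, \emph{merging}, and \emph{internal}, corresponding to your $c(w)-c(v)\in\{+1,0,-1,0\}$), compute the associated coroots case by case, and then verify the identity coefficient-of-$\al_i^\vee$ by coefficient. The paper does not use the affine-permutation realization you propose; instead it works entirely with explicit reduced words (Ns, $\RN$s, Vs, $\Lambda$s) and a rotation calculus on them, but this is a difference of bookkeeping rather than of approach.

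The one substantive point your outline misses is the case $\Supp(v)=I_\af$. Here all covers are internal, and the naive description ``covers come from inserting a missing letter into the embedded reduced word of $v$'' is not quite sufficient: besides the covers obtained from a single canonical (\emph{normal}) embedding $v_\nor\subset N_{k,k-1}$ or $\RN_{k-1,k}$, there is exactly one further \emph{special} cover $v^*$, obtained by first rotating $v_\nor$ to a second canonical embedding $v_\spe$ and then inserting its first missing letter. The paper spends considerable effort (Lemmata~\ref{L:tri}, \ref{L:spe}, \ref{L:intcov}, \ref{L:cointernal}) making this precise and computing the coroot $\al_{vv^*}^\vee$; without this extra cover the coroot sum for full-support $v$ falls short of $2K$. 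Your step~(2)--(3) would need to discover and handle this phenomenon, which is exactly the obstacle you flagged as ``where the real work lies''.
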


\begin{example} Let $n=3$ and $v=s_0s_2s_3s_2\in \WS$. Every
$w\in\CC_v$ is obtained by putting a $1$ into some reduced word for
$v$. For each $w\in \CC_v$, a reduced word and the coroot
$\al_{vw}^\vee$ is given below. They may be computed as in Example
\ref{X:covercoroot}.
\begin{align*}
\begin{array}{|c||c|}\hline
\text{red. word} & \al_{vw}^\vee \\ \hline
10232 & 2\al_0^\vee+\al_1^\vee+2\al_2^\vee+2\al_3^\vee \\
01232 & \al_1^\vee+2\al_2^\vee+2\al_3^\vee \\
23210 & 2\al_0^\vee+\al_1^\vee \\
23201 & \al_1^\vee \\ \hline
\end{array}
\end{align*}
The sum of these coroots is $4K$, which agrees with the fact that
$\Supp(v)$ has two components, $\{0\}$ and $\{2,3\}$.
\end{example}

Let $w \in \WS$.  Since $s_is_j=s_js_i$ for $i$ and $j$ in different
components of $\Supp(w)$, there exists a factorization
$w=w_{I_1}\dotsm w_{I_c}$ where $I_1,I_2,\dotsc,I_c$ are the
components of $\Supp(w)$ and $\Supp(w_{I_p})=I_p$. Let us index the
components so their elements are ordered consistently with the total
order on $I_\af$. Then the above factorization is unique. For a
component $C=I_p$ of $\Supp(w)$ define $w_C=w_{I_p}$, which is
called the $C$-component of $w$.

\begin{example} Let $n=9$ and $u=4689852102$; we have
$u\in\Red(w)$ for some $w\in\WS$. We have $I_1=\{0,1,2\}$,
$I_2=\{4,5,6\}$, and $I_3=\{8,9\}$, and $w_{I_1}=s_2s_1s_0s_2$,
$w_{I_2}=s_4s_6s_5$, and $w_{I_3}=s_8s_9s_8$.
\end{example}

\subsection{Bruhat covers in $\WS$}
\label{SS:BruhatZ}

To prove Proposition \ref{p:2} we study in detail the Bruhat order
of $\tC_n$ when restricted to the subset $\WS$.  The results in
these subsections may be of independent combinatorial interest.

We construct the set of covers $\CC_v$ in $\WS$, of a fixed element
$v\in \WS$. For $k',k\in I_\af$ with $k'<k$ let
\begin{align*}
  N_{k,k'} &= k(k+1)\dotsm(n-1)n(n-1)\dotsm101\dotsm (k'-1)k' \\
\RN_{k',k}&=k'(k'-1)\dotsm101\dotsm(n-1)n(n-1)\dotsm (k+1) k.
\end{align*}
For $w\in\WS$, we define
\begin{align*}
  \Red^N(w) &= \{u\in \Red(w)\mid u\sw N_{k,k-1} \text{ for some
  $1\le k \le n$}\} \\
  \Red^\RN(w) &= \{u\in \Red(w)\mid u\sw \RN_{k-1,k} \text{ for some
  $1\le k \le n$}\}
\end{align*}
where $u\sw u'$ denotes a specific embedding of a word $u$ as a
subword of a word $u'$. Then by definition $w\in\WS$ if and only if
$\Red^N(w)\cup \Red^\RN(w)\ne\vn$.

Therefore $w\in\CC_v$ if and only if either (1) there is a word
$u\in\Red^N(v)$ with an embedding of the form $u\sw N_{k,k-1}$ and a
letter $j\sw N_{k,k-1}$ that is missing from $u$, such that the word
$\tu$ obtained by inserting $j$ into $u$, is a reduced word of $w$,
or (2) there is a $u\in\Red^\RN(v)$ with an embedding of the form
$u\sw\RN_{k-1,k}$ and a letter $j\in\RN_{k-1,k}$ missing from $u$,
such that inserting $j$ into $u$ yields $\tu\in\Red(w)$.

\begin{lem} \label{L:addable} Let $v\in\WS$ and $u\in\Red^N(v)$ with
$u\sw N_{k,k-1}$ (resp. $u\in\Red^\RN(v)$ with $u\sw \RN_{k-1,k}$).
Let $j\sw N_{k,k-1}$ (resp. $j\sw \RN_{k-1,k}$) be a letter that is
not in $u$. Then adding this copy of $j$ to $u$, produces a word in
$\Red^Z(w)$ for some $w\in\CC_v$, if and only if (1) $j\not\in
\Supp(u)$ or (2) $j+1\in \Supp(u)$ for $j\ge k$ or $j-1\in\Supp(u)$
for $j\le k-1$.
\end{lem}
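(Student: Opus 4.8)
The plan is to convert the question into the positivity of a single affine real root and then run a short case analysis on where in $N_{k,k-1}$ the inserted letter sits; I expect the delicate part to be the bookkeeping at the non-simply-laced nodes $0,1,n-1,n$. First, word reversal $u\mapsto u^{\mathrm{rev}}$ carries $N_{k,k-1}$ to $\RN_{k-1,k}$ and realizes the anti-automorphism $w\mapsto w^{-1}$ of $\tC_n$; it preserves reducedness, takes Bruhat covers to Bruhat covers, and fixes $\Supp$ and the integer $k$, so it matches the $\RN_{k-1,k}$ statement to the $N_{k,k-1}$ one. Thus I may assume a fixed embedding $u\sw N:=N_{k,k-1}$, say $u=a_1\cdots a_m\in\Red(v)$, with the inserted copy of $j$ occupying position $p$ of $N$, so $\tu=a_1\cdots a_q\,j\,a_{q+1}\cdots a_m$, where $a_1\cdots a_q$ collects the letters of $u$ from the positions $<p$ of $N$ and $a_{q+1}\cdots a_m$ those from the positions $>p$. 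Since $\tu$ is a subword of $N$ of length $\ell(v)+1$ containing $u$, the conclusion of the lemma is equivalent to the assertion that $\tu$ is a reduced word.

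Set $x=s_{a_1}\cdots s_{a_q}$ and $\beta=x(\al_j)$. By the standard test (a word is reduced iff each successively reflected simple root is positive), $\tu$ is reduced if and only if (I) $\beta\in R_\af^+$, and (II) $x(s_j\eta)\in R_\af^+$ for every root $\eta=s_{a_{q+1}}\cdots s_{a_{i-1}}(\al_{a_i})$ coming from the reduced suffix $a_{q+1}\cdots a_m$. Since $s_j\eta=\eta-\ip{\eta,\al_j^\vee}\al_j$ and $x(\eta)\in R_\af^+$ already, we have $x(s_j\eta)=x(\eta)-\ip{\eta,\al_j^\vee}\beta$, so, granting (I), condition (II) can fail only when $\ip{\eta,\al_j^\vee}\ge1$. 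Because $a_{q+1}\cdots a_m$ is a subword of a single monotone run of $N$, possibly continued through the turn at $n$ or through $\{1,0\}$, its roots $\eta$ are intervals of consecutive simple roots (possibly with a coefficient $2$ somewhere), and for such $\eta$ one checks that $\ip{\eta,\al_j^\vee}>0$ only for a short explicit list, in each case of which $s_j\eta$ is a manifestly positive root; so (II) holds whenever (I) does. Hence the lemma reduces to deciding when $\beta=x(\al_j)$ is a positive root.

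If $j\notin\Supp(u)$, then $x$ lies in the parabolic subgroup $W_{\Supp(u)}\subseteq W_{I_\af\setminus\{j\}}$, so $\beta$ retains $\al_j$-coefficient $1$ and is positive (and the same remark gives $\ip{\eta,\al_j^\vee}\le0$ throughout), which is case (1). If $j\in\Supp(u)$, then $u$ uses exactly one of the (at most two) copies of $j$ in $N$ and we insert the other; here one uses that in $N$ the two copies of $j$ nest the two copies of $s_{j+1}$ when $j\ge k$, and of $s_{j-1}$ when $j\le k-1$, so that ``$j+1\in\Supp(u)$'' (resp.\ ``$j-1\in\Supp(u)$'') says precisely that $u$ has such a letter between its two $j$-positions. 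If no such blocker is present, every letter of $u$ lying between the two copies of $j$ commutes with $s_j$ --- these letters are all $\ge j+2$ when $j\ge k$ and all $\le j-2$ when $j\le k-1$ --- so one copy of $s_j$ slides across them onto the other and cancels, and $\tu$ is not reduced; this settles the ``only if'' direction. For the ``if'' direction I would compute $\beta$ by writing $x$ as an increasing subword of a monotone run of $N$ (multiplied by the $j$ already present in $u$, when that copy precedes the inserted one, and possibly by a subword of the following run) and observe that the blocker keeps the coefficient of $\al_{j+1}$ (resp.\ $\al_{j-1}$) positive along the way, which forces $\beta\in R_\af^+$.

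This last computation is where I expect the real work to lie. At $j=1$ the blocker is $s_0$, and the failure of $s_0$ and $s_1$ to commute is exactly what makes the ``$j-1\in\Supp(u)$'' clause correct; at $j=n-1$ the blocker is $s_n$; and near these nodes one must use $s_1(\al_0)=\al_0+2\al_1$ and $s_{n-1}(\al_n)=\al_n+2\al_{n-1}$ in place of the generic $s_i(\al_j)=\al_i+\al_j$ when tracking coefficients. The degenerate cases $j=0$, $j=n$ (a single copy, hence always case (1)) and $k=n$ (where the third run of $N$ reaches $n-1$) must also be separated out. Each is a finite check, but keeping track of them is the fussiest part of the argument.
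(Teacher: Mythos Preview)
The paper's proof is a single line (``This follows directly from the Coxeter relations for $\tC_n$''), leaving the verification as routine. Your root-positivity approach is a different and more explicit route; the reductions (word reversal to handle $\RN$, rephrasing as reducedness of $\tu$), case (1), and the ``only if'' half of case (2) are correct.

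There is a genuine gap in your sketch of (II), beyond the incompleteness you already acknowledge for (I). You observe that when $\langle\alpha_j^\vee,\eta\rangle>0$ the root $s_j\eta$ remains positive and conclude (II); but (II) asks for $x(s_j\eta)>0$, and $x$ may well send a positive root to a negative one. What rescues the argument is the parabolic structure of $x$ and of the suffix. For $j\ge k$: if the inserted $j$ is the \emph{second} copy, then $x=x_Ps_jw_M$ with $x_P\in W_{[k,j-1]}$ and $w_M\in W_{[j+1,n]}$, the suffix $Q$ has letters in $[0,j-1]$, so every $\eta$ has $\langle\alpha_j^\vee,\eta\rangle\le0$ and (II) follows from (I); here (I) is exactly where $j{+}1\in\Supp(u)$ enters, since it forces the $\alpha_{j+1}$-coefficient $c_{j+1}$ of $w_M(\alpha_j)$ to be positive, whence $s_jw_M(\alpha_j)$ has nonnegative $\alpha_j$-coefficient and positive $\alpha_{j+1}$-coefficient, and $x_P$ (which preserves all $\alpha_i$-coefficients for $i\ge j$) keeps it positive. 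If the inserted $j$ is the \emph{first} copy, then $x=x_P$ and (I) is immediate; for (II), the $\eta$'s from $M$ satisfy $\langle\alpha_j^\vee,\eta\rangle\le0$, the $\eta$ at the second $j$ is handled by the same computation, and for $\eta$'s from $Q$ one checks that the root in $\tu$ minus the corresponding root in $u$ equals $c(c_{j+1}-1)\,x_P(\alpha_j)\ge0$ (with $c=-\langle\alpha_j^\vee,\mu\rangle\ge0$), so positivity is inherited from $u$. This closes both (I) and (II), but it is more than ``$s_j\eta$ is manifestly positive.'' The case $j\le k-1$ follows by the Dynkin involution $i\mapsto n-i$.
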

\begin{proof} This follows directly from the Coxeter relations for
$\tC_n$.
\end{proof}

We define the reduced words
\begin{align*}
 V^{k,k'} &= k(k-1)\dotsm101\dotsm (k'-1)k' &\qquad&\text{for $k,k'<n$} \\
 \La_{k,k'} &= k(k+1)\dotsm(n-1)n(n-1)\dotsm(k'+1)k' &\qquad&\text{for $k,k'>0$} \\
 \Inc{k'}{k} &= k'(k'+1)\dotsm (k-1)k && \text{for $k'\le k$} \\
 \Dec{k}{k'} &= k(k-1)\dotsm(k'+1)k' &&\text{for $k'\le k$}
\end{align*}
A word is an N if it is a subword of $N_{k,k-1}$ for some $1\le k\le
n$ and a reverse N (abbreviated by the symbol $\RN$) if it is a
subword of $\RN_{k-1,k}$ for some $1\le k\le n$. The name N is
suggested by the definition: the values in such a word go up, then
down, and then up, like the letter N. A word $v$ is a Z if it is an
N or a $\RN$. For $w\in\tC_n$ let $\Red^Z(w)$ be the set of reduced
words for $w$ that are Zs. Then by definition, $w \in \tC_n$ is a
$Z$ if and only if $\Red^Z(w)\ne\varnothing$. Let $\Red^N(w)$ (resp.
$\Red^{\RN}(w)$) be the subset of reduced words of $w$ that are Ns
or (resp. $\RN$s).

A \textit{saturated} N (resp. $\RN$) is a word of the form
$N_{k,k'}$ (resp. $\RN_{k',k}$). An N or $\RN$ is \textit{proper} if
it contains both the letters $0$ and $n$. We emphasize the important
fact that if $u$ is a proper N, then $\first(u)>\last(u)$, where
$\first(u)$ and $\last(u)$ are the first and last letters of $u$
respectively. Similarly if $u$ is a proper $\RN$ then
$\first(u)<\last(u)$.

Let $u=i_1i_2\dotsm i_M$ be a word with letters in $I_\af$. We say
that $u$ has a \textit{peak} at $p$ if $1<p<M$ and
$i_{p-1}<i_p>i_{p+1}$ or if $p=1$ and $i_1>i_2$ or if $p=M$ and
$p_{M-1}<p_M$ or if $M=1$. We say that $u$ has a \textit{valley} at
$p$ if $1<p<M$ and $i_{p-1}>i_p<i_{p+1}$ or if $p=1$ and $i_1<i_2$
or if $p=M$ and $p_{M-1}>p_M$ or if $M=1$.

We say that a word is a \textit{V} (resp. $\La$) if it is either
empty or has exactly one valley (resp. peak). Note that only the
empty word has no valleys (resp. peaks). Note that Vs and $\La$s are
both Ns and $\RN$s. Write $\Red^V(w)$ and $\Red^\La(w)$ for the sets
of reduced words of $w$ that are respectively Vs and $\La$s. A
saturated V (resp. $\La$) is one of the form $V^{k,k'}$ (resp.
$\La_{k,k'}$).

\begin{example} Let $n=4$. Then $234101$ is a proper $N$,
$20143$ is a proper $\RN$, $312$ is a V, and $24321$ is a $\La$.
\end{example}

\subsection{Equivalences for reduced words and rotation}
The following Lemma is essentially a special case of Edelman-Greene
insertion \cite{EG}. It says that a $\Lambda$ with no $(n-1)n(n-1)$
is equivalent to a V. Similarly a V with no $101$ is equivalent to a
$\Lambda$.

\begin{lem} \label{L:hook}
Suppose $i_1i_2\dotsm i_p j_1 j_2\dotsm j_q\in \Red(w)$ for some
$w\in\WS$ such that $i_1<i_2<\dotsm<i_p<j_1>j_2>\dotsm>j_q$ and
$(n-1)n(n-1)$ is not a subword. Then there is a $k_1k_2\dotsm k_q
l_1l_2\dotsm l_p\in\Red(w)$ such that $i_1$ occurs in $k_1k_2\dotsm
k_q$, $k_1>k_2>\dotsm>k_q<l_1<l_2<\dotsm<l_p$ and $k_s\le j_s$ for
$1\le s\le q$ and $i_s<l_s$ for $1\le s\le p$.
\end{lem}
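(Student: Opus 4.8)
The plan is to prove the lemma by running Edelman--Greene column insertion \cite{EG} on the word $u = i_1\cdots i_p\, j_1\cdots j_q$, carried out directly inside $\tC_n$. Since $i_1<\cdots<i_p$, the first $p$ insertions build the one-row tableau with first row $(i_1,\dots,i_p)$, and since $i_p<j_1$ the insertion of $j_1$ merely appends it, giving first row $(i_1,\dots,i_p,j_1)$. Then I would insert the strictly decreasing letters $j_2,\dots,j_q$ in turn; using that they decrease, one checks by induction that each one either leaves the first row fixed (a ``special bump'') or replaces a single first-row entry by a strictly smaller value while pushing the displaced entry down the first column, that the bumped first-row positions move weakly to the left, and that the shape stays a hook. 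Hence the final tableau $P$ has shape $(p+1,1^{q-1})$, and its column reading word --- first column read bottom-to-top, then the rest of the first row left-to-right --- is the candidate word $v=k_1\cdots k_q\, l_1\cdots l_p$. (The degenerate cases $p=0$ or $q=0$ are trivial and may be excluded.)

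The heart of the matter is that this insertion is legitimate in $\tC_n$: the Coxeter--Knuth moves it performs must never invoke a relation that fails there. Edelman--Greene insertion only ever uses commutations $s_as_b=s_bs_a$ for $|a-b|\ge 2$ and the braid move $s_as_{a+1}s_a=s_{a+1}s_as_{a+1}$, the latter occurring exactly at a special bump at $a$; in $\tC_n$ this braid holds precisely for $1\le a\le n-2$, so I must rule out special bumps at $a=0$ and at $a=n-1$. A special bump at $a$ requires inserting the letter $a$ into a row already containing both $a$ and $a+1$. For $a=0$: because $w\in\WS$, the letter $0$ occurs at most once in $u$ --- the unique reduced word of $\rho_{2n}$ contains a single $0$, hence so do all its rotations and all their subwords, and braid-equivalent words have the same content --- so $0$ is fed in only once, at a moment when no row contains $0$; thus no special bump at $0$ ever occurs. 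For $a=n-1$: in a word of the form $i_1\cdots i_p\, j_1\cdots j_q$ with $i_p<j_1$ the letter $n$ can only appear as $j_1$, so $n$ occurs at most once and, when present, starts at the right end of the first row; tracking where this single copy of $n$ can travel shows it is never joined in its row by an $n-1$ at the instant an $n-1$ is inserted there, so no special bump at $n-1$ occurs --- and this is exactly where the hypothesis that $(n-1)\,n\,(n-1)$ is not a subword of $u$ is used, since $i_p=n-1,\ j_1=n,\ j_2=n-1$ would force an immediate special bump at $n-1$. Granting this, every move of the insertion is valid in $\tC_n$, so $v$ is Coxeter--Knuth equivalent to $u$ and therefore $v\in\Red(w)$.

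It remains to read off the required structure of $v$. From the hook shape, the first column of $P$ is strictly increasing downward and the first row strictly increasing rightward, so $v$ has a strictly decreasing segment $k_1>\cdots>k_q$ followed by a strictly increasing segment $l_1<\cdots<l_p$ with $k_q<l_1$; that is, $v$ is a V. For the inequalities: (i) $i_1$ starts at the top of the first column and can only be pushed further down that column (the shape never ceases to be a hook), so $i_1\in\{k_1,\dots,k_q\}$; (ii) the value $r_t$ pushed toward the first column when $j_t$ is inserted satisfies $j_t<r_t\le j_{t-1}$ (it is the entry displaced from the first row, or equals $j_t+1$ in a special bump), these values fill the first column below the first row so that $k_s=r_{s+1}$ for $s\le q-1$, whence $k_s\le j_s$, while $k_q$ is the least entry of the final first row and so is $\le j_q$ since $j_q$ lies in that row --- giving $k_s\le j_s$ for all $s$; (iii) because the bumped first-row positions move weakly leftward, at the moment the final value of first-row position $m$ (for $2\le m\le p+1$) is fixed, position $m-1$ still holds its original entry $i_{m-1}$, forcing that value to exceed $i_{m-1}$; as $l_s$ is the entry in position $s+1$ of the final first row, this yields $l_s>i_s$ for $1\le s\le p$.

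The step I expect to be the genuine obstacle is the legitimacy check of the second paragraph: ensuring the Edelman--Greene rewriting stays within the braid relations that actually hold in $\tC_n$, i.e.\ never uses the (false) three-term braid at either end $\{s_0,s_1\}$ or $\{s_{n-1},s_n\}$ of the affine $C$ Dynkin diagram. This is precisely where both hypotheses --- $w\in\WS$ and ``$(n-1)\,n\,(n-1)$ is not a subword of $u$'' --- enter, and the node-$(n-1)$ analysis is the more delicate of the two, because, unlike $0$, the letter $n-1$ may a priori occur twice in a word of $\tC_n$.
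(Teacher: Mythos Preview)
Your approach is correct and is precisely the Edelman--Greene viewpoint the paper itself alludes to in introducing the lemma. The paper, however, does not invoke the EG machinery as a black box: it instead runs an elementary induction on $p$, handling $p=1$ by pushing the single letter $i_1$ rightward through the decreasing word $j_1\cdots j_q$ (using at most one braid move, located at the pair $i_1,i_1+1$), and then for $p>1$ applying the inductive hypothesis to $i_2\cdots i_p\,j_1\cdots j_q$ before inserting $i_1$ from the left via the $p=1$ case. This is the same underlying algorithm you describe, but processed from the opposite end --- the $i$'s rather than the $j$'s --- and it buys a cleaner legitimacy check: each inductive step uses at most one braid relation, at the explicit index $i_1$, so one sees immediately that $i_1\neq 0$ (since $w\in\WS$ forces a single $0$) and $i_1\neq n-1$ (since that would force the excluded subword $(n-1)n(n-1)$), without needing to track special bumps through a multi-row tableau. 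Your route works but is longer to make fully rigorous; one small correction: what you describe is \emph{row} insertion, not column insertion.
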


$$
\tableau[sbY]{i_1,i_2,\dotsm,i_p,j_1| %
\bl,\bl,\bl,\bl,j_2|%
\bl,\bl,\bl,\bl,\vdots| %
\bl,\bl,\bl,\bl,j_q} \qquad %
\tableau[sbY]{k_1|k_2|\vdots|k_q,l_1,l_2,\dotsc,l_p}
$$

\begin{proof} The result is trivial if $p=0$ or $q=0$.
Suppose $p=1$. If both $i_1+1$ and $i_1$ occur in $j_1j_2\dotsm j_q$
then $i_1>0$ and $i_1 j_1\dotsm j_q \equiv j_1\dotsm j_q (i_1+1)$
using the braid relations. We take $k_s=j_s$ for $1\le s\le q$ and
$l_1=i_1+1$, which satisfies $l_1>i_1 \ge j_q$. Otherwise let $r$ be
maximal such that $i_1<j_r$. It cannot be the case that
$j_{r+1}=i_1$ for then $i_1j_1\dotsm j_q$ is not reduced. We have
$i_1j_1\dotsm j_q\equiv (j_1\dotsm j_{r-1} i_1 j_{r+1}\dotsm j_q)
j_r$ and the latter word has the desired form. Note that in the case
$p=1$, $i_1$ occurs in $k_1\dotsm k_q$. Finally suppose $p>1$. By
induction $i_2\dotsm i_p j_1\dotsm j_q\equiv k_1'\dotsm k_q'
l_2\dotsm l_p$ with $k_1'>\dotsm>k_q'<l_2<\dotsm<l_p$ with $j_s\le
k_s'$ for $1\le s\le q$ and $i_s>l_s$ for $2\le s\le p$. Since
$i_1<i_2$ and $i_2$ occurs in $k_1'\dotsm k_q'$, we may apply the
$p=1$ case and obtain $i_1 k_1'\dotsm k_q' \equiv k_1\dotsm k_q l_1$
with $k_1>\dotsm>k_q<l_1$ and $k_s\le k_s'$ for $1\le s\le q$. Since
$i_2$ was in $k_1'\dotsm k_q'$ and $i_1<i_2$, it follows by
considering the $p=1$ case that $l_1\le i_2<l_2$. It follows that
$k_1\dotsm k_q l_1\dotsm l_p$ is the desired reduced word.
\end{proof}

\begin{lem} \label{L:Vcombine}
Suppose $u$ and $u'$ are two Vs (resp. $\Lambda$s) such that all
letters of $u$ are greater than those in $u'$. Then $uu'$ and $u'u$
are both equivalent to a V (resp. $\Lambda$).
\end{lem}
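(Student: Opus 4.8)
The plan is to reduce the two assertions to a single product, then resolve that product by flipping one ``hump'' with Lemma~\ref{L:hook} and inducting on length. Reversing a word preserves being reduced and carries a V to a V (the reverse of a strictly decreasing run is strictly increasing and conversely, so the valley survives), and likewise carries a $\Lambda$ to a $\Lambda$; since $\overline{uu'}=\overline{u'}\,\overline{u}$, with $\overline{u'}$ a V on the smaller letters and $\overline{u}$ a V on the larger ones, the statements for $uu'$ and $u'u$ are equivalent. So it suffices to treat $u'u$ (the smaller V first), by induction on $\ell(u)+\ell(u')$, the cases with one factor empty being immediate; the ``resp.\ $\Lambda$'' case is identical, using the mirror reading of Lemma~\ref{L:hook} (``a V with no $101$ is a $\Lambda$'') with $101$ in place of the exceptional subword, and with the roles of V and $\Lambda$ interchanged throughout.

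Write $u'=P'Q'$ and $u=PQ$, with $P',P$ the maximal strictly decreasing prefixes (each ending at its valley) and $Q',Q$ the remaining strictly increasing suffixes (any of them possibly a single letter; $Q',Q$ possibly empty). Then $u'u=P'\,(Q'P)\,Q$, and the crucial local fact is that $Q'P$ is a $\Lambda$ of exactly the form required in Lemma~\ref{L:hook}: $Q'$ increases up to $\max\Supp(u')$, after which $P$ begins at a strictly larger letter---every letter of $u$ exceeds every letter of $u'$---and strictly decreases, producing a single peak. The forbidden subword $(n-1)n(n-1)$ cannot occur inside $Q'P$, since that would place the letter $n-1$ in both $\Supp(u')$ and $\Supp(u)$, contradicting the strict domination of supports (and similarly $101$ never occurs, for the $\Lambda$ version). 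When $Q'$ or $Q$ is empty there is no genuine hump; these degenerate cases I would treat directly, commuting the small letters of $u'$ past the large letters of $u$---strict domination forces the only possible obstructions to sit at a valley, where the small block can be absorbed.

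In the main case, apply Lemma~\ref{L:hook} to $Q'P$ to get $Q'P\equiv EF$ with $E$ strictly decreasing, $F$ strictly increasing, obeying the inequalities $k_s\le j_s$, $i_s<l_s$ of that lemma and with $i_1=\min\Supp(Q')$ occurring inside $E$; thus $u'u\equiv P'\,E\,F\,Q$. One then analyses the three new interfaces $P'|E$, $E|F$, $F|Q$, feeding those inequalities together with strict domination back into the argument: either one recognizes $P'\,E\,F\,Q$ as a single V outright, or one peels off a strictly shorter sub-configuration---again a product of two shorter Vs with strictly dominated supports, possibly after a second application of Lemma~\ref{L:hook} to a residual up-down hump---and invokes the inductive hypothesis.

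I expect this last step to be the real obstacle. All that Lemma~\ref{L:hook} returns is the inequality system and the location of $i_1$; in particular the braid moves involved do \emph{not} preserve the multiset of letters (only the support), so one cannot reason by ``rearranging the same letters.'' Teasing out of those inequalities, plus strict domination, precisely the statement that the word re-assembles into a single V---and, when that does not close directly, pinning down the correct strictly smaller instance and checking that the induction terminates---is the delicate combinatorial core of the argument.
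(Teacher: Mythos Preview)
Your approach is far more complicated than necessary, and the induction you set up is never closed: you yourself flag the interface analysis of $P'EFQ$ as ``the real obstacle,'' and indeed nothing in your argument guarantees that $P'E$ is strictly decreasing or $FQ$ strictly increasing, nor that the residual configuration is again of the required form.

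The paper's proof is a one-line commutation argument that you are missing entirely. Write the larger V as $u=u_1\,m\,u_2$ with $m$ its valley. Every letter of $u'$ is strictly less than every letter of $u$, hence $\max\Supp(u')\le m-1$; on the other hand every letter of $u_1$ and of $u_2$ is strictly greater than $m$, hence $\ge m+1$. Thus every letter of $u'$ differs from every letter of $u_1$ and of $u_2$ by at least $2$, so by the Coxeter relations $u'$ commutes past $u_1$ and past $u_2$. Therefore
\[
uu'=u_1\,m\,u_2\,u'\equiv u_1\,m\,u'\,u_2,\qquad
u'u=u'\,u_1\,m\,u_2\equiv u_1\,u'\,m\,u_2,
\]
and both right-hand sides are visibly Vs (the valley sits at the valley of $u'$). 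No use of Lemma~\ref{L:hook}, no induction, no reversal trick is needed. The $\Lambda$ case is the same with ``peak'' in place of ``valley.''
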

\begin{proof}
Let $u$ and $u'$ be Vs with $u=u_1mu_2$ where $m$ is the valley of
$u$. Then $u_1 m u' u_2$ and $u_1 u' m u_2$ are Vs that are
equivalent to $uu'$ and $u'u$ respectively. The proof for $\Lambda$s
is similar.
\end{proof}

Given a word $u$, let $u^+$ (resp. $u^-$) be the word obtained by
adding (resp. subtracting) one from each letter in $u$.

\begin{lem} \label{L:LaVunique}
Let $w\in\WS$ and $J=\Supp(w)$.
\begin{enumerate}
\item Suppose $n\not\in J$. Then $\Red^V(w)\ne\vn$. Moreover if $J$
is an interval then $\Red^V(w)$ is a singleton.
\item Suppose $0\not\in J$. Then $\Red^\La(w)\ne\vn$.
Moreover if $J$ is an interval then $\Red^\La(w)$ is a singleton.
\item Suppose $J$ is an interval $[m,M]$ with $0<m\le M<n$. Let $u_1
M u_2\in \Red^\La(w)$ and $u_2' m u_1'\in\Red^V(w)$. Then
$u_2'=u_2^+$ and $u_1'=u_1^+$.
\end{enumerate}
\end{lem}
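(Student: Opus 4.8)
The plan is to establish part~(1), deduce part~(2) from it via a symmetry of the affine Dynkin diagram, and then obtain part~(3) from an explicit induction.

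First I would bring in the automorphism $\sigma$ of $\tC_n$ determined by $\sigma(s_i)=s_{n-i}$ for $i\in I_\af$. From \eqref{E:AffWeylRelations} one checks that $\sigma$ is a well-defined group automorphism: it exchanges the two double bonds $\{0,1\}$ and $\{n-1,n\}$ (as decorated edges, using $a_{01}=a_{n,n-1}=-1$ and $a_{10}=a_{n-1,n}=-2$) and permutes the order-$3$ braid relations among themselves. It sends the unique reduced word $12\dotsm n(n-1)\dotsm10$ of $\rho_{2n}$ to $(n-1)\dotsm101\dotsm(n-1)n$, which is a rotation of it, so $\sigma(\WS)=\WS$. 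Since $x\mapsto n-x$ reverses the total order on $I_\af$, applying $\sigma$ letterwise to a reduced word interchanges ascents with descents, hence $\sigma$ carries $\Red^V(w)$ bijectively onto $\Red^\La(\sigma(w))$ and conversely; moreover $\Supp(\sigma(w))=\{\,n-j\mid j\in\Supp(w)\,\}$, so $n\notin\Supp(w)$ iff $0\notin\Supp(\sigma(w))$, and intervals map to intervals. Thus part~(2) is exactly the image of part~(1) under $\sigma$, and only (1) and (3) remain to prove.

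For existence in~(1): since $w\in\WS$ it has a reduced word $u$ that is an $N$ or an $\RN$, and by the block descriptions of $N_{k,k-1}$ and $\RN_{k-1,k}$ such a word is a concatenation of three monotone runs whose two outer runs lie in value ranges on opposite sides of the cut at $k$. Because $n\notin\Supp(w)$, no factor of $u$ contains $(n-1)\,n\,(n-1)$, so Lemma~\ref{L:hook} applies to the up--down portion of $u$ and straightens it into a V; the surviving outer monotone run can then be absorbed using the commutation relations (in the manner of Lemma~\ref{L:Vcombine}), because after that flip its letters and those of the neighbouring run lie in ranges separated by a gap of at least $2$, hence commute. The result is a V-reduced word for $w$. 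For the singleton statement when $J=\Supp(w)$ is an interval $[m,M]$ (necessarily $M<n$): then $w$ lies in the parabolic subgroup $\langle s_m,\dotsc,s_M\rangle$, of type $A$ when $m\ge1$ and of type $B/C$ when $m=0$, and I would induct on $\ell(w)$ by showing that the position of the extremal letter $M$ in any V-reduced word of $w$ — it is either the first or the last letter — is forced by $w$, then strip it off.

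For part~(3): with $J=[m,M]$, $0<m\le M<n$, parts~(1) and~(2) give $\Red^\La(w)=\{\,u_1\,M\,u_2\,\}$ with $u_1$ increasing, $u_2$ decreasing, $\Supp(u_1)\cup\Supp(u_2)=[m,M-1]$, and $\Red^V(w)$ a singleton; so it is enough to show $u_1\,M\,u_2\equiv u_2^+\,m\,u_1^+$, the right-hand word being plainly a V ($u_2^+$ decreasing, then the valley $m$, then $u_1^+$ increasing). I would prove this by induction on $M-m$, the base $M=m$ being trivial. The letter $m$ occurs in $u_1$ (then as its first letter) and/or in $u_2$ (then as its last letter); deleting the extremal copies of $m$ leaves a $\La$-reduced word of support $[m+1,M]$, to which the inductive hypothesis applies, and one then reassembles using only that $s_m$ commutes past any word whose letters are all $\ge m+2$ (the case for the shifted pieces that appear) and that $s_ms_{m+1}s_m=s_{m+1}s_ms_{m+1}$ when two copies of $m$ flank a single $m+1$. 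The chief obstacle is the singleton assertion in~(1) — one must show the straightening to a V admits no freedom, i.e.\ the extremal letter of a V-reduced word is determined by the group element; by contrast the existence part is essentially forced once Lemma~\ref{L:hook} and the commutation relations are available (the $\RN$ case demanding the most bookkeeping), and~(3) is the clean induction just described.
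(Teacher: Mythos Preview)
Your proposal is essentially correct, and for part~(1) you follow the paper's outline closely: existence via Lemma~\ref{L:hook} plus a Vcombine-style merging, then uniqueness by induction on $\ell(w)$ stripping the maximal letter $M$. One imprecision worth flagging: in your existence argument the decomposition by ``three monotone runs'' is workable for N-words (your gap-of-two claim is indeed justified by the bound $l_s>i_s\ge k$ from Lemma~\ref{L:hook}), but the paper's decomposition is cleaner---it splits an N-word $u\subset N_{k,k-1}$ by the \emph{value} $k$ rather than by runs, writing $u=u_1u_2$ with $u_1$ a $\Lambda$ supported in $[k,n-1]$ and $u_2$ a V supported in $[0,k-1]$, then applies Lemma~\ref{L:hook} only to $u_1$. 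This handles the $\RN$ case symmetrically (split into V-part on $[0,k-1]$ and $\Lambda$-part on $[k,n-1]$) without the extra bookkeeping you anticipate. Also, your uniqueness sketch should explicitly treat the case where $M$ occurs twice in the V-word (the paper does: $u=M\hat u M$, then induct on $s_Mw$).

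Two genuine differences from the paper: you deduce~(2) from~(1) via the Dynkin automorphism $\sigma(s_i)=s_{n-i}$, whereas the paper simply declares~(2) ``similar''; your route is tidier and makes the symmetry explicit. For~(3), you induct on $M-m$ by stripping the minimal letter $m$ (using commutations past letters $\ge m+2$ and the $3$-braid at $m,m+1$), while the paper inducts on $\ell(w)$ working at the top, peeling off the letters adjacent to $M$. Both arguments are short and valid; yours has the minor advantage that the needed commutations are immediately visible from the shift $u\mapsto u^+$, while the paper's has the advantage of not needing to track whether $m$ appears once or twice separately from the braid step.
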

\begin{proof} We shall prove (1) as (2) is similar.
Let $u\in \Red^Z(w)$. Suppose that $n\not\in J$ and that $u$ is an
$N$; the case of a $\RN$ is similar. Say $u$ is embedded in
$N_{k,k-1}$. Then $u=u_1u_2$ where $u_1$ is a $\La$ such that
$\Supp(u_1)\subset[k,n-1]$ and $u_2$ is a V with
$\Supp(u_2)\subset[0,k-1]$. Then $\Red^V(w)\ne\vn$ by Lemmata
\ref{L:hook} and \ref{L:Vcombine}.

Let $u\in\Red^V(w)$ with $J$ an interval. We prove its uniqueness by
induction on $\ell(w)$. For $\ell(w)\le 3$ this is evident from
\eqref{E:AffWeylRelations}. Let $M=\max(J)<n$. Suppose first that
$u$ contains a single $M$. Then $u$ has the form $u = M \uh$ or $u =
\uh M$. We assume the former as the latter has an analogous proof.
We have $\uh \in\Red^V(s_M w)$ and $s_Mw\in\WS$. By induction $\uh$
is unique. Now let $u'\in\Red^V(w)$. Since $\Red(w)$ is connected by
the braid relations \eqref{E:AffWeylRelations}, every reduced word
for $w$ (and in particular $u'$) has a single $M$ which precedes
every $M-1$. Since $u'$ is a V it must start with $M$. Therefore
$u'=M\uh=u$ by the uniqueness of $\uh$.

Otherwise $u$ must have the form $u=M \uh M$. Let $u'\in \Red^V(w)$.
Clearly $u'$ must contain an $M$ which must be at the beginning or
end. We suppose $u'$ has the form $u'=Mu''$ as the case $u'=u''M$ is
similar. By induction $\Red^V(s_M w)$ is a singleton. Therefore
$u''=\uh M$ and $u'=u$ as desired.

(3) is proved by induction on the length of $u_1 M u_2$. If either
$u_1$ or $u_2$ is empty then the result certainly holds. Write
$u_1=u_3x$ and $u_2=yu_4$ where $x$ and $y$ are letters. Since
$\Supp(u_1Mu_2)=[m,M]$, $x=M-1$ or $y=M-1$. Suppose $x=y=M-1$. By
induction we have $u_1 M u_2 \equiv u_3 (M-1)M(M-1) u_4\equiv u_3
M(M-1)Mu_4\equiv M u_3 (M-1) u_4 M\equiv M u_4^+ m u_3^+ M = u_2^+ m
u_1^+$. Suppose next that $x=M-1>y$. Then again by induction we have
$u_1 M u_2 = u_3 (M-1) M u_2 \equiv u_3 (M-1) u_2 M \equiv u_2^+ m
u_3^+ M = u_2^+ m u_1^+$. The case $y=M-1>x$ is similar.
\end{proof}

\begin{example} For $n>7$ the N $676545$ is equivalent to a V:
$676545\equiv 767545 \equiv 765457$.
\end{example}

Suppose $u\sw N_{k,k-1}$ is a subword and $\ell\sw N_{k,k-1}$ is a
subletter (resp. $u \sw \RN_{k-1,k}$ is a subword and $\ell\sw
\RN_{k-1,k}$ is a subletter) with $\ell$ missing from $u$. We give
an explicit way to obtain another embedded word $u'\in\Red^Z(v)$
such that $\ell$ is at the beginning or end of the ambient $N$ or
$\RN$. We call this process \textit{rotation}. The only cases not
treated in Lemma \ref{L:rotate} are $\ell=0$ or $\ell=n$, in which
case we may use Lemma \ref{L:LaVunique} to obtain an equivalent
reduced word that is a $\La$ or V respectively, and these can be
embedded into an N or $\RN$ with the missing letter at the beginning
or end.

\begin{lem} \label{L:rotate}
Suppose $w\in\WS$ and $u\in\Red^N(w)$ (resp. $u\in\Red^\RN(w)$) with
$u\sw N_{k,k-1}$ (resp. $u\sw \RN_{k-1,k}$).
\begin{enumerate}
\item
If there is an $\ell$ such that $k\le \ell < n$ and $\ell$ does not
appear in the part of $u$ that is embedded in $\Inc{k}{n}\sw
N_{k,k-1}$ (resp. $\Dec{n}{k}\sw \RN_{k-1,k}$), then there is a
$u'\in\Red^N(w)$ (resp. $u'\in \Red^\RN(w)$) such that $u'\sw
N_{\ell+1,\ell}$ (resp. $u'\sw \RN_{\ell,\ell+1}$).
\item
If there is an $\ell$ such that $0 < \ell \le k-1$ and $\ell$ does
not appear in the part of $u$ that is embedded in $\Inc{0}{k-1}\sw
N_{k,k-1}$ (resp. $\Dec{k-1}{0}\sw \RN_{k-1,k}$), then there is a
$u'\in\Red^N(w)$ (resp. $u'\in\Red^\RN(w)$) such that $u'\sw
N_{\ell,\ell-1}$ (resp. $u'\sw \RN_{\ell-1,\ell}$).
\item
If there is an $\ell$ such that $0<\ell<n$ and $\ell$ does not
appear in the part of $u$ that is embedded in $\Dec{n}{0}\sw
N_{k,k-1}$ (resp. $\Inc{0}{n}\sw \RN_{k-1,k}$) then there is a
$u'\in\Red^\RN(w)$ (resp. $u'\in\Red^N(w)$) such that $u'\sw
\RN_{\ell-1,\ell}$ (resp. $u'\sw N_{\ell,\ell-1}$). Moreover $k-1$
or $k$ is missing in the increasing (resp. decreasing) part of $u'$,
according as $\ell<k$ or $\ell \ge k$.
\end{enumerate}
\end{lem}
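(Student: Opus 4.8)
The plan is to reduce everything to the $N$-versions of~(1) and~(3) by symmetry, and then to prove those two by rearranging reduced words by hand, using Lemmas~\ref{L:hook}, \ref{L:Vcombine} and~\ref{L:LaVunique}. Word reversal $i_1\cdots i_M\mapsto i_M\cdots i_1$ is the anti-automorphism $w\mapsto w^{-1}$ of $\tC_n$; it sends $N_{k,k'}$ to $\RN_{k',k}$, and since the unique reduced word of $\rho_{2n}$ is a rotation of its own reversal it preserves $\WS$ and interchanges $\Red^N$ with $\Red^\RN$. Thus each ``resp.'' statement is the reversal of its partner. The map $s_j\mapsto s_{n-j}$ is an automorphism of $\tC_n$ which also preserves $\WS$ (the negation of $\rho_{2n}$'s reduced word is again a rotation of it); composed with reversal it is an $N$-preserving symmetry carrying $N_{k,k-1}$ to $N_{n-k+1,n-k}$ and carrying the hypotheses of~(1) to those of~(2) (and fixing~(3), interchanging ``$\ell<k$'' with ``$\ell\ge k$'' and ``$k-1$'' with ``$k$''). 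So it is enough to prove the $N$-versions of~(1) and~(3).

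For~(1), write $N_{k,k-1}=\Inc{k}{n}\cdot\Dec{n-1}{0}\cdot\Inc{1}{k-1}$ and $u=u_1u_2u_3$ with $u_i$ the part of $u$ in the $i$th block. Since $\ell\in[k,n)$ is missing from $u_1$, factor $u_1=ab$ with the letters of $a$ lying in $[k,\ell-1]$ and those of $b$ in $[\ell+1,n]$; these differ pairwise by at least $2$, so $u\equiv b\,a\,u_2\,u_3$. Now $b$ already embeds in $\Inc{\ell+1}{n}$, and it remains to rewrite $a\,u_2\,u_3$ as a word embedding in $\Dec{n-1}{0}\cdot\Inc{1}{\ell}$, i.e.\ as a decreasing run in $[0,n-1]$ followed by an increasing run in $[1,\ell]$. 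The support of $a\,u_2\,u_3$ omits $n$, so by Lemma~\ref{L:LaVunique}(1) it has a ``V'' reduced word; using Lemma~\ref{L:hook} to push the small run $a$ (and the part in $u_3$) through the appropriate portion of $u_2$, Lemma~\ref{L:Vcombine} to amalgamate, and the $u\mapsto u^{+}$ shift identities of Lemma~\ref{L:LaVunique}(3) to bound the resulting increasing tail by $\ell$, one obtains such a word --- the verification being a short case check on where the letters of $a$ resurface. Then $u\equiv b\,a\,u_2\,u_3$ embeds in $\Inc{\ell+1}{n}\cdot\Dec{n-1}{0}\cdot\Inc{1}{\ell}=N_{\ell+1,\ell}$ and is the required $u'$.

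For~(3), the missing letter $\ell\in(0,n)$ now sits in the central decreasing run, so cutting $N_{k,k-1}$ just after that occurrence of $\ell$ reverses the direction of traversal and the target is the reverse-N $\RN_{\ell-1,\ell}$. Split the central run of $u$ at $\ell$ into its high part $p$ (letters $>\ell$) and low part $q$ (letters $<\ell$). Then $u_1p$ is ``up then down'' with no letter $0$, while $qu_3$ is ``down then up'' with no letter $n$, so Lemma~\ref{L:hook} (or, at the endpoints of the ranges, Lemma~\ref{L:LaVunique}) reshapes $u_1p$ into a word embedding in $\Dec{\ell-1}{0}\cdot\Inc{1}{m}$ and $qu_3$ into one embedding in $\Inc{m'}{n}\cdot\Dec{n-1}{\ell}$. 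The $u\mapsto u^{+}$ bookkeeping of Lemma~\ref{L:LaVunique}(3) controls $m,m'$, and the same bookkeeping shows that the letter left missing from the new monotone block is $k-1$ when $\ell<k$ and $k$ when $\ell\ge k$. Gluing $u_1p$ and $qu_3$ with Lemma~\ref{L:Vcombine} produces $u'\in\Red^\RN(w)$ with $u'\sw\RN_{\ell-1,\ell}$, together with the stated ``moreover''.

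The main obstacle is case~(3). In case~(1) the rotation preserves the orientation of the cyclic word underlying $N_{k,k-1}$, so one may simply commute the low run $a$ past the high run $b$ and then clean up; in case~(3) the orientation is reversed, forcing one to split the central decreasing run of $u$, reshape the two halves into the head and tail of an $\RN$, and check that the $\pm1$ shifts coming from Lemma~\ref{L:LaVunique}(3) are mutually compatible and leave exactly the asserted one of $k-1,k$ missing. Everything else is routine manipulation with the braid relations of $\tC_n$. (Alternatively one could induct on $\ell(w)$, peeling off the outermost letter of $u$, rotating the shorter word, and reinserting the letter with the help of Lemma~\ref{L:addable}.)
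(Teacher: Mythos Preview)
Your symmetry reductions are correct and match what the paper does implicitly (``the other cases of (1) and (2) are similar'').

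For part~(1) your outline is in the right direction and is essentially the paper's argument, but the crucial step---why the increasing tail of the V you produce from $a\,u_2\,u_3$ is bounded by $\ell$---is only asserted.  The paper makes this explicit by a finer four-piece decomposition: writing $u=u_1u_2u_3u_4$ with $u_1\subset\Inc{k}{\ell-1}$, $u_2\subset\Lambda_{\ell+1,\ell+1}$, $u_3\subset\Dec{\ell}{k}$, $u_4\subset V^{k-1,k-1}$, one first commutes $u_1$ past $u_2$ and then applies the $\Lambda\!\to\!V$ conversion of Lemma~\ref{L:LaVunique}(3) to the small piece $u_1u_3$ (supported on $[k,\ell]$), obtaining $u_3'u_1'$ with $u_1'=u_1^+\subset\Inc{k+1}{\ell}$.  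That single explicit shift is what bounds the tail by $\ell$; your reference to ``a short case check on where the letters of $a$ resurface'' hides exactly this computation.

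Part~(3) has a genuine gap.  After splitting the central run at $\ell$ you have $u=u_1\,p\,q\,u_3$, and you want to pass to $q\,u_3\,u_1\,p$.  But the supports of $u_1p$ and $qu_3$ are \emph{not} separated: for $\ell<k$ one has $\Supp(u_1p)\subset[\ell+1,n]$ and $\Supp(qu_3)\subset[0,k-1]$, which meet in $[\ell+1,k-1]$; for $\ell\ge k$ they meet in $[k,\ell-1]$.  So neither a direct commutation nor Lemma~\ref{L:Vcombine} (which requires all letters of one word to exceed all letters of the other) applies.  (Incidentally, the sentence ``reshapes $u_1p$ into a word embedding in $\Dec{\ell-1}{0}\cdot\Inc{1}{m}$ and $qu_3$ into one embedding in $\Inc{m'}{n}\cdot\Dec{n-1}{\ell}$'' has the two pieces swapped---$u_1p$ has no letters below $\ell+1$.)  The paper's fix is exactly to isolate the overlap interval: for $\ell<k$ it writes $u=u_1u_2u_3u_4$ with $u_1\subset\Lambda_{k,k}$, $u_2\subset\Dec{k-1}{\ell+1}$, $u_3\subset V^{\ell-1,\ell-1}$, $u_4\subset\Inc{\ell}{k-1}$, commutes $u_2$ past $u_3$, and then performs the $V\!\to\!\Lambda$ conversion on the small piece $u_2u_4$ supported on $[\ell,k-1]$.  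That conversion sends $u_4$ to $u_4'\subset\Inc{\ell}{k-2}$, so when the pieces are reassembled into $u_3u_4'u_1u_2'\subset\RN_{\ell-1,\ell}$ the letter $k-1$ is forcibly absent between $u_4'$ and $u_1$ in the increasing part of $\RN_{\ell-1,\ell}$.  This is precisely the ``moreover'' clause, and it comes directly from the $\pm1$ shift in Lemma~\ref{L:LaVunique}(3); your sketch never isolates the piece on which that shift acts, so it cannot produce the missing-letter statement.
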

\begin{proof}
We prove (1) for $u\in\Red^N(w)$; the other cases of (1) and (2) are
similar. Let $u=u_1u_2u_3u_4$ where $u_1\sw \Inc{k}{\ell-1}$,
$u_2\sw \La_{\ell+1,\ell+1}$, $u_3\sw \Dec{\ell}{k}$, and $u_4\sw
V^{k-1,k-1}$. We have $u\equiv u_2u_1u_3u_4$. $u_1u_3$ is reduced
since it is a factor of a reduced word. Since $u_1u_3$ is a $\La$
with no $n$, by Lemma \ref{L:LaVunique} it is equivalent to a V:
$u_1u_3\equiv u_3'u_1'$ where $u_3'u_1'$ is a V with valley
$\last(u_3')$ such that $u_3'\sw \Dec{\ell}{k}$ and $u_1' = u_1^+\sw
\Inc{k+1}{\ell}$. Then $u\equiv u_2u_1u_3u_4\equiv
u_2u_3'u_1'u_4\equiv u_2 u_3'u_4u_1'\sw N_{\ell+1,\ell}$.

We prove (3) for $u\in\Red^N(w)$ and $\ell < k$; the cases that
$\ell\ge k$ and $u\in\Red^\RN(w)$, are similar. Let $u=u_1u_2u_3u_4$
where $u_1\sw \La_{k,k}$, $u_2\sw \Dec{k-1}{\ell+1}$, $u_3\sw
V^{\ell-1,\ell-1}$, and $u_4\sw \Inc{\ell}{k-1}$. We have $u\equiv
u_1u_3u_2u_4$. $u_2u_4$ is a reduced word supported on $[\ell,k-1]$
that is a V. By Lemma \ref{L:LaVunique} there is an equivalent
$\La$: $u_2u_4\equiv u_4'u_2'$ where $u_4'u_2'$ is a $\La$ with peak
$\first(u_2')$ such that $u_4'\sw \Inc{\ell}{k-2}$ and $u_2'\sw
\Dec{k-1}{\ell}$. Then $u\equiv u_1u_3u_2u_4\equiv u_1u_3
u_4'u_2'\equiv u_3u_4'u_1u_2'\sw \RN_{\ell-1,\ell}$.
\end{proof}

\begin{example}
Let $n=6$. We start with a reduced word for an element of $w\in\WS$
and apply rotations, choosing $\ell$ to be the first break from the
left, indicated by the symbol $\bullet$, in the given reduced word.
$$
\tableau[sbY]{\bl,\bl,\bl,\bl,\bl,\bl,5|
\bl,\bl,\bl,\bl,\bl,\bl,\cdot| \bl,\bl,\bl,\bl,\bl,\bl,\cdot|
\bl,\bl,\bl,\bl,\bl,\bl,\cdot| \bl,\bl,\bl,\bl,\bl,\bl,1|
              6,5,4,\bullet,2,\cdot,0|}
\qquad
\tableau[sbY]{\bl,\bl,6,5,4,\cdot| \bl,\bl,\cdot|
\bl,\bl,4|\bl,\bl,\cdot|\bl,\bl,\cdot|\bl,\bl,1|2,\bullet,0|} \qquad
\tableau[sbY]{6,5,4,\cdot,\cdot,1| \cdot|4|\cdot|2|\bullet|0|}
$$
$$
\tableau[sbY]{\bl,\bl,\bl,\bl,\bl,\bl,1|6,5,4,\cdot,\cdot,\cdot,0|
\cdot|4|\bullet|2|}\qquad
\tableau[sbY]{\bl,\bl,\bl,\bl,\bl,\bl,\cdot|\bl,\bl,\bl,\bl,\bl,\bl,\cdot|\bl,\bl,\bl,\bl,\bl,\bl,1|
              6,5,4,\cdot,2,\cdot,0|
              \bullet|
              4}
$$
Rotating the last word yields the first one. There are two other
words in $\Red^Z(w)$, which are obtained from the first and third
words above, by commutations.
$$
\tableau[sbY]{\bl,\bl,\bl,\bl,\bl,\bl,5| \bl,\bl,\bl,\bl,\bl,\bl,4|
\bl,\bl,\bl,\bl,\bl,\bl,\cdot| \bl,\bl,\bl,\bl,\bl,\bl,\cdot|
\bl,\bl,\bl,\bl,\bl,\bl,1|
              6,5,\cdot,\cdot,2,\cdot,0|}
\qquad \tableau[sbY]{6,5,4,\cdot,2,1| \cdot|4|\cdot|\cdot|\cdot|0|}
$$
\end{example}

\subsection{Normal words}

The set $\WS$ has a partition into three subsets: the elements $w$
with $\Red^\RN(w)=\vn$, those with $\Red^N(w)=\vn$, and those with
both $\Red^N(w)\ne\vn$ and $\Red^\RN(w)\ne\vn$. We give a criterion
for membership in these subsets.

\begin{lem} \label{L:hasN} Let $w\in\WS$.
\begin{enumerate}
\item
$\Red^\RN(w)=\vn$ (resp. $\Red^N(w)=\vn$) if and only if some word
in $\Red^N(w)$ (resp. $\Red^\RN(w)$) contains $\Dec{n}{0}$ (resp.
$\Inc{0}{n}$) as a factor, if and only if every word in $\Red^N(w)$
(resp. $\Red^\RN(w)$) does.
\item There is a $u\in\Red^N(w)$ that does not contain $\Dec{n}{0}$
as a factor, if and only if there is a $u'\in \Red^\RN(w)$ that does
not contain $\Inc{0}{n}$ as a factor.
\end{enumerate}
\end{lem}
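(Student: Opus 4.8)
The plan is to prove the first form of the statement; the ``resp.'' form then follows by word reversal. The map $u\mapsto u^{\mathrm{rev}}$ sends $\Red(w)$ onto $\Red(w^{-1})$ and $\Red^N(w)$ onto $\Red^\RN(w^{-1})$ (since $N_{k,k-1}^{\mathrm{rev}}=\RN_{k-1,k}$), it carries the factor $\Dec{n}{0}$ to the factor $\Inc{0}{n}$, and it preserves $\WS$ (the reverse of a rotation of the reduced word $N_{1,0}$ of $\rho_{2n}$ is again such a rotation); applying the first form to $w^{-1}$ then yields the ``resp.'' form for $w$. So fix $w\in\WS$ and assume $\Red^N(w)\ne\vn$; when $\Red^N(w)=\vn$ the first two conditions both fail trivially and one invokes the ``resp.'' form instead. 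I would first dispose of the non-\emph{proper} cases. If $n\notin\Supp(w)$ then $\Red^V(w)\ne\vn$ by Lemma~\ref{L:LaVunique}(1), and a $V$ is at once an $N$ and an $\RN$ which, having no letter $n$, cannot contain $\Dec{n}{0}$; hence $\Red^\RN(w)\ne\vn$ while no word of $\Red^N(w)$ contains $\Dec{n}{0}$, so all three conditions fail. The case $0\notin\Supp(w)$ is identical via Lemma~\ref{L:LaVunique}(2). So assume $0,n\in\Supp(w)$.

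For such $w$, the key preliminary is a translation of the factor condition: since $N_{k,k-1}$ contains the letters $0$ and $n$ exactly once each, and both of these lie in its block $n(n-1)\dotsm 10$, a reduced word $u\in\Red^N(w)$ with a chosen embedding $u\sw N_{k,k-1}$ contains $\Dec{n}{0}$ as a factor if and only if no $\ell$ with $0<\ell<n$ is missing from the portion of $u$ lying in that block. Given this, the implication ``$\Red^\RN(w)=\vn\Rightarrow$ every $u\in\Red^N(w)$ contains $\Dec{n}{0}$'' is immediate: should some $u\in\Red^N(w)$ fail to contain $\Dec{n}{0}$, fix an embedding, extract a missing $\ell$, and apply Lemma~\ref{L:rotate}(3) to produce a reverse-$N$ reduced word of $w$. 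Together with the trivial ``every $\Rightarrow$ some'' (valid since $\Red^N(w)\ne\vn$), it remains only to prove: if some $u\in\Red^N(w)$ contains $\Dec{n}{0}$ as a factor, then $\Red^\RN(w)=\vn$. Put $\sigma=s_ns_{n-1}\dotsm s_1s_0$, so that $\Dec{n}{0}=n(n-1)\dotsm 10$ is a reduced word for $\sigma$. No braid move applies to $n(n-1)\dotsm 10$ (two adjacent letters $j+1,j$ never commute, and no length-three window alternates between two values), so it is the unique reduced word of $\sigma$; hence $u$ witnesses $\sigma\le w$ in Bruhat order, and therefore \emph{every} reduced word of $w$ has $n(n-1)\dotsm 10$ as a subword. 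But $n(n-1)\dotsm 10$ is not a subword of any $\RN_{k-1,k}=(k-1)(k-2)\dotsm 1\,0\,1\dotsm(n-1)\,n\,(n-1)\dotsm k$ with $1\le k\le n$, because the unique letter $n$ of $\RN_{k-1,k}$ is followed only by $n-1,n-2,\ldots,k$ and never by $0$. Since a reduced word of $w$ lies in $\Red^\RN(w)$ precisely when it is a subword of some $\RN_{k-1,k}$, we conclude $\Red^\RN(w)=\vn$. This closes the cycle of implications and establishes Part~(1).

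Part~(2) is then purely formal: ``there is a $u\in\Red^N(w)$ not containing $\Dec{n}{0}$ as a factor'' is the conjunction of $\Red^N(w)\ne\vn$ with the negation of the third condition of Part~(1), hence by Part~(1) it is equivalent to ``$\Red^N(w)\ne\vn$ and $\Red^\RN(w)\ne\vn$''; by the ``resp.'' form of Part~(1), ``there is a $u'\in\Red^\RN(w)$ not containing $\Inc{0}{n}$ as a factor'' is equivalent to the same conjunction. The step I expect to need the most care is the bookkeeping in the middle paragraph: justifying the combinatorial translation of the factor condition, and checking against the exact shapes of $N_{k,k-1}$ and $\RN_{k-1,k}$ that the unique reduced word of $\sigma$ embeds into every $N_{k,k-1}$ but into no $\RN_{k-1,k}$. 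Everything else is formal manipulation of the three conditions together with Lemmata~\ref{L:LaVunique} and~\ref{L:rotate}.
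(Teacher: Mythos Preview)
Your proof is correct and uses the same two key observations as the paper: (i) containing $\Dec{n}{0}$ as a subword is preserved by braid moves (you phrase this via the unique reduced word of $s_n\dotsm s_0$ and Bruhat order; the paper just says ``invariant under the braid relations''), and (ii) Lemma~\ref{L:rotate}(3) turns an $N$ missing a letter in its middle block into an $\RN$. The only organizational difference is that the paper proves Part~(2) first (directly from Lemma~\ref{L:rotate}(3)) and then uses it for the converse in Part~(1), whereas you prove both directions of Part~(1) directly and then extract Part~(2) formally; your extra handling of the ``resp.'' form and the non-proper cases $n\notin\Supp(w)$, $0\notin\Supp(w)$ is careful but not strictly needed, since the subword argument already covers them.
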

\begin{proof} (2) follows from Lemma \ref{L:rotate}(3).

For (1) we observe that the property of having $\Dec{n}{0}$ as a
subword, is invariant under the braid relations, which connect
$\Red(w)$.

Suppose $\Red^N(w)$ contains a word with factor $\Dec{n}{0}$. In
particular it contains $\Dec{n}{0}$ as a subword. Therefore the same
is true for all of $\Red(w)$. Now every N that contains $\Dec{n}{0}$
as a subword must contain it as a factor. This proves the second
equivalence in (1). Moreover no $\RN$ contains $\Dec{n}{0}$ as a
subword, so $\Red^\RN(w)=\vn$. Conversely, suppose
$\Red^\RN(w)=\vn$. Then $\vn\ne \Red^Z(w)=\Red^N(w)$. Let
$u\in\Red^N(w)$. Then $u$ must contain $\Dec{n}{0}$ as a factor, for
otherwise (2) yields a contradiction.
\end{proof}

Let $w \in \WS$ satisfy $\Supp(w) = I_\af$.  A \textit{normal} word
for $w\in\WS$ is an element $u\in \Red^Z(w)$ such that:
\begin{enumerate}
%\item If $\Red^N(w)\ne\vn$ and $n\not\in\Supp(w)$, then $u$ is a V.
\item If $\Red^N(w)\ne\vn$ %and $n\in\Supp(w)$
then $u$ has the form $u=\Inc{k}{n}\dotsm$.
\item If $\Red^N(w)=\vn$, then $u\in\Red^\RN(w)$ has the form $u=
\Dec{k}{0}\dotsm$.
\end{enumerate}

\begin{lem} \label{L:tri} Let $v\in\WS$ be such that $\Supp(v)=I_\af$.
Then $v$ has a unique normal word, denoted $v_\nor$.
\end{lem}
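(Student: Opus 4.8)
The plan is to show existence and uniqueness of the normal word for $v\in\WS$ with $\Supp(v)=I_\af$ by exploiting the structural dichotomy from Lemma~\ref{L:hasN}: either $\Red^N(v)\neq\vn$ or $\Red^\RN(v)\neq\vn$ (or both), and the two cases are interchanged by the symmetry $i\leftrightarrow n-i$ together with reversal of words, so it suffices to treat the case $\Red^N(v)\neq\vn$ and define the normal word there; when both are nonempty we must additionally check the two recipes agree, but condition (1) takes precedence by fiat, so only existence needs checking in the overlap.

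For \textbf{existence} in the case $\Red^N(v)\neq\vn$: pick any $u\in\Red^N(v)$ with an embedding $u\sw N_{k,k-1}$. Since $\Supp(v)=I_\af$, the letter $n$ appears in $u$, hence $u$ genuinely uses the full ``up-down-up'' shape and in particular the increasing initial segment is nonempty. I would apply Lemma~\ref{L:rotate}(1) repeatedly: whenever the initial increasing run $\Inc{k}{n}$ of the ambient $N$ has a missing letter $\ell$ with $k\le\ell<n$, rotate to push the embedding to $N_{\ell+1,\ell}$; each rotation strictly increases the left endpoint of the increasing run, so the process terminates at a word $u'\in\Red^N(v)$ whose increasing initial part is a \emph{saturated} segment $\Inc{k}{n}$ with no internal gaps — i.e. $u'$ begins with the factor $\Inc{k}{n}=k(k{+}1)\dotsm n$. (If $\Red^N(v)=\vn$ one runs the mirror argument with Lemma~\ref{L:rotate}(1) applied on the $\RN$ side to get the form $\Dec{k}{0}\dotsm$.) This produces a normal word.

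For \textbf{uniqueness}: suppose $u,u'$ are both normal words for $v$, say both of form (1) (the case where one is of form (1) and the other form (2) cannot occur since which form applies is dictated by whether $\Red^N(v)=\vn$, a property of $v$ alone). Write $u=\Inc{k}{n}\,\hat u$ and $u'=\Inc{k'}{n}\,\hat u'$. First I would argue $k=k'$: the value $k$ is recovered from $v$ as, say, $\min\Supp(v)$ intersected with appropriate data — more carefully, since every reduced word of $v$ is connected to $u$ by braid moves and $u$ has a unique $n$ which is preceded in $u$ by exactly the letters $k,k{+}1,\dotsc,n{-}1$ each once, the position of $s_n$ and the ``staircase below it'' is an invariant, forcing $k=k'$. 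Then $\Inc{k}{n}$ cancels from the left (length-additively) leaving $\hat u,\hat u'\in\Red(s_ns_{n-1}\dotsm s_k v)=\Red(v')$ where $v'\in\WS$ has $\Supp(v')$ a proper sub-interval of $I_\af$ (it drops the letter $n$, and possibly more). Now $v'$ has interval support not containing $n$, so by Lemma~\ref{L:LaVunique}(1) $\Red^V(v')$ is a \emph{singleton}; and I would check that $\hat u,\hat u'$ are forced to be that unique V — the normality recipe, after stripping the leading staircase, leaves a word with no further ``ascent to $n$'', which is exactly a V. Hence $\hat u=\hat u'$ and $u=u'$.

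The main obstacle I anticipate is the bookkeeping in the uniqueness step: pinning down precisely why stripping the leading saturated staircase $\Inc{k}{n}$ is length-additive and well-defined (that $v$ really does have $u=\Inc{k}{n}\hat u$ forced, not merely available), and then correctly identifying the residual element $v'$ and verifying its support is an interval omitting $n$ so that the singleton clause of Lemma~\ref{L:LaVunique} applies. One must be careful that a normal word is required to start \emph{exactly} with $\Inc{k}{n}$ and that this $k$ is an invariant of $v$ — I expect this to follow by tracking, under braid moves within $\Red(v)$, the unique occurrence of the top letter $n$ and the letters immediately preceding it, but making that rigorous is where the real work lies. The rotation-based existence argument, by contrast, should be essentially a direct invocation of Lemma~\ref{L:rotate} with a termination count.
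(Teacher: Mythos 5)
Your existence argument is the paper's: iterate Lemma \ref{L:rotate} until the initial monotone run of the ambient $N$ (or $\RN$) has no gaps, and note that the left endpoint strictly increases so the process terminates. Your uniqueness strategy is also structurally the paper's (force $k=k'$, then strip $\Inc{k}{n}$ and invoke Lemma \ref{L:LaVunique} on the remaining V), but the step you yourself flag as "the real work" --- that $k$ is an invariant of $v$ --- is a genuine gap, and the mechanism you propose for it does not work as stated. Tracking "the unique occurrence of $n$ and the letters immediately preceding it" under braid moves fails because every $s_j$ with $j\le n-2$ commutes with $s_n$, so letters from the tail $u_1$ can migrate across the unique $n$ by commutation moves; the set of letters preceding $n$ is therefore not preserved along $\Red(v)$, and "the staircase below $n$" is not a well-defined invariant without further argument.

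The paper closes this gap differently: by induction on $\ell(v)$ combined with the Exchange Property. Supposing $u=\Inc{k}{n}u_1$ and $u'=\Inc{k'}{n}u_1'$ with $k'<k$, one first rules out $k=n$ (the two forms would disagree about whether $s_nv<v$), then sets $v'=s_kv\lessdot v$ and applies Exchange to $u'$: since $k\,\Inc{k'}{n}$ is reduced, the deleted letter lies outside the initial segment $\Inc{k'}{n}$, so $v'$ still has full support and admits a normal word beginning with $\Inc{k'}{n}$; but $\Inc{k+1}{n}u_1$ is also a normal word for $v'$, contradicting the inductive uniqueness because $k'<k+1$. To complete your proof you need an argument of roughly this strength in place of the invariance claim. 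A secondary point: when you cancel $\Inc{k}{n}$ you assert $\Supp(v')$ is a proper sub-interval of $I_\af$ so that the singleton clause of Lemma \ref{L:LaVunique}(1) applies, but a priori $\Supp(u_1)$ is only guaranteed to contain $[0,k-1]$; what actually pins down $u_1$ is that it must embed in the V-shaped tail of the ambient $N_{k,k-1}$, and this constraint should be made explicit.
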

\begin{proof} Existence holds by Lemma \ref{L:rotate}.
Suppose that $\Red^N(v)\ne\vn$.  The case $\Red^N(v) = \vn$ is
analogous. Let $u=\Inc{k}{n}u_1$ and $u'=\Inc{k'}{n}u_1'$ be normal
words for $v$.

%, where $u_1\sw \Dec{n-1}{k}$, $u_1'\sw\Dec{n-1}{k'}$, $u_2\sw
%V^{k-1,k-1}$ and $u_2'\sw V^{k'-1,k'-1}$.

Suppose first that $k'<k$. We cannot have $k=n$, because the form of
$u$ implies that $s_n v < v$ while that of $u'$ implies $s_nv>v$. So
$k<n$. We have $v'=s_kv<v$. By the Exchange Property there is a
letter in $u'$ whose removal gives a reduced word $u''$ for $v'$.
Since $k \Inc{k'}{n}$ is a reduced word, the removed letter does not
occur in $\Inc{k'}{n}$.  In particular $k \in \Supp(v')$.  But
$\Supp(\Inc{k+1}{n}) \supset (I_\af \setminus \{k\})$ so $\Supp(v')
= \Supp(v) = I_\af$.  By induction on length, $u''=\Inc{k+1}{n}u_1$,
which is a contradiction.  Similarly $k<k'$ leads to a
contradiction. Therefore $k=k'$. But then $u_1$ and $u_1'$ are
reduced words and Vs for the same element of $\WS$, so by Lemma
\ref{L:LaVunique}, $u_1=u_1'$ and therefore $u=u'$.
%We have $v'=s_n
%s_{n-1}\dotsm s_k v\in \WS$. Since $u_1$ and $u_1'$ are decreasing
%and contain all the instances of letters of $[k,n-1]$ in both the
%reduced words $u_1u_2$ and $u_1'u_2'$ of $v'$, it follows that
%$u_1=u_1'$.
\end{proof}

\subsection{Special words}
In this section we assume that $v\in\WS$ is such that
$\Supp(v)=I_\af$. By Lemma \ref{L:tri}, $v$ has a unique normal word
$v_\nor$. We say that an embedded subword $u\sw N_{k,k-1}$ (resp.
$u\sw \RN_{k-1,k}$) is \textit{normally embedded} if $u=v_\nor$ for
some $v\in \WS$ with $\Supp(v)=I_\af$ and $u=\Inc{k}{n}\dotsm\sw
N_{k,k-1}$ (resp. $u=\Dec{k-1}{1}\Inc{0}{n}\dotsm\sw \RN_{k-1,k}$).

Suppose $u\sw u'$ and $u\ne u'$. Define $\fgap(u\sw u')$ (resp.
$\lgap(u\sw u')$) to be the first (resp. last) letter $j\sw u'$ that
is not in $u$.

We say that $u\in\Red^Z(v)$ is \textit{special} if it has a
\textit{special embedding}, that is, an embedding of the form $u\sw
u'$ where $u'=N_{a,a-1}$ or $u'=\RN_{a-1,a}$ for some $1\le a\le n$
such that, if $j=\fgap(u\sw u')$, then adding $j$ to $u$ produces
the normally embedded word $w_\nor\sw u'$ for some $w\in\CC_v$. More
specifically, one of the following holds:
\begin{enumerate}
\item $u\in\Red^N(v)$ and $u\sw N_{a,a-1}$ for some $1\le a \le n$
such that $u$ contains all but exactly one of the letters in
$\Inc{a}{n}\subset N_{a,a-1}$, or
\item $u\in\Red^\RN(v)$ with $u\sw \RN_{a-1,a}$
for some $1\le a\le n$ and $u$ contains all but exactly one of the
letters in $\Dec{a-1}{1}\Inc{0}{n}\sw \RN_{a-1,a}$.
\end{enumerate}

\begin{lem} \label{L:spe} Let $v\in\WS$ with $\Supp(v)=I_\af$ and
$\ell(v)<2n$. Then $v$ has a unique specially embedded word, denoted
$v_\spe\sw u''$, which is obtained by rotating the normal embedding
$v_\nor\sw u'$ at $p=\lgap(v_\nor\sw u')$. This given, we define the
special cover $v^*\in \CC_v$ of $v$, to be the unique cover
$w\in\CC_v$ such that the normally embedded word $w_\nor$ is
obtained from the specially embedded word $v_\spe\sw u''$ by
inserting $\fgap(v_\spe\sw u'')$. Moreover, if $\ell=\fgap(v_\nor\sw
u')$ and $\ell(v)<2n-1$ then $\ell=\fgap(v^*_\nor\sw u'')$, except
when $u'=N_{k,k-1}$ and $\lgap(v_\nor\sw u')<\ell \le k-1$, in which
case $\fgap(v^*_\nor\sw u'')=\ell-1$.
\end{lem}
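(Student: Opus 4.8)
The plan is to produce $v_\spe$ by a single application of the rotation lemma to the normal embedding, and then to chase the extreme gaps through that rotation; throughout, the uniqueness of normal words (Lemma~\ref{L:tri}) is what pins down the output. First I would record the shape of the normal embedding $v_\nor\sw u'$. If $\Red^N(v)\ne\vn$ then $u'=N_{k,k-1}$ and $v_\nor=\Inc{k}{n}\dotsm$, so the whole increasing head $\Inc{k}{n}$ is present and every gap of $v_\nor\sw u'$ is the missing copy of some $j\in\{1,\dotsc,n-1\}$ that occurs exactly once in $v_\nor$, lying in the decreasing run $\Dec{n}{0}$ or the final increasing run $\Inc{0}{k-1}$ of $N_{k,k-1}$; the case $\Red^N(v)=\vn$, $u'=\RN_{k-1,k}$, is mirror symmetric. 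In particular distinct gaps have distinct letter values, and since $\ell(v)<2n=\ell(N_{k,k-1})$ there is at least one gap, so $p:=\lgap(v_\nor\sw u')$ is a letter strictly between $0$ and $n$.

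For existence I would apply Lemma~\ref{L:rotate}(3) if $p$ lies in the decreasing run and Lemma~\ref{L:rotate}(2) if $p$ lies in the final increasing run, obtaining an embedded word $v_\spe\sw u''$ with $u''=\RN_{p-1,p}$, resp. $u''=N_{p,p-1}$. The crucial point is that $p$ was the \emph{last} gap, so every letter of $u'$ to the right of $p$ is present; tracing the proof of Lemma~\ref{L:rotate} then forces the unique gap in the head of $u''$ to be the rotated copy of $p$, and in the decreasing-run case the ``Moreover'' clause of \ref{L:rotate}(3) names this missing head letter as $k-1$ or $k$. Thus $v_\spe\sw u''$ meets condition~(1) or~(2) of the definition of a special embedding. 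Putting $j:=\fgap(v_\spe\sw u'')$, inserting $j$ completes the head of $u''$ entirely, so $j$ is addable by Lemma~\ref{L:addable} and the resulting word spells a cover $w\in\CC_v$; it has exactly the form prescribed by Lemma~\ref{L:tri}, hence it is $w_\nor$ in its normal embedding, and we set $v^*:=w$.

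For uniqueness, observe first that $\Supp(w)=I_\af$ for every $w\in\CC_v$, so each cover has a unique normal embedding (Lemma~\ref{L:tri}); a special embedding $\tilde u\sw\tilde u''$ is therefore recovered from its associated cover $\tilde w$ by deleting from $\tilde w_\nor$ the single head letter at which the deletion returns $\tilde u$. Now the first gap of a special embedding is exactly that head gap, and rotating $\tilde u\sw\tilde u''$ at its first gap (by the appropriate case of Lemma~\ref{L:rotate}) completes the new head, so the result is an embedding of $v$ of normal-word shape, hence equal to $v_\nor\sw u'$ by Lemma~\ref{L:tri}. Since rotating $v_\nor\sw u'$ at $p=\lgap(v_\nor\sw u')$ is a deterministic operation, this shows every special embedding of $v$ coincides with the word produced in the existence step; the extreme case $\ell(v)=2n-1$ (a single gap everywhere) is handled in the same way.

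Finally, for the ``Moreover'' clause, set $\ell=\fgap(v_\nor\sw u')$ and follow it through the rotation at $p$. The hypothesis $\ell(v)<2n-1$ gives $\ell(v^*)<2n$, and since distinct gaps have distinct letters it also gives $p\ne\ell$, so $v^*_\nor\sw u''$ still has a gap; its gaps are those of $v_\spe\sw u''$ with the head copy of $p$ removed, and, reading $u''$ from the left, the first of them is the image of $\ell$ under the rotation. Because passing from $u'$ to $v_\nor$ (a deletion) and from $v_\spe$ to $w_\nor$ (an insertion) only touch head positions, this image equals $\ell$ outright, except for the single $\pm1$ index shift internal to the proof of Lemma~\ref{L:rotate} (the $\Lambda\leftrightarrow V$ conversion), which occurs precisely when $u'=N_{k,k-1}$ and $p=\lgap(v_\nor\sw u')<\ell\le k-1$, and then produces $\fgap(v^*_\nor\sw u'')=\ell-1$ rather than $\ell$. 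I expect the main obstacle to be exactly this last bookkeeping, together with the uniqueness step: across the several cases of Lemma~\ref{L:rotate} one must keep careful track of where the first and last gaps of the embedding land, and check that at most one unit of index shift can ever occur; the remaining steps are a fairly routine application of the rotation lemma and the uniqueness of normal words.
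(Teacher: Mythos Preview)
Your proposal follows essentially the same approach as the paper's proof: obtain $v_\spe$ by rotating the normal embedding at the last gap, prove uniqueness by rotating an arbitrary special embedding at its first gap back to the (unique) normal embedding, and trace the first gap through the rotation for the ``Moreover'' clause. The paper executes this by explicit case-by-case computation (four cases according to whether $p=\lgap$ lies in $\Inc{1}{k-1}$, $\Dec{k-1}{1}$, $\Dec{n-1}{k}$ of $N_{k,k-1}$, or in $\Dec{n-1}{k}$ of $\RN_{k-1,k}$), producing the summary table that follows the lemma; your write-up sketches the same argument by appeal to Lemma~\ref{L:rotate} and defers the case work, which you correctly flag as the main obstacle. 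One small correction: your blanket claim that $u''=\RN_{p-1,p}$ whenever $p$ lies in the decreasing run does not match the paper's computation in the subcase $p\sw\Dec{n-1}{k}$, where the explicit rotation lands in $\RN_{p,p+1}$; this is exactly the sort of bookkeeping that the case analysis is there to handle.
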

\begin{proof}
Suppose $v_\nor \sw N_{k,k-1}$. Let $\ell=\fgap(v_\nor\sw
N_{k,k-1})$.

Suppose $p\sw \Inc{1}{k-1}\sw N_{k,k-1}$ is missing from $v_\nor$.
Let $v_\nor=u_1u_2pu_3u_4\sw N_{k,k-1}$ where $u_1\sw \La_{k,k}$,
$u_2\sw \Dec{k-1}{p+1}$, $u_3\sw V^{p-1,p-1}$, and $u_4\sw
\Inc{p+1}{k-1}$. Then using Lemma \ref{L:LaVunique}(3) we have
$v_\nor\equiv u_1u_2pu_4u_3\equiv u_1 u_4^- (k-1) u_2^- u_3\equiv
u_4^- u_1 (k-1)u_2^- u_3=:u \sw N_{p,p-1}$. Now $u_4^-\sw
\Inc{p}{k-2}$. In this case, $u$ is special if and only if
$u_4=\Inc{p+1}{k-1}$, that is, $p=\lgap(v_\nor\sw N_{k,k-1})$.
Suppose so. Then $\fgap(v^*_\nor\sw N_{p,p-1})$ is $\ell$ unless
$\ell\sw \Dec{k-1}{p+1}\sw N_{k,k-1}$, in which case the answer is
$\ell-1$.

Suppose $p\sw \Dec{k-1}{1}\sw N_{k,k-1}$ is missing from $v_\nor$.
Let $v_\nor=u_1 u_2  u_3 p u_4$ where $u_1\sw \La_{k,k}$, $u_2\sw
\Dec{k-1}{p+1}$, $u_3\sw V^{p-1,p-1}$, and $u_4\sw \Inc{p+1}{k-1}$.
Then $v_\nor\equiv u_3 u_1 u_2 p u_4 \equiv u_3 u_1 u_4^- (k-1)
u_2^- \equiv u_3 u_4^- u_1 (k-1) u_2^-=:u\sw \RN_{p-1,p}$. In this
case $u$ is special if and only if $u_3=V^{p-1,p-1}$ and
$u_4=\Inc{p+1}{k-1}$, that is, $p=\lgap(v_\nor\sw N_{k,k-1})$.
Suppose so. Then $\fgap(v^*_\nor\sw \RN_{p-1,p})$ is $\ell$ unless
$\ell\sw \Dec{k-1}{1}\sw N_{k,k-1}$ and $\ell>p$ ($\ell=p$ cannot
happen if $\ell(v)<2n-1$), in which case the answer is $\ell-1$.

Suppose $p\sw \Dec{n-1}{k}\sw N_{k,k-1}$ is missing from $v_\nor$.
Let $v_\nor=\Inc{k}{p-1}p u_1 u_2 u_3\sw N_{k,k-1}$ where $u_1\sw
\La_{p+1,p+1}$, $u_2\sw \Dec{p-1}{k}$, and $u_3\sw V^{k-1,k-1}$. We
have $v_\nor \equiv \Inc{k}{p-1} p u_2 u_3 u_1 \equiv u_2^+ k
\Inc{k+1}{p} u_3 u_1 \equiv u_2^+ k u_3 \Inc{k+1}{p} u_1=:u\sw
\RN_{p,p+1}$. In this case $u$ is special if and only if
$u_2=\Dec{p-1}{k}$ and $u_3=V^{k-1,k-1}$, that is,
$p=\lgap(v_\nor\sw N_{k,k-1})$. Suppose so. Then $\ell\sw
\Dec{n-1}{p+1}\sw N_{k,k-1}$, and $\ell=\fgap(v^*_\nor\sw
\RN_{p,p+1})$.

Suppose $v_\nor\sw \RN_{k-1,k}$. Let $\ell=\fgap(v_\nor\sw
\RN_{k-1,k})$. Let $p\sw \RN_{k-1,k}$ be missing for $v_\nor$. Then
from the definitions we have $p\sw \Dec{n-1}{k}\sw \RN_{k-1,k}$.
Write $v_\nor=\Dec{k-1}{1}\Inc{0}{n} u_1 u_2$ where $u_1\sw
\Dec{n-1}{p+1}$ and $u_2\sw \Dec{p-1}{k}$. Therefore $v_\nor\equiv
u_2^+ \Dec{k-1}{1}\Inc{0}{n} u_1=:u\sw \RN_{p,p+1}$. $u$ is special
if and only if $u_2=\Dec{p-1}{k}$, that is, $p=\lgap(v_\nor\sw
\RN_{k-1,k})$. Suppose so. Then $\ell=\fgap(v^*_\nor\sw
\RN_{p,p+1})$.

Thus rotation at $\lgap(v_\nor\sw u')$ creates a particular
specially embedded word which we shall denote by $v_\spe\sw u''$. It
remains to show that $v_{\spe}$ is unique.  Suppose $u\in\Red(v)$ is
such that $u\sw u'$ is a special embedding. Rotating $u\sw u'$ at
$\fgap(u\sw u')$, we obtain the normal embedding of $v_\nor$, which
is unique. The explicit computation of this rotation shows that it
is the inverse of the rotation at the last gap of the normal
embedding of $v_\nor$ (which was given above explicitly in all
cases). It follows that there is a unique specially embedded word
for $v$.
\end{proof}

For later use we summarize the construction of Lemma \ref{L:spe} in
the following table, where $p$ is the last gap.  We have indicated
the form of $v_\spe$, and used the symbol $*$ to indicate where a
letter (either $k$ or $k-1$) can be added to obtain $v_\nor^*$.
\begin{center}
\begin{tabular}{|c|c|c|c|c|}
\hline $p\subset$ & $v_\spe$ & $u_1\subset$ & $u_2\subset$ & $u_3\subset$  \\
\hline $\Inc{1}{k-1}\sw N_{k,k-1}$ & $\Inc{p}{k-2} * u_1 (k-1)u_2^-
u_3$ &
$\La_{k,k}$ & $\Dec{k-1}{p+1}$ & $V^{p-1,p-1}$\\
\hline $\Dec{k-1}{1}\sw N_{k,k-1}$ & $\Dec{p-1}{1} \Inc{0}{k-2}
* u_1 (k-1) u_2^-$ & $ \La_{k,k}$&$ \Dec{k-1}{p+1}$ &
\\ \hline $\Dec{n-1}{k}\sw N_{k,k-1}$ & $\Dec{p}{1}
\Inc{0}{k-1} *  \Inc{k+1}{p} u_1$ & $\La_{p+1,p+1}$ & &
\\ \hline
$\Dec{n-1}{k}\sw \RN_{k-1,k}$ & $\Dec{p-1}{k+1} *
\Dec{k-1}{1}\Inc{0}{n} u_1$& $\Dec{n-1}{p+1}$ & &\\
\hline
\end{tabular}
\end{center}

\begin{example} Take $n=7$ and $v_\nor=56754310124\subset N_{5,4}$.
In this case $p=3$, $v_\spe=35675431012$, and
$v_\nor^*=345675431012$.
\end{example}

\subsection{Kinds of covers}
Let $v\in\WS$ be fixed. The set $I_\af$ is divided into four kinds
of letters. Let $j$ be $v$-internal if $j\in\Supp(v)$. If $j\not\in
\Supp(v)$, let $j$ be $v$-isolated, $v$-adjoining, and $v$-merging
if the number of components of $\Supp(v)$ adjacent to $j$ is $0$,
$1$, or $2$, that is, $|\{j-1,j+1\}\cap \Supp(v)|$ is $0$, $1$, or
$2$.

Let $w\in\CC_v$ with a reduced word $\tu\in\Red^Z(w)$ and a letter
$j\sw \tu$ whose omission leaves a reduced word $u\in\Red^Z(v)$.
Then we call the cover $w$ internal, isolated, adjoining, or
merging, according as $j$ is (with respect to $v$). Such $w$ have
$c(w)$ equal to $c(v)$, $c(v)+1$, $c(v)$, and $c(v)-1$ respectively.

In the case of an internal cover the omitted letter $j$ may vary if
the reduced word $\tu$ is changed; however the component $C$ of
$j\in \Supp(v)$ depends only on $w$. Moreover $w_C \gtrdot v_C$ and
$w_{C'}=v_{C'}$ for components $C'$ of $\Supp(v)$ with $C'\ne C$.

If $j\not\in\Supp(v)$ then the omitted letter $j$ is uniquely
determined by $w$.

\begin{lem} \label{L:ncov} Let $v\in\WS$ with $\ell(v)<2n$.
\begin{enumerate}
\item
For each $v$-isolated letter $j\in I_\af$ there is a unique cover
$w$ in $\CC_v$ that omits $j$, namely, $s_j v$.
\item
For each $v$-adjoining letter $j\in I_\af$ there are exactly two
covers $w\in \CC_v$ that omit $j$, namely, $s_j v$ and $vs_j$.
\item
For each $v$-merging letter $j\in I_\af$ there are exactly four
covers $w\in\CC_v$ that omit $j$. Let $u\in\Red(v)$ and $u_+$ and
$u_-$ the subwords of $u$ given by the restriction to the letters
greater and less than $j$ respectively and let $v_+$ and $v_-$ be
the corresponding elements of $\WS$. Then the four covers of $v$
that omit $j$ are $s_j v_+v_-$, $v_+s_jv_-$, $v_+v_-s_j$, and
$v_-s_jv_+$.
\end{enumerate}
\end{lem}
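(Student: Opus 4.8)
The plan is to treat the three cases together, first using the commutation relations to reduce to the component(s) of $\Supp(v)$ adjacent to $j$, and then reading off the covers from the structure of reduced words that are Zs. Since $j\notin\Supp(v)$, the generator $s_j$ commutes with $s_i$ for every $i\in\Supp(v)$ with $i\notin\{j-1,j+1\}$; moreover distinct components of $\Supp(v)$ commute with one another, and a component not adjacent to $j$ commutes with $s_j$ as well. Writing $v$ as the ordered product of its components and grouping the (zero, one, or two) components adjacent to $j$ into a single factor $a$, we get $v=ab=ba$ with $b$ commuting with $s_j$, with $\Supp(a)$ and $\Supp(b)$ non-touching, and with $a\in\WS$ since $\WS$ is a Bruhat order ideal. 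Inserting $j$ into a reduced word of $v$ and commuting it past the letters of $b$, one sees that the covers of $v$ in $\WS$ that omit $j$ are exactly the elements $wb=bw$ with $w$ a cover of $a$ in $\WS$ omitting $j$ — here using that a product of two Zs with non-touching supports is again a Z (as in the proof of Proposition~\ref{P:evenP}), so that $wb\in\WS$ iff $w\in\WS$. Thus it suffices to treat $v=a$: either $\Supp(v)=\vn$ (isolated), a single interval adjacent to $j$ (adjoining), or two intervals $[m,j-1]$ and $[j+1,M]$ flanking $j$ (merging).

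If $\Supp(v)=\vn$ then $v=e$ and the only cover omitting $j$ is $s_j$, which lies in $\WS$ because the one-letter word $j$ is a Z; hence in general $w=s_jv$. For the adjoining case, the diagram automorphism $s_i\mapsto s_{n-i}$ of $\tC_n$ preserves $\WS$ (it permutes the rotations of the unique reduced word of $\rho_{2n}$), so we may assume $\Supp(v)=[j+1,M]$. Since $0\notin\Supp(v)$, Lemma~\ref{L:LaVunique}(2) gives $v$ a unique $\La$-reduced-word $\hat u$, increasing to its peak $M=\max$ and then decreasing. Let $w$ be any cover of $v$ omitting $j$; then $w\in\WS$, $\Supp(w)=[j,M]$, $\ell(w)=\ell(v)+1$, and every reduced word of $w$ contains exactly one $j$. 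Writing $w=\bar L\,s_j\,\bar R$ according to where $j$ was inserted into a reduced word of $v$: if $j+1\notin\Supp(\bar L)$ then $s_j$ commutes with $\bar L$ and $w=s_jv$, and symmetrically $w=vs_j$ if $j+1\notin\Supp(\bar R)$. The remaining possibility is that $j+1$ lies in both $\Supp(\bar L)$ and $\Supp(\bar R)$; here one argues that the resulting word cannot be a Z, by examining its sequence of monotone runs against the shapes of $N_{k,k-1}$ and $\RN_{k-1,k}$ — the relative heights of the first and last runs are incompatible with an extra valley at $j$ — which is where the rotation machinery of Section~\ref{SS:BruhatZ} and Lemma~\ref{L:addable} enter, the case $j=0$ needing Lemma~\ref{L:LaVunique}(1) in place of (2). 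Finally $s_jv\ne vs_j$ since $s_j$ does not commute with $v$ (as $j+1\in\Supp(v)$), and both lie in $\WS$ because prepending or appending $j$ to $\hat u$ yields a word that is still a $\La$.

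For the merging case, $v=v_+v_-=v_-v_+$, where $v_+$ (resp.\ $v_-$) is the product of the letters of $v$ greater (resp.\ less) than $j$, with $\Supp(v_+)=[j+1,M]$ and $\Supp(v_-)=[m,j-1]$; note $v_+$ and $v_-$ commute. Inserting $j$ into a reduced word of $v$ and sliding it past the letters that commute with $s_j$, one sees the resulting element depends only on the position of $j$ relative to $v_+$ and $v_-$, giving exactly the four candidates $s_jv_+v_-$, $v_+s_jv_-$, $v_+v_-s_j$, $v_-s_jv_+$ of the statement (the remaining orderings coincide with these because $v_+$ and $v_-$ commute). One checks they are pairwise distinct (using that $s_j$ commutes with neither $v_+$ nor $v_-$), and that each lies in $\WS$: the two ``outer'' ones as in the adjoining case, the two ``inner'' ones by placing $j$ appropriately in an N- or RN-word whose support is the interval $[m,M]$, via Lemma~\ref{L:rotate}. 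Conversely, a cover $w$ of $v$ omitting $j$ has connected support $[m,M]$ and a reduced word with a single $j$; analysing which positions that $j$ can occupy in a Z-word of $w$ — via Lemmata~\ref{L:LaVunique} and~\ref{L:addable}, and noting that the $j\pm1$ neighbours of $j$ are forced by the component factorizations of $v_+$ and $v_-$ — shows $w$ must be one of the four candidates.

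The main obstacle is this converse direction in the merging case: here $\Supp(w)=[m,M]$ is an interval that may contain $0$ and/or $n$, so Lemma~\ref{L:LaVunique} constrains the shape of a Z-word of $w$ only partially, and one must carefully track where the single letter $j$ can sit and which $j\pm1$ it is flanked by, reconciling this with the factorizations of $v_+$ and $v_-$. A secondary nuisance, present throughout, is the $j\in\{0,n\}$ sub-cases, where $j$ cannot simply be commuted to an end of a reduced word and one must instead invoke the uniqueness of V- or $\La$-words (or, when $\Supp(w)=I_\af$, the normal-word machinery of Lemma~\ref{L:tri}) together with Lemma~\ref{L:addable} to rule out $j$ being trapped in the interior.
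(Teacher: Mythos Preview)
Your overall strategy --- reduce to the component(s) adjacent to $j$, exhibit the listed elements as Zs, then argue no other covers exist --- matches the paper's, and your forward direction and distinctness checks are fine. The converse is where your argument remains a sketch, and the paper closes it with a much shorter device than the machinery you anticipate.

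In the adjoining case you write that when $j+1$ lies in both $\Supp(\bar L)$ and $\Supp(\bar R)$, ``the resulting word cannot be a Z''. As phrased this targets one particular word, not the element $w$, so it does not yet exclude $w$ having some other Z-word; the appeal to ``monotone runs'' is not an argument. In the merging case you correctly flag the converse as the main obstacle but defer it to unspecified analysis via rotation and normal words, worrying in particular about $j\in\{0,n\}$.

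The paper's converse is a one-line reducedness argument, carried out only for the merging case (the others being strictly easier). Take $\tilde u\in\Red^Z(w)$ embedded in $N_{k,k-1}$ (the $\RN$-embedding and the sub-case $j<k$ are symmetric), remove the single $j$ to obtain $u\in\Red^Z(v)$, and for $j\ge k$ write $u=u_1u_2u_3$ with $u_2\subset\La_{j+1,j+1}$ carrying the letters $>j$ and $u_1\subset\Inc{k}{j-1}$, $u_3\subset\Dec{j-1}{0}\Inc{1}{k-1}$ the flanks of letters $<j$. Since $\Supp(u_2)$ and $\Supp(u_3)$ are separated, $u\equiv u_1u_3u_2$. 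The key observation: $u_1$ and $u_3$ cannot both contain $j-1$, because $u_1$ is increasing (so would end in $j-1$) and $u_3$ begins in $\Dec{j-1}{0}$ (so would start with $j-1$), putting two consecutive $(j-1)$'s in $u_1u_3$ --- contradicting that $u$ is reduced. Whichever flank omits $j-1$ commutes with $s_j$, and depending on which of the two $j$-slots in $N_{k,k-1}$ is occupied in $\tilde u$, one reads off $w$ as one of the four listed elements. No rotation, no normal words, and no special treatment of endpoints are needed; note that $j\in\{0,n\}$ cannot be merging, and the adjoining case is simply the degeneration where one of the flanks, or $u_2$, is empty.
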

\begin{proof} We prove (3) as the other cases are easier. We observe
that $v_+$ and $v_-$ are defined independent of the reduced word
$u$. The four given elements of $\tC_n$ are all covers of $v$ that
omit $j$, and are distinct since $j-1\in\Supp(v_-)$ and
$j+1\in\Supp(v_+)$. We now realize each of them by reduced words
that are Zs. By Lemma \ref{L:LaVunique} let $u_+\in\Red^\La(v_+)$
and $u_-\in\Red^V(v_-)$. Then $ju_+u_-$, $u_+ju_-$, and $u_+u_-j$
are all Ns, and $u_-ju_+$ is a $\RN$, and they are reduced words for
the above elements of $\tC_n$. It remains to show that if
$w\in\CC_v$ omits $j$ then $w$ is one of the four given covers. Let
$\tu\in\Red^Z(w)$ and $j\in \tu$ such that the omission of $j$ from
$\tu$ leaves $u\in\Red^Z(v)$. Suppose $\tu\sw N_{k,k-1}$. Suppose
$j\ge k$. Let $u=u_1u_2u_3$ where $u_2\sw \La_{j+1,j+1}$, so that
$\tu=u_1 ju_2u_3$ or $u_1u_2ju_3$. Here $\Supp(u_1)\subset[0,j-1]$
and $\Supp(u_3)\subset[0,j-1]$ while $\Supp(u_2)\subset[j+1,n]$. If
$\tu=u_1ju_2u_3$ then $\tu\equiv u_1 j u_3 u_2$. But not both $u_1$
and $u_3$ can contain $j-1$, for if they did then $u_1$ ends with
$j-1$ and $u_3$ starts with $j-1$ and $u\equiv u_1 u_3 u_2$ is not
reduced. If $u_1$ does not contain $j-1$ then $\tu\equiv j u_1 u_3
u_2$ and $w=s_j v_-v_+$. If $u_2$ does not contain $j-1$ then
$\tu\equiv u_1u_3ju_2$ and $w=v_-s_jv_+$. The cases that
$\tu=u_1u_2ju_3$, $j<k$ and $\tu\sw \RN_{k-1,k}$ are similar.
\end{proof}

We now classify the internal covers of $v$. For this purpose we may
assume $\Supp(v)$ has a single component. For $k,\ell\le M$ let
$\La_{k,\ell}^M = \Inc{k}{M}\Dec{M-1}{\ell}$ and for $m\le k,\ell$
let $V^{k,\ell}_m = \Dec{k}{m}\Inc{m+1}{\ell}$.

\begin{lem} \label{L:intcov} Suppose $v\in\WS$ is such that
$\Supp(v)$ consists of a single component $[m,M]$.
\begin{enumerate}
\item
If $M<n$ (resp. $m>0$) then the internal covers of $v$ are precisely
those obtained by inserting missing letters into $u\sw V^{M,M}_m$
(resp. $u\sw \La_{m,m}^M$) where $u$ is the unique element of
$\Red^V(v)$ (resp. $\Red^\La(v)$).
\item
If $m=0$ and $M=n$, consider the normal embedding $v_\nor\sw u'$.
Then the internal covers of $v$ are precisely those obtained by
inserting missing letters into $v_\nor\sw u'$ (\textit{normal
covers}), plus the \textit{special cover}, which is obtained from
the special embedding of $v_\spe$ by inserting the first missing
letter.
\end{enumerate}
\end{lem}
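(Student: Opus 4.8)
The plan is to dispose of the two cases using the structural machinery already in place: case~(1) rests on the uniqueness of the V\nobreakdash-word (resp.\ $\La$-word) from Lemma~\ref{L:LaVunique}(1) together with the subword characterization of Bruhat covers, while case~(2) uses the normal word of Lemma~\ref{L:tri} and the special word of Lemma~\ref{L:spe}. In each case the argument splits in two: every relevant insertion produces an internal cover, and every internal cover arises this way.

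For case~(1), say $M<n$ (the case $m>0$ is symmetric, with $\La$-words and $\La_{m,m}^M$ in place of V-words and $V^{M,M}_m$). Since $\Supp(v)=[m,M]$ is an interval not containing $n$, Lemma~\ref{L:LaVunique}(1) gives $\Red^V(v)=\{u\}$, and $u$ has a unique embedding $u\sw V^{M,M}_m$, which in turn sits inside $N_{M+1,M}$ (legitimate since $M+1\le n$). First: any embedded insertion of a missing letter into $u\sw V^{M,M}_m$ which stays reduced is, by Lemma~\ref{L:addable}, a reduced $Z$-word for some $w\gtrdot v$; since $V^{M,M}_m$ is supported on $[m,M]$, so is $w$, so $w\in\CC_v$ is internal. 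Conversely, let $w\in\CC_v$ be an internal cover; then $\Supp(w)=[m,M]$ and, again by Lemma~\ref{L:LaVunique}(1), $w$ has a unique V-word $\tu\sw V^{M,M}_m$. Since $v\le w$ and $\ell(v)=\ell(w)-1$, deleting a single suitable letter of $\tu$ yields a reduced word for $v$; being a subword of $V^{M,M}_m$ it has at most one valley, hence is a V-word of $v$ supported on $[m,M]$, hence equals $u$. Thus $\tu$ is $u$ with one embedded letter inserted, which proves (1).

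For case~(2), $m=0$ and $M=n$, so $\Supp(v)=I_\af$; then every $w\in\CC_v$ has $\Supp(w)=I_\af$, so all covers are internal. By Lemma~\ref{L:tri} each $w\in\CC_v$ has a unique normal word $w_\nor$, and by Lemma~\ref{L:spe} the special cover $v^*$ is a well-defined element of $\CC_v$ whose normal word is obtained from the special embedding $v_\spe\sw u''$ by inserting $\fgap(v_\spe\sw u'')$. For the inclusion ``$\supseteq$'': each missing-letter insertion into $v_\nor\sw u'$ which stays reduced produces a $Z$-word of support $I_\af$, hence lies in $\CC_v$, so every normal cover is in $\CC_v$; and $v^*\in\CC_v$ by construction. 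The explicit form of the last-gap rotation in Lemma~\ref{L:spe} moreover shows that $v^*$ is not one of the normal covers, so that $\{\text{normal covers}\}\cup\{v^*\}$ is a disjoint union inside $\CC_v$.

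The reverse inclusion is the heart of the matter. Take $w\in\CC_v$ with normal word $w_\nor\sw u''$, where $u''$ is an ambient $N$ or $\RN$. Since $v\le w$ with $\ell(v)=\ell(w)-1$, delete one letter of $w_\nor$ to obtain a reduced $Z$-word $\bar u$ for $v$. If the deleted letter lies outside the initial monotone run of $w_\nor$ singled out in Lemma~\ref{L:tri}, then $\bar u$ is again a normal word, hence $\bar u=v_\nor$ and $w$ is a normal cover. Otherwise the deleted letter lies in that initial run; rotating $\bar u$ at its first gap to reach $v_\nor$ and comparing with the rotation formulas in the proof of Lemma~\ref{L:spe} --- which exhibit the last-gap rotation $v_\nor\mapsto v_\spe$ as inverse to precisely this rotation --- identifies the embedding $\bar u\sw u''$ with the special embedding $v_\spe\sw u''$, so that $w=v^*$. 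Carrying this matching through the four shapes of $v_\nor$ tabulated after Lemma~\ref{L:spe} is the main obstacle; the remaining points (preservation of support, reducedness of the deleted subwords, and the trivial case when $\Supp(v)$ is a single point) are routine.
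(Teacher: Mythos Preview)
Your proof is correct and follows essentially the same route as the paper's: in case~(1) you use Lemma~\ref{L:LaVunique} to pin down the unique V- (resp.\ $\La$-) word of both $v$ and its internal cover $w$, and in case~(2) you delete a letter from the unique $w_\nor$ and observe that the result is either $v_\nor$ normally embedded or $v_\spe$ specially embedded. You supply more detail than the paper (which simply says ``it is easy to check'') by splitting on whether the deleted letter lies in the saturated initial segment of $w_\nor$; note that in the $\RN$ case this segment is $\Dec{k-1}{1}\Inc{0}{n}$ rather than a single monotone run, so your phrase ``initial monotone run'' is slightly imprecise there, but the argument goes through unchanged.
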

\begin{proof} Since internal covers do not change the support and the
support is assumed to be an interval, by Lemma \ref{L:addable}
adding any missing letter of $[m,M]$ creates a cover. Any internal
cover $w\in\CC_v$ has the same support as $v$. If $M<n$ then $w$ has
a reduced word that is a V, and removing one of its letters yields a
reduced word for $v$ that is a V. By uniqueness this word must be
$u$. This proves (1) for $M<n$, and $m>0$ is similar. For (2)
suppose $\Supp(v)=I_\af$. Let $w\in\CC_v$. Consider the normal
embedding of $w_\nor$, which is unique since $\Supp(w)=I_\af$. There
is a unique letter in $w_\nor$ whose removal yields an embedded
reduced word $u$ for $v$. It is easy to check that $u$ is either
$v_\nor$ normally embedded or $v_\spe$ specially embedded.
\end{proof}

\subsection{Associated coroots}
\label{SS:coroot}

Let $v \le v'$ with $v,v'\in \tC_n$ and let $u\in\Red(v)$ and
$u'\in\Red(v')$ be such that $u\sw u'$. For $j\sw u'$, define
$\al^\vee(u\sw u',j)$ to be $\al_{vw}^\vee$ if adding the given
occurrence of $j$ to $u$ creates a reduced word for a cover
$w\gtrdot v$, and $0$ otherwise. In particular the value is $0$ if
$j\sw u$. Define $\al^\vee(u\sw u')=\sum_{j\sw u'} \al^\vee(u\sw
u',j)$. The following Lemma holds by the definitions.

\begin{lem} \label{L:cofactor}
Let $v_1 \le v_1'$, $v_2\le v_2'$, $u_1,u_1',u_2,u_2'$ reduced words
for $v_1,v_1',v_2,v_2'$ such that $u_1\sw u_1'$ and $u_2\sw u_2'$.
Then
\begin{align*}
  \al^\vee(u_1u_2\sw u_1'u_2') =
  v_2^{-1} \al^\vee(u_1\sw u_1') + \al^\vee(u_2\sw u_2').
\end{align*}
\end{lem}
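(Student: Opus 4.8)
The plan is to unwind the definition $\al^\vee(u\sw u')=\sum_{j\sw u'}\al^\vee(u\sw u',j)$ on both sides and match the contributions occurrence by occurrence. Note first that for the left side to be defined one has $u_1u_2\in\Red(v_1v_2)$ and $u_1'u_2'\in\Red(v_1'v_2')$, so in particular $\ell(v_1v_2)=\ell(v_1)+\ell(v_2)$ and $\ell(v_1'v_2')=\ell(v_1')+\ell(v_2')$. The occurrences $j\sw u_1'u_2'$ split into those in the $u_1'$-block and those in the $u_2'$-block, and under this splitting the embedding $u_1u_2\sw u_1'u_2'$ restricts to $u_1\sw u_1'$ and $u_2\sw u_2'$. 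I will show that the $u_2'$-block contributes exactly $\al^\vee(u_2\sw u_2')$ and the $u_1'$-block contributes exactly $v_2^{-1}\al^\vee(u_1\sw u_1')$. Throughout I use the elementary reformulation of the cover coroot: for $v\lessdot w$, $\al_{vw}$ is the unique positive real root $\gamma$ with $s_\gamma=v^{-1}w$ and $\al_{vw}^\vee=\gamma^\vee$, together with the fact that $(x\gamma)^\vee=x\gamma^\vee$ for $x\in\tC_n$ by the very definition of the coroot of a real root.

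Fix an occurrence $j$ in the $u_2'$-block. Inserting $j$ into $u_1u_2$ yields $u_1\tilde u_2$, where $\tilde u_2$ is the word obtained by inserting $j$ into $u_2$. Using that $u_1u_2$ and $u_1'u_2'$ are reduced, one checks that $u_1\tilde u_2$ is a reduced word for a cover of $v_1v_2$ if and only if $\tilde u_2$ is a reduced word for a cover $w_2\gtrdot v_2$; in that case $[u_1\tilde u_2]=v_1w_2$ and
$$(v_1v_2)^{-1}(v_1w_2)=v_2^{-1}w_2,$$
which is literally the reflection governing $v_2\lessdot w_2$. Hence $\al_{v_1v_2,\,v_1w_2}=\al_{v_2,w_2}$, so the two occurrences contribute the same coroot, and summing over such $j$ gives $\al^\vee(u_2\sw u_2')$.

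Now fix an occurrence $j$ in the $u_1'$-block. Inserting $j$ yields $\tilde u_1u_2$ with $\tilde u_1$ obtained from $u_1$, and again (by reducedness of $u_1u_2$ and $u_1'u_2'$) this is a reduced word for a cover of $v_1v_2$ iff $\tilde u_1$ is one for a cover $w_1\gtrdot v_1$; then $[\tilde u_1u_2]=w_1v_2$ and
$$(v_1v_2)^{-1}(w_1v_2)=v_2^{-1}\bigl(v_1^{-1}w_1\bigr)v_2=v_2^{-1}s_{\al_{v_1w_1}}v_2=s_{\,v_2^{-1}\al_{v_1w_1}},$$
so the governing reflection is the $v_2$-conjugate of the one for $v_1\lessdot w_1$. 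Since $w_1v_2$ covers $v_1v_2$ the length increases by one, and because $w_1=v_1 s_{\al_{v_1w_1}}$ forces $v_1\al_{v_1w_1}>0$, this excludes $v_2^{-1}\al_{v_1w_1}<0$ (which would make the length drop); hence the unique positive root of $s_{\,v_2^{-1}\al_{v_1w_1}}$ is $v_2^{-1}\al_{v_1w_1}$ itself. Therefore $\al_{v_1v_2,\,w_1v_2}=v_2^{-1}\al_{v_1w_1}$ and, by equivariance of the coroot map, $\al_{v_1v_2,\,w_1v_2}^\vee=v_2^{-1}\al_{v_1w_1}^\vee$. Summing over such $j$ produces $v_2^{-1}\al^\vee(u_1\sw u_1')$, and adding the two blocks gives the asserted identity.

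The only point that is not a pure symbol-chase is the claim, used once in each of the two middle paragraphs, that enlarging the embedded word by one missing letter yields a reduced word for a cover on the ``combined'' side exactly when it does on the relevant block; equivalently, that length-additivity of the factorization is inherited after the enlargement. This is precisely where the hypotheses $u_1u_2\in\Red(v_1v_2)$ and $u_1'u_2'\in\Red(v_1'v_2')$ enter, and it can be organized by a short induction on $\ell(v_1)$, stripping the leading letter of $u_1$ together with the matching letter of $u_1'$ and invoking the standard behaviour of reduced words under concatenation; the base case is immediate. Everything else is the coroot and sign tracking carried out above.
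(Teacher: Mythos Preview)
Your approach—splitting by whether the inserted occurrence lies in the $u_1'$- or $u_2'$-block and tracking the reflection—is exactly what the paper's one-liner ``holds by the definitions'' intends, and your coroot identification (including the positivity check showing $v_2^{-1}\al_{v_1w_1}>0$ once the combined word is reduced) is correct.

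The gap is precisely where you flag it. The equivalence ``the enlarged word is reduced on the combined side iff it is on the block'' holds only in one direction: if $\tilde u_1u_2$ (resp.\ $u_1\tilde u_2$) is reduced then $\tilde u_1$ (resp.\ $\tilde u_2$) is, by an easy length count; but the converse fails in general. A small counterexample inside $\tC_n$ (for $n\ge 3$, using the type-$A$ parabolic on $s_1,s_2$): take $u_1=\emptyset\sw u_1'=1$ and $u_2=1\sw u_2'=21$ (embedded as the last letter). Then $u_1u_2=1$ and $u_1'u_2'=121$ are both reduced; inserting the unique letter of $u_1'$ gives $\tilde u_1=1$ reduced, yet $\tilde u_1u_2=11$ is not. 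Direct computation gives $\al^\vee(1\sw 121)=\al_1^\vee+\al_2^\vee$ while $v_2^{-1}\al^\vee(\emptyset\sw 1)+\al^\vee(1\sw 21)=s_1\al_1^\vee+(\al_1^\vee+\al_2^\vee)=\al_2^\vee$. Your proposed induction on $\ell(v_1)$ cannot repair this: the counterexample already sits at the base case $u_1=\emptyset$.

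So the identity as literally stated needs an extra hypothesis ensuring that every insertion which is reduced on the block is also reduced on the concatenation. In the paper's applications (the proofs of Lemmata~\ref{L:coV} and~\ref{L:cofullsup}) this is supplied by Lemma~\ref{L:addable}: the support is an interval containing the inserted letter, so every insertion is automatically reduced on both sides, and then your argument—and the paper's one-liner—goes through verbatim.
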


It is straightforward to compute sums of associated coroots for
subwords of increasing or decreasing reduced words.

\begin{lem} \label{L:coincdec} Let $0\le m\le M\le n$ and $u\sw \Inc{m}{M}$ or $u\sw
\Dec{M}{m}$. Then
\begin{equation} \label{E:comono}
\begin{aligned}[3]
 \al^\vee(u\sw \Inc{m}{M})&=
  \al_k^\vee+\al_{k+1}^\vee+\dotsm+\al_M^\vee &\qquad&\text{if
  $M<n$} \\
  \al^\vee(u\sw\Inc{m}{n}) &=
  \al_k^\vee+\al_{k+1}^\vee+\dotsm+\al_{n-1}^\vee+2\al_n^\vee&&
  \\
  \al^\vee(u\sw\Dec{M}{m}) &=
  \al_k^\vee+\al_{k-1}^\vee+\dotsm+\al_m^\vee &\qquad&\text{if $m>0$} \\
  \al^\vee(u\sw\Dec{M}{0}) &=
  \al_k^\vee+\al_{k-1}^\vee+\dotsm+\al_1^\vee+2\al_0^\vee&&
\end{aligned}
\end{equation}
where $k=\fgap(u\sw u')$ for $u'=\Inc{m}{M}$ or $u'=\Dec{M}{m}$. If
$k$ does not exist (that is, $u=u'$) then the sum is $0$.
\end{lem}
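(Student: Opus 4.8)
The natural approach is an induction on the length of $u'=\Inc{m}{M}$ (resp. $\Dec{M}{m}$), peeling off the extreme letter and invoking Lemma~\ref{L:cofactor}. I will treat $\Inc{m}{M}$; the case of $\Dec{M}{m}$ is obtained by running the same induction from the other end. If $u=u'$ there are no gaps and every term $\al^\vee(\cdot\sw\cdot)$ is an empty sum, so assume $u\ne u'$ and write $u' = \Inc{m}{M-1}\cdot M$.

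First I would split $u$ according to whether the letter $M$ is a gap. If $M\notin\Supp(v)$, then $u=u_1$ with $u_1\sw\Inc{m}{M-1}$, and Lemma~\ref{L:cofactor} applied to $u_1\cdot\vn\sw\Inc{m}{M-1}\cdot M$ gives $\al^\vee(u\sw\Inc{m}{M}) = \al^\vee(u_1\sw\Inc{m}{M-1}) + \al^\vee(\vn\sw M)$; here $\al^\vee(\vn\sw M)=\al_M^\vee$ because inserting $M$ into the empty word produces the cover $e\lessdot s_M$, whose associated coroot is $\al_M^\vee$. Since $M$ is a gap, $\fgap(u\sw u')$ is still $k$ (unless $u_1=\Inc{m}{M-1}$, in which case $k=M$), so the inductive formula for $\Inc{m}{M-1}$ plus $\al_M^\vee$ gives exactly $\al_k^\vee+\dotsm+\al_M^\vee$. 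If instead $M\in\Supp(v)$, then $u=u_1\cdot M$ with $u_1\sw\Inc{m}{M-1}$, and Lemma~\ref{L:cofactor} applied to $u_1\cdot M\sw\Inc{m}{M-1}\cdot M$ gives $\al^\vee(u\sw\Inc{m}{M}) = s_M\bigl(\al^\vee(u_1\sw\Inc{m}{M-1})\bigr)$, since $M\sw M$ contributes the empty sum. Here the gaps of $u\sw u'$ are those of $u_1\sw\Inc{m}{M-1}$, so $\fgap$ is again $k$; by induction (the bound $M-1<n$ always holds, so the undoubled formula applies) this equals $s_M(\al_k^\vee+\dotsm+\al_{M-1}^\vee)$. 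Now $s_M$ fixes $\al_i^\vee$ for $i\le M-2$ and, by \eqref{E:WonP} and \eqref{E:Cartan}, sends $\al_{M-1}^\vee$ to $\al_{M-1}^\vee - a_{M-1,M}\,\al_M^\vee$, where by Example~\ref{X:WeylC} we have $a_{M-1,M}=-1$ if $M<n$ and $a_{M-1,M}=-2$ if $M=n$. This yields $\al_k^\vee+\dotsm+\al_M^\vee$ when $M<n$ and $\al_k^\vee+\dotsm+\al_{n-1}^\vee+2\al_n^\vee$ when $M=n$, completing the induction. The decreasing case runs identically, now peeling off the letter $m$ and using $s_m$, with the step $s_0(\al_1^\vee)=\al_1^\vee+2\al_0^\vee$ (coming from $a_{1,0}=-2$) producing the $2\al_0^\vee$ at the bottom of a decreasing word that reaches the affine node $0$.

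The computation is otherwise routine; the one point that really uses the geometry of type $C$ is the appearance of the coefficient $2$, and the induction isolates it as the single reflection step across a double-bonded edge ($s_n$ acting on $\al_{n-1}^\vee$, or $s_0$ on $\al_1^\vee$). The place I expect to need the most care is aligning the inductive hypothesis: checking that $k=\fgap(u\sw u')$ is stable when the extreme letter is a gap that is peeled off, and that the hypothesis applied to $\Inc{m}{M-1}$ is always the undoubled one.
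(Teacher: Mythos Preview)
The paper offers no proof for this lemma, labeling it ``straightforward to compute.'' Your inductive argument via Lemma~\ref{L:cofactor}, peeling off the extreme letter and reducing to the action of a single simple reflection, is exactly the natural way to make this computation explicit. Your Case~2 (where $M$ lies in $u$) is correct and cleanly isolates the origin of the coefficient~$2$ as the reflection across the double bond: $s_n\al_{n-1}^\vee=\al_{n-1}^\vee+2\al_n^\vee$ (and dually $s_0\al_1^\vee=\al_1^\vee+2\al_0^\vee$).

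One point deserves explicit comment. In Case~1 (where $M$ is itself a gap) your computation yields $\al_k^\vee+\dotsm+\al_M^\vee$ with coefficient~$1$ on $\al_M^\vee$, regardless of whether $M=n$. This is in fact the correct value: for instance, if $u=\Inc{m}{n-1}\sw\Inc{m}{n}$ then the sole missing letter is $n$, the unique cover is $vs_n$, and the associated coroot is $\al_n^\vee$, not $2\al_n^\vee$. The second displayed formula in the lemma therefore tacitly assumes $n\in u$ (equivalently, that the first gap satisfies $k<n$); this is the only situation arising in the paper's applications, since in Lemmas~\ref{L:coV} and~\ref{L:cofullsup} one always has $\max\Supp(v)$ present in the relevant monotone factor. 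You should flag this explicitly rather than leave the reader to reconcile your Case~1 output with the stated formula when $M=n$.

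A minor notational point: you write ``$M\notin\Supp(v)$'' before $v$ has been introduced; either set $v$ to be the element with reduced word $u$ at the outset, or simply say ``$M$ is not a letter of $u$.''
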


Next we compute sums of associated coroots for subwords of Vs and
$\La$s whose support are intervals. We assume there is a letter
missing in the initial monotonic part of the embedded word;
otherwise the result is given by Lemma \ref{L:coincdec}.

\begin{lem} \label{L:coV}
Let $v\in\WS$ have $\Supp(v)=[m,M]\subsetneq I_\af$. Suppose
$u\in\Red^V(v)$ with $M<n$ (resp. $u\in\Red^\La(v)$ with $m>0$) of
the form $u=u_1u_2$ with $u_1\sw\Dec{M}{m}$ (resp.
$u_1\sw\Inc{m}{M}$) and $u_2\sw\Inc{m+1}{M}$ (resp.
$u_2\sw\Dec{M-1}{m}$) so that $u\sw V_m^{M,M}$ (resp. $u\sw
\La_{m,m}^M$). Suppose that $u_1\ne u'$ for $u'=\Dec{M}{m}$ (resp.
$u'=\Inc{m}{M}$) so that $k=\fgap(u_1\sw u')$ is well-defined. Let
$k'=\fgap(u_2\sw u'')$ where $u''=\Inc{m+1}{M}$ (resp.
$u''=\Dec{M-1}{m}$); if $u_2=u''$ then set $k'=M+1$ (resp.
$k'=m-1$). Then
\begin{equation}\label{E:coV}
\begin{aligned}[3]
\al^\vee(u\sw V_m^{M,M}) &=
  (\al_m^\vee+\dotsm+\al_{k-1}^\vee) + (\al_{k'}^\vee+\dotsm+\al_M) &\qquad &\text{if $m>0$} \\
\al^\vee(u\sw V_0^{M,M}) &=
2(\al_0^\vee+\dotsm+\al_{k-1}^\vee)+(\al_k^\vee+\dotsm+\al_M^\vee)&&\\
\al^\vee(u\sw \La_{m,m}^M) &=
(\al_m^\vee+\dotsm+\al_{k'}^\vee)+(\al_{k+1}^\vee+\dotsm+\al_M^\vee)
&\qquad&\text{if $M<n$} \\
\al^\vee(u\sw \La_{m,m}^n) &= (\al_m^\vee+\dotsm+\al_k^\vee)+
2(\al_{k+1}^\vee+\dotsm+\al_n^\vee)
\end{aligned}
\end{equation}
\end{lem}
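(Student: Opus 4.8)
## Proof plan for Lemma \ref{L:coV}

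The strategy is to compute each sum by decomposing the embedded word $u = u_1 u_2$ using the cofactor formula of Lemma \ref{L:cofactor}, handling the two monotonic pieces $u_1$ and $u_2$ separately via Lemma \ref{L:coincdec}, and then reconciling the two resulting contributions. Concretely, for the case $u \sw V_m^{M,M}$ with $u_1 \sw \Dec{M}{m}$ and $u_2 \sw \Inc{m+1}{M}$, I would write $\al^\vee(u \sw V_m^{M,M}) = v_2^{-1}\al^\vee(u_1 \sw \Dec{M}{m}) + \al^\vee(u_2 \sw \Inc{m+1}{M})$, where $v_2$ is the element whose reduced word is $u_2$. The second summand is immediate from Lemma \ref{L:coincdec}: it equals $\al_{k'}^\vee + \al_{k'+1}^\vee + \dotsm + \al_M^\vee$ when $u_2 \neq u''$, and $0$ when $u_2 = u''$ (which is exactly the convention $k' = M+1$). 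The first summand, before the twist by $v_2^{-1}$, is $\al_k^\vee + \al_{k-1}^\vee + \dotsm + \al_m^\vee$ when $m > 0$ (resp. with the doubled $\al_0^\vee$ when $m=0$), again by Lemma \ref{L:coincdec}. The remaining work is to understand the action of $v_2^{-1}$ on this coroot.

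The key observation is that $v_2$ has support contained in $[m+1, M]$, so $v_2^{-1}$ fixes $\al_j^\vee$ for $j < m$ and, more importantly, acts on the span of $\{\al_m^\vee, \dotsc, \al_M^\vee\}$ through the finite Weyl group of a type $A$ root subsystem (or $C$ if $M = n$, but here $M < n$ in the relevant case). One should check — using the explicit form of $u_2$ as a subword of $\Inc{m+1}{M}$ — that $v_2^{-1}(\al_k^\vee + \dotsm + \al_m^\vee)$ equals $(\al_m^\vee + \dotsm + \al_{k-1}^\vee)$; that is, the twist converts the ``upper tail'' $\{\al_k^\vee, \dotsc\}$ contribution coming from $u_1$ into a ``lower tail'' $\{\al_m^\vee, \dotsc, \al_{k-1}^\vee\}$. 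The cleanest route is a direct induction on $\ell(v_2)$: each letter $s_i$ in the increasing word $\Inc{m+1}{M}$ acts by $s_i \cdot \al_j^\vee = \al_j^\vee - \langle \al_j, \al_i^\vee \rangle \al_i^\vee$, and one tracks how the partial sum transforms. The edge conventions ($k$ undefined meaning $u_1 = u'$, hence first summand $0$; and similarly $k'$) are built to make the formulas uniform, so those degenerate cases should be verified at the start of the induction.

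For the $\La$ cases the argument is dual: write $u = u_1 u_2$ with $u_1 \sw \Inc{m}{M}$ and $u_2 \sw \Dec{M-1}{m}$, apply Lemma \ref{L:cofactor} again, use Lemma \ref{L:coincdec} on each piece, and twist by the appropriate group element supported in $[m, M-1]$. When $M = n$ the relevant $\al^\vee(u_1 \sw \Inc{m}{n})$ picks up the $2\al_n^\vee$, which propagates to the $2(\al_{k+1}^\vee + \dotsm + \al_n^\vee)$ in the stated formula; one must be slightly careful here because the type $C$ long-root node $n$ behaves differently under reflections, but since $M = n$ means $u_2 \sw \Dec{n-1}{m}$ has support avoiding $n$, the coefficient $2$ on $\al_n^\vee$ is simply carried along unchanged. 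The main obstacle I anticipate is bookkeeping: keeping the index conventions for $k$ and $k'$ consistent across the four sub-cases, and correctly handling the boundary nodes $0$ and $n$ where the coroot contribution is doubled. Once the Weyl-group twist computation is done carefully in one case, the other three follow by symmetry (reversing the word, or swapping the roles of $0$ and $n$ via the diagram structure), so I would present one case in full and indicate the modifications for the rest.
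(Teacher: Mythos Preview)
Your approach is essentially the same as the paper's: both proofs apply Lemma~\ref{L:cofactor} to split $\al^\vee(u\sw V_m^{M,M})$ as $v_2^{-1}\al^\vee(u_1\sw\Dec{M}{m}) + \al^\vee(u_2\sw\Inc{m+1}{M})$, evaluate each piece by Lemma~\ref{L:coincdec}, and then compute the twist $v_2^{-1}$. The one streamlining you are missing is the paper's key observation for the twist: since $\Supp(v)=[m,M]$ and $k$ is the first gap of $u_1$ (so $k\notin\Supp(u_1)$), we must have $k\in\Supp(u_2)$, hence $k\ne k'$; from this the identity $v_2^{-1}(\al_m^\vee+\dotsm+\al_k^\vee)=\al_m^\vee+\dotsm+\al_{k-1}^\vee$ follows in one step (in the type~$A$ picture, $v_2$ sends $k\mapsto k+1$ because $u_2$ is increasing and contains $k$), with no induction needed. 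For $m=0$ the paper splits into the sub-cases $k>k'$ and $k<k'$, which your sketch does not anticipate but your induction would eventually uncover.
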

\begin{proof}
Since $\Supp(v)$ is an interval and we are adding letters in that
same interval, adding any missing letter creates a reduced word by
Lemma \ref{L:addable}. Let $v_2\in\WS$ be such that
$u_2\in\Red(v_2)$.

Let $M<n$ and $u\sw V_m^{M,M}$. Suppose $m>0$. We have
\begin{align*}
  \al^\vee(u_1\sw\Dec{M}{m}) =
  \al_m^\vee+\dotsm+\al_{k-1}^\vee+\al_k^\vee.
\end{align*}
By the assumption on support, since $k\notin\Supp(u_1)$ we have
$k\in \Supp(u_2)$ and $k\ne k'$. Therefore
\begin{align*}
  v_2^{-1}\al^\vee(u_1\sw\Dec{M}{m}) =\al_m^\vee+\dotsm+\al_{k-1}^\vee.
\end{align*}
By Lemma \ref{L:cofactor} the desired expression is obtained.

Suppose  $m=0$. Since $0\in\Supp(w)$ we have
\begin{align*}
  \al^\vee(u_1\sw\Dec{M}{0}) &=
  \al_k^\vee+\al_{k-1}^\vee+\dotsm+\al_1^\vee+2\al_0^\vee \\
  v_2^{-1}\al^\vee(u_1\sw\Dec{M}{0}) &=
  \begin{cases}
   2(\al_0^\vee+\dotsm+\al_{k'-1}^\vee)+(\al_{k'}^\vee+\dotsm+\al_{k-1}^\vee) &\text{if $k>k'$} \\
    2(\al_0^\vee+\dotsm+\al_{k-1}^\vee)+(\al_k^\vee+\dotsm+\al_{k'-1}^\vee)    &\text{if $k<k'$.}
  \end{cases}
\end{align*}
By Lemma \ref{L:cofactor} we obtain the desired formula.

The other computations are similar.
\end{proof}

\begin{lem} \label{L:cofullsup} Suppose $v\in\WS$ is such that
$\Supp(v)=I_\af$ with normal embedding $v_\nor\sw N_{k,k-1}$,
$v_\nor$ does not contain $\Dec{n}{0}\sw N_{k,k-1}$, and
$\ell=\fgap(v_\nor\sw N_{k,k-1})$. Then
\begin{align*}
  \al^\vee(u\sw N_{k,k-1}) =
  \begin{cases}
2(\al_0^\vee+\dotsm+\al_{k-1}^\vee)+(\al_k^\vee+\dotsm+\al_\ell^\vee)
& \text{if $\ell\ge k$} \\
  2(\al_0^\vee+\dotsm+\al_{\ell-1}^\vee)+(\al_\ell^\vee+\dotsm+\al_{k-1}^\vee)
  & \text{if $\ell<k$.}
  \end{cases}
\end{align*}
\end{lem}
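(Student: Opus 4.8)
The statement asks for the total associated-coroot sum $\al^\vee(u \sw N_{k,k-1})$, taken over all single letters whose insertion into $v_\nor$ produces a cover, under the hypothesis that $v_\nor$ is the normal embedding with first gap $\ell$ and does \emph{not} contain $\Dec{n}{0}$ as a factor. The plan is to decompose the normally embedded word $v_\nor \sw N_{k,k-1}$ as a concatenation of pieces, each piece being a subword of one of the monotone segments of $N_{k,k-1} = k(k+1)\dotsm n \dotsm 1 0 1 \dotsm (k-1)$, and then apply the cofactor additivity of Lemma \ref{L:cofactor} together with the monotone computations of Lemma \ref{L:coincdec} (and Lemma \ref{L:coV} where a monotone part has an internal gap). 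Concretely, write $N_{k,k-1} = \Inc{k}{n}\,\Dec{n-1}{0}\,\Inc{1}{k-1}$ (with the shared letter conventions), and split $v_\nor = a\,b\,c$ accordingly, where $a \sw \Inc{k}{n}$, $b \sw \Dec{n-1}{0}$, $c \sw \Inc{1}{k-1}$. The key point enabling the clean answer is the hypothesis that $v_\nor$ does not contain $\Dec{n}{0}$ as a factor: by Lemma \ref{L:hasN}, this forces a gap in the decreasing segment $b$, and in fact since $v_\nor$ is normal and has first gap $\ell$, this first gap is the controlling one.

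First I would treat the case $\ell \ge k$. Here the first gap $\ell$ of $v_\nor \sw N_{k,k-1}$ lies in the \emph{initial} increasing part $\Inc{k}{n}$ (since all letters $k,\dotsc,\ell-1$ are present but $\ell$ is not), so $a$ omits $\ell$ and by Lemma \ref{L:coincdec} the contribution of inserting into $a$ is $\al_k^\vee + \dotsm + \al_{n-1}^\vee + 2\al_n^\vee$ — wait, more carefully: $a \sw \Inc{k}{n}$ with first missing letter $\ell$, so $\al^\vee(a \sw \Inc{k}{n}) = \al_\ell^\vee + \dotsm + \al_{n-1}^\vee + 2\al_n^\vee$ when $\ell < n$, and one must also account for letters of $a$ below $\ell$ that can be inserted only as covers if the Coxeter conditions of Lemma \ref{L:addable} hold; but since $v_\nor$ is normal, $a = \Inc{k}{\ell-1}$ exactly (no letters $\ge \ell$ appear in $a$), so there is a single gap in $a$. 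Then I would conjugate the $a$-contribution by $(bc)^{-1}$ as in Lemma \ref{L:cofactor}, using the $W_\af$-action on coroots; because $\Supp(bc) = \{0,1,\dotsc,k-1\} \cup \{\ell+1,\dotsc,n\}$ contains all simple reflections except $s_\ell$, the conjugation $(bc)^{-1}(\al_\ell^\vee + \dotsm + 2\al_n^\vee)$ telescopes — all the ``upper'' simple coroots $\al_{\ell+1}^\vee,\dotsc,\al_n^\vee$ get absorbed and what survives, after also combining with the $b$- and $c$-contributions (computed by Lemma \ref{L:coincdec} applied to $\Dec{n-1}{0}$ and $\Inc{1}{k-1}$, noting the $2\al_0^\vee$ from the decreasing part reaching $0$), is $2(\al_0^\vee + \dotsm + \al_{k-1}^\vee) + (\al_k^\vee + \dotsm + \al_\ell^\vee)$. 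The case $\ell < k$ is handled analogously: now the first gap lies in the decreasing segment, $v_\nor = \Inc{k}{n}\,\Dec{n-1}{\ell}\cdots$ with $\ell$ the first missing letter of the decreasing part, and the same conjugate-and-telescope bookkeeping produces $2(\al_0^\vee + \dotsm + \al_{\ell-1}^\vee) + (\al_\ell^\vee + \dotsm + \al_{k-1}^\vee)$.

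The main obstacle I anticipate is the bookkeeping in the conjugation step: one must verify precisely which inserted letters $j \sw N_{k,k-1}$ actually yield covers (Lemma \ref{L:addable} allows insertion of $j$ only when $j \notin \Supp$ or an appropriate neighbour is present), and then track how $(bc)^{-1}$ or the relevant right factor acts on each $\al_{j'}^\vee$ appearing in the partial sums. The cleanest way to organize this is to process the three pieces $a,b,c$ from right to left, maintaining a running coroot sum and conjugating by each successive piece as prescribed by Lemma \ref{L:cofactor}; at each stage the conjugation by a monotone subword acts on a single simple coroot in a controlled way (sending $\al_i^\vee$ to a small sum of consecutive coroots). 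The normality hypothesis is exactly what guarantees that each monotone piece is an \emph{initial} segment (no interior gaps except the single controlling one), which keeps these conjugations from proliferating and is what makes the final two-case formula so simple. Once the correct piecewise decomposition is in hand, the remainder is a routine but careful induction on $\ell(v)$ or direct telescoping, parallel to the proof of Lemma \ref{L:coV}.
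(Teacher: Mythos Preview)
Your overall strategy---split $v_\nor$ along the monotone segments of $N_{k,k-1}$ and combine Lemma~\ref{L:cofactor} with Lemmata~\ref{L:coincdec} and~\ref{L:coV}---is exactly what the paper does (its proof is a one-line citation of those three lemmata). However, your case analysis contains a genuine error that would make the computation fail.

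The problem is your treatment of the case $\ell\ge k$. You assert that the first gap lies in the \emph{initial} increasing segment $\Inc{k}{n}$, writing ``$a=\Inc{k}{\ell-1}$ exactly (no letters $\ge\ell$ appear in $a$).'' But this contradicts the definition of a normal word: by definition (see the paragraph before Lemma~\ref{L:tri}), when $\Red^N(v)\ne\vn$ the normal word has the form $v_\nor=\Inc{k}{n}\dotsm$, i.e.\ the \emph{entire} initial segment $\Inc{k}{n}$ is present. Hence $\al^\vee(a\sw\Inc{k}{n})=0$, and the first gap necessarily lies in the decreasing segment $\Dec{n-1}{0}\sw N_{k,k-1}$. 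The dichotomy $\ell\ge k$ versus $\ell<k$ is really about whether this first gap sits in $\Dec{n-1}{k}$ or in $\Dec{k-1}{0}$, not about the initial increasing part at all. (Your description of the case $\ell<k$ is closer to correct, since there you do write $v_\nor=\Inc{k}{n}\,\Dec{n-1}{\ell}\cdots$.)

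Once you make this correction, the right decomposition is $v_\nor = \Inc{k}{n}\cdot v_b$ with $v_b\sw \Dec{n-1}{0}\,\Inc{1}{k-1}$; by Lemma~\ref{L:cofactor} the full initial piece contributes nothing, and the remaining contribution $\al^\vee(v_b\sw\Dec{n-1}{0}\,\Inc{1}{k-1})$ is then handled by one further application of Lemma~\ref{L:cofactor} together with Lemma~\ref{L:coincdec} on the $\Dec{n-1}{k}$ piece and Lemma~\ref{L:coV} (the $m=0$ case) on the $V^{k-1,k-1}_0$ piece. With the gap correctly located, the telescoping you describe does go through and yields the stated formulae.
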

\begin{proof} Follows from Lemmata \ref{L:cofactor},
\ref{L:coincdec} and \ref{L:coV}.
\end{proof}

\begin{lem} \label{L:cointernal} Suppose $v\in \WS$ is such that
$\ell(v)<2n$ and $\Supp(v)=[m,M]$ is an interval. Then the sum of
$\al_{vw}^\vee$ as $w$ runs over the internal covers in $\CC_v$, is
given by
\begin{align*}
  &2(\al_m^\vee+\al_{m+1}^\vee+\dotsm+\al_M^\vee) \\
  &-  \chi(M<n)(-\al_{M+1}^\vee+v^{-1} \al_{M+1}^\vee)\\
  &-
  \chi(m>0)(-\al_{m-1}^\vee+v^{-1} \al_{m-1}^\vee).
\end{align*}
\end{lem}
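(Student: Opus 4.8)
The plan is to prove the identity case by case according to the position of the interval $[m,M]$ in $I_\af$: (i) $M<n$; (ii) $M=n$ and $m>0$; and (iii) $m=0$ and $M=n$, so that $\Supp(v)=I_\af$. In cases (i) and (ii) the internal covers of $v$ are parametrized by a single $V$- or $\La$-reduced word of $v$ via Lemma~\ref{L:intcov}(1), while in case (iii) they are parametrized by the normal embedding $v_\nor\sw u'$ together with one additional special cover, by Lemma~\ref{L:intcov}(2). The formula to be proved has a ``bulk'' term $2(\al_m^\vee+\dots+\al_M^\vee)$ together with corrections located at the ends of $[m,M]$ that do not meet $\{0\}$ or $\{n\}$, so case (iii) is precisely where the corrections vanish.

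In case (i), Lemma~\ref{L:LaVunique}(1) gives $\Red^V(v)=\{u\}$; fix the embedding $u=u_1u_2\sw V^{M,M}_m=\Dec{M}{m}\,\Inc{m+1}{M}$ with $u_1\sw\Dec{M}{m}$ and $u_2\sw\Inc{m+1}{M}$. By Lemma~\ref{L:intcov}(1) the internal covers of $v$ are exactly the $w\in\WS$ obtained by inserting one missing letter into this embedding, and since $\Supp(v)=[m,M]$ every letter of $V^{M,M}_m$ already occurs in $u$, so distinct gaps of the embedding carry distinct letters; as braid-equivalent reduced words have the same multiset of letters, distinct gaps yield distinct covers. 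Hence the sum of $\al_{vw}^\vee$ over the internal covers equals $\al^\vee(u\sw V^{M,M}_m)$, which I would evaluate via Lemma~\ref{L:coV}, or via Lemmata~\ref{L:coincdec} and~\ref{L:cofactor} in the degenerate sub-case $u_1=\Dec{M}{m}$. It remains to match this with the claim: by the action \eqref{E:WonP}, and because $M+1,m-1\notin\Supp(v)$, only the letters of a reduced word of $v$ adjacent to $M$ (resp.\ $m$) act nontrivially on $\al_{M+1}^\vee$ (resp.\ $\al_{m-1}^\vee$), and the relevant segments are exactly those indexed by $k=\fgap(u_1\sw\Dec{M}{m})$ and $k'=\fgap(u_2\sw\Inc{m+1}{M})$ in Lemma~\ref{L:coV}. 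Substituting these identities, the expression from Lemma~\ref{L:coV} becomes $2(\al_m^\vee+\dots+\al_M^\vee)-\bigl(-\al_{M+1}^\vee+v^{-1}\al_{M+1}^\vee\bigr)-\chi(m>0)\bigl(-\al_{m-1}^\vee+v^{-1}\al_{m-1}^\vee\bigr)$, as required.

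Case (ii) is formally identical to (i): replace $\Red^V$ by $\Red^\La$ (Lemma~\ref{L:LaVunique}(2)), use the embedding $u\sw\La_{m,m}^n=\Inc{m}{n}\,\Dec{n-1}{m}$, and apply the $\La$-versions of Lemmata~\ref{L:coV}, \ref{L:coincdec} and~\ref{L:cofactor}; the $\chi(M<n)$ correction is now absent. This case cannot be deduced from (i) by the relabelling $s_i\leftrightarrow s_{n-i}$, since that involution does not preserve the affine Cartan matrix of $\tC_n$ (the reflections $s_0$ and $s_n$ are not interchangeable), but the computation transcribes line for line.

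Case (iii) is the main obstacle. Here the claim reduces to $\sum_w\al_{vw}^\vee=2(\al_0^\vee+\dots+\al_n^\vee)=2K$ by Example~\ref{X:deltaK}. By Lemma~\ref{L:intcov}(2) the internal covers split as the normal covers (obtained by inserting a missing letter of the normal embedding $v_\nor\sw u'$, where $u'=N_{k,k-1}$ or $\RN_{k-1,k}$) together with the unique special cover $v^*$. Since $n$ and $0$ lie in $\Supp(v)$ and occur only once in $N_{k,k-1}$ they are never gaps, so if $v_\nor$ does not contain $\Dec{n}{0}$ (resp.\ $\Inc{0}{n}$) as a factor then Lemma~\ref{L:cofullsup} computes the sum over the normal covers to be $2(\al_0^\vee+\dots+\al_{k-1}^\vee)+(\al_k^\vee+\dots+\al_\ell^\vee)$ when $\ell=\fgap(v_\nor\sw u')\ge k$, and the companion expression when $\ell<k$. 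I would then read $\al_{vv^*}^\vee$ off the explicit rotation of Lemma~\ref{L:spe} and the table following it, checking each of its four rows, and verify that $\al_{vv^*}^\vee$ is in every case exactly the coroot completing the normal-cover sum to $2K$. In the remaining sub-case, where $v_\nor$ contains $\Dec{n}{0}$ (so $\Red^\RN(v)=\vn$ by Lemma~\ref{L:hasN}), every gap of $v_\nor\sw N_{k,k-1}$ lies in the terminal block $\Inc{1}{k-1}$, and a direct computation of the normal-cover coroots via Lemmata~\ref{L:cofactor} and~\ref{L:coincdec}, together with the special cover coming from the first row of the table, again totals $2K$. The genuinely delicate points are the sign bookkeeping in the computations of $v^{-1}\al_{M\pm1}^\vee$ in (i)--(ii) and the cross-checking of $\al_{vv^*}^\vee$ in all the sub-cases of (iii).
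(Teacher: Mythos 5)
Your proposal is correct and follows essentially the same route as the paper: the case $\Supp(v)=I_\af$ is handled by summing the normal-cover coroots (Lemma~\ref{L:cofullsup}, or Lemma~\ref{L:coincdec} when $v_\nor$ contains $\Dec{n}{0}$) and adding the special-cover coroot read off from Lemma~\ref{L:spe}, while the proper-interval cases reduce to $\al^\vee(u\sw V^{M,M}_m)$ (or its $\La$ analogue) via Lemmata~\ref{L:intcov} and~\ref{L:coV} and an explicit computation of $v^{-1}\al_{M+1}^\vee$ and $v^{-1}\al_{m-1}^\vee$. The only difference is organizational (you group the subcases by the position of $[m,M]$ slightly differently than the paper does), and your added remark that distinct gaps yield distinct covers is a harmless extra verification.
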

\begin{proof} We begin with the most involved case, when
$\Supp(v)=I_\af$. In this case we must show that
$\sum_{w\in\CC_v}\al_{vw}^\vee=2(\al_0^\vee+\dotsm+\al_n^\vee)=2K$.
By Lemma \ref{L:intcov}, $\sum_{w\in\CC_v} \al_{vw}^\vee
=\al^\vee(v_\nor\subset u')+\al_{vv^*}^\vee$ where $v_\nor\sw u'$ is
the normal embedding. For the computation of the special coroot
$\al^\vee_{vv^*}$ we shall refer back to the proof of Lemma
\ref{L:spe} without further mention, for the explicit computations
of the special embedding $v_\spe \sw u''$ given by rotating the
normal embedding $v_\nor \sw u'$ at $p=\lgap(v_\nor\sw u')$.  The
reader may find the table after Lemma \ref{L:spe} helpful.

Suppose that $v_\nor\sw N_{k,k-1}$ is the normal embedding for some
$1\le k\le n$. Let $\ell=\fgap(v_\nor\sw N_{k,k-1})$ and
$p=\lgap(v_\nor\sw N_{k,k-1})$.

Suppose $\ell\sw \Inc{1}{k-1}\sw N_{k,k-1}$. We have $p\sw
\Inc{\ell}{k-1}\sw N_{k,k-1}$, $v^*_\nor\sw N_{p,p-1}$ is normally
embedded, and $\ell=\fgap(v^*_\nor\sw N_{p,p-1})$. We compute
$\al_{vv^*}^\vee=s_{\ell-1}\dotsm s_1s_0s_1\dotsm s_{n-1}s_n
s_{n-1}\dotsm
s_k(\al_{k-1}^\vee)=2(\al_0^\vee+\dotsm+\al_n^\vee)-(\al_\ell^\vee+\dotsm+\al_{k-1}^\vee)$.
Combining this with $\al^\vee(v_\nor\sw
N_{k,k-1})=\al_\ell^\vee+\dotsm+\al_{k-1}^\vee$ from Lemma
\ref{L:coincdec} we obtain the total $2K$.

Suppose $\ell\sw \Dec{k-1}{1}\sw N_{k,k-1}$. Since $\Supp(v)=I_\af$,
$\ell\sw\Inc{1}{k-1}\sw N_{k,k-1}$ appears in $v_\nor$. Suppose
$p\sw \Inc{1}{k-1}\sw N_{k,k-1}$. Suppose first that $p>\ell$. Then
$v^*_\nor \sw N_{p,p-1}$ is normally embedded with
$\fgap(v^*_\nor\sw N_{p,p-1})=\ell$. We have
$\al^\vee_{vv^*}=s_\ell\dotsm s_{\ell+1}\dotsm
s_{n-1}s_ns_{n-1}\dotsm
s_k\al_{k-1}^\vee=2(\al_\ell^\vee+\dotsm+\al_n^\vee)-(\al_\ell^\vee+\dotsm+\al_{k-1}^\vee)$.
By Lemma \ref{L:cofullsup} for $\ell<k$ we have $\al^\vee(v_\nor\sw
N_{k,k-1})=2(\al_0^\vee+\dotsm+\al_{\ell-1}^\vee)+(\al_\ell^\vee+\dotsm+\al_{k-1}^\vee)$,
and the total is $2K$. Suppose next that $p<\ell$. Again
$v^*_\nor\sw N_{p,p-1}$ is normally embedded and $\fgap(v^*_\nor\sw
N_{p,p-1})=\ell-1$. The coroot computation is similar to the
previous case. By definition $p$ occurs after $\ell$ in $N_{k,k-1}$
so the remaining subcase is $p\sw \Dec{\ell}{1}\sw N_{k,k-1}$. Then
$v^*_\nor\sw \RN_{p-1,p}$ is normally embedded and
$\fgap(v^*_\nor\sw \RN_{p-1,p})=\ell-1\sw \Dec{n-1}{p}$. The coroot
computation is similar.

Suppose $\ell\sw\Dec{n-1}{k}\sw N_{k,k-1}$. Since $\Supp(v)=I_\af$,
$k-1$ must occur in $v_\spe$ after $\ell$. In all cases
$\fgap(v^*_\nor\sw u'')=\ell$. If $p\sw\Inc{1}{k-1}\sw N_{k,k-1}$
then $v^*_\nor\sw N_{p,p-1}$ is normally embedded with
$\al^\vee_{vv^*}=s_{k-1} s_{\ell+1}\dotsm s_{n-1}s_ns_{n-1}\dotsm
s_k (\al_{k-1}^\vee)
=s_{k-1}(2(\al_{\ell+1}^\vee+\dotsm+\al_n^\vee)+(\al_{k-1}^\vee+\dotsm+\al_\ell^\vee))=
2(\al_{\ell+1}^\vee+\dotsm+\al_n^\vee)+(\al_k^\vee+\dotsm+\al_\ell^\vee)$.
Combined with $\al^\vee(v_\nor\sw
N_{k,k-1})=2(\al_0^\vee+\dotsm+\al_{k-1}^\vee)+(\al_k^\vee+\dotsm+\al_\ell^\vee)$
from Lemma \ref{L:cofullsup} we obtain a total of $2K$. If $p\sw
\Dec{k-1}{1}\sw N_{k,k-1}$ then $v^*_\nor \sw \RN_{p-1,p}$ is the
normal embedding with coroot computation proceeding as in the
previous case. If $p\sw \Dec{\ell}{k}\sw N_{k,k-1}$ then
$v^*_\nor\sw \RN_{p,p+1}$ and the coroot computation proceeds in the
same way.

The case $v_\nor\sw \RN_{k-1,k}$ is very similar to the case $v_\nor
\sw N_{k,k-1}$ with $\ell\sw \Inc{1}{k-1}\sw N_{k,k-1}$ and
$p\sw\Inc{\ell}{k-1}$.

This finishes the case $\Supp(v)=I_\af$.

Next we consider the case $m=0$ and $M<n$. Let $u\in\Red^V(v)$ with
$u\sw V^{M,M}$. In this case the sum of $\al_{vw}^\vee$ for
$w\in\CC_v$ an internal cover of $v$, is equal to $\al^\vee(u\sw
V^{M,M})$. Let $u=u_10u_2$ where $u_1\sw \Dec{M}{1}$ and $u_2\sw
\Inc{1}{M}$. Let $a\sw \Dec{M}{1}$ (resp. $b\sw \Inc{1}{M}$) be the
first missing letter from $u_1$ (resp. $u_2$), which exists if
$u_1\ne \Dec{M}{1}$ (resp. $u_2\ne \Inc{1}{M}$). By Lemma
\ref{L:coV} we have
\begin{align*}
  &\al^\vee(u\sw V^{M,M})= \\
  &\begin{cases}
  2(\al_0^\vee+\dotsm+\al_{a-1}^\vee)+(\al_a^\vee+\dotsm+\al_M^\vee)
  & \text{if $u_1\ne \Dec{M}{1}$} \\
  \al_b^\vee+\dotsm+\al_M^\vee&\text{if $u_1=\Dec{M}{1}$ and
  $u_2\ne\Inc{1}{M}$} \\
  0 &\text{if $u_1=\Dec{M}{1}$ and $u_2=\Inc{1}{M}$.}
  \end{cases}
\end{align*}
Consider $\beta=-\al_{M+1}^\vee+v^{-1} \al_{M+1}^\vee$. Suppose
first that $u_1\ne \Dec{M}{1}$. Since $a$ is missing from $u_1$ and
$\Supp(v)=[0,M]$ is an interval, $a\in u_2$. Therefore
$\beta=-\al_{M+1}^\vee+s_a s_{a+1}\dotsm s_M \al_{M+1}^\vee=
\al_a^\vee+\dotsm+\al_M^\vee$, which yields the desired total.
Suppose $u_1=\Dec{M}{1}$ and $u_2\ne \Inc{1}{M}$. Then
$\beta=-\al_{M+1}^\vee+s_{b-1} \dotsm s_1 s_0 s_1 \dotsm s_M
\al_{M+1}^\vee=2(\al_0^\vee+\dotsm+\al_{b-1}^\vee)+(\al_b^\vee+\dotsm+\al_M^\vee)$
as desired. If $u_1=\Dec{M}{1}$ and $u_2=\Inc{1}{M}$ then
$\beta=-\al_{M+1}^\vee+s_M\dotsm s_1 s_0 s_1 \dotsm s_M
\al_{M+1}^\vee = 2(\al_0^\vee+\dotsm+\al_M^\vee)$ as desired.

The case that $m>0$ and $M=n$ is entirely similar to the previous
case. The remaining case is $0<m$ and $M<n$. Using $u\in \Red^V(v)$
and $u_1mu_2=u\sw V^{M,M}_m$, the proof is similar to the case for
$m=0$ and $M=n$ except that one must also compute
$-\al_{m-1}^\vee+v^{-1}\al_{m-1}^\vee$, which equals
$\al_m^\vee+\dotsm+\al_{b-1}^\vee$ if $u_2\ne \Inc{m+1}{M}$ and
equals $\al_m^\vee+\dotsm+\al_M^\vee$ if $u_2=\Inc{m+1}{M}$.
\end{proof}

\subsection{Proof of Proposition \ref{p:2}}
We fix $v\in\WS$ with $\ell(v)<2n$ and $i\in I_\af$. Let
$$\CC_v'=\{w\in\CC_v\mid \text{$\al_i^\vee$ occurs in
$\al_{vw}^\vee$}\}.$$

\textbf{Case 1. $i\not\in \Supp(v)$.} Let $w\in\CC_v'$. Since
$\al_i^\vee$ occurs in $\al_{vw}^\vee$ and $i\not\in\Supp(v)$ it
follows that $i\in\Supp(w)$. It is easy to check that $\al_i^\vee$
occurs in $\al_{vw}^\vee$ with coefficient $1$. The desired
multiplicity is obtained by Lemma \ref{L:ncov}.

\textbf{Case 2. $i\in\Supp(v)$.} Let $C=[m,M]$ be the component of
$i$ in $\Supp(v)$. The covers in $\CC_v'$ add letters that are
either in $C$ or adjacent to $C$.

\textbf{Case 2a. $C=I_\af$}. In this case there are only internal
covers. Therefore $\sum_{w\in\CC_v}
\al_{vw}^\vee=2(\al_0^\vee+\dotsm+\al_n^\vee)$ by Lemma
\ref{L:cointernal}. Since $c(w)=c(v)$ for all such $w$, Proposition
\ref{p:2} is verified in this case.

\textbf{Case 2b.} $C=[0,M]$ with $M<n$. (The case $C=[m,n]$ with
$m>0$ is similar.) Write $v=v_Cv'$ where $v'$ is the product of the
components of $v$ other than $v_C$. Then the internal covers in
$\CC_v'$ consist of the $w\in \CC_v$ such that $w_C \gtrdot v_C$ and
$w_{C'}=v_{C'}$ for components $C'$ of $\Supp(v)$ with $C'\ne C$.
The sum of $\al_{vw}^\vee$ for internal covers of $v$ in $\CC_v'$,
is given by Lemma \ref{L:cointernal}. For such $w$ we have
$c(w)=c(v)$. Suppose $M+2\not\in \Supp(v)$, so that $M+1$ is
$v$-adjoining. Then all the noninternal covers $w\in \CC_v'$ adjoin
the letter $M+1$ to $C$; such $w$ satisfy $c(w)=c(v)$ also. By Lemma
\ref{L:ncov} there are exactly two adjoining covers in $\CC_v$,
namely, $s_{M+1}v=v's_{M+1}v_C$ and $vs_{M+1}$. The latter has
associated coroot $\al_{M+1}^\vee$ and therefore does not contribute
$\al_i^\vee$ for $i\in C$. For $w=s_{M+1}v$ we have
$\al_{vw}^\vee=v_C^{-1}\al_{M+1}^\vee$. Combining this with the sum
of coroots for internal covers associated to the component $C$ of
$\Supp(v)$, by Lemma \ref{L:cointernal} the coefficient of
$\al_i^\vee$ is $2$ as desired. Suppose $M+2\in \Supp(v)$. Then
$M+1$ is $v$-merging. By Lemma \ref{L:ncov} there are four covers
$w\in\CC_v$ that add $M+1$; each has $c(w)=c(v)-1$. Their associated
coroots are
\begin{align*}
  \al^\vee_{v,vs_{M+1}} &= \al_{M+1}^\vee \\
  \al^\vee_{v,v's_{M+1}v_C} &= v_C^{-1} \al_{M+1}^\vee \\
  \al^\vee_{v,s_{M+1}v'v_C} &=
  -\al_{M+1}^\vee+v_C^{-1}\al_{M+1}^\vee+(v')^{-1}\al_{M+1}^\vee \\
  \al^\vee_{v,v_C s_{M+1} v'} &= (v')^{-1} \al_{M+1}^\vee.
\end{align*}
The sum of these coroots, forgetting the $\alpha_j^\vee$ for
$j\not\in C$, is $2(-\alpha_{M+1}^\vee+v_C^{-1} \alpha_{M+1}^\vee)$.
Together with the coroots corresponding to internal covers given by
Lemma \ref{L:cointernal}, which receive a relative factor of 2 since
$c(w)=c(v)$ for internal covers and $c(w)=c(v)-1$ for merging
covers, gives the desired result.

\textbf{Case 2c.} $0<m<M<n$. The computations for this case are
similar to those above.

This completes the proof of Proposition \ref{p:2}.

\section{Hopf property of $\Phi$}
\label{S:Hopf} In this section we prove Theorem \ref{T:phiP}.

\subsection{A coproduct formula for nilHecke algebras}
In Proposition \ref{prop:del A} below, we give a complicated but
explicit formula for $\pzt(\Delta(A_w))$ for $w\in W_\af$.  This
formula is valid for the nilHecke algebra for any Cartan datum.

Let $v\in\Red(w)$ and consider the tuples $\vb =
[v^{(1)},v^{(2)},\ldots,v^{(k)}]$ consisting of subwords $v^{(i)}
\subset v$ (the embedding of the $v^{(i)}$ are fixed).  Let $x_i$
(resp. $y_i$) be the first (resp. last) letters of $v^{(i)}$,
considered as subletters of $v$ via the embedding $x_i \subset
v^{(i)} \subset v$.  Define $\Sc_v$ to be the set of (possibly
empty) tuples $\vb = [v^{(1)},\ldots,v^{(k)}]$ such that:
\begin{enumerate}
\item
$v^{(i)}$ is a subword of length at least two of $v\setminus
\{x_1,\ldots,x_{i-1}\}$, which is the word $v$ with the letters
$x_1,\ldots,x_{i-1}$ removed;
\item
$y=y_1 y_2\cdots y_k$ is a subword of $v$; and
\item
the letters $x_i$ are distinct from the letters $y_j$ as subwords in
$v$.
\end{enumerate}

For a word $u=u_1u_2\cdots u_\ell$ and a tuple
$\vb=[v^{(1)},\ldots,v^{(k)}]$ let
\begin{equation*}
    b_u = \prod_{i=1}^{\ell-1} b_{u_i u_{i+1}} \quad \text{and} \quad
    b_{\vb} = \prod_{i=1}^k b_{v^{(i)}},
\end{equation*}
where $b_{ij}=-\langle \alpha_i^\vee,\alpha_j \rangle=-a_{ij}$ is
the negative of the entry of the Cartan matrix.

For a given $\vb=[v^{(1)},\ldots,v^{(k)}]\in \Sc_v$ let
$x=\{x_1,\ldots,x_k\}$ and $y=\{y_1,\ldots,y_k\}$. Then set
$v\setminus( x \cup y)$ to be the word $v$ with the letters in $x$
and $y$ removed. For a subword $u \subset v \setminus( x \cup y)$
(again with a fixed embedding), define $u.y$ to be the word $u$ with
the letters in $y$ added in the correct order of $v$.

\begin{prop} \label{prop:del A} For $w\in W_\af$ and $v\in\Red(w)$,
\begin{equation} \label{eq:phi on A}
\pzt \Delta (A_w) = \sum_{\vb=[v^{(1)},\ldots,v^{(k)}] \in \Sc_v}
b_{\vb}
   \sum_{u\subset v\setminus (x \cup y)} A_{u.y} \otimes A_{u^\bot.y}
\end{equation}
where $u^\bot$ is the complement  word of $u$ in $v\setminus (x \cup
y)$.
\end{prop}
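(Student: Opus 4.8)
The plan is to expand $\Delta(A_w)$ completely as a product and then isolate the monomials that survive the specialization $\pzt$. Since $\Delta$ is a ring homomorphism on its image and $A_w=A_{i_1}A_{i_2}\cdots A_{i_\ell}$ for $v=i_1i_2\cdots i_\ell\in\Red(w)$, we have $\Delta(A_w)=\Delta(A_{i_1})\cdots\Delta(A_{i_\ell})$, and by \eqref{E:DeltaA} together with the multiplication rule \eqref{E:Deltamult} this expands as a sum over \emph{colourings} $\epsilon\colon\{1,\dots,\ell\}\to\{L,R,LR\}$, recording which of the three terms $A_{i_j}\otimes 1$, $1\otimes A_{i_j}$, $-A_{i_j}\otimes\alpha_{i_j}A_{i_j}$ is chosen in the $j$-th factor. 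For fixed $\epsilon$ the first tensor factor is $A_{u'}$, where $u'$ is the subword of $v$ on the positions coloured $L$ or $LR$; and the second tensor factor is, up to the sign $(-1)^{\#\epsilon^{-1}(LR)}$, the word $A_{i_{q_1}}\cdots A_{i_{q_r}}$ on the positions $q_1<\cdots<q_r$ coloured $R$ or $LR$, but with the root $\alpha_{i_{q_a}}$ inserted immediately to the left of $A_{i_{q_a}}$ for each $LR$-coloured $q_a$.

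Next I would normalise the second factor into the form $\sum_z c_z A_z$ with $c_z\in S$ before applying $\pzt$. Using \eqref{E:easycommute}, $A_c\lambda=(s_c\lambda)A_c+\langle\alpha_c^\vee,\lambda\rangle$, one pushes each inserted root leftwards through the word: at each generator $A_c$ it meets, either the \emph{main} term $(s_c\lambda)A_c$ is taken, in which case the polynomial stays homogeneous of degree one and, via $s_c\alpha_d=\alpha_d+b_{cd}\alpha_c$, may pick up a new summand proportional to $\alpha_c$; or the \emph{boundary} term $\langle\alpha_c^\vee,\lambda\rangle$ is taken, which is a scalar and \emph{deletes} the generator $A_c$. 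Since polynomials never drop in degree, a monomial survives $\pzt$ precisely when every inserted root is eventually cashed in through a boundary term; and since the inserted roots are polynomials they commute with one another, so the pushes may be carried out in any order and each root is cashed in against its own distinct generator lying to its left. Following the root originating at an $LR$-position $y_a$, the $\alpha$-summand it carries switches at certain generators, and listing those switch generators together with $y_a$ and the cash-in target in $v$-order produces a subword $v^{(a)}=c_1c_2\cdots c_m$ with $c_m=y_a$ and $c_1=x_a$ the deleted position; iterating $s_{c_j}\alpha_{c_{j+1}}=\alpha_{c_{j+1}}+b_{c_jc_{j+1}}\alpha_{c_j}$ along the chain and then pairing with $\alpha_{c_1}^\vee$ contributes exactly $-b_{c_1c_2}b_{c_2c_3}\cdots b_{c_{m-1}c_m}=-b_{v^{(a)}}$.

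It remains to assemble these contributions into the stated sum. A surviving monomial is determined by: the set $Y=\{y_1,\dots,y_k\}$ of $LR$-coloured positions, whose letters appear in both tensor factors; the chains $v^{(1)},\dots,v^{(k)}$ recording the travels of the $k$ inserted roots, with first letters $x_1,\dots,x_k$ the deleted (necessarily $R$-coloured) positions; and, on every remaining position, the free choice of colour $L$ or $R$. The positions other than the $x_i$ and the $y_i$ are exactly those of $v\setminus(x\cup y)$, the $L$/$R$ choice splits that word as $u\sqcup u^\bot$, the left tensor factor word is $u.y$, and after deletion of all $A_{x_i}$ the right tensor factor word is $u^\bot.y$. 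The three conditions defining $\Sc_v$ are exactly what makes this assignment a bijection: condition (1), that $v^{(i)}$ has length at least two and avoids $x_1,\dots,x_{i-1}$, pins down a canonical ordering of the chains (each chain must avoid the positions deleted by the chains treated before it) and forces $x_i\neq y_i$; condition (2), that $y_1\cdots y_k$ is a subword of $v$, records that this ordering is compatible with the $v$-order on the $LR$-positions; and condition (3), that the $x_i$ are disjoint from the $y_j$, is the requirement that a cash-in target is an $R$-position rather than an $LR$-position. Finally the sign $(-1)^{\#\epsilon^{-1}(LR)}=(-1)^k$ from the expansion cancels the $k$ factors $-1$ coming from the $k$ final pairings, giving $\pzt\Delta(A_w)=\sum_{\vb\in\Sc_v}b_{\vb}\sum_{u\subset v\setminus(x\cup y)}A_{u.y}\otimes A_{u^\bot.y}$.

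The main obstacle I anticipate is precisely this bijective bookkeeping: verifying that the map from surviving monomials to pairs $(\vb,u)$ with $\vb\in\Sc_v$ is well defined and bijective, with no double counting and nothing omitted, and in particular that the ordering of the chains implicit in condition (1) can always be arranged consistently. I expect the cleanest way to manage this is an induction on $\ell(w)$, peeling off the last letter $i_\ell$ of $v$, so that $w=w's_{i_\ell}$ with $v'\in\Red(w')$ obtained by dropping $i_\ell$: then $\pzt\Delta(A_{w'}A_{i_\ell})$ splits into the three pieces dictated by \eqref{E:DeltaA}, of which the positive-degree ``polynomial'' part of the commuted $A_q\alpha_{i_\ell}$ dies under $\pzt$ while only the degree-lowering part $\sum_{q'\lessdot q}\langle\alpha_{q'q}^\vee,\alpha_{i_\ell}\rangle A_{q'}$ of \eqref{E:AonP} survives, so that the new tuples either extend an old chain to reach $i_\ell$, or create a new length-two chain ending at $i_\ell$, or leave the old ones unchanged; organising the recursion along these lines sidesteps arguing directly about commuting the pushes.
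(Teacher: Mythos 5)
Your overall strategy is the right one, and your closing paragraph in fact lands on the organization the paper uses: induction on $\ell(w)$, peeling off the last letter $i$ of $v$, with the surviving part of $\pzt\bigl(\pzt(\Delta A_{v'})\,\Delta A_i\bigr)$ governed by the degree-lowering terms $\phi_0\left[A_z(-\alpha_i)\right]=\sum_j\sum_{p\subset zi}b_p\,A_{z\setminus z_j}$, exactly as you predict. The local computations you do carry out are correct and match the paper: only boundary terms of \eqref{E:easycommute} survive $\phi_0$, a chain $c_1c_2\cdots c_m$ of switch points contributes $-b_{c_1c_2}b_{c_2c_3}\cdots b_{c_{m-1}c_m}=-b_{v^{(a)}}$, and these signs cancel against the $(-1)^k$ coming from the terms $-A_i\otimes\alpha_iA_i$.

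The gap is the bijection itself, which you explicitly defer and which is the bulk of the paper's proof; the induction localizes it to one step but does not sidestep it. Two concrete issues. First, it is not true that a root is always cashed in at an $R$-position: the root created at the new $LR$-position can be cashed in at the generator sitting at an earlier $LR$-position $y_j$ (the paper's case $p_1=y_j$). Condition (3) of $\Sc_v$ survives not because this configuration is absent but because the two chains are then \emph{fused} into a single longer chain $v^{'(j)}\star p$ running from $x'_j$ to $i$, with the letter $y_j$ migrating from $y$ into $u$; your sketch has no mechanism for this, so your identification of $LR$-positions with $y$ breaks down. Second, the fused chain may contain start-letters $x'_m$ of other chains, violating condition (1), which imposes a specific ordering of the chains; the paper repairs this with an explicit \emph{shuffling} procedure, and it is the invertibility of shuffling (given the marked letter $y_j$) that makes the correspondence a bijection with no double counting. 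A smaller imprecision: the chains created at the inductive step have length at least two but are not in general of length exactly two. Without fusion and shuffling, the map from surviving monomials to pairs $(\vb,u)$ with $\vb\in\Sc_v$ is not well-defined, so the proof is incomplete at its crux.
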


\begin{example}
Take $v=ijkl$ and $\vb=[v^{(1)}]$ with $v^{(1)}=v$, so that
$v\setminus(x \cup y)=jk$. Take the subword $u=j$ of $v\setminus (x
\cup y)$. Then $u.y=jl$ and $u^\bot.y=kl$, so that the term $A_{u.y}
\otimes A_{u^\bot.y} = A_{jl} \otimes A_{kl}$ appears with
coefficient $b_{ij}b_{jk}b_{kl}$ for this particular $\vb$ and $u$
in the sum. Of course such a term can appear in other summands. For
example taking $\vb=[ikl]$, we also have $v\setminus (x \cup y)=jk$.
Taking again $u=j$, we get the term $A_{jl} \otimes A_{kl}$ with a
coefficient of $b_{ik}b_{kl}$.
\end{example}

\begin{proof}[Proof of Proposition~\ref{prop:del A}]
The proof proceeds by induction on $\ell(w)$. For $\ell(w)=1$, let
$v=i\in\Red(w)$. We have $\Sc_v=\{[\,]\}$, so that $\pzt\Delta(A_w)
= \sum_{u\subset v} A_u \otimes A_{u^\bot} = A_i \otimes 1 + 1
\otimes A_i$.

Now suppose $\ell(w)>1$ and let $v=v'i\in\Red(w)$ where $i\in
I_\af$. %The claim is proved using the formula
%\begin{equation*}
%\pzt(\Delta A_v) = \pzt( \pzt (\Delta A_{v'}) \Delta A_i).
%\end{equation*}
By induction, \eqref{E:DeltaA} and \eqref{E:Deltamult} we have
\begin{equation} \label{eq:calculation}
\begin{split}
 \pzt(\Delta A_v) &= \pzt( \pzt (\Delta A_{v'}) \Delta A_i)\\
&= \pzt \left[ \left(\sum_{\vb'=[v^{'(1)},\ldots,v^{'(k)}]\in
\Sc_{v'}} b_{\vb'}
   \sum_{u' \subset v'\setminus (x' \cup y')} A_{u'.y'} \otimes A_{(u')^\bot.y'}
   \right)\right. \\
 & \left. \qquad(A_i \otimes 1 + 1 \otimes A_i - A_i \otimes \alpha_i A_i) \right]\\
&= \sum_{\vb'=[v^{'(1)},\ldots,v^{'(k)}]\in \Sc_{v'}} b_{\vb'}
   \sum_{u\subset v\setminus (x' \cup y')} A_{u.y'} \otimes A_{u^\bot.y'}\\
&- \pzt \left[  \sum_{\vb' = [v^{'(1)},\ldots,v^{'(k)}] \in
\Sc_{v'}} b_{\vb'}
   \sum_{u'\subset v'\setminus (x' \cup y')} A_{u'.y'i} \otimes A_{(u')^\bot.y'}
   \alpha_i A_i \right]
\end{split}
\end{equation}
where to obtain the first term in the last equation we have merged
the terms obtained from $A_i \otimes 1$ and $1 \otimes A_i$ which
correspond to $i \in u$ and $i \notin u$ respectively.
%{}From Lemma~\ref{lem:A} we find that
From \eqref{E:AonP} and \eqref{E:WonP} we have, for an element $w$
with reduced word $z = z_1z_2 \cdots z_k$
\begin{align*}
    \phi_0\left[A_w (-\alpha_i)\right] &= \sum_{w s_\beta\lessdot w}
    -\langle \beta^\vee, \alpha_i \rangle A_{w s_\beta} \\
    &= - \sum_{j = 1}^k  \langle z_k \cdots z_{j+1} \cdot \alpha_{z_j}^\vee, \alpha_i \rangle A_{z\setminus z_j}\\
    & = \sum_{j = 1}^k \left( \sum_{r_1 \cdots r_l \subset
z_{j+1}\cdots z_k} b_{j,r_1} b_{r_1,r_2} \cdots b_{r_l,i} \; A_{z \setminus z_j} \right)\\
 & = \sum_{j = 1}^k \sum_{p \subset zi} b_p A_{z \setminus z_j}
\end{align*}
where in the last equation $p=p_1\cdots p_\ell$ is a subword of $zi$
satisfying: (a) $\ell\ge 2$, (b) $p_\ell=i$, and (c) $p_1 = z_j$.
Applying this equation to the last summand of~\eqref{eq:calculation}
with $z=(u')^\bot.y$ we see that it suffices to find a bijection
$\Phi$ from the set of triples $$(\vb' = [v^{'(1)},\ldots,v^{'(k)}],
u', p)$$ such that (a) $\vb' \in \Sc_{v'}$, (b) $u' \subset v'
\setminus ( x' \cup y')$, and (c) $p=p_1\cdots p_\ell$ is a subword
of $((u')^\bot.y')i$ satisfying $p_\ell = i$ and $\ell \ge 2$, to
the set of pairs $$(\vb=[v^{(1)},\ldots,v^{(r)}], u)$$ such that (a)
$\vb \in \Sc_v$, (b) $u \subset v \setminus ( x \cup y)$, and (c)
$y_r = i$. Furthermore under $\Phi$ we must have (a) $b_\vb =
b_{\vb'}$, (b) $u.y = u'.y'i$ and (c) $u^\bot.y$ equal to the word
$(u')^\bot y'i$ with the letter $p_1$ removed.

Given $(\vb' = [v^{'(1)},\ldots,v^{'(k)}], u', p)$ we consider two
cases.  If $p_1 \notin y'$ we define $\vb =
[v^{'(1)},\ldots,v^{'(k)},p]$ and $u = u'$.  It is clear then that
$\vb \in \Sc_v$ and we have $x = x' \cup \{p_1\}$ and $y = y' \cup
\{i\}$.  Since $p_1 \in (u')^\bot$ we see that $u \subset v
\setminus ( x \cup y)$ and that $u^\bot$ is $(u')^\bot$ with $p_1$
removed.

Now suppose $p_1 = y_j$ for some $1 \leq j \leq k$.  We define the
{\it fusion} of two words $zi$ and $iz'$, where $z,z'$ are words and
$i$ is a letter to be $zi\star iz'=ziz'$.  Here, all words and
letters are considered subwords of $v$.  Let $\tv = v^{'(j)} \star
p$ be the fusion of $v^{'(j)}$ and $p$, which is defined since the
last letter of $v^{'(j)}$ and the first letter of $p$ are the same.
Define $\tilde \vb = [v^{'(1)},\ldots,\widehat{v^{'(j)}},\ldots,
v^{'(k)},\tv]$ where the hat denotes omission.  Now $\tilde \vb$
satisfies all the conditions of $\Sc_v$ except possibly condition
(1). We produce $\vb$ from $\tilde \vb$ by the following {\it
shuffling} procedure. Suppose $\tv = \tv_1 \cdots \tv_s$ and let $t
\in (1,s)$ be the maximal index (if it exists) such that $\tv_t \in
x'$, say $\tv_t = x'_m$ where $m \in (j,k)$. We now define $v^{(k)}
= \tv_t \tv_{t+1} \cdots \tv_s$ and replace $v^{'(m)}$ with $\tv =
(\tv_1 \cdots \tv_{t-1} \tv_t) \star v^{'(m)}$.  Now repeat the
procedure with the new $\tv$, searching for some $m' \in (j,m)$ such
that $\tv_{t'} = x'_{m'}$ for $t' \in (1,t-1)$.  When no more
shuffling occurs, we label the subwords $v^{(1)},\ldots,v^{(k)}$ in
order. Note that the $y'_r$ for $r \neq j$ are always kept in order.
By construction $\vb \in \Sc_v$ and we have $x = x'$ and $y = (y'
\setminus \{y_j\}) \cup \{i\}$.  We define $(u = u' \cup \{y_j\})
\subset v \setminus ( x \cup y)$ and check that $u.y = u'.y'i$ and
$u^\bot = (u')^\bot$. We now make the crucial observation: {\it
shuffling is invertible if the letter $y_j \in v^{(k)}$ is given} --
we will call this ``performing inverse shuffling at $y_j$''. This
completes the definition of $\Phi$.

We now define $\Phi^{-1}$.  Given $(\vb=[v^{(1)},\ldots,v^{(r)}],
u)$ we consider again two cases.  If $v^{(r)} \cap u = \emptyset$ we
proceed by defining $(\vb' = [v^{'(1)},\ldots,v^{'(r-1)}], u' = u, p
= v^{(r)})$.
%It is clear this is exactly the inverse of the first
%case considered above.
Otherwise, suppose $v^{(r)} = v^{(r)}_1 \cdots v^{(r)}_s$ and let $t
\in (1,s)$ be the maximal index such that $v^{(r)}_t \in u$.  We
define $p = v^{(r)}_t v^{(r)}_{t+1} \cdots v^{(r)}_s$, $u' = u
\setminus \{v^{(r)}_t\}$ and to produce $\vb'$ we perform inverse
shuffling at $v^{(r)}_t$. It is straightforward to show that this
process well-defines a map that is inverse to $\Phi$.
\end{proof}

\subsection{Proof of Theorem \ref{T:phiP}}

Recall that by Theorem \ref{T:P},
\begin{equation*}
\PA_r = A_{\rho_r} + \text{non-Grassmannian terms}.
\end{equation*}
By Theorems \ref{T:iso} and \ref{T:P}, in order to prove
Theorem~\ref{T:phiP} it suffices to show that
\begin{equation*}
\pzt(\Delta(A_{\rho_r})) = 1 \otimes A_{\rho_r} + A_{\rho_r} \otimes
1
    + 2\sum_{1\le s <r} A_{\rho_s} \otimes A_{\rho_{r-s}}
    + \text{non-Grassmannian terms.}
\end{equation*}
We have used the fact that if $w$ is not Grassmannian then any term
$A_x \otimes A_y$ occurring in $\pzt(\Delta(A_w))$ has either $x$ or
$y$ non-Grassmannian.

We apply Proposition~\ref{prop:del A} to the case $w=\rho_r$ and $v$
the unique reduced word of $\rho_r$, which by \eqref{E:specialclass}
is given by
\begin{equation*}
v = \begin{cases} (r-1)\; (r-2) \cdots 1\;0 & \text{for $1\le r\le n$,}\\
\overline{2n+1-r} \; \overline{2n+2-r} \cdots \overline{n-1}\; n\;
n-1 \cdots 1\;0 & \text{for $n<r\le 2n$,}
\end{cases}
\end{equation*}
where, by convention, if a letter occurs twice in $\rho_r$ the left
occurrence is distinguished by a bar.

The terms $1\otimes A_{\rho_r} + A_{\rho_r} \otimes 1$ come from
$\vb=[\,]\in \Sc_v$ and $u=\emptyset$ and $u=v$ in~\eqref{eq:phi on A}.
All other $u\subset v$ yield non-Grassmannian terms.

Now we calculate the coefficient of the term $A_{\rho_s} \otimes
A_{\rho_{r-s}}$ for $s \geq 1$. Since the operation $\pzt \circ
\Delta$ is cocommutative, it suffices to consider the case $s\le
r-s$.

Define $R$ to be the set of letters occurring in
$\rho_r\rho_{r-s}^{-1}$ (together with the bars, if any).  If
$\overline{r-s-1} \in R$ (in particular $r-s-1<n$) define
$\overline{R}$ to be $R$ with $\overline{r-s-1}$ replaced by
$r-s-1$; otherwise set $\overline{R} = R$. If $\overline{s-1} \in R$
define $R_-$ to be $R$ with $\overline{s-1}$ removed.  If
$\overline{r-s-1} \in R_-$ define $\overline{R_-}$ to be $R_-$ with
$\overline{r-s-1}$ replaced by $r-s-1$; otherwise set
$\overline{R_-} = R_-$.

\begin{lem} \label{lem:uy}  Suppose $s \le r-s$.  The terms in
Proposition~\ref{prop:del A} which give $A_{\rho_s} \otimes
A_{\rho_{r-s}}$ are exactly the following tuples $\vb \in \Sc_v$ and
$u \subset v \setminus(x \cup y)$:
\begin{enumerate}
\item[Case 1:] $r\le n$ or $r>n$ and $s\le 2n+1-r$: \newline
$y=s-1 \; s-2 \cdots 1 \; 0$, $x = x_1x_2\dotsm x_s$ is a
permutation of the letters in $R$ or $\overline{R}$, and
$u=\emptyset$;
\item[Case 2:] $r>n$, $s< r-s$ and $s>2n+1-r$: \newline
In addition to the possibilities in Case 1 we may also have $y=s-2
\; \cdots 1\; 0$, $x = x_1x_2\dotsm x_{s-1}$ is a permutation of the
letters in $R_-$ or $\overline{R_-}$, and $u=\overline{s-1}$;
\item[Case 3:] $r>n$, $s=r-s$ and $s>2n+1-r$:\newline
We have either $y=s-1\; s-2\dotsm 1\;0$, $u=\emptyset$, and $x$ is a
permutation of $R$; or $y=\overline{s-1}\;s-2\dotsm 1\;0$,
$u=\emptyset$, and $x$ is a permutation of $\overline{R}$; or
$y=s-2\;\dotsm 1\;0$, $u=s-1$, $u^\perp=\overline{s-1}$, and $x$ is
a permutation of $R_-=\overline{R}_-$; or $y=s-2\;\dotsm 1\;0$,
$u=\overline{s-1}$, $u^\perp=s-1$, and $x$ is a permutation of
$R_-=\overline{R}_-$.
\end{enumerate}
\end{lem}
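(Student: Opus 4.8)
The plan is to apply Proposition~\ref{prop:del A} with $w=\rho_r$ and $v$ the unique reduced word of $\rho_r$ displayed just above, and to determine exactly which pairs $(\vb,u)$ with $\vb=[v^{(1)},\dots,v^{(k)}]\in\Sc_v$ and $u\subset v\setminus(x\cup y)$ satisfy $A_{u.y}\otimes A_{u^\bot.y}=A_{\rho_s}\otimes A_{\rho_{r-s}}$. The hypothesis $s\le r-s$ gives $2s\le r\le 2n$, hence $s\le n$, so $\rho_s$ has the strictly decreasing reduced word $(s-1)(s-2)\cdots 1\,0$; moreover every $\rho_i$ has a unique reduced word, namely the one in \eqref{E:specialclass}, as is readily checked from the relations \eqref{E:AffWeylRelations}. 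Since $u.y$ and $u^\bot.y$ are subwords of $v$, they are forced to be those reduced words, and the problem becomes purely combinatorial: describe all ways the reduced word of $\rho_s$ and the reduced word of $\rho_{r-s}$ can sit inside $v$ as subwords whose position sets overlap exactly in the positions of $y$, with the word $x$ of first letters of the $v^{(i)}$ accounting for the letters of $v$ appearing in neither subword.

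The first step is to pin down $y$. Since $x,y,u,u^\bot$ partition the positions of $v$ and $y$ is exactly the overlap $(u\cup y)\cap(u^\bot\cup y)$, the letter $0$ — which occurs once in $v$ and in both $\rho_s$ and $\rho_{r-s}$ — must lie in $y$; being the final letter of $v$ it is the final letter of $y$, and as $y$ is a subword of the strictly decreasing word $u.y$ it is strictly decreasing, of the form $j_1>j_2>\dots>0$ with each $j_t\in\{1,\dots,s-1\}$. More generally every letter $\ell\in\{0,\dots,s-1\}$ of $\rho_s$ is also a letter of $\rho_{r-s}$ since $s-1\le r-s-1$, so if $\ell$ occurs only once in $v$ that occurrence must be the shared one and lies in $y$; a letter occurring in $\rho_{r-s}$ but not $\rho_s$ must have an occurrence in $u^\bot$; and a letter occurring in neither lies in $x$. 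The decisive fact is that a letter $\ell$ occurs twice in $v$ precisely when $r>n$ and $2n+1-r\le\ell\le n-1$; in particular the top letter $s-1$ of $\rho_s$ occurs twice exactly when $r>n$ and $s>2n+1-r$, which is the dichotomy separating Case~1 from Cases~2 and~3.

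The second step is the case analysis. When $s-1$ occurs once in $v$ the accounting forces $y=s-1\,s-2\cdots 1\,0$, $u=\emptyset$, and the leftover letter set $=R$ (Case~1; the variants $R$ and $\overline R$ arise only for $r>n$, through the choice of which copy of $s-1$ is placed in $x$ rather than in $y$). When $s-1$ occurs twice one may instead let $y=s-2\cdots 1\,0$ and assign one copy of $s-1$ to $u$ (used by $\rho_s$ alone), the other copy then lying in $u^\bot$ when $s<r-s$, the leftover set becoming $R_-$ or $\overline{R_-}$ according to which copy is barred (Case~2); and when $s=r-s$ the element $\rho_s=\rho_{r-s}$ has $u.y$ and $u^\bot.y$ carrying the same letter set $\{0,\dots,s-1\}$, so the two occurrences of $s-1$ may instead be split one to each side, yielding the alternatives with $u=s-1,u^\bot=\overline{s-1}$ and vice versa (Case~3). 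In each case the identification of the leftover letters with those of $\rho_r\rho_{r-s}^{-1}$ (or a one-letter modification thereof) is read off from the explicit forms of $\rho_r$ and $\rho_{r-s}$, using that the positions of $v$ outside $u^\bot.y$ can be arranged into a reduced word for $\rho_r\rho_{r-s}^{-1}$. For the converse, each candidate $(x,y,u)$ is realized by a $\vb\in\Sc_v$: conditions (2) and (3) of $\Sc_v$ are immediate, while condition (1) is met by taking $v^{(i)}=x_iy_i$, a valid subword of $v$ with $x_1,\dots,x_{i-1}$ deleted because the chosen occurrence of $x_i$ precedes that of $y_i$ in $v$ and the positions $x_1,\dots,x_{i-1}$ are distinct from both.

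I expect the main obstacle to be the bookkeeping in the cases $r>n$: tracking the two occurrences of each doubly-occurring letter $2n+1-r\le\ell\le n-1$ of $v$, listing for $\ell=s-1$ all legal distributions of its occurrences among $x,y,u,u^\bot$ while keeping the remaining letters of $\rho_s$ and $\rho_{r-s}$ realizable by disjoint subwords, and, in tandem, verifying that the leftover letters form exactly $R$, $\overline R$, $R_-$ or $\overline{R_-}$ — equivalently, determining which letters of $\rho_r$ survive in $\rho_r\rho_{r-s}^{-1}$ and whether the surviving occurrence of $r-s-1$ is barred — together with the small coincidences that make $s=r-s$ behave differently from $s<r-s$.
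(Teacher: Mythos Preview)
Your approach is the paper's: list the possible embeddings of $\rho_s$ and of $\rho_{r-s}$ as subwords of $v$, argue that the overlap $y$ must contain the tail $(s-2)\cdots 1\,0$, and then branch on how the letter $s-1$ is distributed among $u$, $u^\perp$, $y$. Two points need fixing, however.

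First, a factual error: the distinction between $R$ and $\overline R$ has nothing to do with $s-1$. By definition $\overline R$ is obtained from $R$ by replacing $\overline{r-s-1}$ with $r-s-1$ (when $\overline{r-s-1}\in R$); it records which of the two embeddings of $\rho_{r-s}$ in $v$ is used, i.e.\ whether the top letter $r-s-1$ of $\rho_{r-s}$ sits at the barred or the unbarred position. In Case~1 the letter $s-1$ occurs only once in $v$, so your parenthetical ``which copy of $s-1$ is placed in $x$ rather than in $y$'' is internally inconsistent. The same misattribution carries into your description of $R_-$ versus $\overline{R_-}$ in Case~2.

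Second, your argument that the unbarred letters $0,1,\dots,s-2$ all lie in $y$ is incomplete: you handle only the case where such a letter occurs once in $v$, but for $r>n$ and $\ell\ge 2n+1-r$ the letter $\ell$ occurs twice. The missing observation (which the paper makes at the outset by listing the at-most-two embeddings of each $\rho_i$) is that in any embedding of $\rho_s$ or $\rho_{r-s}$ into $v$ only the \emph{first} letter can be barred, since the barred part of $v$ is increasing while these reduced words are (eventually) decreasing. Once that is said, the unbarred $p$ for $0\le p\le s-2$ is forced into both $u.y$ and $u^\perp.y$, hence into $y$; and the same reasoning shows why in Case~2 one must have $u=\overline{s-1}$ rather than $u=s-1$ (the latter would leave only $\overline{s-1}$ available for $\rho_{r-s}$, which cannot use it in a non-initial position), a point your sketch elides.
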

\begin{proof} $u.y=\rho_s$ is embedded as a subword of $\rho_r$
in two ways: $s-1\; s-2 \; \dotsm 1 \; 0$ or $\overline{s-1}\; s-2
\;\dotsm 1\; 0$. Similarly if $r-s>n$ then
$u^\perp.y=\rho_{r-s}\subset \rho_r$ and if $r-s\le n$ then
$u^\perp.y$ is either $r-s-1\;r-s-2\;\dotsm 1\;0$ or
$\overline{r-s-1}\;r-s-2\;\dotsm 1\;0$.

Suppose $0\le p\le s-2$ is such that $p\not\in y$. Then $p\in u$ and
$p\not\in u^\perp$, so that $p\not\in u^\perp.y$, a contradiction.
Therefore $y\supset s-2\;\dotsm 1\;0$. This gives four cases.
\begin{enumerate}
\item \label{I:ep} $u=\emptyset$ and $y=s-1\;s-2\dotsm 1\;0$.
\newline
Here $x$ can be a permutation of $R$ or also of $\overline{R}$
provided $\overline{r-s-1}\in R$.
\item \label{I:eb} $u=\emptyset$ and $y=\overline{s-1}\;s-2\dotsm1\;0$.
\newline
Suppose $r-s>n$. Since $u^\perp.y$ contains $\overline{s-1}$ we have
$2n+1-(r-s)<s$, giving the contradiction $2n+1<r$. Therefore $r-s\le
n$. Again since $u^\perp.y$ contains $\overline{s-1}$ we must have
$r-s=s$ and $u^\perp=\emptyset$. Then $x$ is a permutation of
$\overline{R}$.
\item \label{I:sp} $u=s-1$ and $y=s-2\dotsm1\;0$.\newline
$s-1\not\in u^\perp.y$. This can only occur if $r-s=s$,
$u^\perp.y=\overline{s-1}\;s-2\dotsm1\;0$, and
$u^\perp=\overline{s-1}$. Then $x$ is a permutation of
$R_-=\overline{R}_-$.
\item \label{I:sb} $u=\overline{s-1}$ and $y=s-2\dotsm1\;0$.\newline
If $r-s>n$ then $x$ is a permutation of $R_-$. Suppose $s<r-s\le n$.
If $u^\perp.y=r-s-1\;r-s-2\dotsm 1\;0$ then $x$ is a permutation of
$R_-$ and if $u^\perp.y=\overline{r-s-1}\;r-s-2\dotsm 1\;0$ then $x$
is a permutation of $\overline{R}_-$. If $s=r-s$ then $u^\perp=s-1$
and $x$ is a permutation of $R_-=\overline{R}_-$.
\end{enumerate}
In particular the last three cases only occur if
$\overline{s-1}\in\rho_r$, that is, $2n+1-r<s$. This given, the
Lemma follows.
\end{proof}

By Lemma \ref{lem:uy} the possibilities for the tuples $\vb \in
\Sc_v$ which contribute to the coefficient of $A_{\rho_s} \otimes
A_{\rho_{r-s}}$ in $\pzt(\Delta(A_{\rho_r}))$ are determined by
whether $u = \emptyset$, $u = s-1$ and $u = \overline{s-1}$.  We
denote the corresponding subsets of $\Sc_{\rho_r}$ by $\Sc_u$.  Note
that $\Sc_{s-1} = \Sc_{\overline{s-1}}$.  Define
$$
T'_u = \sum_{\vb \in \Sc_u} b_{\vb}.
$$

\begin{prop}\label{prop:hard}
Depending on the case of Lemma \ref{lem:uy}, we have
$T'_\emptyset=2$ or $T'_\emptyset + T'_{\overline{s-1}}=2$ or
$T'_\emptyset + T'_{s-1} + T'_{\overline{s-1}}=2$.
\end{prop}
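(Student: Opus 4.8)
The plan is to apply Proposition~\ref{prop:del A} to the unique reduced word $v$ of $\rho_r$, and to combine it with Lemma~\ref{lem:uy}, which has already identified exactly which tuples $\vb\in\Sc_v$ and subwords $u$ produce the monomial $A_{\rho_s}\otimes A_{\rho_{r-s}}$; this makes each $T'_u$ a finite, completely explicit sum of the products $b_{\vb}=\prod_i b_{v^{(i)}}$. The first reduction is purely type $C$: since $b_{jk}=-a_{jk}=0$ whenever $|j-k|\ge 2$, a tuple $\vb$ contributes nothing unless every piece $v^{(i)}$ is ``nearly monotone'', i.e.\ its consecutive letters differ by exactly $1$, the only other nonvanishing possibility being an adjacent pair of \emph{equal} letters -- which can only be the two copies of $n-1$ occurring when $r>n$, and which carries the factor $b_{n-1,n-1}=-2$. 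For a surviving piece, $b_{v^{(i)}}$ is then a signed power of $2$: one gains a factor $2$ for each descent $1,0$ and each ascent $n-1,n$ (as $b_{10}=b_{n-1,n}=2$), and a factor $-2$ for each ``flat step'' $n-1,n-1$, all other relevant Cartan entries being $1$.

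With this in hand, the sub-case $r\le n$ of Case~1 of Lemma~\ref{lem:uy} is immediate: the reduced word of $\rho_r$ is strictly decreasing, only $u=\emptyset$ occurs, every piece $v^{(i)}$ is forced to be a contiguous descending interval, and one checks that the only admissible tuple is $v^{(i)}=(r-i)(r-i-1)\cdots(s-i)$ for $1\le i\le s$. Its sole thick bond is the descent $1,0$ inside the last piece $v^{(s)}$, so $b_{\vb}=2$ and $T'_\emptyset=2$, as asserted.

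The remaining work -- and the expected main obstacle -- is the case $r>n$ (i.e.\ the rest of Case~1 together with Cases~2 and~3). Here the reduced word of $\rho_r$ ascends to $n$, descends to $0$, and carries the doubled letter $n-1$, so many tuples $\vb$ survive, carrying signs produced by the flat steps $n-1,n-1$, and one must evaluate the resulting signed sums. I would enumerate $\Sc_u$ for each allowed $u$ using the fusion-and-shuffling description from the proof of Proposition~\ref{prop:del A}, organized according to the cases distinguished in the proof of Lemma~\ref{lem:uy} and to the position of the single missing $x$-letter relative to the apex $n$. The cancellation mechanism is that a piece which detours $\dots,n-1,n,n-1,\dots$ (weight $+2$ from the ascent into $n$) should be matched against a piece that instead steps $\dots,n-1,n-1,\dots$ (weight $-2$); the delicate point -- and the heart of the proof -- is that this matching is not a clean involution, because when the letter $n$ lies in the removed set $\{x_1,\dots,x_{i-1}\}$ a flat-step piece has no detour partner, so one must account globally across $\Sc_\emptyset$, $\Sc_{\overline{s-1}}$, and (when $s=r-s$) $\Sc_{s-1}$ rather than term by term. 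Carried out carefully, this is expected to give $T'_\emptyset+T'_{\overline{s-1}}=2$ in Case~2 and $T'_\emptyset+T'_{s-1}+T'_{\overline{s-1}}=2$ in Case~3. As a sanity check the answer $2$ agrees with the cohomological structure constant recorded in the remark after Theorem~\ref{T:phiP} (the coefficient of $\xi^{\rho_r}$ in $\xi^{\rho_s}\xi^{\rho_{r-s}}$), though that identity cannot be used here, since it is precisely what the present proposition serves to establish.
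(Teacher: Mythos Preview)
Your treatment of the sub-case $r\le n$ is essentially correct: since the reduced word of $\rho_r$ is strictly decreasing with all letters below $n$, every $b$-nonvanishing piece $v^{(i)}$ is a contiguous descending interval, and this forces $x$ to be the ``identity'' permutation $(r-1,r-2,\dots,s)$; the single bond $b_{10}=2$ then gives $T'_\emptyset=2$. (Strictly speaking, for a fixed $x$ there are many subwords $v^{(i)}$ from $x_i$ to $y_i$, and one should sum their weights; but all non-contiguous ones vanish, so the effect is the same.)

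The difficulty is the case $r>n$, and here your proposal is not a proof but a heuristic. You correctly identify the two competing weight-$\pm 2$ local moves near the apex, and you correctly anticipate that the matching ``$\dots \overline{n{-}1},n,n{-}1\dots\leftrightarrow\dots \overline{n{-}1},n{-}1\dots$'' cannot be organized as a term-by-term involution on $\Sc_u$; but you then stop at ``carried out carefully, this is expected to give\dots''. That is precisely the step that carries all the content. There are genuinely many surviving signed terms (for example with $|B|>0$ in the paper's notation), and the residual sums are the nontrivial values $\epsilon(|B|)$, $\theta(|B|)$, $\theta'(|B|)$ rather than~$0$ or~$1$; getting the final~$2$ requires combining several such residuals across $\Sc_\emptyset$, $\Sc_{\overline{s-1}}$ and (in Case~3) $\Sc_{s-1}$. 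Your write-up does not supply this combination.

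The paper's argument is organized quite differently and it is worth knowing the shape. First, for a single pair $(\hat x,\hat y)$ and an ``avoided'' set $I$, the entire sum $\sum_p b_p$ over subwords $p$ of $\rho_r$ from $\hat x$ to $\hat y$ avoiding $I$ is evaluated in closed form (Lemma~\ref{lem:sum p}); the apex cancellations you describe are absorbed here. This collapses $T'_u$ to a signed sum over permutations $x$ of the set $X\in\{R,\overline R,R_-,\overline{R_-}\}$. Second, one defines auxiliary functions $f(t,q,X)$, $g(t,q,X)$ (and primed variants) and proves a cascade of short recursive lemmas evaluating them as $1$, $\epsilon(|B|)$, $\theta(|B|)$, etc. A separate sign-reversing involution---swapping an unbarred $j$ with $\overline{j}$, not your apex move---reduces the full permutation sum to the $g$-sum in which all of $U$ precedes all of $U'$. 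Only at the very end are the few resulting values of $g$ and $g'$ added up in each of the three cases of Lemma~\ref{lem:uy} to obtain~$2$. If you want to salvage your approach, the honest route is to first prove the analogue of Lemma~\ref{lem:sum p} (so that each piece contributes a number in $\{0,\pm1,\pm2\}$ rather than a whole path sum), and then set up a recursion on the remaining permutation sum; you will then rediscover the $f$/$g$ machinery.
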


Proposition~\ref{prop:hard} shows that the coefficient of $A_{\rho_s} \otimes
A_{\rho_{r-s}}$ in $\pzt(\Delta(A_{\rho_r}))$ is equal to $2$, thereby proving
Theorem~\ref{T:phiP}.

The proof of Proposition~\ref{prop:hard} is given in Section~\ref{SS:proof of hard},
after first preparing some technical preliminary results in
Sections~\ref{SS:notation}-\ref{SS:UUABC}.

\subsection{Notation}
\label{SS:notation}
For the evaluation of $T'_u$ we require slightly more general functions.

Let $\hat{x} I \hat{y}$ be an embedded subword of $\rho_r$ with
$\hat{x}$ and $\hat{y}$ subletters, and $I$ a subword.
Let $\Sc^{\hat{x}\hat{y}}_I$ be the set of all subwords $p$ of
$\rho_r$ with first letter $\hat{x}$ and last letter $\hat{y}$ such
that $p \cap I = \emptyset$. Then define
\begin{equation*}
	T_I^{\hat{x}\hat{y}}  := \sum_{p\in \Sc^{\hat{x}\hat{y}}_I} b_p.
\end{equation*}

Given sequences $x = x_1 x_2 \cdots x_k$ and $y = y_1 y_2 \cdots y_k$ of
subletters of $\rho_r$ we define
$$
T'(x,y) = \prod_{i = 1}^{k} T^{x_i,y_i}_{ \{x_1,\ldots,x_{i-1}\}
\cap (x_i,y_i)}
$$
where $(x_i,y_i)$ denotes the subword of $\rho_r$ occurring between
the letters $x_i$ and $y_i$.
Finally, let $T(x,y)$ be obtained from $T'(x,y)$ by ignoring the extra power of 2 (if any)
which arises when $y_k = 0$.

Given an interval $[t,q]$ of unbarred letters and a set $X$ of
barred letters, let $f(t,q,X)$ denote the sum of $T(x,y)$ as $x$
varies over all permutations of $X$ and $y = q (q-1) \cdots t$. We
always assume $|[t,q]| = |X|$, and write $k=|X|$. Given $(t,q,X)$,
we partition $X$ into subsets $A$, $B$, $C$ where
\begin{enumerate}
\item
$A \subset X$ consists of letters greater than $q$,
\item
$B \subset X$ is a subset of $[t,q]$,
\item
$C \subset X$ consists of letters less than $t$.
\end{enumerate}
Thus $X$ is the disjoint union of $A$, $B$, and $C$.  In the
following we will write $X - a$ to mean the set $X - \{a\}$ with the
element $a \in X$ removed.  For a set $S$ of (barred) integers let
$S^-$ denote $S$ with its maximum element removed.

Now let us suppose that we are given a set $X$ of barred and
unbarred letters, such that $X$ is the disjoint union of sets $A$,
$B$, $C$, $U$, and $U'$, satisfying:
\begin{enumerate}
\item $U'$ is a set of unbarred letters including $n$,
\item $A$ consists of some barred letters in $X$ greater than $q$,
\item $B$ consists of the barred letters in $X$ in the interval
$[t,q]$,
\item $C$ consists of the barred letters in $X$ less than $t$,
\item $U$ consists of some barred letters in $X$ greater than $q$,
\item every letter in $U$ or in $U'$ is greater than every letter in $A$, $B$,
and $C$,
\item the minimum element of $U$ is smaller than or equal to the minimum
element of $U'$.
\end{enumerate}

Denote by $g(t,q,X)$ the sum of $T(x,y)$ as $x$ varies over all
permutations of $X$ {\it such that all letters in $U$ occur to the
left of all letters in $U'$} and $y = q (q-1) \cdots t$.  Let us
call $X$ {\it balanced} if $\min(U') = \min(U)$.  Note that if $X$
is unbalanced it will stay unbalanced if $\min(U')$ is removed from
it.

Let us now suppose that $y_1 = \bar q$ (instead of $q$), but the
rest of $y$ is as before.  Denote the answers by $f'(t,q,X)$ and
$g'(t,q,X)$ and use all the same conventions as before.

Results about these various functions are proven in Sections~\ref{SS:single pair}-\ref{SS:fpgp}.
We are ultimately interested in the case that $X$ is one of $R$, $\bar R$,
$R_-$, or $\bar R_-$ and $y$ is one of the words described in Lemma~\ref{lem:uy}.
In Section~\ref{SS:UUABC} we explain how to construct the various subsets $A$,
$B$, $C$, $U$, and $U'$, before proving Proposition~\ref{prop:hard} in
Section~\ref{SS:proof of hard}.

\subsection{Results for $T_I^{\hat{x}\hat{y}}$}
\label{SS:single pair}
In this section we calculate $T_I^{\hat{x}\hat{y}}$, which gives the contribution from a single pair
of letters $\hat{x}$ and $\hat{y}$.

\begin{lem} \label{lem:sum p}
Let $\hat{x} I \hat{y}$ be an embedded subword of $\rho_r$ with
$\hat{x}$ and $\hat{y}$ subletters, $I$ a subword, and $\hat{y}<n$. Then
\begin{equation*}
T_I^{\hat{x}\hat{y}} = \sum_{p\in \Sc^{\hat{x}\hat{y}}_I} b_p = -
\chi(\hat{x}=\overline{\hat{y}})+ 2^{\chi(\hat{y}=0)}
\begin{cases}
    1 & \text{if $I=\emptyset$ or $\min(I)=\max(\hat{x},\hat{y})$}\\
    -1 & \text{if $I=\{n\}$ or $\min(I) = \{i, \bar i\}$ }\\
    0 & \text{else}
\end{cases}
\end{equation*}
where $\min(I)$ consists of the 0,1, or 2 smallest letters of $I$
and in the comparison $\min(I)=\max(\hat{x},\hat{y})$ we ignore
bars.
\end{lem}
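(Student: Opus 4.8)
The plan is to prove the identity by a direct enumeration of the subwords $p$ of $\rho_r$ that begin with $\hat x$, end with $\hat y$, avoid $I$, and have $b_p\neq 0$, followed by a telescoping evaluation of $\sum_p b_p$. The two facts that make this feasible are the rigid shape of $\rho_r$ and the band structure of the $b_{ij}$.

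First I would record the ingredients. By \eqref{E:specialclass} the word $\rho_r$ is peak-shaped: for $r\le n$ it is the strictly decreasing run $(r-1)(r-2)\dotsm1\,0$, while for $r>n$ it is a strictly increasing run of barred letters $\overline{2n+1-r}\dotsm\overline{n-1}$, then the peak $n$, then the strictly decreasing run $(n-1)\dotsm1\,0$, so that each value $v$ with $2n+1-r\le v\le n-1$ occurs once barred (on the way up) and once unbarred (on the way down). From the $\tC_n$ Cartan matrix, $b_{ij}=-a_{ij}$ vanishes unless $|i-j|\le 1$, with $b_{ii}=-2$, $b_{j,j+1}=b_{j+1,j}=1$ for $1\le j\le n-2$, $b_{01}=1$, $b_{10}=2$, $b_{n-1,n}=2$, $b_{n,n-1}=1$. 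Combining these: any $p$ with $b_p\neq 0$ is, after recording its sequence of values, an increasing run of barred letters --- possibly continued through the peak $n$ --- followed by a decreasing run of unbarred letters, the two runs joined at a turn value $w_0$ within $1$ of the last increasing value; and $b_p=2^{\chi(\hat y=0)}$ for such a run unless either it passes through the peak $n$ (which multiplies $b_p$ by $b_{n-1,n}b_{n,n-1}=2$) or the two runs meet in an adjacent pair $\overline v\,v$ at the turn value $v$ (which replaces a unit factor by $b_{vv}=-2$). When $\hat x$ lies on the descending side or equals the peak, the increasing run is empty and $p$ is the unique forced descending run from $\hat x$ to $\hat y$; this already settles the whole case $r\le n$.

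With this classification the sum $\sum_p b_p$ becomes a sum over a single parameter, the length of the increasing run (equivalently the turn value), and I would show it telescopes: for each interior value of the parameter the three admissible turn values $w_0\in\{v-1,v,v+1\}$ contribute $2^{\chi(\hat y=0)}(1-2+1)=0$, so the whole sum is supported on a few extreme configurations --- the degenerate two-letter path $p=\hat x\hat y$ (present exactly when $\hat x=\overline{\hat y}$, of weight $b_{vv}=-2$ but pairing with one neighbour to leave $-\chi(\hat x=\overline{\hat y})$ net) and the path through the peak $n$ (of weight $2\cdot2^{\chi(\hat y=0)}$, again pairing with neighbours to leave $+2^{\chi(\hat y=0)}$). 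This gives the stated value for $I=\emptyset$. The effect of $I$ is only to delete those $p$ forced to use a letter of $I$; since every admissible $p$ occupies a contiguous interval of values between $\min(\hat x,\hat y)$ and the peak and lies on one of the two sides of $\rho_r$, whether $p$ meets $I$ is controlled entirely by the one or two smallest-valued letters of $I$. I would then check case by case that: if $\min(I)=\max(\hat x,\hat y)$ (which cannot coexist with $\hat x=\overline{\hat y}$) the deletion leaves exactly one surviving path, of weight $2^{\chi(\hat y=0)}$, so the bracket is $1$; if $\min(I)=\{n\}$ or $\min(I)=\{v,\overline v\}$ the deletion removes the peak path and reorganizes the cancellation to leave $-2^{\chi(\hat y=0)}$, so the bracket is $-1$; and in every remaining case the deletion breaks the telescoping only at interior points, where a residual $1-2+1$ still collapses, so the bracket is $0$. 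In all cases the separate term $-\chi(\hat x=\overline{\hat y})$, being the weight of the two-letter path which $I$ never meets, survives unchanged, and $\hat y=0$ is impossible when $\hat x=\overline{\hat y}$ since $0$ is never barred.

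I expect the main obstacle to be purely organizational: keeping the bookkeeping straight across the subcases for the type of $\hat x$ (barred ascending letter, the peak $n$, or unbarred descending letter), for whether $\hat y=0$, and for the location of the smallest letters of $I$ relative to the ascending side, the peak, and the descending side --- together with the small boundary cases where the ``low end'' and ``high end'' of the parameter range coincide or are adjacent. Everything else --- the peak shape of $\rho_r$, the values of the $b_{ij}$, and the arithmetic identities $1-2+1=0$ and $1-2+2=1$ --- is immediate.
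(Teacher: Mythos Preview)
Your proposal is correct and follows essentially the same approach as the paper's proof: both classify the paths $p$ with $b_p\neq 0$ by the height of the barred ascending run and the turn value, both exploit the cancellation $1+(-2)+1=0$ among the three turn options (the paper phrases this as ``paths that only differ by $\overline{i}\,(i-1)$, $\overline{i}\,i\,(i-1)$, $\overline{i}\,(i+1)\,i\,(i-1)$ all cancel''), and both isolate the two surviving boundary contributions --- the $-1$ from $\hat x=\overline{\hat y}$ and the $+2^{\chi(\hat y=0)}$ from the peak at $n$ via $1-2+2=1$ --- before case-checking the effect of $I$. The paper's own proof is itself a sketch ending in ``it is tedious but not difficult to check all cases,'' so your level of detail is comparable.
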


\begin{proof}
Let $p=p_1 p_2\cdots p_\ell\in \Sc^{\hat{x}\hat{y}}_I$ be such that
$b_p\ne0$. Then $p_i-p_{i+1} \in\{\pm1,0\}$ for all $1\le i<\ell$.
Since $\overline{0}$ never occurs in $\rho_r$, $\hat{y} = 0$ and
$\hat{x} = \overline{\hat{y}}$ cannot both hold. If $\hat{y}=0$ then
since $b_{10}=2$, $b_p$ contains a factor of two, giving rise to the
overall factor $2^{\chi(\hat{y}=0)}$.

Since $p$ is a subword of the unique reduced word of $\rho_r$ and
the latter word is a $\La$ (see subsection \ref{SS:BruhatZ}), $p$
crosses at most once from barred to unbarred letters. Suppose
$\bar{i}j$ are consecutive letters in $p$. Then $j\in \{i-1,i,i+1\}$
and the corresponding values of $b_{ij}$ are $1,-2,1$ provided that
$j\ne n$.

The contributions of the paths that only differ by $\ldots
\overline{i} \;i-1 \ldots$, $\ldots \overline{i} \;i \; i-1\ldots$
and $\ldots \overline{i}\; i+1\; i\; i-1\ldots$ all cancel out. For
$\hat{y}>\hat{x}$, the contributions of the paths which differ in
$\ldots \overline{\hat{y}-1} \; \hat{y} \ldots$, $\ldots
\overline{\hat{y}-1}\; \overline{\hat{y}}\; \hat{y}\ldots$, and
$\ldots \overline{\hat{y}-1}\; \overline{\hat{y}} \hat{y}+1\;
\hat{y} \ldots$ all cancel out. For $\hat{x}=\overline{\hat{y}}$ the
paths $\overline{\hat{y}}\; \hat{y}$ and $\overline{\hat{y}} \;
\hat{y}+1\;\hat{y}$ leave a net contribution of $-1$.

The case $\bar{i}j=\overline{n-1}n$ is special since $b_{n-1n}=2$.
Given the above discussion, it is tedious but not difficult to check
all cases claimed in the lemma explicitly.
\end{proof}

In the following sections, we will use Lemma \ref{lem:sum p}
repeatedly without mention.
%Note that the definition of $T'(x,y)$ and $T(x,y)$ makes sense even if
%the conditions of the Lemma are not satisfied, for example if the
%sequence $y$ contains barred letters.

\subsection{Results for $f(t,q,X)$: Barred letters only}
In this section we derive results for $f(t,q,X)$, which is defined for a pair
of unbarred letters $t,q$ and a set $X$ of barred letters.
It follows from its definition that $f(t,q,X)$ only depends on $B$ and the sizes $|A|$ and
$|C|$. By definition $f(q,q,\emptyset) = 0$.

Let $\epsilon(n)$ denote the function which is 1 when $n$ is even
and $0$ when $n$ is odd.

\begin{lem}\label{L:0}
If $|B| = 0$ then $f(t,q,X) = 1$.
\end{lem}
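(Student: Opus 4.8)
The plan is a short direct computation resting on the very rigid shape of the unique reduced word $v$ of $\rho_r$. For $n<r\le 2n$ this word is an increasing run of barred letters $\overline{2n+1-r}\,\overline{2n+2-r}\cdots\overline{n-1}$, then the single peak $n$, then the decreasing run of unbarred letters $(n-1)(n-2)\cdots 1\,0$; for $1\le r\le n$ there are no barred letters, so under $|B|=0$ the standing assumption $|X|=|[t,q]|\ge 1$ is vacuous. First I would record, from the shape of $v$, the facts needed below. Since $|B|=0$, the set $X=A\sqcup C$ consists of barred letters of value $>q$ (those in $A$) or $<t$ (those in $C$); we have $k:=|X|=|[t,q]|\ge 1$ and $q<n$ (as in all applications of $f$), so Lemma~\ref{lem:sum p} applies to every factor occurring in $T'(x,y)$. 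Each $x_i$ occurs in $v$ strictly before the peak and each $y_i\in[t,q]$ strictly after it, so every interval $(x_i,y_i)$ straddles the peak; and since the barred prefix of $v$ is increasing in value, the index set of the $i$-th factor, $I_i:=\{x_1,\dots,x_{i-1}\}\cap(x_i,y_i)$, is exactly $\{x_j \mid j<i,\ \mathrm{val}(x_j)>\mathrm{val}(x_i)\}$.

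Next I would evaluate $T^{x_i,y_i}_{I_i}$ using Lemma~\ref{lem:sum p}. We have $x_i\ne\overline{y_i}$ since $x_i\notin[t,q]$, and $I_i\subseteq X$ is a set of barred letters of pairwise distinct values not containing $n$; hence neither ``$-1$'' alternative of that lemma ($I_i=\{n\}$, or the two smallest letters of $I_i$ being a barred--unbarred pair) can occur. Consequently $T^{x_i,y_i}_{I_i}$ is nonzero only if $I_i=\varnothing$: the only remaining nonzero case of Lemma~\ref{lem:sum p} would require $\min(I_i)=\max(x_i,y_i)$ with bars ignored, and this is impossible --- if $x_i\in A$ then $\max(x_i,y_i)=\mathrm{val}(x_i)$, strictly below every element of $I_i$, while if $x_i\in C$ then $\max(x_i,y_i)=y_i\in[t,q]$, whereas every element of $X$, hence of $I_i$, has value $>q$ or $<t$. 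When $I_i=\varnothing$, Lemma~\ref{lem:sum p} gives $T^{x_i,y_i}_{I_i}=2^{\chi(y_i=0)}$. In particular $I_i=\varnothing$ says that $\mathrm{val}(x_i)$ is the largest among $\mathrm{val}(x_1),\dots,\mathrm{val}(x_i)$.

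It then follows that $T(x,y)=0$ unless the permutation $x_1,\dots,x_k$ of $X$ is increasing in value, and that singles out exactly one permutation. For it every $I_i$ is empty, so $T'(x,y)=\prod_{i=1}^{k}2^{\chi(y_i=0)}=2^{\chi(t=0)}$, since $0$ occurs among $y_1,\dots,y_k$ precisely when $t=0$ and then only as $y_k$; discarding the extra factor of $2$ that arises when $y_k=0$ leaves $T(x,y)=1$. Hence $f(t,q,X)=1$, which also matches the stated independence of $f$ from $|A|$ and $|C|$ once $B=\varnothing$. I expect the only genuine obstacle to be the verification that the branch $\min(I)=\max(\hat x,\hat y)$ of Lemma~\ref{lem:sum p} cannot fire under $|B|=0$: that is precisely what collapses the $k!$-term sum to the single increasing term, and everything else is bookkeeping with the reduced word of $\rho_r$ and the bar convention.
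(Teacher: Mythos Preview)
Your proof is correct and follows essentially the same approach as the paper's: both use Lemma~\ref{lem:sum p} to show that the only nonzero contribution to $f(t,q,X)$ comes from the unique permutation of $X$ in which the letters appear in the same order as in $\rho_r$ (equivalently, increasing in value), and that this term contributes $1$. Your version is simply more explicit about why the other branches of Lemma~\ref{lem:sum p} cannot fire under $|B|=0$, which the paper leaves implicit.
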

\begin{proof}
By Lemma~\ref{lem:sum p} the only nonzero contributions to $f(t,q,X)$ occur when
$I=\{x_1,\ldots,x_{i-1}\}\cap(x_i,y_i)=\emptyset$ for all $i$. For this to hold for all
$i\in [1,q-t+1]$, the letters in $x=(x_1,x_2,\ldots,x_{q-t+1})$ must occur in the same
order as in $\rho_r$. This term gives contribution 1 to $f(t,q,X)$.
\end{proof}

\begin{lem}\label{L:1}
Suppose $X \neq \emptyset$.  If $t \notin B$ then $f(t,q,X) =
f(t+1,q,X^-)$.
\end{lem}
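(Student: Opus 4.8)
The plan is to reduce $f(t,q,X)$ to $f(t+1,q,X^-)$ by examining the single factor coming from the last entry $y_k=t$ of $y=(q,q-1,\dots,t)$. Write $k=q-t+1=|X|$ and $\mu=\max X$. Recall $f(t,q,X)=\sum_{x}T(x,y)$ over permutations $x=(x_1,\dots,x_k)$ of $X$, while $f(t+1,q,X^-)=\sum_{x'}T(x',y')$ over permutations $x'$ of $X^-$ with $y'=(q,q-1,\dots,t+1)$; these two $y$'s agree except for the extra final letter $t$, and $X^-=X\setminus\{\mu\}$. So I would show: (i) the last factor of a nonzero $T(x,y)$ always contributes exactly $1$; (ii) a nonzero $T(x,y)$ forces $x_k=\mu$; and (iii) when $x_k=\mu$ the truncation $(x_1,\dots,x_{k-1})$ is a permutation of $X^-$ reproducing the matching term of $f(t+1,q,X^-)$ verbatim. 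Summing then yields the identity.

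For (i): the last factor is $T^{x_k,t}_{I_k}$ with $I_k=\{x_1,\dots,x_{k-1}\}\cap(x_k,t)$, and I would apply Lemma~\ref{lem:sum p} with $\hat{x}=x_k$ and $\hat{y}=t$ (which is $<n$). The hypothesis $t\notin B$ gives $\overline{t}\notin X$, so $x_k\neq\overline{t}$ and the correction term $-\chi(\hat{x}=\overline{\hat{y}})$ vanishes; moreover $I_k\subset X$ consists of barred letters of value $\le n-1$, hence is neither $\{n\}$ nor of the form $\{i,\overline{i}\}$. Lemma~\ref{lem:sum p} then gives $T^{x_k,t}_{I_k}=2^{\chi(t=0)}$ when $I_k=\emptyset$ or $\min(I_k)=\max(x_k,t)$ (ignoring bars), and $0$ otherwise; since $T(x,y)$ discards the extra power of $2$ arising from $y_k=t=0$, a nonzero last factor contributes exactly $1$.

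For (ii): suppose $x_k=\overline{c}\neq\mu$, so $c<\mu$ and $\mu$ occurs among $x_1,\dots,x_{k-1}$. Because the unique reduced word of $\rho_r$ is a $\Lambda$ (barred letters ascending, then $n$, then unbarred letters descending), $\overline{\mu}$ lies strictly between $\overline{c}$ and the unbarred $t$ in it, so $\mu\in I_k$ and $I_k\neq\emptyset$. It remains to rule out $\min(I_k)=\max(c,t)$ ignoring bars: first $c\neq t$ since $\overline{t}\notin X$; if $c<t$ this would require $\overline{t}\in I_k\subset X$, contradicting $t\notin B$; if $c>t$ it would require a barred letter of value $c$ in $I_k$, impossible because $\overline{c}=x_k$ is not among $x_1,\dots,x_{k-1}$ and $\rho_r$ has only one barred occurrence of each value. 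Hence $T^{x_k,t}_{I_k}=0$. Conversely, when $x_k=\mu$ the set $\{x_1,\dots,x_{k-1}\}$ is $X^-$, all of whose members are barred of value $<\mu$, while $(x_k,t)=(\mu,t)$ contains only barred letters of value $>\mu$ together with unbarred letters; so $I_k=\emptyset$ and the last factor is nonzero, contributing $1$.

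For (iii): for $i\le k-1$ the factor $T^{x_i,y_i}_{I_i}$ with $y_i=q-i+1$ and $I_i=\{x_1,\dots,x_{i-1}\}\cap(x_i,y_i)$ does not involve $x_k=\mu$, so it equals the $i$-th factor of $T(x',y')$ once $(x_1,\dots,x_{k-1})$ is read as a permutation $x'$ of $X^-$ paired with $y'=(q,q-1,\dots,t+1)$; and no $y_i$ with $i\le k-1$ is $0$, so no hidden power of $2$ intervenes. As $x\mapsto x'$ is a bijection between the permutations of $X$ with $x_k=\mu$ and the permutations of $X^-$, summing over terms gives $f(t,q,X)=\sum_{x'}T(x',y')=f(t+1,q,X^-)$, consistent with Lemma~\ref{L:0} in the case $B=\emptyset$. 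I expect step (iii), together with the power-of-$2$ bookkeeping, to be routine; the real obstacle is the forcing argument (ii), where one must combine the $\Lambda$-shape of the reduced word of $\rho_r$ with the precise hypothesis $t\notin B$ to control $\min(I_k)$ — it is exactly $t\notin B$ that keeps $\overline{t}$ out of $I_k$.
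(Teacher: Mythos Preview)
Your proof is correct and is exactly the argument the paper has in mind; the paper states Lemma~\ref{L:1} without proof, treating it as an immediate consequence of Lemma~\ref{lem:sum p} via precisely the analysis of the last factor that you carry out. One small cosmetic point: in your description of $(\mu,t)$ you say it ``contains only barred letters of value $>\mu$ together with unbarred letters,'' which omits the letter $n$ itself, but this does not affect the argument since $n\notin X$.
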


\begin{lem}\label{L:2}
Suppose $|C| > 0$.  Then $f(t,q,X) = f(t+1,q,X^-)$.
\end{lem}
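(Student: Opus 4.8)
The plan is to reduce to the case $t\in B$ and then peel the last letter off $y$. If $t\notin B$ the statement is exactly Lemma \ref{L:1}, so I would assume $t\in B$, i.e.\ $\bar t\in X$; put $k=|X|=q-t+1$ and $m=\max(X)$.

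First I would expand $f(t,q,X)$ according to which letter $x_k=\bar a\in X$ is paired with the last letter $y_k=t$ of $y=q(q-1)\cdots t$; the remaining letters $y_1\cdots y_{k-1}=q(q-1)\cdots(t+1)$ are then paired with a permutation of $X\setminus\{\bar a\}$. Since each element of $X$ sits at a unique position of $\rho_r$ (all barred letters of $\rho_r$ are distinct and make up its increasing prefix), the set $\{x_1,\dots,x_{k-1}\}$ equals $X\setminus\{\bar a\}$ independently of the permutation, and because the unique reduced word of $\rho_r$ is a $\Lambda$, the subword $(x_k,t)$ of $\rho_r$ is $\overline{a+1}\cdots\overline{n-1}\,n\,(n-1)\cdots(t+1)$; hence $\{x_1,\dots,x_{k-1}\}\cap(x_k,t)=\{\bar b\in X\mid b>a\}=:I(\bar a)$. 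Summing the first $k-1$ factors over permutations of $X\setminus\{\bar a\}$ reproduces $f(t+1,q,X\setminus\{\bar a\})$ (none of $y_1,\dots,y_{k-1}$ is $0$, so $T$ and $T'$ agree there), which gives
\begin{equation*}
  f(t,q,X)=\sum_{\bar a\in X}c(\bar a)\,f(t+1,q,X\setminus\{\bar a\}),\qquad c(\bar a):=T^{\bar a,\,t}_{I(\bar a)},
\end{equation*}
where $c(\bar a)$ is evaluated by Lemma \ref{lem:sum p}; the factor $2^{\chi(t=0)}$ there, when present, is precisely the one suppressed in passing from $T'$ to $T$, and $\overline 0$ never occurs in $\rho_r$. (The same decomposition underlies Lemmas \ref{L:0} and \ref{L:1}.)

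The hard part is the case analysis for $c(\bar a)$, using $I(\bar a)=\{\bar b\in X\mid b>a\}$ — a set of barred letters, so it is never $\{n\}$ nor of the form $\{i,\bar i\}$, and $\max(\bar a,t)$ equals $a$ if $a>t$ and $t$ if $a\le t$. The outcome I expect to extract from Lemma \ref{lem:sum p} is: $c(\bar a)=1$ if $\bar a=m$ and $m\neq\bar t$; $c(\bar a)=1$ if $\bar a=\max C$ (this is where $|C|>0$ is used, together with $\bar t\in X$, so that $\min I(\max C)=\bar t$ matches $\max(\bar a,t)=t$ up to bars); $c(\bar t)=-1$ if $m\neq\bar t$, while $c(\bar t)=0$ if $m=\bar t$ (in which case $A=\emptyset$, $B=\{\bar t\}$ and $I(\bar t)=\emptyset$); and $c(\bar a)=0$ for every other $\bar a\in X$.

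To finish I would invoke the already-noted fact that $f(t+1,q,\cdot)$ depends only on its $B$-part and on the sizes of its $A$- and $C$-parts. Relative to the interval $[t+1,q]$, both $X\setminus\{\max C\}$ and $X\setminus\{\bar t\}$ have $A$-part $A$, $B$-part $B\setminus\{\bar t\}$ (in $X\setminus\{\max C\}$ the letter $\bar t$ now lies below $t+1$ and joins the $C$-part), and $C$-parts of equal size $|C|$; hence $f(t+1,q,X\setminus\{\max C\})=f(t+1,q,X\setminus\{\bar t\})$. Thus when $m\neq\bar t$ the three nonzero terms collapse, via this equality and the cancellation of the $\pm1$ coefficients, to $f(t+1,q,X\setminus\{m\})=f(t+1,q,X^-)$, and when $m=\bar t$ the single surviving term $\bar a=\max C$ already equals $f(t+1,q,X\setminus\{\bar t\})=f(t+1,q,X^-)$. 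The case-by-case evaluation of $c(\bar a)$ in the third paragraph is the only real obstacle; the rest is bookkeeping.
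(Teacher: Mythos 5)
Your proof is correct and follows essentially the same route as the paper's: reduce to $t\in B$ via Lemma \ref{L:1}, condition on the last letter $x_k$, use Lemma \ref{lem:sum p} to see that only $\max(X)$, $\max(C)$, and $\bar t$ contribute (with coefficients $+1,+1,-1$, degenerating appropriately when $\max(X)=\bar t$), and cancel using the fact that $f$ depends only on $B$ and the sizes $|A|,|C|$. The paper phrases the degenerate case via Lemma \ref{L:0} rather than the invariance argument, but this is only a cosmetic difference.
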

\begin{proof}
By Lemma \ref{L:1}, this holds if $t \notin B$.  Suppose $t \in B$.
We have two cases $t = \max(X)$ or $t \neq \max(X)$.  In the first
case, a sequence $x$ can only contribute to $f(t,q,X)$ if $x_k =
\max(C)$.  But $f(t+1,q,X - \max(C)) = f(t+1,q,X^-) = 1$ by Lemma
\ref{L:0}.  In the second case $x_k$ may be $\max(X)$, $t$ or
$\max(C)$, all of which are distinct.  Hence we have
$f(t,q,X)=f(t+1,q,X^-)+f(t+1,q,X-\max(C))-f(t+1,q,X-t)$ by Lemma~\ref{lem:sum p}.
Since $f(t+1,q,X-t) = f(t+1,q,X-\max(C))$, we conclude that
$f(t,q,X) = f(t+1,q,X^-)$.
\end{proof}

\begin{lem}\label{L:3}
Suppose $|C| > 0$.  Then $f(t,q,X) = 1$.
\end{lem}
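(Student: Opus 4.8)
The plan is to deduce Lemma~\ref{L:3} from Lemmata~\ref{L:0} and~\ref{L:2} by a short induction on $|X|$. Recall the standing conventions: for the triple $(t,q,X)$ one has $|[t,q]| = |X|$, the set $X$ is partitioned as $X = A\cup B\cup C$ with $B$ the letters of $X$ in $[t,q]$ and $C$ the letters of $X$ strictly below $t$, and $f(t,q,X)$ depends only on $B$ and on the sizes $|A|,|C|$.

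First I would dispose of the case $|B| = 0$: here Lemma~\ref{L:0} gives $f(t,q,X) = 1$ outright, with no hypothesis on $C$ needed at all. In particular this already settles the base case $|X| = 1$, since if $|C| > 0$ and $|X| = 1$ then $C = X$ and $B = \emptyset$.

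For the inductive step, assume $|X| > 1$, $|C| > 0$ and $|B| > 0$; note that these force $q > t$, so that $(t+1,q,X^-)$ is a legitimate triple with $|[t+1,q]| = |X|-1 = |X^-|$, where $X^-$ denotes $X$ with its largest element removed. Since $|C| > 0$, Lemma~\ref{L:2} applies and gives $f(t,q,X) = f(t+1,q,X^-)$. The key point is that $\max X$ cannot lie in $C$: because $B\ne\emptyset$, every element of $B$ (and every element of $A$) is at least $t$, hence strictly larger than every element of $C$, so $\max X \in A\cup B$. Consequently $C\subseteq X^-$, and therefore the ``$C$-part'' of the triple $(t+1,q,X^-)$ --- namely the set of letters of $X^-$ strictly below $t+1$ --- contains $C$ and is in particular nonempty. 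Since $|X^-| = |X|-1$, the inductive hypothesis yields $f(t+1,q,X^-) = 1$, and hence $f(t,q,X) = 1$, completing the induction.

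There is no genuine obstacle here; the only point requiring a moment's care is verifying that the reduction supplied by Lemma~\ref{L:2} preserves the hypothesis $|C|>0$, which is exactly the observation that $\max X\notin C$ once $B$ is nonempty --- and when $B$ is empty one bypasses the reduction entirely and invokes Lemma~\ref{L:0} directly.
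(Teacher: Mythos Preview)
Your proof is correct and follows exactly the approach the paper indicates: the paper's proof is the single sentence ``This follows from Lemmas~\ref{L:0} and~\ref{L:2} by induction,'' and you have simply supplied the details of that induction, including the check that the hypothesis $|C|>0$ survives the reduction of Lemma~\ref{L:2}.
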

\begin{proof}
This follows from Lemmas~\ref{L:0} and~\ref{L:2} by induction.
\end{proof}

\begin{lem}\label{L:4}
Suppose $|C| = 0$.  Then $f(t,q,X) = \epsilon(|B|)$.
\end{lem}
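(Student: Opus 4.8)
The plan is to prove Lemma~\ref{L:4} by strong induction on $|X| = q-t+1$. The case $|B| = 0$ is exactly Lemma~\ref{L:0} (and $\epsilon(0) = 1$), so assume $|B| \ge 1$. I would next dispose of the subcase $t \notin B$: since $|C| = 0$ and $|X| = |[t,q]|$, if $A$ were empty then $X = B \subseteq [t,q]$ would force $B = [t,q]$, hence $t \in B$, a contradiction; so $A \ne \emptyset$ and $\max X = \max A > q$. Lemma~\ref{L:1} then gives $f(t,q,X) = f(t+1,q,X^-)$, and passing to the interval $[t+1,q]$ and the set $X^- = X \setminus \{\max X\}$ preserves both $|C| = 0$ and the value of $|B|$ while decreasing $|X|$, so the inductive hypothesis yields $f(t,q,X) = \epsilon(|B|)$.

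The main case is $t \in B$, and the idea is to split the defining sum for $f(t,q,X)$ according to the last letter $x_k$ of the permutation $x$, which is the partner of $y_k = t$. The key point is that once $x_k$ is fixed the letters $x_1,\dots,x_{k-1}$ run over \emph{all} orderings of the \emph{fixed} set $X \setminus \{x_k\}$, so the factor $T^{x_k,t}_{I_k}$ with $I_k = (X \setminus \{x_k\}) \cap (x_k,t)$ is a constant, and summing the remaining product $\prod_{i<k} T^{x_i,y_i}_{I_i}$ over these orderings gives precisely $f(t+1,q,X \setminus \{x_k\})$ (here $t \ge 1$, since $\overline{0}$ does not occur in $\rho_r$, so the $2^{\chi(y_k=0)}$ factor never intervenes). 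Because every element of $X$ is a barred letter in the increasing prefix of $\rho_r$ while $t$ lies in the decreasing suffix, the subword $(x_k,t)$ consists of the barred letters of larger value, then $n$, then the unbarred letters above $t$; in particular $I_k$ is always a set of barred letters, so in Lemma~\ref{lem:sum p} the exceptional outputs "$I = \{n\}$" and "$\min(I) = \{j,\overline{j}\}$" can never occur. Concretely: if $x_k = \overline{t}$ (available since $t \in B$) then $I_k = X \setminus \{\overline{t}\} \ne \emptyset$ for $|X| \ge 2$ and Lemma~\ref{lem:sum p} gives $T^{\overline{t},t}_{I_k} = -1$, while $f(t+1,q,X \setminus \{\overline{t}\})$ still has $|C| = 0$ and $|B|$ reduced by one, hence equals $\epsilon(|B|-1)$ by induction; if $x_k = \overline{a}$ with $a \ne t$ then $a > t$ (as $|C| = 0$), so $\overline{t}$ persists in $X \setminus \{\overline{a}\}$ as a letter below $t+1$, making $f(t+1,q,X \setminus \{\overline{a}\}) = 1$ by Lemma~\ref{L:3}, and $I_k = \{\overline{b} \in X : b > a\}$, so Lemma~\ref{lem:sum p} gives $T^{\overline{a},t}_{I_k} = 1$ when $a = \max X$ and $0$ otherwise. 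Since $\max X > t$ whenever $|X| \ge 2$, exactly one $a \ne t$ contributes, and summing the three pieces gives $f(t,q,X) = -\epsilon(|B|-1) + 1 = \epsilon(|B|)$; the leftover case $|X| = 1$ with $t \in B$ means $X = \{\overline{t}\}$ and $q = t$, where $f(t,t,\{\overline{t}\}) = T^{\overline{t},t}_{\emptyset} = -1 + 1 = 0 = \epsilon(1)$ directly.

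The step I expect to be the main obstacle is this $t \in B$ case: pinning down $I_k$ exactly as a set of barred letters and verifying that the exceptional "$-1$" branches of Lemma~\ref{lem:sum p} are never triggered, checking that the auxiliary sub-problems with a nonempty "$C$"-part are genuinely the ones covered by Lemma~\ref{L:3} (in particular that $\overline{t}$ lands below the new left endpoint $t+1$), and keeping the degenerate small cases together with the bookkeeping of the $2^{\chi(y_k=0)}$ factor under control.
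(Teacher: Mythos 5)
Your proof is correct and follows essentially the same route as the paper's: induction on $|X|$, reduction via Lemma \ref{L:1} when $t\notin B$, and for $t\in B$ the recursion $f(t,q,X)=f(t+1,q,X^-)-f(t+1,q,X-\overline{t})$ obtained by conditioning on the last letter $x_k$, evaluated with Lemma \ref{L:3} and the inductive hypothesis. The only difference is that you make explicit the computation of $I_k$ and the check that the exceptional branches of Lemma \ref{lem:sum p} never fire, which the paper leaves implicit after establishing the analogous recursion in Lemma \ref{L:2}.
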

\begin{proof}
We proceed by induction on the size of $X$.  Suppose $t \notin B$.
Then $|A|$ must be non-empty.  The inductive step holds by Lemma
\ref{L:1}. Suppose $t \in B$.  If $t = \max(X)$ then $|X| = 1$ and
the result is trivial. Thus we may assume that $t \neq \max(X)$.
Then $f(t,q,X) = f(t+1,q,X^-) - f(t+1,q,X-t)$.  By Lemma \ref{L:3},
$f(t+1,q,X^-) = 1$.  Also by the induction hypothesis $f(t+1,q,X-t)
= \epsilon(|B|-1)$.  The result follows.
\end{proof}

\subsection{Results for $g(t,q,X)$: Unbarred letters as well}
\label{sec:unbar}
In this section we prove results for the function $g(t,q,X)$, where
$X$ is a set of barred and unbarred letters.

\begin{lem}\label{L:5+}
Suppose $|A| = |B| = |C| = 0$.  Then $g(t,q,X) = 1$.
\end{lem}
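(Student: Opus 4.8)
The plan is to show that, under the hypothesis $|A|=|B|=|C|=0$, exactly one of the permutations $x$ of $X$ summed over in the definition of $g$ (those with all letters of $U$ to the left of all letters of $U'$) contributes a nonzero term to $g(t,q,X)$, namely the order $x^\circ$ in which the letters of $X$ occur in the unique reduced word of $\rho_r$, and that this term equals $T(x^\circ,y)=1$. I would begin by recording the relevant structure. Since $A=B=C=\emptyset$ we have $X=U\sqcup U'$ with $U$ a set of barred letters and $U'$ a set of unbarred letters containing $n$, and by hypothesis (7) together with the definition of $U$ every letter of $X$ exceeds $q$, whereas every $y_i$ satisfies $y_i\le q<n$. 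The reduced word of $\rho_r$ is a $\Lambda$ (see Section~\ref{SS:BruhatZ}): the barred letters form the increasing run $\overline{2n+1-r}\dots\overline{n-1}$, then comes $n$, then the decreasing run $n-1,\dots,1,0$ of unbarred letters. Hence in $\rho_r$ the letters of $U$ occur, in increasing order of value, strictly to the left of the letters of $U'$, which occur in decreasing order of value; this is the order $x^\circ$, and it already satisfies the block condition that $U$ precede $U'$.

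Next I would use Lemma~\ref{lem:sum p} to rule out every other admissible $x$. Fix an admissible $x$ and an index $i$, and set $I_i=\{x_1,\dots,x_{i-1}\}\cap(x_i,y_i)$. Since $x_i>q\ge y_i$ we have $x_i\ne\overline{y_i}$ and $y_i<n$, so Lemma~\ref{lem:sum p} gives $T^{x_i,y_i}_{I_i}=0$ unless $I_i=\emptyset$ or $\min(I_i)$, ignoring bars, equals the value of $x_i$. The latter option never occurs: if $x_i$ is barred then, because $U$ precedes $U'$, all earlier $x_j$ are barred, and the members of $I_i$ lie in the increasing run to the right of $x_i$, hence have value $>x_i$; if $x_i$ is unbarred then $(x_i,y_i)$ meets only the unbarred decreasing run, so $I_i$ consists of the earlier letters of $U'$ of value strictly between $y_i$ and $x_i$, hence $<x_i$; and in neither case can $I_i$ equal $\{n\}$ or a pair $\{j,\bar j\}$. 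Therefore $I_i=\emptyset$ for all $i$, which forces the barred part of $x$ to be increasing and its unbarred part decreasing; together with the block condition this forces $x=x^\circ$.

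It remains to evaluate $T(x^\circ,y)$. For $x=x^\circ$ every earlier letter lies to the left of $x_i$ in $\rho_r$ while $(x_i,y_i)$ consists of positions to its right, so indeed $I_i=\emptyset$ and $T^{x_i,y_i}_{\emptyset}=2^{\chi(y_i=0)}$ by Lemma~\ref{lem:sum p}; multiplying over $i$ gives $2^{\chi(t=0)}$, and discarding the factor of $2$ coming from $y_k=t=0$, as prescribed in the definition of $T$, leaves $T(x^\circ,y)=1$, so $g(t,q,X)=1$. The main obstacle is the case analysis in the middle paragraph: one must pin down precisely which earlier letters fall into the interval $(x_i,y_i)$ in the barred and the unbarred cases, so that $\min(I_i)$ cannot rescue a non-monotone ordering, and this is exactly where the hypotheses that $U$ precedes $U'$ and that all letters of $X$ exceed $q$ are used; the degenerate situations ($X=\emptyset$, or $t=0$) are absorbed by the stated conventions.
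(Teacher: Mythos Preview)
Your proof is correct and follows essentially the same approach as the paper's own proof, which is a one-sentence appeal to Lemma~\ref{lem:sum p} asserting that the only nonzero contribution occurs when the letters of $x$ appear in the same order as in $\rho_r$. You have simply expanded that sentence into a careful case analysis, verifying in both the barred and unbarred cases that a nonempty $I_i$ forces $T^{x_i,y_i}_{I_i}=0$, and then evaluating the surviving term explicitly.
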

\begin{proof}
Since by assumption all letters in $U$ occur to the left of $U'$, again by
Lemma~\ref{lem:sum p} the only contribution to $g(t,q,X)$ occurs when
all letters in $x$ occur in the same order as in $\rho_r$.
\end{proof}

\begin{lem}\label{L:5}
Suppose $X$ is unbalanced and $|C| > 0$.  Then $g(t,q,X) = 1$.
\end{lem}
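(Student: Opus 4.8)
The plan is to imitate the inductive chain already used for the barred‑only function $f$, namely Lemmas~\ref{L:0}, \ref{L:1}, \ref{L:2}, \ref{L:3}, adapting each step to the presence of the extra blocks $U$ and $U'$. I would argue by induction on $|X|$. The engine is exactly that of the proof of Lemma~\ref{L:2}: since $y = q(q-1)\cdots t$ ends in $y_k = t$ with $t$ unbarred, the last factor of every $T(x,y)$ contributing to $g(t,q,X)$ is $T^{x_k,t}_{\{x_1,\dots,x_{k-1}\}\cap(x_k,t)}$, which by Lemma~\ref{lem:sum p} is nonzero only for a very short list of last letters $x_k$. Because $|C|>0$ and $[t,q]$ contains no letter of $X$ smaller than $t$, the only possibilities are $x_k = \max C$ and, when $\overline t \in B$, also $x_k = \overline t$. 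This yields the same kind of relation as in Lemma~\ref{L:2}: $g(t,q,X) = g(t+1,q,X^-)$ when $\overline t \notin B$, and $g(t,q,X) = g(t+1,q,X^-) + g(t+1,q,X-\max C) - g(t+1,q,X-\overline t)$ when $\overline t \in B$, the last two terms being equal and therefore cancelling.

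It then remains to check that the reduced configuration $X^- = X \setminus \{\max X\}$ still falls under the hypotheses, so that the inductive hypothesis (or an earlier lemma) applies. The subtlety is that $\max X = n \in U'$, so removing it shrinks $U'$; after enough steps $U'$, then $U$, then $A$, then the $B$‑block relative to the growing interval $[t',q]$ are successively exhausted. One should therefore read the hypotheses on $g$ as requiring only that $U'$ consist of unbarred letters above all of $A,B,C$ with $\min U\le\min U'$ (the literal "$U'$ contains $n$'' is just a feature of the first application). Two points must be tracked: first, $|C|$ stays positive — $C$ never loses elements as $t'$ grows, and it gains $\overline{t'}$ from $B$ whenever that letter lies in $B$; second, the unbalanced condition persists — removing $\max X=\max U'$ leaves $\min U'$ unchanged unless $U'=\{n\}$ collapses to $\emptyset$, at which point the ordering constraint becomes vacuous, $U$ can be merged into $A$, and the claim reduces to $f(t',q,\cdot)=1$ with $|C|>0$, that is, to Lemma~\ref{L:3}. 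The base cases are $|A|=|B|=|C|=0$ (Lemma~\ref{L:5+}) and this pure‑$f$ case.

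The main obstacle, as in the $f$‑case, is bookkeeping, but here genuinely delicate: one must verify case by case — over all admissible relative positions of the blocks $A,B,U,U'$ — that Lemma~\ref{lem:sum p} really does pin the last letter down to $\max C$ (and possibly $\overline t$), and that the "unbalanced'' hypothesis is precisely what rules out an extra surviving term. Such a term would otherwise be produced through the exceptional Cartan value $b_{n-1,n}=2$ when $\overline m$ and $m$ sit at the bottom of $U$ and $U'$ simultaneously — exactly the balanced situation excluded here. Once this analysis is carried out the relation collapses as in Lemma~\ref{L:2} and the induction closes, giving $g(t,q,X)=1$.
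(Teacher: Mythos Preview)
Your inductive skeleton is right, but the identification of the surviving last letter $x_k$ is wrong, and this error propagates through the whole argument. You claim that when $\bar t\notin B$ the only nonzero contribution has $x_k=\max C$, and you then remove $\max X=n$ in the inductive step. Neither is correct.

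In the $f$-case (barred letters only) the element of $X$ occurring last in $\rho_r$ before $y_k=t$ is indeed $\max X$, because barred letters appear in $\rho_r$ in increasing order of value. But once $U'$ is present this changes: unbarred letters appear in $\rho_r$ \emph{after} all barred letters and in \emph{decreasing} order of value. Hence the element of $X$ closest to $t$ in the $\rho_r$ order is $\min(U')$, not $n=\max U'$ and certainly not $\max C$. Concretely, if $x_k=\max C$ and $\bar t\notin B$, then $I=\{x_1,\dots,x_{k-1}\}\cap(x_k,t)=B\cup A\cup U\cup U'$; this set is nonempty (it contains $n$), its minimum has value strictly larger than $t$, and the unbalanced hypothesis rules out the $\{i,\bar i\}$ clause of Lemma~\ref{lem:sum p}. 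So $T^{x_k,t}_I=0$, contrary to what you assert.

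The correct recursion (which is what the paper does) is: when $t\notin B$ the only nonzero contribution is $x_k=\min(U')$, giving $g(t,q,X)=g(t+1,q,X-\min(U'))$; when $t\in B$ the three possibilities are $x_k\in\{\min(U'),\max C,\bar t\}$, and the last two cancel exactly as in Lemma~\ref{L:2}, again leaving $g(t,q,X)=g(t+1,q,X-\min(U'))$. Removing $\min(U')$ keeps $X$ unbalanced (since $\min U$ is unchanged while $\min U'$ can only grow), and when $|U'|=1$ the next step lands in Lemma~\ref{L:3}. Your proposed removal of $\max X=n$ does not correspond to any term in the expansion and would not yield a valid recursion.
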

\begin{proof}
We proceed by induction on $|X|$.  If $t \notin B$ then for a
non-zero contribution we must have $x_k = \min(U')$.  If $|U'| = 1$
the result follows from Lemma \ref{L:3}.  Otherwise it is the
inductive hypothesis.  If $t \in B$ then $x_k$ may be equal to
$\max(C)$, $t$, or $\min(U')$, which are distinct.  Hence we have
$g(t,q,X)=g(t+1,q,X-\min(U'))+g(t+1,q,X-\max(C))-g(t+1,q,X-t)$
by Lemma~\ref{lem:sum p}. Since $g(t+1,q,X-t) = g(t+1,q,X-\max(C))$, we
conclude that $g(t,q,X) = g(t+1,q,X-\min(U'))$.  The argument now proceeds
as for $t \notin B$.
\end{proof}

\begin{lem}\label{L:6}
Suppose $X$ is unbalanced and $|C| = 0$. Then $g(t,q,X) =
\epsilon(|B|)$.
\end{lem}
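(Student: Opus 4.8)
The plan is to prove the statement by induction on $|X|$, mirroring the proof of Lemma~\ref{L:4} (the analogous statement for $f$) and using Lemma~\ref{L:5} in the role that Lemma~\ref{L:3} plays there. After the trivial base cases one splits into the cases $t\notin B$ and $t\in B$ exactly as in Lemma~\ref{L:4}; since $n\in U'$ lies above the interval $[t,q]$, the subcase $t=\max(X)$ that appears in Lemma~\ref{L:4} never occurs here.

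In the case $t\notin B$: because $|C|=0$ and $t\notin B$, inspecting the last factor of $T(x,y)$ by means of Lemma~\ref{lem:sum p} — the same inspection carried out in the $t\notin B$ part of Lemma~\ref{L:5} — shows that the only permutations of $X$ contributing to $g(t,q,X)$ end in $x_k=\min(U')$. Stripping off this letter gives $g(t,q,X)=g(t+1,q,X-\min(U'))$. If $|U'|>1$, the set $X-\min(U')$ over the shorter interval $[t+1,q]$ is still unbalanced with $|C|=0$ and with the same $B$, so the inductive hypothesis yields $\epsilon(|B|)$; if $|U'|=1$, the letter removed is $n$, the reduced configuration has no unbarred letters, and Lemma~\ref{L:4} applies directly to give $\epsilon(|B|)$.

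In the case $t\in B$: now $x_k$ can only be $t$ or $\min(U')$ (the $\max(C)$ possibility in Lemma~\ref{L:5} is absent since $|C|=0$), and Lemma~\ref{lem:sum p} gives $g(t,q,X)=g(t+1,q,X-\min(U'))-g(t+1,q,X-t)$. For the first summand, over the interval $[t+1,q]$ the element $t\in B$ now lies below the interval, so this configuration has $|C|>0$ and is still unbalanced, whence $g(t+1,q,X-\min(U'))=1$ by Lemma~\ref{L:5} (or by Lemma~\ref{L:3} when $|U'|=1$). For the second summand, $X-t$ over $[t+1,q]$ is unbalanced with $|C|=0$ and with exactly $|B|-1$ barred letters in the interval, so the inductive hypothesis gives $\epsilon(|B|-1)$. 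Therefore $g(t,q,X)=1-\epsilon(|B|-1)=\epsilon(|B|)$, completing the induction.

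The main obstacle is the step, hidden in each case above, of checking via Lemma~\ref{lem:sum p} that only the asserted value(s) of the last letter $x_k$ give a nonzero contribution and of tracking the resulting signs; this is the same delicate case analysis that underlies Lemmas~\ref{L:4} and~\ref{L:5}, and one must be especially careful in the degenerate situations where $U$ or $U'$ has only one or two elements.
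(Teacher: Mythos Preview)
Your proposal is correct and follows essentially the same approach as the paper's proof: induction on $|X|$, with the $t\notin B$ case handled as in Lemma~\ref{L:5} and the $t\in B$ case yielding the recursion $g(t,q,X)=g(t+1,q,X-\min(U'))-g(t+1,q,X-t)=1-\epsilon(|B|-1)$ via Lemma~\ref{L:5} and the inductive hypothesis. You have simply spelled out in more detail the edge cases (such as $|U'|=1$ falling back to Lemmas~\ref{L:3} or~\ref{L:4}) that the paper leaves implicit.
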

\begin{proof}
We proceed by induction on $|X|$.  If $t \notin B$ the argument is
as for Lemma \ref{L:5}.  If $t \in B$, we may have $x_k \in \{t,
\min(U')\}$.  Thus $g(t,q,X) = g(t+1,q,X-\min(U'))-g(t+1,q,X-t)$. By
induction and Lemma \ref{L:5} this is equal to $1 - \epsilon(|B|
-1)$, as required.
\end{proof}

\begin{lem} \label{L:7}
Suppose $X$ is balanced and $|C| > 0$.  Then $g(t,q,X) =
\epsilon(|A|+|B|+|C|)$.
\end{lem}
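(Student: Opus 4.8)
The plan is to prove Lemma~\ref{L:7} by induction on $|X|$, following the ``peel off the last letter of $x$'' scheme already used for Lemmata~\ref{L:5} and~\ref{L:6}. Observe first that $|C|>0$ forces $t\ge 1$ (there are no letters below $0$, and $\overline 0$ never occurs in $\rho_r$), so the word $y=q(q-1)\cdots t$ does not end in $0$ and no extraneous factor $b_{10}=2$ appears at the final step. In the sum defining $g(t,q,X)$, group the permutations $x$ by the letter $x_k\in X$ placed in the last position, which is necessarily paired with $y_k=t$; extracting the factor $T^{x_k t}_I$ of $T(x,y)$, where $I=\{x_1,\dots,x_{k-1}\}\cap(x_k,t)$, turns the remaining sum into $T^{x_k t}_I\cdot g(t+1,q,X\setminus x_k)$, with the partition $A,B,C,U,U'$ recomputed over the shorter interval $[t+1,q]$. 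A point that must be watched throughout is that, upon passing from $t$ to $t+1$, a barred letter $\overline t\in X$ --- present exactly when $t\in B$ --- migrates from $B$ into $C$, so the block sizes appearing on the right change accordingly.

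First I would use Lemma~\ref{lem:sum p} to narrow the admissible last letters $x_k$ to a short list: generically $\min(U')$, $\max(C)$, and, when $t\in B$, the letter $\overline t$. For each candidate one reads off $T^{x_k t}_I$ after deciding which of the possibilities ``$I=\emptyset$'', ``$\min(I)=\max(x_k,t)$'', ``$\min(I)=\{i,\overline i\}$'', ``$I=\{n\}$'' actually holds; this uses that the unique reduced word of $\rho_r$ is a $\Lambda$ (so any subword changes from barred to unbarred letters at most once) together with the inequalities between the blocks $A,B,C,U,U'$. For instance $x_k=\min(U')$ always contributes with $I=\emptyset$ and constant $+1$, while the letter $\overline t$ (for $t\in B$) contributes $-1$ in general and $-2$ in the degenerate sub-case $A=\varnothing$, $B=\{\overline t\}$, where $\min(I)=\{\min(U'),\min(U)\}$ combines with the $-\chi(x_k=\overline t)$ summand of Lemma~\ref{lem:sum p}. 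Assembling the surviving terms produces, in each sub-case, an alternating identity expressing $g(t,q,X)$ through $g$-values at $X\setminus\min(U')$, $X\setminus\max(C)$ and $X\setminus\overline t$; each of these is either unbalanced --- and then equals $1$ or $\epsilon(|B|)$ by Lemmata~\ref{L:5} and~\ref{L:6} --- or balanced of strictly smaller size, and then equals $\epsilon(|A|+|B|+|C|)$ (with the updated block sizes) by the inductive hypothesis. The elementary identity $\epsilon(m)+\epsilon(m-1)=1$ then collapses each alternating sum to $\epsilon(|A|+|B|+|C|)$, once one verifies sub-case by sub-case that the block sizes shift by exactly the amounts required for the parities to line up. The base case is the smallest balanced $X$ with $|C|>0$ (forced to have $A=B=\varnothing$, $|U|=1$, $|U'|=2$, $|C|=1$), checked directly from Lemma~\ref{lem:sum p}.

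I expect the essential obstacle to be the sheer amount of case analysis and sign bookkeeping, rather than any new idea. Compared with Lemma~\ref{L:5} (which involves only $|C|$) and Lemma~\ref{L:6} (only $|B|$), here all of $|A|,|B|,|C|$ enter at once, so there are noticeably more sub-cases for the admissible $x_k$, the constant $T^{x_k t}_I$ can now equal $-2$ as well as $\pm1$ (which changes the shape of the recursion), and one must separately confirm the sign and the parity accounting in each branch, including the migration of $\overline t$ from $B$ to $C$. A secondary subtlety is that several residual $g$-values are balanced with $|C|=0$; a clean write-up therefore needs the companion statement (the ``balanced, $|C|=0$'' analogue of Lemma~\ref{L:6}), and in practice Lemma~\ref{L:7} and that companion are best established together by a single induction on $|X|$.
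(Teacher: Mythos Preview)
Your inductive framework is right, but your enumeration of admissible last letters $x_k$ is incomplete, and this breaks the recursion.  The candidate you miss is $a=\max(A\cup B\cup C)$.  When $X$ is balanced and $x_k=a$, the set $I=(X\setminus a)\cap(a,t)$ equals $U\cup U'$, and since $\min(U)=\min(U')$ we land in the case $\min(I)=\{i,\overline i\}$ of Lemma~\ref{lem:sum p}, contributing $-1$.  You saw this mechanism, but only applied it in the special situation $x_k=\overline t$ with $A=\varnothing$, $B=\{\overline t\}$; in fact it fires for \emph{every} balanced $X$ at the letter $a$, regardless of whether $a$ lies in $A$, $B$, or $C$.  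Your list ``$\min(U')$, $\max(C)$, and (if $t\in B$) $\overline t$'' is correct only when $A=B=\varnothing$, where $a=\max(C)$.  In general $\max(C)$ contributes $0$ when $t\notin B$ and $A\cup B\ne\varnothing$ (since then $\min(I)$ is a single barred letter strictly greater than $t$), so your recurrence $g(t,q,X)=g(t{+}1,q,X{-}\min(U'))+\cdots$ is missing the term $-g(t{+}1,q,X{-}a)$.

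The paper's proof runs exactly along your inductive scheme but with the correct four candidates $\min(U'),\,a,\,\overline t,\,\max(C)$ when $t\in B$ (and $\min(U'),\,a$ when $t\notin B$).  The point of organizing it this way is that $g(t{+}1,q,X{-}\overline t)$ and $g(t{+}1,q,X{-}\max(C))$ have identical block sizes over $[t{+}1,q]$ and hence cancel, leaving only $g(t{+}1,q,X{-}\min(U'))$ (unbalanced, value $1$ by Lemma~\ref{L:5}) minus $g(t{+}1,q,X{-}a)$ (still balanced, handled by induction or Lemma~\ref{L:5+}).  No ``balanced, $|C|=0$'' companion is needed: the cancellation eliminates the residual terms that would have required it.
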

\begin{proof}
We proceed by induction on $|X|$.  Let $a = \max(A \cup B \cup C)$.
If $t \notin B$ then $g(t,q,X) = g(t+1,q,X-\min(U'))-g(t+1,q,X-a)$.
By Lemma \ref{L:5}, $g(t+1,q,X-\min(U')) = 1$.  If $a \in C$ and
$|C| = 1$ then the result follows from Lemma \ref{L:5+}, otherwise
it follows by induction. If $t \in B$ then $g(t,q,X) =$
$$
g(t+1,q,X-\min(U')) - g(t+1,q,X-a) - g(t+1,q,X-t) +
g(t+1,q,X-\max(C)).
$$
Note that this formula holds even if $t = a$.  Clearly, the last two
terms cancel, so the result follows by induction and Lemma
\ref{L:5}.
\end{proof}

Let $\theta(n)$ be the function with values $1,-1,2,-2,3,-3,\ldots$
on the nonnegative integers.

\begin{lem}\label{L:8}
Suppose $X$ is balanced, $|C| = 0 = |A|$, and $t\in B$. Then
$g(t,q,X) = \theta(|B|)$.
\end{lem}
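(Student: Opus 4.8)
The plan is to prove Lemma~\ref{L:8} by induction on $|X|$ --- equivalently on $|B|$, since $|A|=|C|=0$ --- imitating the proof of Lemma~\ref{L:7}. First I would obtain a recursion by analysing the possible last letters $x_k$ of a permutation $x$ of $X$ that contributes a nonzero term to $g(t,q,X)$, recalling that $y_k=t$ is the last letter of $y=q\,(q-1)\cdots t$. As in the proof of Lemma~\ref{L:7}, the letter $x_k$ cannot lie in $U$, since it is the rightmost letter of $x$ whereas every letter of $U$ must occur to the left of every letter of $U'\neq\emptyset$; and by Lemma~\ref{lem:sum p} the only remaining choices contributing a nonzero term are $x_k\in\{t,\max(B),\min(U')\}$, with the convention that when $|B|=1$ one has $\max(B)=t$ so the two corresponding terms coincide, exactly as in the ``$t=a$'' remark in the proof of Lemma~\ref{L:7}. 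This yields
$$g(t,q,X)=g(t+1,q,X-\min(U'))-g(t+1,q,X-\max(B))-g(t+1,q,X-t).$$

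Next I would evaluate each summand by reinterpreting the set relative to the shorter word $q\,(q-1)\cdots(t+1)$ on the interval $[t+1,q]$, under which the letter $t$ (when it is still present) becomes a $C$-letter. In the balanced case one always has $|U'|\ge 2$, because $\min(U')=\min(U)$ is a barred letter of $\rho_r$, hence of value at most $n-1$, whereas $n\in U'$; consequently $X-\min(U')$ is \emph{unbalanced}, and, since it contains the $C$-letter $t$, Lemma~\ref{L:5} gives $g(t+1,q,X-\min(U'))=1$. The set $X-\max(B)$ remains balanced, with $|A|=0$, with $C$-part equal to $\{t\}$, and with $|B|-2$ barred letters in $[t+1,q]$, so Lemma~\ref{L:7} gives $g(t+1,q,X-\max(B))=\epsilon(0+(|B|-2)+1)=\epsilon(|B|-1)$. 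Finally $X-t$ is balanced with $|A|=|C|=0$ and $|B|-1$ barred letters in $[t+1,q]$; after passing up, if necessary, to the least element of $B-t$ by the $t\notin B$ reduction (the $g$-analogue of Lemma~\ref{L:1}, which preserves balance and $|B|$), the inductive hypothesis gives $g(t+1,q,X-t)=\theta(|B|-1)$, while in the base case $|B|=1$ instead $|A|=|B|=|C|=0$ and Lemma~\ref{L:5+} gives $g(t+1,q,X-t)=1=\theta(0)$.

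The proof then concludes with the elementary identities $\epsilon(m)+\epsilon(m-1)=1$ and $\theta(m)=\epsilon(m)-\theta(m-1)$: substituting the three values above,
$$g(t,q,X)=1-\epsilon(|B|-1)-\theta(|B|-1)=\epsilon(|B|)-\theta(|B|-1)=\theta(|B|),$$
as claimed. \textbf{The main obstacle} is the first step --- making the enumeration of admissible last letters $x_k$ and their signs fully precise, in particular the relative sign $-1$ produced by the pair $(\overline{t},t)$ through the term $-\chi(\hat x=\overline{\hat y})$ of Lemma~\ref{lem:sum p} and the coalescence of two terms when $|B|=1$ --- together with the bookkeeping in the second step of how the partition $X=A\sqcup B\sqcup C\sqcup U\sqcup U'$ changes when the interval contracts from $[t,q]$ to $[t+1,q]$ and a letter is removed; it is precisely there that the hypotheses ``$X$ balanced'' and ``$t\in B$'' are used.
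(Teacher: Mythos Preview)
Your proposal is correct and follows essentially the same approach as the paper's proof: the same three-term recursion in $x_k\in\{\min(U'),\,t,\,\max(B)\}$, the same evaluations via Lemmas~\ref{L:5}, \ref{L:7}, \ref{L:5+} and the inductive hypothesis, and the same closing identity $\theta(|B|)=1-\epsilon(|B|-1)-\theta(|B|-1)$.

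One small remark: the paper, like you, applies the inductive hypothesis directly to $g(t+1,q,X-t)$ without isolating the case $t+1\notin B-t$. Your attempt to cover this via a ``$g$-analogue of Lemma~\ref{L:1}'' is not literally a one-step reduction preserving $|B|$ (the balanced $t\notin B$ case produces two terms, as in the proof of Lemma~\ref{L:10}); however, in the actual applications $B$ is an interval (see the hypotheses of Lemma~\ref{L:10} and the description in \S\ref{SS:UUABC}), so $t\in B$ forces $t+1\in B-t$ whenever $|B|\ge 2$, and the issue does not arise. Alternatively one can fold the $|A|=0$ case of Lemma~\ref{L:10} into the same induction on $|X|$. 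Either way your argument goes through.
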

\begin{proof}
We proceed by induction on $|X|$. % If $t \notin B$ then $|B| = 0$
%then $g(t,q,X) = g(t+1,q,X-\min(U'))$.  By Lemma \ref{L:6}, we are
%done.
We have
$$
g(t,q,X) = g(t+1,q,X-\min(U')) -  g(t+1,q,X-t) - g(t+1,q,X-b)
$$
where $b = \max(B)$.  Again this holds even if $t = b$.  By Lemma
\ref{L:5}, $g(t+1,q,X-\min(U')) = 1$.  The result follows from
induction if $t = b$ since $g(t+1,q,X-t) = \theta(0) = 1$.  Assume
$t \neq b$.  By Lemma \ref{L:7}, $g(t+1,q,X-b) = \epsilon(|B|-1)$.
By the inductive hypothesis, $g(t+1,q,X-t) = \theta(|B|-1)$.  But
$\theta(|B|) = 1 - \theta(|B|-1) - \epsilon(|B|-1)$, proving the
lemma.
\end{proof}

%\begin{lem}\label{L:9}
%Suppose $X$ is balanced and $|C| = 0 = |A|$. % Furthermore suppose
%%that $B = [t',t'']$ for some $t' \geq t$.
%Then $g(t,q,X) = \theta(|B|)$.
%\end{lem}
%\begin{proof} If $|B|=0$ then $g(t,q,X) = g(t+1,q,X-\min(U'))$
%and we are done by Lemma \ref{L:6}.
%
%Otherwise let $t'=\min(B)$. If $t = t'$ we are done by Lemma
%\ref{L:8}. So let $t'>t$. We proceed by induction on $t' - t \geq
%1$. Thus $t \notin B$ and $g(t,q,X) = g(t+1,q,X-\min(U')) -
%g(t+1,q,X-b)$ where $b = \max(B)$. By inductive hypothesis and Lemma
%\ref{L:6}, $g(t,q,X) = \epsilon(|B|) - \theta(|B|-1) = \theta(|B|)$,
%as required.
%\end{proof}

%Lemmata \ref{L:8} and \ref{L:9} hold as stated even if $|B| = 0$.
\begin{lem}\label{L:10}
Suppose $X$ is balanced and $|C| = 0$.  Furthermore suppose that
either we have $|B| = 0$ or we have $B = [t',t'']$ for some $t' \geq
t$ and $|A| \leq t' - t$. Then $g(t,q,X) = (-1)^{|A|}\theta(|B|) +
\epsilon(|B|)\epsilon(|A|-1)$.
\end{lem}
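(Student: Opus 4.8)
The plan is to prove this by induction on $|X|$, reducing to the already-established Lemmata \ref{L:5+}--\ref{L:8}. The statement treats the case $|C|=0$ with the constraint that $B$ is an interval $[t',t'']$ sitting above $t$, and $|A|\le t'-t$; this geometric hypothesis is exactly what makes the recursions close, because it guarantees that there are ``enough'' unbarred slots below $B$ to absorb the barred letters of $A$ one at a time. First I would dispose of the base cases: if $|B|=0$ the claim reads $g(t,q,X)=(-1)^{|A|}\theta(0)+\epsilon(0)\epsilon(|A|-1)=(-1)^{|A|}+\epsilon(|A|-1)$, which I would check directly — when $|A|=0$ this is $g=1$, matching Lemma \ref{L:5+}; and when $|A|\ge 1$ one peels off $\max(A)$ via the $t\notin B$ recursion below and induces. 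If $|B|\ge 1$ and $t\in B$ and $|A|=0$, the claim is $\theta(|B|)$, which is precisely Lemma \ref{L:8}.

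The inductive step splits on whether $t\in B$. If $t\notin B$, then since $|C|=0$ and $B=[t',t'']$ with $t'>t$, a non-zero contribution forces $x_k\in\{\min(U'),\max(A)\}$ (these are distinct, using $|A|\le t'-t$ so $\max(A)$ need not equal the top of $B$ — more precisely $\max(A)$ is barred and greater than $q$, so it is certainly not $t$, nor is it $\min(U')$). By Lemma \ref{lem:sum p} this gives
\begin{equation*}
g(t,q,X)=g(t+1,q,X-\min(U'))-g(t+1,q,X-\max(A)).
\end{equation*}
By Lemma \ref{L:5} (or \ref{L:5+}) the first term is $1$; the second term has $A$ replaced by $A-\max(A)$ of size $|A|-1$, still satisfying $|A|-1\le t'-t-1=t'-(t+1)$, so by induction it equals $(-1)^{|A|-1}\theta(|B|)+\epsilon(|B|)\epsilon(|A|-2)$. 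Then I would verify the identity $1-\bigl[(-1)^{|A|-1}\theta(|B|)+\epsilon(|B|)\epsilon(|A|-2)\bigr]=(-1)^{|A|}\theta(|B|)+\epsilon(|B|)\epsilon(|A|-1)$, which reduces to $1-\epsilon(|A|-2)=\epsilon(|A|-1)$ together with $\theta(|B|)-\bigl(\theta(|B|)\bigr)=0$ when the $(-1)$ sign flips absorb correctly; this is the kind of elementary case-check on the parities of $|A|$ and $|B|$ I would spell out but not belabor.

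If $t\in B$, then $t=t'$ (the bottom of the interval), and a non-zero contribution allows $x_k\in\{t,\min(U'),\max(B)\}$ when $|A|=0$ — but now $|A|$ may be positive, and here $x_k$ may also be $\max(A)$; these four candidates are distinct (when they all genuinely occur). Applying Lemma \ref{lem:sum p} carefully, the recursion becomes
\begin{equation*}
g(t,q,X)=g(t+1,q,X-\min(U'))-g(t+1,q,X-t)-g(t+1,q,X-\max(B))+\chi(|A|>0)\,g(t+1,q,X-\max(A)),
\end{equation*}
where I must track whether removing $\max(A)$ keeps the interval hypothesis valid (it does, since $A$ shrinks). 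The first term is $1$; the $g(t+1,q,X-t)$ term has $B$ replaced by $[t+1,t'']$ of size $|B|-1$ with the same $A$, and $|A|\le t'-t=t'-(t+1)+1$ — this is the \textbf{delicate boundary case}: when $|A|=t'-t$ exactly, after removing $t$ we have $|A|=t'-t=(t''\text{-interval bottom shifted})$, and one must check the hypothesis $|A|\le t'-(t+1)$ can fail, in which case a separate sub-argument (using Lemma \ref{L:6} or \ref{L:7} for the unbalanced/balanced-with-$|C|$ analogues, or a direct count) is needed. This is the step I expect to be the main obstacle: ensuring the inductive hypothesis applies to \emph{every} subterm, and handling the one configuration where the interval/count constraint becomes tight. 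Once the recursion is valid I would finish by the parity identity $\theta(|B|)=1-\theta(|B|-1)-\epsilon(|B|-1)$ from the proof of Lemma \ref{L:8}, combined with the sign bookkeeping $(-1)^{|A|}$, to collapse the four terms to the claimed closed form; all the verifications are routine parity arithmetic on $|A|,|B|$.
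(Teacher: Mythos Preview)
Your overall inductive scheme is sound, but you have missed the key structural observation that makes the lemma's hypothesis work: since $B=[t',t'']$ with $t'\ge t$ and $|A|\le t'-t$, the case $t\in B$ forces $t=t'$ and hence $|A|=0$. So your entire ``$t\in B$ with $|A|>0$'' discussion --- including the four-term recursion and the ``delicate boundary case'' you flag as the main obstacle --- is vacuous. There is nothing to check there; the hypothesis was engineered precisely to avoid it.

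The real gap is elsewhere. In your $t\notin B$ step you assert $x_k\in\{\min(U'),\max(A)\}$, but this is only correct when $|A|>0$. When $|A|=0$ and $|B|>0$ and $t<t'$, the recursion is instead
\[
g(t,q,X)=g(t+1,q,X-\min(U'))-g(t+1,q,X-\max(B)),
\]
and you never treat this case. It is genuinely needed: if you start with $|A|<t'-t$ and peel off $\max(A)$ repeatedly, you land exactly here once $A$ is exhausted. The paper handles it first (by induction on $t'-t$, with Lemma~\ref{L:8} as the base $t=t'$), obtaining $g=\epsilon(|B|)-\theta(|B|-1)=\theta(|B|)$, and only then runs the $|A|>0$ induction. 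A smaller correction: in the $t\notin B$ recursion the term $g(t+1,q,X-\min(U'))$ equals $\epsilon(|B|)$ by Lemma~\ref{L:6} (the set becomes unbalanced), not $1$; with this value your identity check goes through cleanly.
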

\begin{proof}
First suppose $|A| = 0$.  If $|B|=0$ then $g(t,q,X) =
g(t+1,q,X-\min(U'))$ and we are done by Lemma \ref{L:6}.  Otherwise
let $t'=\min(B)$. If $t = t'$ we are done by Lemma \ref{L:8}. So let
$t'>t$. We proceed by induction on $t' - t \geq 1$. Thus $t \notin
B$ and $g(t,q,X) = g(t+1,q,X-\min(U')) - g(t+1,q,X-b)$ where $b =
\max(B)$. By inductive hypothesis and Lemma \ref{L:6}, $g(t,q,X) =
\epsilon(|B|) - \theta(|B|-1) = \theta(|B|)$, as required.

Now suppose that $|A| > 0$.  Then $t \notin B$ and by Lemma
\ref{L:6} and induction we have
\begin{align*} g(t,q,X) &= g(t+1,q,X-\min(U')) -
g(t+1,q,X-\max(A))\\
&=\epsilon(|B|) - \left((-1)^{|A|-1}\theta(|B|) +
\epsilon(|B|)\epsilon(|A|-2)\right) \\
&= (-1)^{|A|}\theta(|B|)+\epsilon(|B|)\epsilon(|A|-1) \end{align*}
as required.
\end{proof}

%\subsection{Even more unbarred letters}\label{sec:moreunbar}
%Suppose we are in the situation of subsection \ref{sec:unbar},
%except that $\min(U)>\min(U')$.
%Suppose that $\min(U) \in U'$ (though one is barred and the other
%unbarred). Define $U'_- = \{u \in U' \mid u < \min(U)\}$.

For the next lemma we assume that $\min(U)>\min(U')$.
Suppose that $\min(U) \in U'$ (though one is barred and the other
unbarred). Define $U'_- = \{u \in U' \mid u < \min(U)\}$.
\begin{lem} \label{L:11}
If $|C| = 0$, $B = [t',t'']$ and $|U'_-| + |A| \leq t' - t$, then
Lemma \ref{L:10} holds as stated.
\end{lem}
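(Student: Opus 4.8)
The plan is to prove Lemma \ref{L:11} by induction on $|X|$, in the same spirit as the recursive arguments establishing Lemmata \ref{L:5}--\ref{L:10}: peel off the last letter $y_k = t$ of $y = q(q-1)\cdots t$ using Lemma \ref{lem:sum p} to express $g(t,q,X)$ as a signed sum $\sum_{c}(\pm)\,g(t+1,q,X-c)$ over the small set of admissible final letters $c = x_k \in X$, and then identify the resulting recursion. The crucial structural observation is that, because the value $\min(U)$ occurs in $U'$ as well (one barred, one unbarred), the set $X$ leaves the ``coincidence'' regime and either becomes genuinely unbalanced (once the barred copy of $\min(U)$ is removed from $U$) or becomes balanced in the sense of Lemmata \ref{L:5}--\ref{L:10} (once $U'_-$ has been exhausted from $U'$). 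In the balanced case Lemma \ref{L:10} applies verbatim, and the hypothesis $|U'_-| + |A| \le t' - t$ guarantees that $U'_-$ (together with $A$) can be stripped off entirely while $t$ is still strictly below $t'$, i.e.\ while $t \notin B$.

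Concretely, the steps I would carry out are: (i) fix the base cases --- $X$ balanced (handled by Lemma \ref{L:10}, where the counting hypothesis becomes $|A| \le t' - t$) and $X$ plain unbalanced (handled by Lemmata \ref{L:5}--\ref{L:8}); (ii) for the inductive step with $t \notin B$, apply Lemma \ref{lem:sum p} to enumerate which $c \in X$ contribute a nonzero factor $T^{c,t}_{(X\setminus c)\cap(c,t)}$ --- since $|C| = 0$ these are $\min(U')$, the barred letter of value $\min(U)$ (which can contribute because it produces a $\min(I) = \{i,\bar i\}$ configuration against the unbarred copy of $\min(U)$ in $U'$), and $\max(A\cup B)$ --- together with the signs dictated by Lemma \ref{lem:sum p}; (iii) check that each $X - c$ satisfies the hypotheses of Lemma \ref{L:11}, of Lemma \ref{L:10}, or of Lemma \ref{L:6}/\ref{L:8} at level $t+1$, with the interval $B = [t',t'']$ preserved (or shortened to an interval), the inequality passing to $|U'_-| + |A| \le t' - (t+1)$, and the coincidence either preserved, resolved to balanced, or resolved to unbalanced; (iv) combine the resulting values, which are governed by $\theta$ and $\epsilon$ through the inductive hypothesis and Lemmata \ref{L:6}, \ref{L:10}, and simplify using the elementary identities $\epsilon(m) - \theta(m-1) = \theta(m)$ and $\epsilon(m-1) - \epsilon(m-2) = -(-1)^m$ to recover $g(t,q,X) = (-1)^{|A|}\theta(|B|) + \epsilon(|B|)\epsilon(|A|-1)$.

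The main obstacle will be step (ii): correctly determining the set of admissible final letters and their signs from Lemma \ref{lem:sum p} in the presence of the barred/unbarred coincidence at $\min(U)$, since this is exactly the feature that distinguishes Lemma \ref{L:11} from both the plain balanced case (Lemma \ref{L:10}) and the plain unbalanced case (Lemma \ref{L:6}) --- I expect an extra contributing letter to appear, and one must verify that it supplies precisely the correction term that turns the unbalanced answer $\epsilon(|B|)$ into the balanced answer. A secondary but still delicate point is keeping the three hypotheses --- $|C| = 0$, $B$ an interval $[t',t'']$, and $|U'_-| + |A| \le t' - t$ --- simultaneously valid under all reduction steps, in particular ensuring the inequality degrades by exactly one when $t$ increases by one so that the induction closes, and that no element of $U'_-$ is ever removed before $t$ reaches $t'$.
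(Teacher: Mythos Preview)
Your approach is more elaborate than necessary, and your step~(ii) reflects a misunderstanding of which final letters $x_k$ actually contribute in the regime of Lemma~\ref{L:11}. Recall that here $\min(U) > \min(U')$ (so condition~(7) is violated) and $\min(U) \in U'$. Since every letter of $U$ must precede every letter of $U'$ in the permutation $x$, and $U' \neq \emptyset$, the final letter $x_k$ can never lie in $U$; this immediately rules out your candidate ``the barred letter of value $\min(U)$''. For $x_k = \bar a \in A \cup B$, the set $I = (X - x_k) \cap (x_k, t)$ contains all of $U'$, and since $\min(U') < \min(U)$ the minimum of $I$ is the single unbarred letter $\min(U')$, which is strictly larger than $a$; by Lemma~\ref{lem:sum p} this contributes zero. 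Likewise for $x_k \in U'$ with $x_k > \min(U')$ one gets $\min(I) = \min(U')$ and again zero. Hence the \emph{only} nonzero contribution is $x_k = \min(U') = \min(U'_-)$, with sign $+1$.

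This is the paper's entire argument: $g(t,q,X) = g(t+1,q,X - \min(U'_-))$, and one iterates, stripping off $U'_-$ one element at a time. After $|U'_-|$ steps the new $\min(U')$ equals $\min(U)$, so $X$ is balanced, and the hypothesis $|U'_-| + |A| \le t' - t$ has become exactly $|A| \le t' - (t + |U'_-|)$, the hypothesis of Lemma~\ref{L:10}. There is no ``extra contributing letter'' and no ``correction term'': the answer is literally that of Lemma~\ref{L:10} because the recursion is trivial until one lands in that lemma's regime. Your plan could still succeed if you carefully discover that your extra candidates vanish, but your stated expectation of a nonzero correction would lead you astray.
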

\begin{proof}
The statements follow from the fact that a non-zero contribution
only occurs with $x_k = \min(U'_-)$.  When $|U'_-|$ becomes zero,
$X$ is balanced and we are in the situation of Lemma \ref{L:10}.
\end{proof}

\subsection{Results for $f'(t,q,X)$ and $g'(t,q,X)$: One barred letter in $y$}
\label{SS:fpgp}
Let us now suppose that $y_1 = \bar q$ (instead of $q$), but the
rest of $y$ is as before.  Denote the answers by $f'(t,q,X)$ and
$g'(t,q,X)$ and use all the same conventions as before.  We state
the relevant results. The proofs are identical to before.  Let
$\epsilon'(n) = 1- \epsilon(n)$ and $\theta'(n)$ be defined on
nonnegative integers by the sequence $0,1,-1,2,-2,\ldots$.

\begin{lem}\label{L:3a}
Suppose $|C| > 0$.  Then $f'(t,q,X) = 1$.
\end{lem}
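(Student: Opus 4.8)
The plan is to follow the proof of Lemma~\ref{L:3} line for line. The only change in passing from $f$ to $f'$ is that the factor attached to the leading pair $(x_1,y_1)$ of each summand $T(x,y)$ becomes $T^{x_1\,\bar q}_{\emptyset}$ instead of $T^{x_1\,q}_{\emptyset}$; by Lemma~\ref{lem:sum p} this replaces $1-\chi(x_1=\bar q)$ by $1-\chi(x_1=q)$. Crucially this factor is attached to the pair carrying the \emph{largest} entry of $y$, namely $q$, whereas every recursion used in Lemmas~\ref{L:0}--\ref{L:3} strips a letter off the \emph{small} end $t$ of $y$ and deletes $\max(X)$ from the $x$-side. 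So the leading factor is never disturbed by the inductive steps, the sole exception being the base case $|X|=1$ which is treated directly, and the whole chain of arguments transports verbatim.

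Concretely I would first record the primed analogues of Lemmas~\ref{L:0}, \ref{L:1}, \ref{L:2}: if $|B|=0$ then $f'(t,q,X)=1$; if $X\ne\emptyset$ and $t\notin B$ then $f'(t,q,X)=f'(t+1,q,X^-)$; and if $|C|>0$ then $f'(t,q,X)=f'(t+1,q,X^-)$. For the first, $|B|=0$ forces $\bar q\notin X$ (otherwise $\bar q$ would lie in $B$), so the leading factor equals $1-\chi(x_1=q)=1$, exactly as $1-\chi(x_1=\bar q)=1$ in the unprimed setting, and the proof of Lemma~\ref{L:0} — that the only surviving term is the one with the $x_i$ in the order in which they occur in $\rho_r$ — applies unchanged. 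For the other two, the proofs of Lemmas~\ref{L:1} and~\ref{L:2} invoke Lemma~\ref{lem:sum p} only on the trailing pairs $(x_k,y_k)$ and $(x_{k-1},y_{k-1})$, never on $(x_1,y_1)$, so they carry over word for word with $f$ replaced by $f'$.

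Then the lemma follows as Lemma~\ref{L:3} did, by induction on $|X|$. In the base case $|X|=1$ we have $t=q$, and since $|C|>0$ the unique element of $X$ is $\bar c$ with $c<q$, whence $f'(q,q,\{\bar c\})=T^{\bar c\,\bar q}_{\emptyset}=1$ by Lemma~\ref{lem:sum p}. For the inductive step the primed analogue of Lemma~\ref{L:2} gives $f'(t,q,X)=f'(t+1,q,X^-)$, and one checks as in the unprimed case that the new parameters still satisfy $|C|>0$ (raising $t$ can only enlarge $C$, and $\max(X)$ lies in $C$ only when $A=B=\emptyset$, in which case either $|C|\ge2$ and $C$ stays nonempty, or $|X|=1$ and we are in the base case), so the inductive hypothesis yields $f'(t+1,q,X^-)=1$. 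I expect the only genuine work — and the reason the paper can assert that "the proofs are identical to before" — to be this bookkeeping check that the one altered factor is truly inert throughout, which reduces to the observation above that $y_1=q=\max(y)$ while the recursions always consume $y$ from the bottom.
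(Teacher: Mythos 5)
Your overall strategy — transporting the chain of Lemmas~\ref{L:0}--\ref{L:3} to the primed setting, and noting that the recursion consumes $y$ from the small end $t$ while the $f\to f'$ modification sits at the large end $y_1$ — is the right reading of ``the proofs are identical to before.'' But the ``bookkeeping check'' that the altered leading factor is inert is incomplete, and your stated primed analogues of Lemmas~\ref{L:0} and~\ref{L:1} are actually false. The subtlety you miss is embeddability: in the unprimed case $T^{x_1\,q}_\emptyset$ always makes sense because the barred $x_1$ precedes the unbarred $q$ in $\rho_r$; but once $y_1 = \bar q$, both $x_1$ and $\bar q$ live among the barred letters, which occur in $\rho_r$ in \emph{increasing} order, so the subword $\hat{x}I\hat{y}$ required by Lemma~\ref{lem:sum p} does not exist when $x_1 > q$, and then $T^{x_1\,\bar q}_\emptyset = 0$ rather than $1-\chi(x_1=q)$. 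Concretely, if $|B|=|C|=0$ but $A\ne\emptyset$, every permutation of $X$ has $x_1\in A$, so $f'(t,q,X)=0$; this is consistent with $\epsilon'(0)=0$ in Lemma~\ref{L:4a} but contradicts your primed Lemma~\ref{L:0}, and the same example with $|X|=1$ contradicts your primed Lemma~\ref{L:1}.

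Your proof of Lemma~\ref{L:3a} itself survives, because $|C|>0$ is preserved by the recursion (as you verify), so every use of the primed~\ref{L:0}/\ref{L:1} inside the primed~\ref{L:2} occurs with $|C|>0$, where those statements do hold; and the base case is handled directly with $x_1\in C$, hence $x_1 < q$, so the embedding exists and $T^{\bar c\,\bar q}_\emptyset=1$. But the auxiliary primed lemmas should carry the extra hypothesis $|C|>0$ (equivalently, one should argue that because $\max(X)$ is removed first, the leading factor is only ever evaluated at the end with $x_1=\min(X)\in C$), and the justification ``the leading factor equals $1-\chi(x_1=q)=1$, exactly as in the unprimed setting'' papers over the one place where the primed and unprimed cases genuinely diverge — which is precisely the check you flagged as the content hiding behind ``the proofs are identical to before.''
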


\begin{lem}\label{L:4a}
Suppose $|C| = 0$.  Then $f'(t,q,X) = \epsilon'(|B|)$.
\end{lem}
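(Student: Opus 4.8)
The plan is to prove Lemma~\ref{L:4a} by copying the proof of Lemma~\ref{L:4}, replacing each unprimed auxiliary statement by its primed analogue and $\epsilon$ by $\epsilon'=1-\epsilon$. The only structural input that differs is the leading letter $y_1=\bar q$ of $y=\bar q\,(q-1)\cdots t$, and this bar affects only the single factor $T^{x_1,y_1}_{\emptyset}$ of the product $T'(x,y)$; it never touches the factor attached to the bottom letter $y_k=t$, which is where every reduction takes place. So the whole chain of elementary lemmas carries over unchanged apart from base values, and those base values are exactly what turn $\epsilon$ into $\epsilon'$ on the configurations in question.

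Explicitly, I would first observe that the argument of Lemma~\ref{L:1} gives the primed reduction verbatim: if $X\neq\emptyset$ and $t\notin B$ then $f'(t,q,X)=f'(t+1,q,X^-)$, since that argument only identifies which letter of $x$ may occupy the last slot, paired with $y_k=t$. Second, Lemma~\ref{L:3a} is already available, giving $f'(t,q,X)=1$ whenever $|C|>0$. With these the induction on $|X|$ proceeds as in Lemma~\ref{L:4}. If $t\notin B$ then, since $|C|=0$ and $|[t,q]|=|X|$, we get $|A|>0$, hence $\max(X)\in A$, and the primed Lemma~\ref{L:1} together with the inductive hypothesis give $f'(t,q,X)=f'(t+1,q,X^-)=\epsilon'(|B|)$. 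If $t\in B$ and $t=\max(X)$ then $X=\{\bar t\}$ and $q=t$; the single pair is $(x_1,y_1)=(\bar t,\bar t)$, whose only subword of $\rho_r$ is the length-one word $\bar t$, so $f'(t,q,X)=1=\epsilon'(1)$. If $t\in B$ and $t\neq\max(X)$, the last-slot analysis via Lemma~\ref{lem:sum p} yields the same two-term recursion
\[
f'(t,q,X)=f'(t+1,q,X^-)-f'(t+1,q,X-t);
\]
when $X^-$ is considered over $[t+1,q]$ it still contains $\bar t$, which now lies below the interval, so $|C|>0$ and Lemma~\ref{L:3a} gives $f'(t+1,q,X^-)=1$, while $X-t$ has empty $C$-part and its $B$-part has size $|B|-1$, so the inductive hypothesis gives $f'(t+1,q,X-t)=\epsilon'(|B|-1)$. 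Hence $f'(t,q,X)=1-\epsilon'(|B|-1)$, which equals $\epsilon'(|B|)$ because $\epsilon'(m)=\epsilon(m-1)=1-\epsilon'(m-1)$ for $m\geq 1$.

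The one thing that genuinely needs checking — the sole content beyond bookkeeping — is that the leading bar really does not perturb the last-slot recursion or the hypotheses of the primed Lemma~\ref{L:3a}: namely that passing from $[t,q]$ to $[t+1,q]$ after deleting $\max(X)$ does make $|C|$ positive, which uses $t\neq\max(X)$ so that $\bar t$ is not deleted, and that unwinding Lemma~\ref{lem:sum p} for the subwords from $x_k$ to $t$ produces precisely the two contributions displayed above. Since everything delicate is confined to the first factor $T^{x_1,y_1}_{\emptyset}$, whose sole effect on $f'$ versus $f$ is to replace the base value $\epsilon(0)=1$ by $\epsilon'(0)=0$, this verification is routine; one may even use it as a consistency check, since the same reasoning shows $f'(t,q,X)+f(t,q,X)=1$ whenever $|C|=0$, matching $\epsilon'(|B|)=1-\epsilon(|B|)$.
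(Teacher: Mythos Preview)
Your proposal is correct and follows exactly the paper's approach, which simply declares the primed proofs ``identical to before.'' One small caveat: your primed analogue of Lemma~\ref{L:1} actually needs $|X|\ge 2$, since for $|X|=1$ the last slot is $y_1=\bar q$ rather than $t$; the missing base case $|X|=1$, $t\notin B$ (so $|B|=0$, $X=\{\bar a\}$ with $a>q$) is immediate, as $\bar a$ lies after $\bar q$ in $\rho_r$ and hence $f'(t,t,\{\bar a\})=0=\epsilon'(0)$.
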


\begin{lem}\label{L:5a}
Suppose $X$ is unbalanced and $|C| > 0$.  Then $g'(t,q,X) = 1$.
\end{lem}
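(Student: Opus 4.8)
The claim is the primed analogue of Lemma~\ref{L:5}, and the plan is to carry that proof over essentially verbatim, replacing $g$ by $g'$ throughout, replacing appeals to Lemma~\ref{L:3} (the barred‑letters computation) by appeals to Lemma~\ref{L:3a}, and otherwise changing nothing. Concretely I would induct on $|X|$ and, at each step, isolate the rightmost pair of the data: the letter $x_k$ of the permutation $x$ that is matched with $y_k = t$ (recall $y = q\,(q-1)\cdots t$ has length $|[t,q]| = |X| = k$). Applying Lemma~\ref{lem:sum p} to this pair controls which $x_k$ can give a nonzero contribution and lets me expand $g'(t,q,X)$ as a signed sum of $g'(t+1,q,\cdot)$‑values with strictly smaller index set, on which the inductive hypothesis or Lemma~\ref{L:3a} applies.

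First I would dispose of the case $t\notin B$: by Lemma~\ref{lem:sum p} a nonzero contribution forces $x_k=\min(U')$, since any other choice leaves a forbidden subword $I$ between $x_k$ and $t$; hence $g'(t,q,X)=g'(t+1,q,X-\min(U'))$. If $|U'|>1$ this is the inductive hypothesis, using that removing $\min(U')$ keeps $X$ unbalanced (as recorded in the setup of Section~\ref{SS:notation}) and keeps $|C|>0$ (removing an unbarred letter does not touch $C$). If $|U'|=1$ then $\min(U')=n$ is the only unbarred letter of $X$, so $X-n$ consists of barred letters only, whence $g'(t+1,q,X-n)=f'(t+1,q,X-n)=1$ by Lemma~\ref{L:3a} (since $|C|>0$); therefore $g'(t,q,X)=1$.

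For $t\in B$ we have $t=\min(B)$, and the possibilities for $x_k$ are the three distinct letters $\max(C)$, $t$, $\min(U')$, so Lemma~\ref{lem:sum p} gives
$$
g'(t,q,X)=g'(t+1,q,X-\min(U'))+g'(t+1,q,X-\max(C))-g'(t+1,q,X-t).
$$
The last two terms cancel, because $g'(t+1,q,\cdot)$ (exactly like $f$, which depends only on $B$, $|A|$, $|C|$) is a function of the combinatorial type of its argument, and at level $t+1$ both $X-t$ and $X-\max(C)$ have the same type: in each case the barred letters below $t+1$ number $|C|$, the new $B$ is $B\setminus\{t\}$, and $|A|$ together with the $U,U'$ decomposition is unchanged. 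Thus $g'(t,q,X)=g'(t+1,q,X-\min(U'))$ and we finish exactly as in the case $t\notin B$.

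The argument has no genuine obstacle beyond bookkeeping; the one point that deserves a line of justification is the cancellation of the two middle terms, i.e.\ the assertion that $g'$ depends only on the combinatorial type of $X$ rather than on $X$ itself. The only way the primed setup ($y_1=\bar q$) differs from the unprimed one is at the very last reduction, when $|X|$ is small and we land in $f'$ rather than $f$; but that difference has already been absorbed into Lemma~\ref{L:3a}, so nothing new is needed and the proof is indeed identical to that of Lemma~\ref{L:5}.
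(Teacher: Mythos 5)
Your proposal is correct and matches the paper's intent exactly: the paper's own proof of Lemma~\ref{L:5a} is literally the remark that "the proofs are identical to before," i.e.\ one repeats the induction of Lemma~\ref{L:5} with $g'$ in place of $g$ and Lemma~\ref{L:3a} in place of Lemma~\ref{L:3}, which is what you do. Your extra sentence justifying the cancellation $g'(t+1,q,X-t)=g'(t+1,q,X-\max(C))$ is a reasonable elaboration of a step the paper leaves implicit.
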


\begin{lem}\label{L:6a}
Suppose $X$ is unbalanced and $|C| = 0$. Then $g'(t,q,X) =
\epsilon'(|B|)$.
\end{lem}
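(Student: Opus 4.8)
The plan is to prove Lemma~\ref{L:6a} by exactly the induction on $|X|$ used for Lemma~\ref{L:6}, replacing every appeal to an unprimed function or constant by its primed counterpart; the only new algebraic input is that the parity identity $\epsilon(m)=1-\epsilon(m-1)$ which drives the unprimed computation is replaced by $\epsilon'(m)=1-\epsilon'(m-1)$.

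First I would dispose of the case $t\notin B$. Here $\bar t\notin X$, so shrinking the interval $[t,q]$ to $[t+1,q]$ moves no letter of $X$ into $C$ and leaves $B$ unchanged. By Lemma~\ref{lem:sum p} together with the requirement that every letter of $U$ precede every letter of $U'$, a nonzero term of $g'(t,q,X)$ must put $\min(U')$ in the last slot $x_k$; deleting $\min(U')$ keeps $X$ unbalanced (or empties $U'$, in which case we have dropped to the barred-letters-only function $f'$), so $g'(t,q,X)=g'(t+1,q,X-\min(U'))$, and iterating this reduction lands us on Lemma~\ref{L:4a}, which gives $\epsilon'(|B|)$.

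Next, the case $t\in B$. As in the proof of Lemma~\ref{L:6}, the last slot $x_k$ can, for a nonzero contribution, be filled only by $t$ or by $\min(U')$, and these are distinct since $\min(U')$ is unbarred while $t\in B$; hence Lemma~\ref{lem:sum p} gives $g'(t,q,X)=g'(t+1,q,X-\min(U'))-g'(t+1,q,X-t)$. In the first term the barred letter $\bar t$ survives the removal of $\min(U')$ but, relative to the shrunken interval $[t+1,q]$, now lies below it and so joins $C$; that set therefore has $|C|>0$ and is still unbalanced, so Lemma~\ref{L:5a} makes the first term equal to $1$. In the second term $\bar t$ has been removed from $B$, decreasing $|B|$ by one while preserving $|C|=0$ and unbalancedness, so the inductive hypothesis makes it $\epsilon'(|B|-1)$. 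Finally, since $\epsilon'=1-\epsilon$ is the indicator of oddness, $1-\epsilon'(|B|-1)=\epsilon'(|B|)$ for $|B|\ge1$, which is the asserted value.

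The main difficulty is purely bookkeeping. At each reduction step one must check that the set still satisfies the seven conditions defining the partition $X=A\sqcup B\sqcup C\sqcup U\sqcup U'$ (in particular that $n$ stays in $U'$ and that $U$ still precedes $U'$), and, crucially, that the hypotheses ``unbalanced'' and ``$|C|=0$'' or ``$|C|>0$'' of the sublemma being invoked really do hold after the interval has shrunk --- the one subtle point being that passing from $[t,q]$ to $[t+1,q]$ moves $\bar t$, when it is present, from $B$ into $C$. Since the only difference between the primed and unprimed setup is confined to the single factor $T^{x_1,y_1}$ with $y_1=\bar q$ in place of $q$, and Lemma~\ref{lem:sum p} already evaluates such a factor, no fresh analysis of the weights $b_p$ is needed and the argument is otherwise word-for-word the unprimed one.
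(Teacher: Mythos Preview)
Your proposal is correct and follows exactly the approach the paper has in mind: the paper simply states that the proofs of the primed lemmata ``are identical to before,'' and your write-up is precisely the proof of Lemma~\ref{L:6} with $g,\epsilon$ replaced by $g',\epsilon'$ and Lemmata~\ref{L:5},~\ref{L:4} replaced by Lemmata~\ref{L:5a},~\ref{L:4a}. The one minor imprecision is in the $t\notin B$ case: rather than ``iterating until we land on Lemma~\ref{L:4a},'' the cleaner phrasing (matching Lemma~\ref{L:6}) is that $g'(t,q,X)=g'(t+1,q,X-\min(U'))$ and then one invokes the inductive hypothesis directly (or Lemma~\ref{L:4a} in the base case $|U'|=1$), since $|B|$ is unchanged and $|C|$ stays zero; but this is cosmetic.
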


\begin{lem} \label{L:7a}
Suppose $X$ is balanced and $|C| > 0$.  Then $g'(t,q,X) =
\epsilon'(|A|+|B|+|C|)$.
\end{lem}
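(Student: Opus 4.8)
The plan is to prove Lemma~\ref{L:7a} by induction on $|X|$, in exact parallel with the proof of Lemma~\ref{L:7}, replacing every unprimed auxiliary function from Section~\ref{SS:fpgp} by its primed counterpart and replacing $\epsilon$ throughout by $\epsilon' = 1-\epsilon$. Concretely, I would set $a = \max(A\cup B\cup C)$ and split according to whether $t\in B$. When $t\notin B$, one enumerates the possible last letters $x_k$ of a permutation $x$ of $X$ that can contribute a nonzero term $T(x,y)$; applying Lemma~\ref{lem:sum p} to the final pair $(x_k,y_k)$, where $y_k=t$, these are exactly $x_k=\min(U')$ and $x_k=a$, giving
\[
g'(t,q,X) = g'(t+1,q,X-\min(U')) - g'(t+1,q,X-a).
\]
Here $g'(t+1,q,X-\min(U'))=1$ by Lemma~\ref{L:5a} (deleting $\min(U')$ from a balanced $X$ leaves it unbalanced with $|C|>0$), while $g'(t+1,q,X-a)$ is handled by the induction hypothesis, or — in the degenerate subcase $a\in C$, $|C|=1$, which forces $A=B=\emptyset$ — by the $g'$-analogue of Lemma~\ref{L:5+}. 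When $t\in B$ the admissible final letters are $\min(U')$, $a$, $t$, and $\max(C)$, yielding
\[
g'(t,q,X) = g'(t+1,q,X-\min(U')) - g'(t+1,q,X-a) - g'(t+1,q,X-t) + g'(t+1,q,X-\max(C)),
\]
where the last two summands cancel for exactly the reason they do in Lemma~\ref{L:7}, so the claim follows from Lemma~\ref{L:5a} and induction.

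The point I would emphasize at the outset, to justify this transcription, is that each recursion above is produced by Lemma~\ref{lem:sum p} applied only to the \emph{last} pair $(x_k,y_k)$, and $y_k=t$ is the same letter in the primed and unprimed settings; moreover deleting $\min(U')$ and passing from $y=\bar q\,(q-1)\cdots t$ to $\bar q\,(q-1)\cdots(t+1)$ preserves the defining feature $y_1=\bar q$, so the induction stays inside the primed world. The barring of $y_1=\bar q$ influences $T(x,y)$ only through its first factor $T^{x_1,\bar q}_{\cdots}$; hence it can change the final value only through the base of the induction and the uniform shift $\epsilon\mapsto\epsilon'$, while the combinatorial recursion itself is untouched. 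This is the precise sense in which the proof is ``the same'' as that of Lemma~\ref{L:7}.

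The real work, and the only genuinely new verification, lies at the base of the induction: one must check the $g'$-analogues of Lemmas~\ref{L:5+} and~\ref{L:5a} (and their companions) in the residual configurations $|A|=|B|=|C|=0$ and $X$ unbalanced, which is exactly where the single barred factor in $y$ pins the answer to $\epsilon'$ rather than $\epsilon$ (e.g.\ the analogue of Lemma~\ref{L:5+} should give $\epsilon'(0)=0$, not $1$). The other bookkeeping hazard is keeping careful track of how $a=\max(A\cup B\cup C)$ is reclassified among the three classes $A$, $B$, $C$ relative to the shortened interval $[t+1,q]$ after each step, so that the correct base or inductive statement is invoked. Once these points are settled, the induction closes exactly as for Lemma~\ref{L:7}, confirming the claim in Section~\ref{SS:fpgp} that these primed results admit the same proofs as their unprimed analogues.
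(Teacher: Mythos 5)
Your proposal is correct and is precisely the argument the paper intends: the paper proves the primed lemmas of Section~\ref{SS:fpgp} only by the remark that ``the proofs are identical to before,'' and your transcription of the induction from Lemma~\ref{L:7} — same recursions on the last letter $x_k$ (which pairs with the unbarred $y_k=t$, so Lemma~\ref{lem:sum p} applies unchanged), with the bar on $y_1=\bar q$ entering only through the primed base cases such as the analogue of Lemma~\ref{L:5+} giving $\epsilon'(0)=0$ — is exactly that proof. You also correctly isolate the two places where care is needed (the primed base cases and the reclassification of letters relative to $[t+1,q]$), so nothing is missing.
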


%\begin{lem}\label{L:8a}
%Suppose $X$ is balanced, $|C| = 0 = |A|$, and $t\in B$. Then
%$g'(t,q,X) = \theta'(|B|)$.
%\end{lem}

\begin{lem}\label{L:9a}
Suppose $X$ is balanced and $|C| = 0 = |A|$. % Furthermore suppose
%that $B = [t',t'']$ for some $t' \geq t$.
Then $g'(t,q,X) = \theta'(|B|)$.
\end{lem}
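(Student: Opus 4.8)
The plan is to mirror the proof of Lemma~\ref{L:8} almost verbatim, substituting the primed ingredients Lemmas~\ref{L:5a} and~\ref{L:7a} for their unprimed counterparts, and to induct on $|X|$. (As in Lemma~\ref{L:8} I would carry the extra hypothesis $t\in B$, reducing the case $t\notin B$ to smaller $X$ by peeling off $\max(X)$ via the $g'$-analogue of Lemma~\ref{L:1}; the substantive case is $t\in B$.) The base case is $|B|=0$: then $|A|=|B|=|C|=0$ and, exactly as in the primed analogue of Lemma~\ref{L:5+}, Lemma~\ref{lem:sum p} forces the unique surviving term to have the letters of $x$ in their $\rho_r$-order, but the barred leading letter $y_1=\bar q$ annihilates it, so $g'(t,q,X)=0=\theta'(0)$.

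For the inductive step with $t\in B$, the last letter $x_k$ of the permutation $x$ is paired with the final letter $t$ of $y=\bar q\,(q-1)\dotsm t$, and Lemma~\ref{lem:sum p} shows the only choices of $x_k$ producing a nonzero final factor $T^{x_k,t}_{\{x_1,\dotsc,x_{k-1}\}\cap(x_k,t)}$ are $\min(U')$, $t$ itself, and $b:=\max(B)$, giving
\[
g'(t,q,X)=g'(t+1,q,X-\min(U'))-g'(t+1,q,X-t)-g'(t+1,q,X-b),
\]
valid even when $t=b$. Passing from the interval $[t,q]$ to $[t+1,q]$ reclassifies the (still present) barred letter $\bar t$ out of $B$ and into $C$, whence: $g'(t+1,q,X-\min(U'))=1$ by Lemma~\ref{L:5a} (the set is now unbalanced with $|C|>0$); $g'(t+1,q,X-b)=\epsilon'(|A|+|B|+|C|)=\epsilon'(|B|-1)$ by Lemma~\ref{L:7a} (still balanced with $|C|>0$); and $g'(t+1,q,X-t)=\theta'(|B|-1)$ by the inductive hypothesis (still balanced with $|A|=|C|=0$). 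The conclusion then follows from the elementary identity $\theta'(m)=1-\theta'(m-1)-\epsilon'(m-1)$ for $m\ge1$, checked termwise from the definitions of $\theta'$ and $\epsilon'$; in the degenerate case $t=b$ (so $B=\{t\}$, $|B|=1$) the two coinciding terms combine and the base case gives $g'(t+1,q,X-t)=\theta'(0)=0$, hence $g'(t,q,X)=1=\theta'(1)$.

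The main obstacle is the combinatorial bookkeeping hidden in Lemma~\ref{lem:sum p}: determining precisely which $x_k$ survive and with which signs, and correctly tracking the reclassification of $\bar t$ from $B$ to $C$ as the interval shrinks — in particular making the $t=b$ collision yield exactly the base value $\theta'(0)=0$ (rather than $\theta(0)=1$, which is the only substantive difference from Lemma~\ref{L:8}). The remaining steps — the reductions to Lemmas~\ref{L:5a} and~\ref{L:7a} and the verification of the $\theta',\epsilon'$ recursion — are routine.
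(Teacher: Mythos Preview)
Your proposal is correct and follows the paper's approach, which simply declares the proof ``identical'' to the unprimed case (Lemma~\ref{L:8}). Your recurrence for the $t\in B$ step, the applications of Lemmas~\ref{L:5a} and~\ref{L:7a}, the identity $\theta'(m)=1-\theta'(m-1)-\epsilon'(m-1)$, and the base case $\theta'(0)=0$ are all right.

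One small correction to your parenthetical: the reduction for $t\notin B$ is not via a $g'$-analogue of Lemma~\ref{L:1}. With $|A|=|C|=0$, $X$ balanced, and $t\notin B$, the last letter $x_k$ may be either $\min(U')$ or $b=\max(B)$, so one obtains the two-term recurrence
\[
g'(t,q,X)=g'(t+1,q,X-\min(U'))-g'(t+1,q,X-b),
\]
exactly as in the $|A|=0$ portion of the proof of Lemma~\ref{L:10}. By Lemma~\ref{L:6a} and induction this gives $\epsilon'(|B|)-\theta'(|B|-1)=\theta'(|B|)$, so the conclusion is unchanged. This case matters, since in the paper's application (Case~3 of Proposition~\ref{prop:hard}) one has $t=0\notin B$.
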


\subsection{Defining $U$, $U'$, $A$, $B$, $C$}
\label{SS:UUABC}
%In the following sections, we will be using the comment in
%subsection \ref{sec:moreunbar} without mention.

Let us set $X$ to be one of $R$, $\bar R$, $R_-$, or $\bar R_-$ and
pick $y$ to be one of the words described in Lemma \ref{lem:uy}.
Thus $t = 0$ and $q \in \{s-1, s-2\}$.  We describe how to construct
$U$, $U'$, $A$, $B$, and $C$.  First we must have $C = \emptyset$
and there is no choice for $B$ and $U'$.  We let $A$ be the set of
barred letters in $X$ which are greater than $q$ and less than all
unbarred letters in $X$.  We let $U$ be any remaining barred letters
in $X$.

%For example, in Case 1 of \ref{lem:uy} when $X = R$ and $r > n$, we
%would have
For example, in Case 1 of \ref{lem:uy} when $X = R$, $r > n$, and $2n+1-r\le r-s$, we
would have $B=\emptyset$ and
\begin{align*}
U' &= \{r-s,r-s+1,\ldots,n\} \\
U &= \{\overline{r-s}, \overline{r-s+1},\ldots, \overline{n-1}\} \\
A &= \{\overline{r-s-1}, \overline{r-s-2},\ldots,
\overline{2n+1-r}\}. \end{align*}

We claim that the sum $g(t,q,X)$ of subsection \ref{sec:unbar} is
equal to the same sum, but without the assumption that letters in
$U$ occur to the left of letters in $U'$.

This is proved as follows.  In the case that $X \in \{R, \bar R,
R_-,\bar R_-\}$, the letters in $U$ are all present in $U'$.  Let us
take a permutation $x$ of $X$ such that $T(x,y) \neq 0$.  Then it is
clear that unbarred letters of $X$ occur in $x$ in the same order as
in $\rho_r$.  Suppose $x_i = n$ and there is a barred letter $\bar j
\in U$ occurring to the right of $x_i$, say $x_p = \bar j$ where $p
> i$.  Then one checks that in $x$, (a) barred letters greater than
$\overline{j}$ occur before $x_i$, (b) letters in
$\{n-1,\ldots,j+1\}$ occur between $x_i$ and $x_p$ and (c) no
unbarred letter less than $j$ occurs between $x_i$ and $x_p$. We now
define a sign-reversing involution: let $\tilde{x}$ be obtained from
$x$ by swapping the locations of $j$ with $\overline{j}$.  By Lemma
\ref{lem:sum p}, $T(x,y) = -T(\tilde{x},y)$.  In other words, to
calculate the sum of $T(x,y)$ as $x$ varies over all permutations of
$X$ we need only consider permutations $x$ such that letters in $U$
occur to the left of letters in $U'$.

\subsection{Proof of Proposition \ref{prop:hard}}
\label{SS:proof of hard}

\subsubsection{Notation} Since $t=0$  and $q\in\{s-1,s-2\}$ are known for the
cases of Lemma~\ref{lem:uy}, from now on we will
denote the total contributions by $f(X)$, $g(X)$, $f'(X)$, $g'(X)$, where
$X \in \{R, \overline{R}, R_-,\overline{R_-}\}$ is one of the sets in Lemma
\ref{lem:uy}.  We prove Proposition \ref{prop:hard} by splitting
into the cases of Lemma \ref{lem:uy}.  Note that the terms $T'_u$ of
Proposition \ref{prop:hard} always differ from the corresponding
numbers $f(X)$ and $g(X)$ by a factor of 2 (arising from $y_k = 0$).

\subsubsection{Case 1} If $R = \overline{R}$ then $T'_\emptyset = 2f(R)$.
%The result follows from Lemma \ref{L:9}.
The result follows from Lemma \ref{L:11}.

%Suppose $R \neq \overline{R}$.  By Lemma \ref{L:10} we have $g(R) =
Suppose $R \neq \overline{R}$.  By Lemma \ref{L:11} we have $g(R) =
(-1)^{a} + \epsilon(a-1)$, where $a = |A|$ for $X = R$. Similarly,
%we have $g(R_-) = (-1)^{a-1} + \epsilon(a)$. Thus $g(R) + g(R_-) =
we have $g(\overline{R}) = (-1)^{a-1} + \epsilon(a)$. Thus $g(R) + g(\overline{R}) =
1$, so $T'_\emptyset = 2$.

\subsubsection{Case 2} If $R$ consists only of barred letters, then
we have $f(R) + f(R_-) = 1$ by Lemma \ref{L:4}.

Otherwise we need to calculate $g(R)+g(R_-)+g(\overline{R})+g(\overline{R_-})$.
Let $a$ and $b$ be the sizes of $|A|$ and $|B|$ when $X = R$.  Then
by Lemma \ref{L:11}
\begin{align*}
g(R) &= (-1)^{a}\theta(b) + \epsilon(b)\epsilon(a-1) \\
g(\overline{R}) &= (-1)^{a-1}\theta(b) + \epsilon(b)\epsilon(a) \\
g(R_-) &= (-1)^{a}\theta(b-1) + \epsilon(b-1)\epsilon(a-1) \\
g(\overline{R_-}) &= (-1)^{a-1}\theta(b-1) + \epsilon(b-1)\epsilon(a)
\end{align*}
and the sum is 1.

In both cases $T'_\emptyset + T'_{\overline{s-1}} = 2$ as
required.

\subsubsection{Case 3} Let $b = 3s-2n-1$ be the size of $B$ in $R$.
Then $g(R) = \theta(b)$ and $g(R_-) = \theta(b-1)$ by Lemma~\ref{L:11}.%~\ref{L:9}.
For the case $y = \overline{s-1} s-2 \cdots 0$ we have a
contribution of $g'(\overline{R})=\theta'(b-1)$ by Lemma \ref{L:9a}. We calculate
$T'_\emptyset + T'_{\overline{s-1}} + T'_{s-1} = 2(\theta(b) +
\theta'(b-1)) +2\theta(b-1) + 2\theta(b-1) = 2$.

\appendix

\section{$\PA_i$}
\label{S:PAdata}

In the data that follows, elements of $\tC_n$ are indicated by
reduced words.

Some $\PA_i$ are expressed in the $A_w$ basis of $\NilCox$.
\subsection{$n=2$}
\begin{align*}
\PA_1 &= A_{0}+A_{1}+A_{2}  \\
\PA_2 &= A_{01}+
    A_{10}+A_{12}+2 A_{20}+A_{21}  \\
\PA_3 &= A_{012}+
    A_{101}+A_{120}+A_{121}+A_{201}+
    A_{210}  \\
\PA_4 &= A_{0121}+A_{1012}+
    A_{1210}+A_{2101}
\end{align*}
\subsection{$n=3$}
\begin{align*}
\PA_1 &= A_{0}+A_{1}+
    A_{2}+A_{3}  \\
\PA_2 &= A_{01}+
    A_{10}+A_{12}+2 A_{20}+A_{21}+A_{23}+2 A_{30}+2 A_{31}+
    A_{32}  \\
\PA_3 &= A_{012}+A_{101}+A_{120}+A_{121}+A_{123}+A_{201}+
    A_{210}+2 A_{230}\\ &+A_{231}+A_{232}+2 A_{301}+2 A_{310}+
    A_{312}+2 A_{320}+A_{321}  \\
\PA_4 &= A_{0121}+A_{0123}+
    A_{1012}+A_{1210}+A_{1230}+A_{1231} \\ &+A_{1232}+
    A_{2101}+A_{2301}+A_{2310}+2 A_{2320}+A_{2321}\\ &+
    A_{3012}+2 A_{3101}+A_{3120}+A_{3121}+A_{3201}+
    A_{3210}  \\
\PA_5 &= A_{01231}+A_{01232}+A_{10123}+A_{12310}+
    A_{12320} + A_{12321}+
    A_{21012}\\ &+A_{23101}+A_{23201}+A_{23210}+
    A_{30121}+A_{31012}+A_{31210}+
    A_{32101}  \\
\PA_6 &= A_{012321}+A_{101232}+
    A_{123210}+A_{210123}+A_{232101}+
    A_{321012}
\end{align*}
\subsection{$n=4$}
\begin{align*}
\PA_1 &= A_{0}+A_{1}+
    A_{2}+A_{3}+A_{4}  \\
\PA_2 &= A_{01}+
    A_{10}+A_{12}+2 A_{20}+A_{21}+A_{23}+2 A_{30}+2 A_{31}+
    A_{32}+A_{34}\\
    &+2 A_{40}+2 A_{41}+2 A_{42}+A_{43}  \\
\PA_3 &= A_{012}+
    A_{101}+A_{120}+A_{121}+A_{123}+A_{201}+
    A_{210}+2 A_{230}+A_{231}\\
    &+A_{232}+A_{234}+2 A_{301}+2 A_{310}+A_{312}+2 A_{320}+
    A_{321}+2 A_{340}\\&+2 A_{341}+A_{342}+
    A_{343}+2 A_{401}+2 A_{410}+2 A_{412}+4 A_{420}+2 A_{421}\\&+A_{423}+2 A_{430}+2 A_{431}+
    A_{432}  \\
\PA_4 &= A_{0121}+A_{0123}+
    A_{1012}+A_{1210}+A_{1230}+A_{1231}+A_{1232}+
    A_{1234}\\&+A_{2101}+A_{2301}+A_{2310}+2 A_{2320}+
    A_{2321}+2 A_{2340}+A_{2341}+A_{2342}\\&+A_{2343}+
    A_{3012}+2 A_{3101}+A_{3120}+A_{3121}+A_{3201}+
    A_{3210}+2 A_{3401}\\&+2 A_{3410}+A_{3412}+2 A_{3420}+
    A_{3421}+2 A_{3430}+2 A_{3431}+
    A_{3432}+2 A_{4012}\\&+2 A_{4101}+2 A_{4120}+2 A_{4121}+
    A_{4123}+2 A_{4201}+2 A_{4210}+2 A_{4230}+A_{4231}\\&+
    A_{4232}+2 A_{4301}+2 A_{4310}+A_{4312}+2 A_{4320}+
    A_{4321}
\end{align*}

\section{$\Qf_w$}
\label{S:Qfdata} In the following tables, for $w\in\tC_n^0$ and
$\la$ a partition, the $(w,\la)$-th entry is the coefficient of
$M_\la=2^{\ell(\la)}m_\la$ in $\Qf_w$. We work in the quotient in
\eqref{E:Ccoiso} and hence we expand in $M_\la$ for $\la_1 \le 2n$.

\setlength{\extrarowheight}{1pt}
\subsection{$n=2$}
\newcolumntype{X}{>{\!}c<{\!}}
\begin{align*}
&\begin{array}{|X||X|} \hline  %
& 1 \\ \hline \hline %
0 & 1 \\ \hline
\end{array}\quad
\begin{array}{|X||X|X|} \hline %
 & 2 & 11 \\ \hline \hline
10 & 1 & 1 \\ \hline
\end{array}\quad
\begin{array}{|X||X|X|X|} \hline %
 & 3 & 21 & 111 \\ \hline \hline
010 & & 1 & 1 \\ \hline
210 & 1 & 1 & 1 \\ \hline
\end{array}\quad
\begin{array}{|X||X|X|X|X|X|} \hline
 & 4 & 31 & 22 & 211 & 1^4 \\ \hline \hline
0210& & 1 & 2 & 2 & 2  \\ \hline
1210&1&1&1&1&1 \\ \hline
\end{array} \\[2mm]
&\begin{array}{|X||X|X|X|X|X|X|} \hline %
 & 41 & 32 & 311 & 221 & 21^3 & 1^5 \\ \hline \hline
10210&  &1&1&2&2&2 \\ \hline
01210& 1 & 1 & 1 & 1 & 1 & 1 \\ \hline
\end{array} \\[2mm]
&\begin{array}{|X||X|X|X|X|X|X|X|X|X|} \hline %
&42&411&33&321&31^3&222&2211&21^4&1^6 \\ \hline %
010210&&&&1&1&2&2&2&2 \\ \hline %
210210& &&1&1&1&2&2&2&2 \\ \hline%
101210&1&1&1&1&1&1&1&1&1 \\ \hline%
\end{array} \\[2mm]
&\begin{array}{|X||X|X|X|X|X|X|X|X|X|X|X|} \hline %
& 43&421&41^3&331&322&3211&31^4&2^31&221^3&21^5&1^7 \\ \hline\hline %
0210210&&&&1&2&2&2&4&4&4&4\\ \hline %
0101210&&1&1&1&2&2&2&3&3&3&3\\ \hline %
2101210&1&1&1&1&1&1&1&1&1&1&1 \\ \hline %
\end{array}
\end{align*}

\subsection{$n=3$}

\begin{align*}
&\begin{array}{|X||X|} \hline  %
& 1 \\ \hline \hline %
0 & 1 \\ \hline
\end{array}\quad
\begin{array}{|X||X|X|} \hline %
 & 2 & 11 \\ \hline \hline
10 & 1 & 1 \\ \hline
\end{array}\quad
\begin{array}{|X||X|X|X|} \hline %
 & 3 & 21 & 111 \\ \hline \hline
010 & & 1 & 1 \\ \hline 210 & 1 & 1 & 1 \\ \hline
\end{array}\quad
\begin{array}{|X||X|X|X|X|X|} \hline
 & 4 & 31 & 22 & 211 & 1^4 \\ \hline \hline
0210& & 1 & 2 & 2 & 2  \\ \hline %
3210&1&1&1&1&1 \\ \hline
\end{array} \\[2mm]
&\begin{array}{|X||X|X|X|X|X|X|X|} \hline %
 & 5 & 41 & 32 & 311 & 221 & 21^3 & 1^5 \\ \hline \hline
10210& & &1&1&2&2&2 \\ \hline %
03210& & 1&2&2&3&3&3 \\ \hline %
23210& 1 & 1 & 1 & 1 & 1 & 1 &1 \\ \hline
\end{array} \\[2mm]
&\begin{array}{|X||X|X|X|X|X|X|X|X|X|X|X|} \hline  %
& 6&51&42&411&33&321&31^3&222&2211&21^4&1^6 \\ \hline \hline %
010210& & & & & & 1&1&2&2&2&2 \\ \hline %
103210& & & 1&1&2&3&3&5&5&5&5 \\ \hline %
023210& & 1&2&2&2&3&3&4&4&4&4 \\ \hline %
123210& 1&1&1&1&1&1&1&1&1&1&1 \\ \hline
\end{array} \\[2mm]
&\begin{array}{|X||X|X|X|X|X|X|X|X|X|X|X|X|X|X|} \hline %
&61&52&511&43&421&41^3&331&322&3211&31^4&2^31&221^3&21^5&1^7\\
\hline\hline %
0103210&&&&&1&1&2&4&4&4&7&7&7&7 \\ \hline %
2103210&&&&1&1&1&2&3&3&3&5&5&5&5 \\ \hline %
1023210&&1&1&1&2&2&2&3&3&3&4&4&4&4\\ \hline %
0123210&1&1&1&1&1&1&1&1&1&1&1&1&1&1 \\ \hline
\end{array}
\end{align*}

\section{$\Pf_w$} \label{S:Pfdata} In the following tables, for
$w\in \tC_n^0$ and $\la$ a strict partition, the $(w,\la)$-th entry
is the coefficient of the Schur $P$-function $P_\la$ in $\Pf_w$.
Again $w$ is given as a reduced word.
\subsection{$n=2$}
\begin{align*}
&\begin{array}{|X||X|} \hline %
& 1 \\ \hline\hline %
0 & 1 \\ \hline
\end{array}
\qquad
\begin{array}{|X||X|} \hline %
 &  2 \\ \hline\hline %
10 & 1 \\ \hline
\end{array}
\qquad
\begin{array}{|X||X|X|} \hline %
 & 3 & 21 \\ \hline\hline
010 & & 1 \\ \hline %
210 & 1 & \\ \hline %
\end{array}
\qquad
\begin{array}{|X||X|X|} \hline %
& 4 & 31 \\ \hline\hline %
0210 & & 1  \\ \hline %
1210 & 1 & \\ \hline %
\end{array} \qquad
\begin{array}{|X||X|X|X|} \hline %
& 5&41&32 \\ \hline\hline %
10210 & &1&1 \\ \hline %
01210 & 1&1& \\ \hline %
\end{array} \\[2mm]
&\begin{array}{|X||X|X|X|X|} \hline %
&6&51&42&321 \\ \hline \hline %
010210 & &1&1&1 \\ \hline  %
210210 & &&1& \\ \hline%
101210 & 1&2&1&  \\  \hline %
\end{array}
\qquad %
\begin{array}{|X||X|X|X|X|X|} \hline %
&7&61&52&43&421\\ \hline \hline %
0210210 & &&1&1&1 \\ \hline%
0101210 & &1&1&&1  \\  \hline %
2101210 & 1&2&2&1&  \\ \hline %
\end{array}
\end{align*}

\subsection{$n=3$}
\begin{align*}
&\begin{array}{|X||X|} \hline \vn & 1 \\ \hline
\end{array}
\qquad
\begin{array}{|X||X|} \hline %
& 1 \\ \hline \hline %
0 & 1 \\ \hline
\end{array}
\qquad
\begin{array}{|X||X|} \hline
& 2 \\ \hline\hline %
10 & 1 \\ \hline
\end{array}
\qquad
\begin{array}{|X||X|X|} \hline %
&3&21 \\ \hline\hline %
010 & & 1 \\ \hline
210 & 1& \\ \hline %
\end{array}
\qquad
\begin{array}{|X||X|X|} \hline %
& 4 & 31 \\ \hline\hline %
0210 & & 1 \\ \hline %
3210 & 1 &  \\ \hline %
\end{array} \\[2mm]
&\begin{array}{|X||X|X|X|} \hline %
& 5&41&32 \\ \hline\hline %
10210 & & &1 \\ \hline %
03210 & &1&  \\ \hline %
23210 & 1&& \\ \hline %
\end{array}
\qquad %
\begin{array}{|X||X|X|X|X|} \hline %
&6&51&42&321\\ \hline\hline %
010210 & &&&1 \\ \hline %
103210 & &&1& \\ \hline %
023210 & &1&& \\ \hline %
123210 & 1&&& \\ \hline %
\end{array}
\qquad \!%
\begin{array}{|X||X|X|X|X|X|} \hline %
&7&61&52&43&421\\ \hline\hline %
0103210 & &&&&1 \\ \hline %
2103210 & &&1&1& \\ \hline%
1023210 & &1&1&& \\ \hline %
0123210 & 1&1&&& \\ \hline %
\end{array} \\
%&\begin{array}{|X||X|} \hline %
%10123210 & P_6 P_2  \\  %
%01023210 & P_{71}+P_{62}+P_{521} \\
%21023210 & P_{62}+P_{53} \\  %
%32103210 & P_{53} \\ %
%23010210 & P_{321}P_2 \\ \hline %
%\end{array}
\end{align*}

\end{document}